\numberwithin{equation}{section}
\newtheorem{theorem}{Theorem}
\newtheorem{lemma}{Lemma}[section]
\newtheorem{example}[lemma]{Example}
\newtheorem{remark}[lemma]{Remark}
\newtheorem{proposition}{Proposition}
\newtheorem{notation}[lemma]{Notation}
\newtheorem{definition}[lemma]{Definition}
\renewcommand{\leq}{\leqslant}
\renewcommand{\geq}{\geqslant}
\newcommand*{\IfItalic}{%
  \ifx\f@shape\my@test@it
    \expandafter\@firstoftwo
  \else
    \expandafter\@secondoftwo
  \fi
}
\newcommand*{\my@test@it}{it}
\newcommand{\myae}{\IfItalic{\emph{\mbox{\ae}}}{\mbox{\ae}}}
\newcommand\XLast{}
\newcommand\YLast{}
\newcommand\Vidr[4]{
\qbezier(#1,#2)(#1,#2)(#3,#4)
\renewcommand\XLast{#3}
\renewcommand\YLast{#4}}
\newcommand\VidrTo[2]{\Vidr{\XLast}{\YLast}{#1}{#2}}
\newcommand\ssemg\psi
\newcommand\fkinkg\gamma
\newcommand\sdev{s}
\newcommand\gdev{\gamma}
 \DeclareMathOperator{\Sat}{SAT}
\newcommand{\SAT}{\ensuremath{\Sat}}
\begin{document}

\begin{center}
{\Large Classification of commutative pairs of surjective maps of
interval, one of which is unimodal.}

{\large Makar Plakhotnyk\\
University of S\~ao Paulo, Brazil.\\

makar.plakhotnyk@gmail.com}\\
\end{center}

\begin{abstract}
We introduce here a classification of unimodal maps $[0,
1]\rightarrow [0, 1]$, which commute with piecewise linear
surjective maps $[0, 1]\rightarrow [0, 1]$.

Remind that if continuous piecewise linear unimodal map $g$
commutes with a non-constant piecewise linear map $\psi$, which is
not an iteration of $g$, then $g$ is topologically conjugated with
the tent map by piecewise linear conjugacy.

We use the obtained classification to illustrate the mentioned
fact.~\footnote{AMS subject classification:
37E05  
}~\footnote{Key words: one-dimensional dynamics, piecewise linear
maps, topological conjugacy}
\end{abstract}

\section{Introduction}

We will devote this article to continuous piecewise linear
functions from an interval $[0, 1]$ to itself. For a finite
sequence of points $A_1,\ldots, A_k\in [0, 1]\times [0, 1]$ write
$$ g:\, A_1\rightarrow A_2\rightarrow \ldots \rightarrow A_k
$$ for a continuous function, whose graph passes through $A_1,\ldots,
A_k$, and does not have other kinks. For example,  $$f:\, (0,
0)\rightarrow (1/2, 1)\rightarrow (1, 0)$$ is the classical tent
map $f:\, x\mapsto 1-|1-2x|$.

We call the map $g: [0, 1]\rightarrow [0, 1]$ \textbf{unimodal},
if it can be written in the form
\begin{equation}\label{eq:1.1} g(x) = \left\{
\begin{array}{ll}g_l(x),& 0\leq x\leq
v,\\
g_r(x), & v\leq x\leq 1,
\end{array}\right.
\end{equation} where %
$v\in (0,\, 1)$ is a parameter, the function $g_l$ is increasing,
the function $g_r$ is decreasing, and $$g(0)=g(1)=1-g(v)=0.$$

We will say that unimodal map $g$ commutes with a map $\psi:\, [0,
1]\rightarrow [0, 1]$, if
$$ g\circ \psi = \psi\circ g.
$$

Remind that maps $f,g:\, [0,1]\rightarrow [0, 1]$ are called
topologically equivalent, if there is a continuous invertible map
$h: [0,1]\rightarrow [0, 1]$ such that
\begin{equation}\label{eq:1.2} h\circ f = g\circ h.\end{equation}

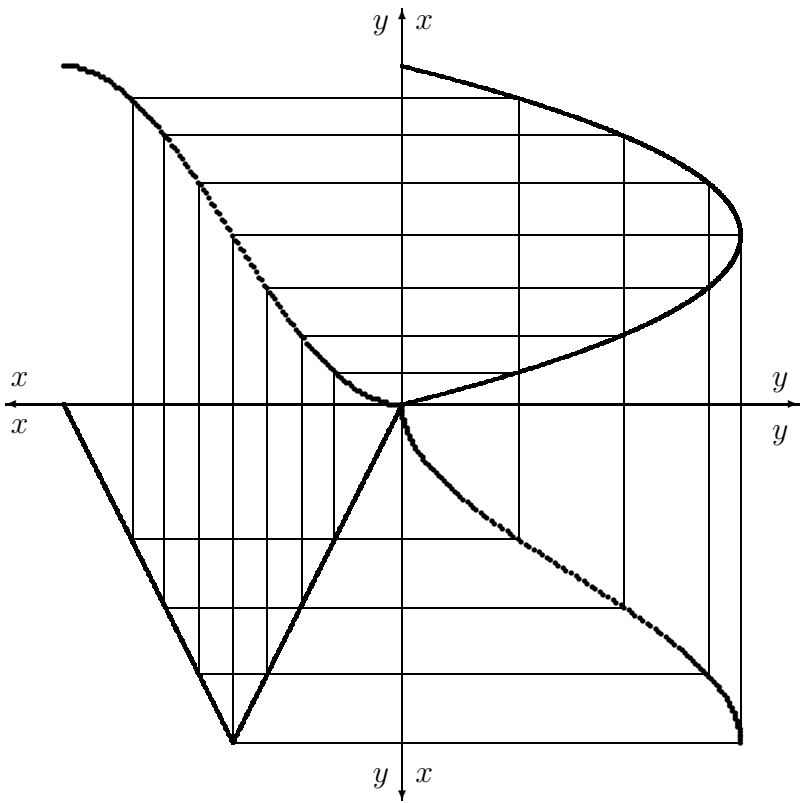
\begin{figure}[htbp]
\begin{center}
\begin{picture}(260,280)
\put(143,143){\vector(-1,0){150}} \put(143,143){\vector(1,0){150}}
\put(143,143){\vector(0,1){150}} \put(143,143){\vector(0,-1){150}}

\linethickness{0.4mm} \qbezier(143,143)(399,207)(143,271)
\qbezier(15,143)(47,79)(79,15) \qbezier(79,15)(111,79)(143,143)
\linethickness{0.1mm} \put(-5,150){$x$} \put(-5,132){$x$}

\put(132,285){$y$} \put(148,285){$x$} \put(283,150){$y$}
\put(283,130){$y$} \put(132,0){$y$} \put(148,0){$x$}

\put(15,271){\circle*{2}} \put(16,271){\circle*{2}}
\put(17,271){\circle*{2}} \put(18,271){\circle*{2}}
\put(19,271){\circle*{2}} \put(20,271){\circle*{2}}
\put(21,270){\circle*{2}} \put(22,270){\circle*{2}}
\put(23,270){\circle*{2}} \put(24,269){\circle*{2}}
\put(25,269){\circle*{2}} \put(26,269){\circle*{2}}
\put(27,268){\circle*{2}} \put(28,268){\circle*{2}}
\put(29,267){\circle*{2}} \put(30,267){\circle*{2}}
\put(31,266){\circle*{2}} \put(32,266){\circle*{2}}
\put(33,265){\circle*{2}} \put(34,264){\circle*{2}}
\put(35,263){\circle*{2}} \put(36,263){\circle*{2}}
\put(37,262){\circle*{2}} \put(38,261){\circle*{2}}
\put(39,260){\circle*{2}} \put(40,259){\circle*{2}}
\put(41,258){\circle*{2}} \put(42,257){\circle*{2}}
\put(43,256){\circle*{2}} \put(44,255){\circle*{2}}
\put(45,254){\circle*{2}} \put(46,253){\circle*{2}}
\put(47,252){\circle*{2}} \put(48,251){\circle*{2}}
\put(49,250){\circle*{2}} \put(50,249){\circle*{2}}
\put(51,248){\circle*{2}} \put(52,246){\circle*{2}}
\put(53,245){\circle*{2}} \put(54,244){\circle*{2}}
\put(55,243){\circle*{2}} \put(56,241){\circle*{2}}
\put(57,240){\circle*{2}} \put(58,239){\circle*{2}}
\put(59,237){\circle*{2}} \put(60,236){\circle*{2}}
\put(61,234){\circle*{2}} \put(62,233){\circle*{2}}
\put(63,231){\circle*{2}} \put(64,230){\circle*{2}}
\put(65,229){\circle*{2}} \put(66,227){\circle*{2}}
\put(67,226){\circle*{2}} \put(68,224){\circle*{2}}
\put(69,223){\circle*{2}} \put(70,221){\circle*{2}}
\put(71,219){\circle*{2}} \put(72,218){\circle*{2}}
\put(73,216){\circle*{2}} \put(74,215){\circle*{2}}
\put(75,213){\circle*{2}} \put(76,212){\circle*{2}}
\put(77,210){\circle*{2}} \put(78,209){\circle*{2}}
\put(79,207){\circle*{2}} \put(80,205){\circle*{2}}
\put(81,204){\circle*{2}} \put(82,202){\circle*{2}}
\put(83,201){\circle*{2}} \put(84,199){\circle*{2}}
\put(85,198){\circle*{2}} \put(86,196){\circle*{2}}
\put(87,195){\circle*{2}} \put(88,193){\circle*{2}}
\put(89,191){\circle*{2}} \put(90,190){\circle*{2}}
\put(91,188){\circle*{2}} \put(92,187){\circle*{2}}
\put(93,185){\circle*{2}} \put(94,184){\circle*{2}}
\put(95,183){\circle*{2}} \put(96,181){\circle*{2}}
\put(97,180){\circle*{2}} \put(98,178){\circle*{2}}
\put(99,177){\circle*{2}} \put(100,175){\circle*{2}}
\put(101,174){\circle*{2}} \put(102,173){\circle*{2}}
\put(103,171){\circle*{2}} \put(104,170){\circle*{2}}
\put(105,169){\circle*{2}} \put(106,168){\circle*{2}}
\put(107,166){\circle*{2}} \put(108,165){\circle*{2}}
\put(109,164){\circle*{2}} \put(110,163){\circle*{2}}
\put(111,162){\circle*{2}} \put(112,161){\circle*{2}}
\put(113,160){\circle*{2}} \put(114,159){\circle*{2}}
\put(115,158){\circle*{2}} \put(116,157){\circle*{2}}
\put(117,156){\circle*{2}} \put(118,155){\circle*{2}}
\put(119,154){\circle*{2}} \put(120,153){\circle*{2}}
\put(121,152){\circle*{2}} \put(122,151){\circle*{2}}
\put(123,151){\circle*{2}} \put(124,150){\circle*{2}}
\put(125,149){\circle*{2}} \put(126,148){\circle*{2}}
\put(127,148){\circle*{2}} \put(128,147){\circle*{2}}
\put(129,147){\circle*{2}} \put(130,146){\circle*{2}}
\put(131,146){\circle*{2}} \put(132,145){\circle*{2}}
\put(133,145){\circle*{2}} \put(134,145){\circle*{2}}
\put(135,144){\circle*{2}} \put(136,144){\circle*{2}}
\put(137,144){\circle*{2}} \put(138,143){\circle*{2}}
\put(139,143){\circle*{2}} \put(140,143){\circle*{2}}
\put(141,143){\circle*{2}} \put(142,143){\circle*{2}}
\put(143,143){\circle*{2}}

\put(143,143){\circle*{2}} \put(143,142){\circle*{2}}
\put(143,141){\circle*{2}} \put(143,140){\circle*{2}}
\put(143,139){\circle*{2}} \put(143,138){\circle*{2}}
\put(144,137){\circle*{2}} \put(144,136){\circle*{2}}
\put(144,135){\circle*{2}} \put(145,134){\circle*{2}}
\put(145,133){\circle*{2}} \put(145,132){\circle*{2}}
\put(146,131){\circle*{2}} \put(146,130){\circle*{2}}
\put(147,129){\circle*{2}} \put(147,128){\circle*{2}}
\put(148,127){\circle*{2}} \put(148,126){\circle*{2}}
\put(149,125){\circle*{2}} \put(150,124){\circle*{2}}
\put(151,123){\circle*{2}} \put(151,122){\circle*{2}}
\put(152,121){\circle*{2}} \put(153,120){\circle*{2}}
\put(154,119){\circle*{2}} \put(155,118){\circle*{2}}
\put(156,117){\circle*{2}} \put(157,116){\circle*{2}}
\put(158,115){\circle*{2}} \put(159,114){\circle*{2}}
\put(160,113){\circle*{2}} \put(161,112){\circle*{2}}
\put(162,111){\circle*{2}} \put(163,110){\circle*{2}}
\put(164,109){\circle*{2}} \put(165,108){\circle*{2}}
\put(166,107){\circle*{2}} \put(168,106){\circle*{2}}
\put(169,105){\circle*{2}} \put(170,104){\circle*{2}}
\put(171,103){\circle*{2}} \put(173,102){\circle*{2}}
\put(174,101){\circle*{2}} \put(175,100){\circle*{2}}
\put(177,99){\circle*{2}} \put(178,98){\circle*{2}}
\put(180,97){\circle*{2}} \put(181,96){\circle*{2}}
\put(183,95){\circle*{2}} \put(184,94){\circle*{2}}
\put(185,93){\circle*{2}} \put(187,92){\circle*{2}}
\put(188,91){\circle*{2}} \put(190,90){\circle*{2}}
\put(191,89){\circle*{2}} \put(193,88){\circle*{2}}
\put(195,87){\circle*{2}} \put(196,86){\circle*{2}}
\put(198,85){\circle*{2}} \put(199,84){\circle*{2}}
\put(201,83){\circle*{2}} \put(202,82){\circle*{2}}
\put(204,81){\circle*{2}} \put(205,80){\circle*{2}}
\put(207,79){\circle*{2}} \put(209,78){\circle*{2}}
\put(210,77){\circle*{2}} \put(212,76){\circle*{2}}
\put(213,75){\circle*{2}} \put(215,74){\circle*{2}}
\put(216,73){\circle*{2}} \put(218,72){\circle*{2}}
\put(219,71){\circle*{2}} \put(221,70){\circle*{2}}
\put(223,69){\circle*{2}} \put(224,68){\circle*{2}}
\put(226,67){\circle*{2}} \put(227,66){\circle*{2}}
\put(229,65){\circle*{2}} \put(230,64){\circle*{2}}
\put(231,63){\circle*{2}} \put(233,62){\circle*{2}}
\put(234,61){\circle*{2}} \put(236,60){\circle*{2}}
\put(237,59){\circle*{2}} \put(239,58){\circle*{2}}
\put(240,57){\circle*{2}} \put(241,56){\circle*{2}}
\put(243,55){\circle*{2}} \put(244,54){\circle*{2}}
\put(245,53){\circle*{2}} \put(246,52){\circle*{2}}
\put(248,51){\circle*{2}} \put(249,50){\circle*{2}}
\put(250,49){\circle*{2}} \put(251,48){\circle*{2}}
\put(252,47){\circle*{2}} \put(253,46){\circle*{2}}
\put(254,45){\circle*{2}} \put(255,44){\circle*{2}}
\put(256,43){\circle*{2}} \put(257,42){\circle*{2}}
\put(258,41){\circle*{2}} \put(259,40){\circle*{2}}
\put(260,39){\circle*{2}} \put(261,38){\circle*{2}}
\put(262,37){\circle*{2}} \put(263,36){\circle*{2}}
\put(263,35){\circle*{2}} \put(264,34){\circle*{2}}
\put(265,33){\circle*{2}} \put(266,32){\circle*{2}}
\put(266,31){\circle*{2}} \put(267,30){\circle*{2}}
\put(267,29){\circle*{2}} \put(268,28){\circle*{2}}
\put(268,27){\circle*{2}} \put(269,26){\circle*{2}}
\put(269,25){\circle*{2}} \put(269,24){\circle*{2}}
\put(270,23){\circle*{2}} \put(270,22){\circle*{2}}
\put(270,21){\circle*{2}} \put(271,20){\circle*{2}}
\put(271,19){\circle*{2}} \put(271,18){\circle*{2}}
\put(271,17){\circle*{2}} \put(271,16){\circle*{2}}
\put(271,15){\circle*{2}}

\put(105,143){\line(0,1){26}} \put(105,169){\line(1,0){122}}
\put(227,169){\line(0,-1){103}} \put(105,66){\line(0,1){77}}

\put(53,143){\line(0,1){102}} \put(53,245){\line(1,0){174}}
\put(227,245){\line(0,-1){179}} \put(227,66){\line(-1,0){174}}
\put(53,66){\line(0,1){77}}

\put(79,143){\line(0,1){64}} \put(79,207){\line(1,0){192}}
\put(271,207){\line(0,-1){192}} \put(271,15){\line(-1,0){192}}
\put(79,15){\line(0,1){128}}

\put(92,143){\line(0,1){44}} \put(92,187){\line(1,0){167}}
\put(259,187){\line(0,-1){146}} \put(92,41){\line(0,1){102}}

\put(66,143){\line(0,1){84}} \put(66,227){\line(1,0){193}}
\put(259,227){\line(0,-1){186}} \put(259,41){\line(-1,0){193}}
\put(66,41){\line(0,1){102}}

\put(117,143){\line(0,1){12}} \put(117,155){\line(1,0){70}}
\put(187,155){\line(0,-1){63}} \put(187,92){\line(-1,0){70}}
\put(117,92){\line(0,1){51}}

\put(41,143){\line(0,1){116}} \put(41,259){\line(1,0){146}}
\put(187,259){\line(0,-1){167}} \put(187,92){\line(-1,0){146}}
\put(41,92){\line(0,1){51}}

\end{picture}
\end{center}
\caption{Graphs, which illustrate topological conjugateness}
\label{fig:01}
\end{figure}

Topological conjugation of maps can be illustrated as on
Fig.~\ref{fig:01} (see~\cite{Skufca}). Suppose that~\eqref{eq:1.2}
holds for maps $f, g, h:\, [0, 1]\rightarrow [0, 1]$. Divide the
plane by 4 parts by 2 ``coordinate lines'', plot the graph of $h$
at left top and right bottom quarter of the plane, plot $f$ at
left bottom, and $g$ at right top. Thus, each of compositions
$h\circ f$ and $g\circ h$ acts from the ray, going to the left, to
the ray, going to the right. The equality~\eqref{eq:1.2} means
that the result of the action is independent on a choice of the
path (the top, or the bottom one).

\section{The main construction}

Suppose that $g$ is a unimodal map surjective map such that $g(0)
=g(1)=0$. We will say that a continuous non-constant map $\psi$ is
a non-trivial commutator of $g$, if $g$ commutes with $\psi$ and
$\psi$ is a not an iteration of $g$.

\begin{lemma}\cite[Lemmas~3.3 and~3.5]{1808.03622.2}\label{lema:2.1}
Let $\psi$ be a non-trivial commutator of $g$. Then $\psi(0)=0$
and for every maximal interval $I$ of monotonicity of $\psi$ we
have that $\psi(I) = [0, 1]$.
\end{lemma}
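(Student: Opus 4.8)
The plan is to extract three facts in turn — that $\psi(0)$ is a fixed point of $g$, that $\psi$ is surjective, and that $\psi$ is full on every lap — after which the statement follows at once. First I would evaluate the commutation relation $g\circ\psi=\psi\circ g$ at the point $0$. Since $g(0)=0$, this gives $g(\psi(0))=\psi(0)$, so $\psi(0)$ is a fixed point of $g$; I would also record that $1$ is \emph{not} a fixed point, because $g(1)=0$.

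Next I would prove surjectivity. Writing $J=\psi([0,1])=[\alpha,\beta]$ and using that $g$ is onto, one gets $g(J)=g(\psi([0,1]))=\psi(g([0,1]))=\psi([0,1])=J$, so $J$ is $g$-invariant. If the turning point $v$ of $g$ lay in $J$, then $1=g(v)\in J$ and hence $0=g(1)\in J$, forcing $J=[0,1]$. Otherwise $g$ is strictly monotone on $J$, so $g|_J$ is a homeomorphism of the interval $J$ onto itself. But a full unimodal map is topologically transitive (in the piecewise linear case it is even conjugate to the tent map), so $g$ has a dense orbit; its $\psi$-image is a dense $g$-orbit lying inside $J$, which an interval homeomorphism cannot possess. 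This contradiction shows $J=[0,1]$, i.e. $\psi$ is surjective, so $\min\psi=0$ and $\max\psi=1$.

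The main step is to show that $\psi$ maps each maximal lap onto all of $[0,1]$; equivalently, that every local extreme value of $\psi$ (including the endpoint values $\psi(0),\psi(1)$) equals $0$ or $1$. For this I would compare the two piecewise linear functions $\Phi:=g\circ\psi$ and $\Phi=\psi\circ g$ lap by lap. On the side $\psi\circ g$, since $g$ is an increasing homeomorphism $[0,v]\to[0,1]$ and a decreasing one $[v,1]\to[0,1]$, the turning values of $\psi\circ g$ are exactly those of $\psi$, listed forwards on $[0,v]$ and backwards on $[v,1]$. On the side $g\circ\psi$, the local maxima of value $1$ occur precisely where $\psi$ crosses $v$, which yields the relation $\psi^{-1}(v)=g^{-1}(\psi^{-1}(1))$, while at an interior turning point $t$ of $\psi$ the value is $g(\psi(t))$ and the type of extremum is governed by whether $\psi(t)$ lies left or right of $v$. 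Matching the positions and the values of all local extrema of the two expressions should force each turning value of $\psi$ into $\{0,1\}$: a turning value strictly between $0$ and $1$, pulled back through these relations and combined with transitivity (density of the backward orbit of the critical orbit $v\mapsto 1\mapsto 0$), cannot be reconciled with the forward/backward symmetry of $\psi\circ g$. I expect this combinatorial matching of turning points — essentially a kneading computation — to be the real obstacle and the part requiring the most care.

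Finally, granting fullness, the domain endpoint $0$ is the left end of the first lap, on which $\psi$ is monotone onto $[0,1]$; hence $\psi(0)\in\{0,1\}$. Since $\psi(0)$ is a fixed point of $g$ while $1$ is not, we conclude $\psi(0)=0$, and every lap maps onto $[0,1]$, which is exactly the assertion of the lemma.
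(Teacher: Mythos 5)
Your first step (evaluating $g\circ\psi=\psi\circ g$ at $0$ to conclude that $\psi(0)$ is a fixed point of $g$, while $1$ is not one because $g(1)=0$) and your final deduction (once fullness of the first lap is known, $\psi(0)\in\{0,1\}$, hence $\psi(0)=0$) are correct. The argument breaks in the middle, however. Your surjectivity step rests on the claim that a surjective unimodal map is topologically transitive, ``in the piecewise linear case even conjugate to the tent map.'' That claim is false: a full unimodal map can have an attracting fixed point on its increasing branch. For instance the piecewise linear map through $(0,0)\rightarrow(0.2,0.28)\rightarrow(0.4,0.32)\rightarrow(1/2,1)\rightarrow(1,0)$ is unimodal, surjective, with $g(0)=g(1)=0$, yet it fixes $x=0.3$ with slope $0.2$, so it is neither transitive nor conjugate to the tent map. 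This is precisely why Theorem~\ref{th:1}, quoted in the paper from~\cite{UMZh-2016}, must assume that $g_l$ has no positive fixed points, and why the paper's main point --- that having a non-trivial commutator forces conjugacy with the tent map --- is a theorem rather than an automatic fact; invoking that conclusion here would also be circular, since the present lemma is an ingredient in its proof. Consequently, in your case $v\notin J$ you only know that $g|_J$ is a homeomorphism of $J=\psi([0,1])$ onto itself, and no contradiction has actually been obtained.

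The second and more serious gap is that the heart of the lemma --- that every maximal interval of monotonicity of $\psi$ maps onto all of $[0,1]$ --- is never proved. You describe a plan (``matching the positions and the values of all local extrema \dots should force each turning value of $\psi$ into $\{0,1\}$'') and you yourself flag it as ``the real obstacle and the part requiring the most care''; moreover the density/transitivity fact this sketch appeals to is the same false claim as above. As written, nothing is derived from the proposed matching, so the statement is not established. Note also that the paper does not prove this lemma at all: it imports it from \cite{1808.03622.2} (Lemmas~3.3 and~3.5), so there is no in-paper argument your proposal can be said to parallel. To stand on its own, your proof would need the surjectivity step repaired without transitivity and the lap-fullness step carried out in full.
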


Let
$$f:\, (0, 0)\rightarrow (1/2, 1)\rightarrow (1, 0)$$
be the tent map and $$\xi:\, (0, 0)\rightarrow (1/3, 1)\rightarrow
(2/3, 0)\rightarrow (1, 1).$$ These maps commute, because it is
easy to see that $$\begin{array}{cc}
f\circ \xi = \xi\circ f :\\
(0, 0)\rightarrow \left(\frac{1}{6}, 1\right)\rightarrow
\left(\frac{1}{3}, 0\right)\rightarrow \left(\frac{1}{2},
1\right)\rightarrow \left(\frac{2}{3}, 0\right)\rightarrow
\left(\frac{5}{6}, 1\right)\rightarrow (1, 0).
\end{array}
$$

Analogously to Figure~\ref{fig:01}, plot $f$ and $\xi$ (see
Fig.~\ref{fig:02}a). Call axes $x$, $\xi$, $f$ and $y$, as it is
done at Fig.~\ref{fig:02}a.

\begin{figure}[htbp]
\begin{minipage}[h]{0.3\linewidth}
\begin{center}
\begin{picture}(130,130)

\put(65,65){\vector(1,0){65,65}} \put(65,65){\vector(-1,0){65,65}}
\put(65,65){\vector(0,1){65,65}} \put(65,65){\vector(0,-1){65,65}}

\Vidr{65}{65}{125}{95} \VidrTo{65}{125}

\Vidr{65}{65}{45}{125} \VidrTo{25}{65} \VidrTo{5}{125}

\Vidr{65}{65}{35}{5} \VidrTo{5}{65}

\Vidr{65}{65}{125}{45} \VidrTo{65}{25} \VidrTo{125}{5}

\put(3,70){$x$} \put(55,5){$f$} \put(55,122){$\xi$}
\put(125,73){$y$}

\end{picture}
\vskip 3mm \centerline{a) }\end{center}
\end{minipage}
\hfill
\begin{minipage}[h]{0.3\linewidth}
\begin{center}
\begin{picture}(130,130)

\put(65,65){\vector(1,0){65,65}} \put(65,65){\vector(-1,0){65,65}}
\put(65,65){\vector(0,1){65,65}} \put(65,65){\vector(0,-1){65,65}}

\put(3,70){$x$} \put(55,5){$f$} \put(55,122){$\xi$}
\put(125,73){$y$}


\put(20,100){$(\alpha, \beta)_{\lrcorner}$}

\put(80,100){$_{\llcorner}(\alpha, \beta)$}

\put(20,30){$(\alpha, \beta)^{\urcorner}$}

\put(80,30){$^{\ulcorner}(\alpha, \beta)$}

\linethickness{0.1mm}

\end{picture}
\vskip 3mm \centerline{b) }\end{center}
\end{minipage}
\hfill
\begin{minipage}[h]{0.3\linewidth}
\begin{center}
\begin{picture}(130,130)

\put(65,65){\vector(1,0){65,65}} \put(65,65){\vector(-1,0){65,65}}
\put(65,65){\vector(0,1){65,65}} \put(65,65){\vector(0,-1){65,65}}

\Vidr{65}{65}{125}{95} \VidrTo{65}{125}

\Vidr{65}{65}{45}{125} \VidrTo{25}{65} \VidrTo{5}{125}

\Vidr{65}{65}{35}{5} \VidrTo{5}{65}

\Vidr{65}{65}{125}{45} \VidrTo{65}{25} \VidrTo{125}{5}

\linethickness{0.4mm}

\Vidr{30}{15}{30}{80} \Vidr{95}{15}{95}{80} \Vidr{30}{80}{95}{80}
\Vidr{30}{15}{95}{15}

\linethickness{0.1mm}


\end{picture}
\vskip 3mm \centerline{c) }\end{center}
\end{minipage}
\hfill \caption{} \label{fig:02}
\end{figure}

We will write $(\alpha, \beta)_{\lrcorner}$ for a point in the
quadrant $x\times \xi$, meaning that $x= \alpha$ and $\xi =\beta$.

Analogously, $_{\llcorner}(\alpha, \beta)$ will mean that $\xi =
\alpha$ and $y = \beta$ in the quadrant $\xi\times y$,

$(\alpha, \beta)^{\urcorner}$ for $x =\alpha$ in $f =\beta$ in the
quadrant $x\times f$, and, finally,

$^{\ulcorner}(\alpha, \beta)$ for $f = \alpha$ and $y =\beta$ in
the quadrant $f\times y$. See Fig.~\ref{fig:02}b.

For any $x_0\in [0, 1]$ we will call a {\it single trajectory} the
set of lines $x =x_0$, $\xi = \xi(x_0)$, $f= f(x_0)$ and $y =
(f\circ \xi)(x_0)$. Remark that the definition of a single
trajectory does not demand neither the commutativity of the maps
$f$ and $\xi$, nor the equality
\begin{equation}\label{eq:2.1}(f\circ \xi)(x_0) = (\xi\circ
f)(x_0).\end{equation} We will say that the trajectory of $x_0$
does not contradict to the commutativity of $f$ and $\xi$,
if~\eqref{eq:2.1} holds.

In other words, we will \textbf{call} a quadruple of lines $x =
x_0$, $\xi =\xi_0$, $f =f_0$ and $y =y_0$ a single trajectory, if
all their intersections belong to correspond graph of the plot,
mentioned above. An example of a single trajectory is given at
Figure~\ref{fig:02}c. We will say that

Conjugate now the maps $f$ and $\xi$ by $$ h:\, (0, 0)\rightarrow
\left(\frac{1}{2},  \frac{3}{4}\right) \rightarrow (1,1).
$$ Clearly, the
resulting maps $g = h\circ f\circ h^{-1}$ and $\psi = h\circ
\xi\circ h^{-1}$ commute, because $f$ and $\xi$ commute. The
graphs of $\widetilde{f}$ and $\psi$ are given at
Fig.~\ref{fig:03}. By direct calculations,
\begin{equation}\label{eq:2.2}
g:\, (0, 0)\rightarrow (3/8, 3/4)\rightarrow (3/4, 1)\rightarrow
(7/8, 3/4)\rightarrow (1, 0)
\end{equation} and
\begin{equation}\label{eq:2.3}
\psi:\, (0, 0)\rightarrow (1/4, 3/4)\rightarrow (1/2,
1)\rightarrow (3/4, 3/4)\rightarrow (5/6, 0)\rightarrow (11/12,
3/4)\rightarrow (1, 1). \end{equation}

\begin{figure}[htbp]
\begin{center}

\begin{picture}(300,300)

\put(150,150){\vector(1,0){150}} \put(150,150){\vector(-1,0){150}}
\put(150,150){\vector(0,1){150}} \put(150,150){\vector(0,-1){150}}

\linethickness{0.4mm} \Vidr{150}{150}{120}{240}
\put(120,240){\circle*{6}}\VidrTo{90}{270}
\put(90,270){\circle*{6}}\VidrTo{60}{240}
\put(60,240){\circle*{6}}\VidrTo{50}{150}
\put(50,150){\circle*{6}}\VidrTo{40}{240}
\put(40,240){\circle*{6}}\VidrTo{30}{270}

\Vidr{150}{150}{240}{195}
\put(240,195){\circle*{6}}\VidrTo{270}{240}
\put(270,240){\circle*{6}}\Vidr{270}{240}{240}{255}
\put(240,255){\circle*{6}}\VidrTo{150}{270}

\Vidr{150}{150}{240}{120}
\put(240,120){\circle*{6}}\VidrTo{270}{90}
\put(270,90){\circle*{6}}\VidrTo{240}{60}
\put(240,60){\circle*{6}}\VidrTo{150}{50}
\put(150,50){\circle*{6}}\VidrTo{240}{40}
\put(240,40){\circle*{6}}\VidrTo{270}{30}

\Vidr{150}{150}{105}{60} \put(105,60){\circle*{6}}\VidrTo{60}{30}
\put(60,30){\circle*{6}}\Vidr{60}{30}{45}{60}
\put(45,60){\circle*{6}}\VidrTo{30}{150}\linethickness{0.1mm}
\end{picture}
\end{center}\caption{}
\label{fig:03}
\end{figure}

Is is possible to verify their commutativity of maps $g$ and
$\psi$ directly from the plot of Fig.~\ref{fig:03}. We will
describe the method of this verification below.

For every break point of each of two (four) function, from
Fig.~\ref{fig:03} plot all the single trajectories, which pass
through this point (see Fig.~\ref{fig:04}). Verify that each of
the constructed single trajectories does not contradict to the
commutativity of the maps $g$ and $\psi$. Claim, that this
verification of enough to conclude that $g$ indeed commutes with
$\psi$.

\begin{figure}
\begin{center}

\begin{picture}(300,300)

\put(150,150){\vector(1,0){150}} \put(150,150){\vector(-1,0){150}}
\put(150,150){\vector(0,1){150}} \put(150,150){\vector(0,-1){150}}

\linethickness{0.4mm}

\Vidr{150}{150}{120}{240}
\put(120,240){\circle*{6}}\VidrTo{90}{270}
\put(90,270){\circle*{6}}\VidrTo{60}{240}
\put(60,240){\circle*{6}}\VidrTo{50}{150}
\put(50,150){\circle*{6}}\VidrTo{40}{240}
\put(40,240){\circle*{6}}\VidrTo{30}{270}

\Vidr{150}{150}{240}{195}
\put(240,195){\circle*{6}}\VidrTo{270}{240}
\put(270,240){\circle*{6}}\Vidr{270}{240}{240}{255}
\put(240,255){\circle*{6}}\VidrTo{150}{270}

\Vidr{150}{150}{240}{120}
\put(240,120){\circle*{6}}\VidrTo{270}{90}
\put(270,90){\circle*{6}}\VidrTo{240}{60}
\put(240,60){\circle*{6}}\VidrTo{150}{50}
\put(150,50){\circle*{6}}\VidrTo{240}{40}
\put(240,40){\circle*{6}}\VidrTo{270}{30}

\Vidr{150}{150}{105}{60} \put(105,60){\circle*{6}}\VidrTo{60}{30}
\put(60,30){\circle*{6}}\Vidr{60}{30}{45}{60}
\put(45,60){\circle*{6}}\VidrTo{30}{150}

\linethickness{0.1mm}

\put(150,270){\line(-1,0){120}} \put(30,270){\line(0,-1){120}}

\put(270,240){\line(-1,0){230}} \put(40,240){\line(0,-1){150}}
\put(40,90){\line(1,0){230}}

\put(240,120){\line(-1,0){205}} \put(35,120){\line(0,1){135}}
\put(35,255){\line(1,0){205}}

\put(240,195){\line(-1,0){195}} \put(45,195){\line(0,-1){135}}
\put(45,60){\line(1,0){195}}

\put(50,150){\line(0,-1){100}} \put(50,50){\line(1,0){100}}

\put(60,240){\line(0,-1){210}} \put(60,30){\line(1,0){210}}

\put(105,255){\line(0,-1){195}}

\put(75,255){\line(0,-1){215}} \put(75,40){\line(1,0){165}}

\put(90,270){\line(0,-1){220}}

\put(120,240){\line(0,-1){150}}

\put(135,195){\line(0,-1){75}}

\put(240,255){\line(0,-1){215}}

\put(270,240){\line(0,-1){210}}

\put(55,195){\line(0,-1){155}} \put(55,40){\line(1,0){185}}

\put(3,160){$x$} \put(3,137){$x$} \put(137,295){$y$}
\put(158,295){$x$} \put(290,160){$y$} \put(290,140){$y$}
\put(137,10){$y$} \put(158,10){$x$}
\end{picture}
\end{center}\caption{}
\label{fig:04}
\end{figure}

With the purpose to formalize our conclusion, we will define one
more concept. Let $g$ and $\psi$ be piecewise linear maps $[0,
1]\rightarrow [0, 1]$, and let $g$ be unimodal surjective such
that $g(0) =g(1)=0$. For any $x\in (0, 1)$ call the set of all
trajectories, related to $x$, the set of all single trajectories,
generated by $\psi^{-1}(g^{-1}(g\circ \psi)(x))$. Write for
simplicity $\SAT(x)$ for the the set of all trajectories, related
to $x$. We will say that $\SAT(x)$ does not contradict to the
commutativity of $g$ and $\psi$, if any single trajectory of
$\SAT(x)$ does not contradict to the commutativity of $g$ and
$\psi$. The examples of $\SAT$ are given at Figures~\ref{fig:05}a,
\ref{fig:05}b and~\ref{fig:05}c. Precisely, Fig.~\ref{fig:05}a
contains $\SAT\left(\frac{1}{6}\right)$ for maps $f$ and $\xi$;
Fig.~\ref{fig:05}b contains $\SAT\left(\frac{1}{2}\right)$, and
Fig.~\ref{fig:05}c contains $\SAT(0)$.

\begin{figure}[htbp]
\begin{minipage}[h]{0.3\linewidth}
\begin{center}
\begin{picture}(130,130)

\put(65,65){\vector(1,0){65,65}} \put(65,65){\vector(-1,0){65,65}}
\put(65,65){\vector(0,1){65,65}} \put(65,65){\vector(0,-1){65,65}}

\Vidr{65}{65}{125}{95} \VidrTo{65}{125}

\Vidr{65}{65}{45}{125} \VidrTo{25}{65} \VidrTo{5}{125}

\Vidr{65}{65}{35}{5} \VidrTo{5}{65}

\Vidr{65}{65}{125}{45} \VidrTo{65}{25} \VidrTo{125}{5}

\linethickness{0.4mm}

\Vidr{30}{15}{30}{80} \Vidr{95}{15}{95}{110} \Vidr{20}{80}{95}{80}
\Vidr{20}{80}{20}{35} \Vidr{20}{35}{95}{35}
\Vidr{95}{110}{10}{110} \Vidr{10}{110}{10}{55}
\Vidr{10}{55}{95}{55} \Vidr{60}{55}{60}{80} \Vidr{50}{110}{50}{35}
\Vidr{40}{110}{40}{15} \Vidr{30}{15}{95}{15}

\linethickness{0.1mm}

\put(3,70){$x$} \put(55,5){$g$} \put(55,122){$\psi$}
\put(125,73){$y$}

\end{picture}
\vskip 3mm \centerline{a) }\end{center}
\end{minipage}
\hfill
\begin{minipage}[h]{0.3\linewidth}
\begin{center}
\begin{picture}(130,130)

\put(65,65){\vector(1,0){65,65}} \put(65,65){\vector(-1,0){65,65}}
\put(65,65){\vector(0,1){65,65}} \put(65,65){\vector(0,-1){65,65}}

\Vidr{65}{65}{125}{95} \VidrTo{65}{125}

\Vidr{65}{65}{45}{125} \VidrTo{25}{65} \VidrTo{5}{125}

\Vidr{65}{65}{35}{5} \VidrTo{5}{65}

\Vidr{65}{65}{125}{45} \VidrTo{65}{25} \VidrTo{125}{5}

\linethickness{0.4mm}

\Vidr{125}{95}{15}{95} \Vidr{125}{95}{15}{95}
\Vidr{15}{95}{15}{45} \Vidr{55}{95}{55}{45} \Vidr{15}{45}{125}{45}
\Vidr{35}{95}{35}{5} \Vidr{35}{5}{125}{5} \Vidr{125}{5}{125}{95}

\linethickness{0.1mm}

\put(3,70){$x$} \put(58,10){$g$} \put(55,122){$\psi$}
\put(117,73){$y$}

\end{picture}
\vskip 3mm \centerline{b) }\end{center}
\end{minipage}
\hfill
\begin{minipage}[h]{0.3\linewidth}
\begin{center}
\begin{picture}(130,130)

\put(65,65){\vector(1,0){65,65}} \put(65,65){\vector(-1,0){65,65}}
\put(65,65){\vector(0,1){65,65}} \put(65,65){\vector(0,-1){65,65}}

\Vidr{65}{65}{125}{95} \VidrTo{65}{125}

\Vidr{65}{65}{45}{125} \VidrTo{25}{65} \VidrTo{5}{125}

\Vidr{65}{65}{35}{5} \VidrTo{5}{65}

\Vidr{65}{65}{125}{45} \VidrTo{65}{25} \VidrTo{125}{5}

\linethickness{0.4mm}

\Vidr{5}{65}{65}{65} \Vidr{5}{125}{65}{125} \Vidr{5}{65}{5}{125}
\Vidr{65}{25}{65}{125} \Vidr{25}{65}{25}{25} \Vidr{25}{25}{65}{25}
\Vidr{45}{25}{45}{125}

\linethickness{0.1mm}

\put(-2,70){$x$} \put(55,5){$g$} \put(55,115){$\psi$}
\put(125,73){$y$}

\end{picture}
\vskip 3mm \centerline{c) }\end{center}
\end{minipage}
\hfill \caption{} \label{fig:05}
\end{figure}

The next lemma follows directly from the construction of \SAT s.

\begin{lemma}\label{lema:2.2}
Let $g$ and $\psi$ be piecewise linear maps $[0, 1]\rightarrow [0,
1]$, and let $g$ be unimodal surjective such that $g(0) =g(1)=0$.
Suppose that $x_1,\ldots, x_s$ is the minimal set such that
$$\mathcal{S} = \bigcup\limits_{i=1}^s\SAT(x_i)
$$ contains all the kinks of $g$ and $\psi$.

Then the set of lines $\mathcal{S}$ determines each of two graphs
of $g$ and $\psi$ in its quadrant as follows:

1. Graph starts at the origin;

2. If graph contains a point inside some rectangle, which is
formed by lines of $\mathcal{S}$, then this graph contains a
diagonal of this rectangle.

3. The function maps every its maximum segment $I$ of monotonicity
onto $[0, 1]$.
\end{lemma}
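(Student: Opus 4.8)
The plan is to read the family $\mathcal{S}$ as a rectangular grid and to show that each graph is inscribed in this grid as a union of cell--diagonals, which is exactly what rules~1--3 assert. Write $W=(g\circ\psi)(\{x_1,\dots,x_s\})$ and $G=(g\circ\psi)^{-1}(W)$, so that $G$ is precisely the set of base points of the single trajectories making up $\mathcal{S}$; by construction the lines of $\mathcal{S}$ are exactly $x=x'$, $\psi=\psi(x')$, $g=g(x')$ and $y=(g\circ\psi)(x')$ as $x'$ ranges over $G$. Rule~1 is immediate: $g(0)=0$ by hypothesis and $\psi(0)=0$ by Lemma~\ref{lema:2.1}, so both graphs issue from the origin of their quadrant. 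Rule~3 is also essentially given: the ascending and descending laps of the unimodal surjection $g$ each map onto $[0,1]$, and by Lemma~\ref{lema:2.1} every maximal monotonicity interval of $\psi$ maps onto $[0,1]$ as well.

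The substance is Rule~2, and the first step is to observe that every kink sits at a grid vertex. Indeed, if $p$ is a kink of $\psi$ (resp.\ of $g$), then by the minimality hypothesis $p\in G$, so the single trajectory based at $p$ contributes \emph{both} the vertical line $x=p$ and the value line $\psi=\psi(p)$ (resp.\ $g=g(p)$); their intersection is the kink itself. Hence every vertex of either polygonal graph is an intersection point of two lines of $\mathcal{S}$, and since all kinks lie on lines of $\mathcal{S}$, each graph is linear inside every cell of the arrangement it meets. It therefore suffices to prove that each graph crosses every line of $\mathcal{S}$ only at grid vertices: a linear segment joining two consecutive such vertices and meeting no line of $\mathcal{S}$ in its interior is then a diagonal of the cell it spans, which is Rule~2.

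To control the crossings I would use that $G$ is a union of fibres of $g\circ\psi$. This makes $G$ automatically closed under $\psi$-fibres: if $\psi(c_1)=\psi(c_2)$ then $(g\circ\psi)(c_1)=(g\circ\psi)(c_2)$, so $c_1\in G\iff c_2\in G$. Consequently, whenever the graph of $\psi$ meets a horizontal line $\psi=d$ of $\mathcal{S}$ at a point $c^{*}$, one has $d=\psi(c')$ for some $c'\in G$, and then $c^{*}\in G$, so $x=c^{*}$ is again a line of $\mathcal{S}$ and the crossing is a vertex. The reconstruction of the copy of $g$ living in the quadrant $\psi\times y$ goes through in the same way, using the lines $y=w$, $w\in W$: to realise any $s\in g^{-1}(w)$ as a present line $\psi=s$, pick $c$ with $\psi(c)=s$ (possible since $\psi$ is onto each lap); then $(g\circ\psi)(c)=g(s)=w\in W$, whence $c\in G$ and $s\in\psi(G)$.

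The main obstacle is the representation of $g$ as a function of $x$, in the quadrant $x\times g$: here a horizontal line $g=d$ of $\mathcal{S}$ is met at \emph{both} preimages $c_-<v<c_+$ of $d$, and closure of $G$ under $g$-fibres is not formal. This is exactly where commutativity enters: from $g\circ\psi=\psi\circ g$ one gets $(g\circ\psi)(c)=\psi(g(c))$, so the value of $g\circ\psi$ depends on $c$ only through $g(c)$; hence $G=(g\circ\psi)^{-1}(W)$ is a union of $g$-fibres, both preimages of $d$ lie in $G$, and the crossing is again a vertex. Once every crossing is a vertex, uniqueness of the reconstruction follows by starting at the origin (Rule~1) and following, on each lap, the forced monotone chain of cell--diagonals until the full height $[0,1]$ is exhausted (Rule~3), the turning points being the kinks, which are themselves vertices of $\mathcal{S}$. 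I expect the bookkeeping of these fibre-closure facts, together with making precise that a segment meeting no interior line is a genuine diagonal, to be the only delicate points; everything else is unwinding the definition of $\SAT$.
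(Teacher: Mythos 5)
Your proposal supplies an actual argument where the paper offers none: the paper's entire justification for Lemma~\ref{lema:2.2} is the preceding sentence, ``The next lemma follows directly from the construction of \SAT s.'' Your formalization is the right one --- reading $\mathcal{S}$ through its base-point set $G=(g\circ\psi)^{-1}(W)$, reducing Rule~2 to the claim that the graphs cross lines of $\mathcal{S}$ only at grid vertices, and proving that by fibre-closure: closure of $G$ under $\psi$-fibres is formal, closure under $g$-fibres follows from $g\circ\psi=\psi\circ g$, and surjectivity of the laps of $\psi$ (Lemma~\ref{lema:2.1}) realizes every $s\in g^{-1}(w)$ as a $\psi$-line. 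Two bookkeeping points: your opening step (``every kink sits at a grid vertex'') already needs those closure facts, since the hypothesis only guarantees that a kink lies on \emph{some} line of $\mathcal{S}$, possibly a horizontal one, so it cannot be cited ahead of them; and the copy of $\psi$ in the quadrant $g\times y$ is never treated (it is analogous, and again needs commutativity for both kinds of crossings).

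The substantive issue is the status of commutativity. The lemma as stated assumes only that $g$ is unimodal surjective and $\psi$ piecewise linear, while you invoke Lemma~\ref{lema:2.1} (which presupposes that $\psi$ is a non-trivial commutator) for Rules~1 and~3, and full commutativity for the $g$-fibre closure. You are right that this is where commutativity \emph{must} enter --- without it the lemma is simply false. For instance, take $g$ the tent map and $\psi$ the skew tent with kink at $(1/3,1)$: the minimal covering family is $\SAT(1/6)\cup\SAT(1/3)\cup\SAT(1/2)$, whose base points form $G=\{0,\tfrac1{12},\tfrac16,\tfrac14,\tfrac13,\tfrac12,\tfrac23,\tfrac56,1\}$; then $g=g(1/12)=1/6$ is a line of $\mathcal{S}$ but $x=11/12$ is not, so the graph of $g$ crosses $g=1/6$ at an interior point of a cell edge and Rule~2 fails. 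So your proof establishes the lemma exactly in the setting in which the paper actually uses it (Notation~\ref{not:2.7} speaks of ``commuting maps''), and in that sense it is correct and far more than the paper provides. Be aware, however, that in this form it cannot support the ``if'' direction of Lemma~\ref{lema:2.4}, where commutativity is the \emph{conclusion} and only commutativity at the base points of $\mathcal{S}$ is available as a hypothesis; your $g$-fibre argument does not go through verbatim from that weaker assumption, and neither your proposal nor the paper closes this gap.
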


%

We will call the {\it determinating lattice} the set of lines of
the minimal set of $\SAT$s, which contains all the kinks of the
maps $g$ and $\psi$. If $\SAT(0)$ contains an extremum points of
one of the graphs of $g$, the we will call it {\it boundary}
$\SAT$.

The goal of the next two lemmas is to provide a method to conclude
the commutativity of $g$ and $\psi$ from their commutativity on
the points of their intersection with their determinating lattice.

\begin{lemma}\label{lema:2.3}
Suppose that $a, b\in [0, 1]$ are such that:

1. $g$ commutes with $\psi$ at $a$ and $b$;

2. $\psi$ is linear on the intervals with ends $\{a; b\}$ and with
ends $\{ g(a); g(b)\}$;

3. $g$ is linear on the intervals with ends $\{a; b\}$ and with
ends $\{ \psi(a); \psi(b)\}$.

\end{lemma}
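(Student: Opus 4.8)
The conclusion I read into this statement is that $g$ commutes with $\psi$ at every point of the interval with ends $\{a;b\}$. The plan is to show that both compositions $g\circ\psi$ and $\psi\circ g$ are \emph{affine} on that interval and that they already agree at its two endpoints; since an affine function on an interval is pinned down by its values at the endpoints, the two must then coincide throughout, which is precisely the asserted commutativity. So the whole argument reduces to ``a composition of two affine maps is affine, and matching endpoints forces matching everywhere.''

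Concretely, I would fix a point in the interval and write it as $x=(1-t)a+tb$ with $t\in[0,1]$. By hypothesis~3, $g$ is affine on the interval with ends $\{a;b\}$, so $g(x)=(1-t)g(a)+tg(b)$, and this value lies in the interval with ends $\{g(a);g(b)\}$. On that interval $\psi$ is affine by hypothesis~2, hence $\psi(g(x))=(1-t)\psi(g(a))+t\,\psi(g(b))=(1-t)(\psi\circ g)(a)+t\,(\psi\circ g)(b)$. Symmetrically, hypothesis~2 makes $\psi$ affine on the interval with ends $\{a;b\}$, so $\psi(x)=(1-t)\psi(a)+t\,\psi(b)$ lies in the interval with ends $\{\psi(a);\psi(b)\}$, and hypothesis~3 makes $g$ affine there, giving $g(\psi(x))=(1-t)(g\circ\psi)(a)+t\,(g\circ\psi)(b)$. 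Finally, invoking hypothesis~1, namely $(g\circ\psi)(a)=(\psi\circ g)(a)$ and $(g\circ\psi)(b)=(\psi\circ g)(b)$, the two convex combinations become identical, so $(g\circ\psi)(x)=(\psi\circ g)(x)$ for every such $x$.

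The only step requiring genuine care, and the one I would flag as the main obstacle, is the claim that $g(x)$ actually lands in the interval with ends $\{g(a);g(b)\}$ and that $\psi(x)$ lands in the interval with ends $\{\psi(a);\psi(b)\}$; otherwise the \emph{second} linearity hypotheses in~2 and~3 would not apply. This is immediate once one notes that an affine map carries the closed interval with ends $\{a;b\}$ exactly onto the closed interval spanned by the images of its endpoints, so no case analysis on the orientation (increasing versus decreasing) of $g$ or $\psi$ is needed. Everything else is routine affine bookkeeping, and I would present it exactly in the order above: reduce to an affine computation of each composition, then substitute the endpoint commutativity.
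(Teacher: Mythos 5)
Your proposal is correct and is in substance the same argument the paper intends: the paper's entire proof is an appeal to ``evident geometrical constructions'' (Fig.~\ref{fig:06}), and the rectangles in that figure encode exactly your affine-interpolation computation (linearity of both compositions on the interval with ends $\{a;b\}$, plus agreement at the two endpoints). You additionally supply two things the paper leaves implicit --- the conclusion itself, which the printed statement of Lemma~\ref{lema:2.3} omits, and the verification that $g$ and $\psi$ map the interval with ends $\{a;b\}$ onto the intervals spanned by the images of the endpoints, so the second linearity hypotheses actually apply --- which makes your write-up strictly more complete than the paper's.
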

\begin{proof}
The Lemma follows from the evident geometrical constructions (see
Fig.~\ref{fig:06}).
\begin{figure}[htbp]
\begin{center}
\begin{picture}(130,130)

\put(65,65){\vector(1,0){65,65}} \put(65,65){\vector(-1,0){65,65}}
\put(65,65){\vector(0,1){65,65}} \put(65,65){\vector(0,-1){65,65}}

\Vidr{50}{85}{20}{110} \Vidr{80}{85}{110}{110} %
\Vidr{50}{45}{20}{20} \Vidr{80}{45}{110}{20}

\linethickness{0.4mm}

\Vidr{50}{85}{80}{85} \VidrTo{80}{45} \VidrTo{50}{45}
\VidrTo{50}{85}

\Vidr{20}{110}{110}{110} \VidrTo{110}{20} \VidrTo{20}{20}
\VidrTo{20}{110}

\put(12,56){$a$} \put(43,67){$b$}

\put(20,65){\circle*{4}} \put(50,65){\circle*{4}}


\linethickness{0.1mm}

\put(3,70){$x$} \put(55,5){$g$} \put(55,122){$\psi$}
\put(125,73){$y$}

\end{picture}
\end{center}
\caption{proof of Lemma~\ref{lema:2.4}} \label{fig:06}
\end{figure}
\end{proof}

Lemmas~\ref{lema:2.2} and~\ref{lema:2.3} imply the next fact.

\begin{lemma}\label{lema:2.4}
Let $g$ and $\psi$ be piecewise linear maps $[0, 1]\rightarrow [0,
1]$, and let $g$ be unimodal surjective such that $g(0) =g(1)=0$.
Suppose that $x_1,\ldots, x_s$ is the minimal set such that $$
\bigcup\limits_{i=1}^s\SAT(x_i)
$$ contains all the kinks of $g$ and $\psi$. Then $g$ commutes
with $\psi$ if and only if $\SAT(x_i)$ does not contradict to the
commutativity of $g$ and $\psi$ for every $i\in \{1,\ldots, s\}$.
\end{lemma}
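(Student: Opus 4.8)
The plan is to prove the two implications separately, treating the ``only if'' direction as essentially immediate and reserving the real work for the ``if'' direction, where I would propagate commutativity off the determinating lattice by repeated application of Lemma~\ref{lema:2.3}.

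For the ``only if'' part, suppose $g\circ\psi=\psi\circ g$ holds on all of $[0,1]$. Each single trajectory occurring in $\SAT(x_i)$ is based at some point $z$, and by the definition of ``does not contradict the commutativity'' the relevant condition for that trajectory is exactly $(g\circ\psi)(z)=(\psi\circ g)(z)$. Global commutativity supplies this equality at every $z$, so every single trajectory, hence every $\SAT(x_i)$, fails to contradict, and there is nothing further to check.

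For the ``if'' part I would write $\mathcal{S}=\bigcup_{i=1}^s\SAT(x_i)$ for the determinating lattice and let $0=t_0<t_1<\dots<t_m=1$ be the abscissae of its vertical lines. Fixing an arbitrary $x_0\in[0,1]$, I choose consecutive lattice abscissae $a=t_{j-1}\le x_0\le t_j=b$ and then verify the three hypotheses of Lemma~\ref{lema:2.3} for the pair $a,b$. Every vertical line of $\mathcal{S}$ is the base of a single trajectory lying in some $\SAT(x_i)$, and the non-contradiction assumption for that $\SAT$ is precisely $(g\circ\psi)(a)=(\psi\circ g)(a)$, and similarly at $b$; this is hypothesis~1. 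Because $\mathcal{S}$ contains all kinks of $g$ and of $\psi$, the abscissa of each kink is among the $t_k$, so both $g$ and $\psi$ are affine on $[a,b]$, giving the first halves of hypotheses~2 and~3.

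The remaining, and main, step is to show that $\psi$ is affine on $[g(a),g(b)]$ and that $g$ is affine on $[\psi(a),\psi(b)]$, and here I would invoke Lemma~\ref{lema:2.2}. The single trajectories based at $a$ and at $b$ place $\psi(a),g(a),\psi(b),g(b)$ among the horizontal lines of $\mathcal{S}$ in the appropriate quadrants, and the diagonal rule (part~2 of Lemma~\ref{lema:2.2}) forces the affine graph of $\psi$ to run from the lattice corner $(a,\psi(a))$ directly to the adjacent corner $(b,\psi(b))$ without meeting any intermediate horizontal line; were there a line of $\mathcal{S}$ at height $q$ with $\psi(a)<q<\psi(b)$, the segment would pass through the interior of a lattice rectangle without being its diagonal, contradicting Lemma~\ref{lema:2.2}. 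Hence no line of $\mathcal{S}$ lies strictly between $\psi(a)$ and $\psi(b)$, and the analogous argument for $x\mapsto g(x)$ shows none lies strictly between $g(a)$ and $g(b)$. Since $\mathcal{S}$ carries a horizontal line at the image value of every kink of $g$ and of $\psi$, no kink of $g$ falls in the open interval $\bigl(\psi(a),\psi(b)\bigr)$ and no kink of $\psi$ falls in $\bigl(g(a),g(b)\bigr)$; thus $g$ is affine on $[\psi(a),\psi(b)]$ and $\psi$ is affine on $[g(a),g(b)]$, completing hypotheses~2 and~3. Lemma~\ref{lema:2.3} then gives $(g\circ\psi)(x)=(\psi\circ g)(x)$ for all $x\in[a,b]$, in particular at $x_0$, and since $x_0$ was arbitrary the maps commute everywhere. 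The delicate point I expect to be the obstacle is exactly this closure property---that consecutive vertical lines are carried by $g$ and by $\psi$ to consecutive horizontal lines---which is what keeps the image intervals of Lemma~\ref{lema:2.3} free of kinks, and it is here that the saturated form $\psi^{-1}(g^{-1}(\,\cdot\,))$ defining $\SAT$, as encoded in the diagonal characterization of Lemma~\ref{lema:2.2}, is indispensable.
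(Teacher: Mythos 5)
Your proposal is correct and takes essentially the same route as the paper: the paper's entire proof of Lemma~\ref{lema:2.4} is the single assertion that Lemmas~\ref{lema:2.2} and~\ref{lema:2.3} imply it, and your argument---checking commutativity at the lattice abscissae via the non-contradiction hypothesis, then applying Lemma~\ref{lema:2.3} cell-by-cell between consecutive vertical lines, with the diagonal rule of Lemma~\ref{lema:2.2} plus the kink-covering hypothesis forcing $g$ to be linear on $[\psi(a),\psi(b)]$ and $\psi$ on $[g(a),g(b)]$---is precisely a careful elaboration of that implication. Your write-up in fact supplies the details (notably the closure property that consecutive vertical lines go to consecutive horizontal lines) that the paper leaves implicit.
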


The deal of out next reasonings is to reverse our reasonings about
commutating maps. In other words, we consider the problem about
the construction of maps $g$ and $\psi$ by their determinating
lattice.

\begin{remark}\label{rem:2.5}
Suppose that $\psi$ is map, which satisfies the condition of
Lemma~\ref{lema:2.1}. Let $n$ be the number of maximal intervals
of monotonicity of $\psi$. Then

(i) the equation $\psi(x)=0$ has $\left[\frac{n}{2}\right]+1$
solutions;

(ii) the equation $\psi(x)=1$ has $\left[\frac{n+1}{2}\right]$
solutions.
\end{remark}

\begin{lemma}\label{lema:2.6}
Let $\psi$ be a non-trivial commutator of $g$. Denote by $n$ the
number of maximal intervals of monotonicity of $\psi$, and $s$ the
numbers of non-boundary \SAT s in the determinating lattice of $g$
and $\psi$. Then the determinating lattice of $g$ and $\psi$
consists of:

1. $2ns +2n-1$ lines of the form $x=x_0$, where $x_0\in (0, 1)$;

2. $2s+1$ lines of the form $\psi =\psi_0$, where $\psi_0\in (0,
1)$.

3. $ns +n-1$ line of the form $g = g_0$, where $g_0\in (0, 1)$;

4. $s$ lines of the form $y=y_0$, where $y_0\in (0, 1)$;

5. coordinate lines;

6. lines $x =1$, $g =1$, $\psi =1$ and $y=1$.

\end{lemma}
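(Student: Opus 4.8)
The plan is to organize everything around the observation that $\SAT(x)$ depends on $x$ only through the single number $y_0=(g\circ\psi)(x)$: its generating set is $\psi^{-1}(g^{-1}(y_0))=(g\circ\psi)^{-1}(y_0)$. Thus the $\SAT$s forming the determinating lattice are indexed by the finite set $Y=(g\circ\psi)(\{\text{kinks of }g\text{ and }\psi\})\subseteq[0,1]$ of attained values, and the two boundary $\SAT$s are exactly those with $y_0\in\{0,1\}$ — the value $0$ coming from the extrema of $\psi$ (where $g\circ\psi=g(\{0,1\})=0$) and the value $1$ from the vertex level of $g$ (where $g\circ\psi=1$, i.e.\ on $\psi^{-1}(v)$ with $g(v)=1$). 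The remaining $s$ indices lie in $(0,1)$, so $Y=\{0,1\}\sqcup\{c_1,\dots,c_s\}$ with each $c_i\in(0,1)$.

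I would then identify each family of lattice lines, restricted to the open interval, with a single preimage set:
\[
\{x\text{-lines}\}=(g\circ\psi)^{-1}(Y)\cap(0,1),\qquad \{y\text{-lines}\}=Y\cap(0,1),
\]
\[
\{\psi\text{-lines}\}=g^{-1}(Y)\cap(0,1),\qquad \{g\text{-lines}\}=\psi^{-1}(Y)\cap(0,1).
\]
The identities for the $x$- and $y$-lines are immediate from the definition of a single trajectory together with $(g\circ\psi)^{-1}=\psi^{-1}g^{-1}$. For the $\psi$- and $g$-lines I use that a trajectory seeded at $x'$ with $(g\circ\psi)(x')=c$ carries the lines $\psi=\psi(x')$ and $g=g(x')$; the inclusions ``$\subseteq$'' are then clear, while for ``$\supseteq$'' one runs the trajectories backwards: every $u\in g^{-1}(c)$ equals $\psi(x')$ for some $x'\in\psi^{-1}(u)$ (surjectivity of $\psi$ from Lemma~\ref{lema:2.1}), and — this is where $g\psi=\psi g$ enters — every $w\in\psi^{-1}(c)$ equals $g(x')$ for some $x'\in g^{-1}(w)$, since then $g(\psi(x'))=\psi(g(x'))=\psi(w)=c$ places $x'$ in the generating set.

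It remains to count the four preimages lap by lap. As $g$ is unimodal and surjective, $g^{-1}(0)=\{0,1\}$, $g^{-1}(1)=\{v\}$ and $|g^{-1}(c)|=2$ for $c\in(0,1)$; by Lemma~\ref{lema:2.1} $\psi$ has $n$ laps each onto $[0,1]$, so $|\psi^{-1}(c)|=n$ for $c\in(0,1)$ and, by Remark~\ref{rem:2.5}, $|\psi^{-1}(\{0,1\})\cap(0,1)|=([n/2]+1)+[(n+1)/2]-2=n-1$; consequently $g\circ\psi$ has $2n$ laps, with $|(g\circ\psi)^{-1}(1)\cap(0,1)|=n$ and $|(g\circ\psi)^{-1}(0)\cap(0,1)|=n-1$. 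Summing the contributions of $c=0$, $c=1$ and the $s$ interior values — pairwise disjoint because preimages of distinct values under a function are disjoint — yields $x$: $(n-1)+n+2ns=2ns+2n-1$; $\psi$: $0+1+2s=2s+1$; $g$: $(n-1)+ns=ns+n-1$; $y$: $s$. The two discarded values $0,1$ produce precisely the coordinate lines (item 5) and the lines $x=1$, $g=1$, $\psi=1$, $y=1$ (item 6).

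The step I expect to be the main obstacle is the ``$\supseteq$'' half of the $g$-line identity, i.e.\ showing that the whole level set $\psi^{-1}(c)$ is swept out by the values $g(x')$; this is exactly where the commutation $g\circ\psi=\psi\circ g$ is indispensable, since an arbitrary pair of surjections would give only the easy inclusion. A secondary point, responsible for the constant terms $+2n-1$ and $+1$, is to confirm that \emph{both} boundary values $0$ and $1$ always belong to $Y$ — equivalently, that the determinating lattice must resolve both the $g=0$ level (the endpoints) and the $g=1$ level (the vertex) of the unimodal map; once that is secured, Remark~\ref{rem:2.5} delivers the exact counts and the disjointness of level sets excludes any double counting.
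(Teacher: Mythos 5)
Your proposal is correct, and it is in fact tighter than the paper's own proof. Both arguments share the same skeleton: split the determinating lattice into the $s$ non-boundary \SAT s plus the two boundary ones, count each of the four families of lines with the help of Remark~\ref{rem:2.5}, and sum. The difference lies in the execution, and it matters. The paper tallies lines \SAT-by-\SAT, and its stated counts for the two boundary \SAT s are off: it attributes $0$ interior $\psi$-lines to the \SAT{} at level $g\circ\psi=1$ and $1$ to $\SAT(0)$, whereas the former contains $\psi=v$ (with $v$ the vertex of $g$, since its $\psi$-lines are $g^{-1}(1)=\{v\}$) and the latter only $\psi=0$ and $\psi=1$; more seriously, its interior $g$-line counts $\left[\frac{n+1}{2}\right]$ and $1+\left[\frac{n}{2}\right]$ sum together with $ns$ to $ns+n+1$, contradicting the asserted total $ns+n-1$ --- the paper forgets to discard the two endpoint solutions $x=0$, $x=1$ of $\psi(x)\in\{0,1\}$. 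Your route avoids exactly these pitfalls: identifying the line families with the preimage sets $(g\circ\psi)^{-1}(Y)$, $g^{-1}(Y)$, $\psi^{-1}(Y)$, $Y$ (with commutativity used precisely where needed, to show the $g$-lines of the \SAT{} at level $c$ sweep out all of $\psi^{-1}(c)$), invoking disjointness of level sets to exclude double counting, and subtracting the endpoint solutions explicitly yields the lemma's totals for either parity of $n$; one can check on the paper's own example ($n=3$, $s=1$, Figure~\ref{fig:07}) that these totals, not the ones implied by the paper's tallies, are the correct ones. The point you flag as needing confirmation --- that both $0$ and $1$ always occur among the \SAT{} levels --- is genuine and true: $\SAT(1)$ is forced into the lattice because the vertex kink of $g$, read in the quadrant $\psi\times y$, requires the lines $\psi=v$ and $y=1$, which occur in no other \SAT, while $\SAT(0)$ is forced by the endpoint kinks of $g$ and the interior extrema of $\psi$.
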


\begin{proof}
Notice that every non-boundary $\SAT$ consists of:

1. $2n$ lines of the form $x=x_0$, where $x_0\in (0, 1)$;

2. $2$ lines of the form $\psi =\psi_0$, where $\psi_0\in (0, 1)$.

3. $n$ line of the form $g = g_0$, where $g_0\in (0, 1)$;

4. $1$ line of the form $y=y_0$, where $y_0\in (0, 1)$.

\noindent If $x$ is such that $g\circ \psi(x) =1$, then $\SAT(x)$
consists of:

1. $n$ lines of the form $x=x_0$, where $x_0\in (0, 1)$;

2. $0$ lines of the form $\psi =\psi_0$, where $\psi_0\in (0, 1)$;

3. $\left[\frac{n+1}{2}\right]$ line of the form $g = g_0$, where
$g_0\in (0, 1)$, by (ii) of Remark~\ref{rem:2.5};

4. $0$ lines of the form $y=y_0$, where $y_0\in (0, 1)$.

\noindent Moreover, $\SAT(0)$ contains

1. $n-1$ lines of the form $x=x_0$, where $x_0\in (0, 1)$;

2. $1$ line of the form $\psi =\psi_0$, where $\psi_0\in (0, 1)$.

3. $1+\left[\frac{n}{2}\right]$ line of the form $g = g_0$, where
$g_0\in (0, 1)$, by (ii) of Remark~\ref{rem:2.5};

4. $0$ lines of the form $y=y_0$, where $y_0\in (0, 1)$.

Thus, the lemma follows. \end{proof}

\begin{notation}\label{not:2.7}
Let $\mathcal{S}$ be a determinating lattice of the commuting maps
$g$ and $\psi$. Lemma~\ref{lema:2.2} implies that $\mathcal{S}$
completely determines both $g$ and $\psi$, moreover this Lemma
gives a constructive algorithm of their construction. Thus,
denote:

(i) $A_1, A_2,\ldots $ the consecutive positive (starting from the
origin) intersections of $\mathcal{S}$ with the graph of $g$ in
the quadrant $\psi\times y$;

(ii) $B_1, B_2,\ldots $ the consecutive positive intersections of
$\mathcal{S}$ with the graph of $\psi$ in the quadrant $x\times
\psi$;

(iii) $C_1, C_2,\ldots $ the consecutive positive intersections of
$\mathcal{S}$ with the graph of $\psi$ in the quadrant $g\times
y$;

(iv) $D_1, D_2,\ldots $ the consecutive positive intersections of
$\mathcal{S}$ with the graph of $g$ in the quadrant $x\times g$.
\end{notation}

For the functions $g$ and $\psi$ of the forms~\eqref{eq:2.2}
and~\eqref{eq:2.3} respectively, the points from
Notation~\ref{not:2.7} are shown at Figure~\ref{fig:07}. The next
fact follows from Lemma~\ref{lema:2.6}.

\begin{figure}
\begin{center}

\begin{picture}(300,300)

\put(150,150){\vector(1,0){150}} \put(150,150){\vector(-1,0){150}}
\put(150,150){\vector(0,1){150}} \put(150,150){\vector(0,-1){150}}

\linethickness{0.4mm}

\Vidr{150}{150}{120}{240}
\put(120,240){\circle*{6}}\VidrTo{90}{270}
\put(90,270){\circle*{6}}\VidrTo{60}{240}
\put(60,240){\circle*{6}}\VidrTo{50}{150}
\put(50,150){\circle*{6}}\VidrTo{40}{240}
\put(40,240){\circle*{6}}\VidrTo{30}{270}

\Vidr{150}{150}{240}{195}
\put(240,195){\circle*{6}}\VidrTo{270}{240}
\put(270,240){\circle*{6}}\Vidr{270}{240}{240}{255}
\put(240,255){\circle*{6}}\VidrTo{150}{270}

\Vidr{150}{150}{240}{120}
\put(240,120){\circle*{6}}\VidrTo{270}{90}
\put(270,90){\circle*{6}}\VidrTo{240}{60}
\put(240,60){\circle*{6}}\VidrTo{150}{50}
\put(150,50){\circle*{6}}\VidrTo{240}{40}
\put(240,40){\circle*{6}}\VidrTo{270}{30}

\Vidr{150}{150}{105}{60} \put(105,60){\circle*{6}}\VidrTo{60}{30}
\put(60,30){\circle*{6}}\Vidr{60}{30}{45}{60}
\put(45,60){\circle*{6}}\VidrTo{30}{150}

\linethickness{0.1mm}

\put(150,270){\line(-1,0){120}} \put(30,270){\line(0,-1){120}}

\put(270,240){\line(-1,0){230}} \put(40,240){\line(0,-1){150}}
\put(40,90){\line(1,0){230}}

\put(240,120){\line(-1,0){205}} \put(35,120){\line(0,1){135}}
\put(35,255){\line(1,0){205}}

\put(240,195){\line(-1,0){195}} \put(45,195){\line(0,-1){135}}
\put(45,60){\line(1,0){195}}

\put(50,150){\line(0,-1){100}} \put(50,50){\line(1,0){100}}

\put(60,240){\line(0,-1){210}} \put(60,30){\line(1,0){210}}

\put(105,255){\line(0,-1){195}}

\put(75,255){\line(0,-1){215}} \put(75,40){\line(1,0){165}}

\put(90,270){\line(0,-1){220}}

\put(120,240){\line(0,-1){150}}

\put(135,195){\line(0,-1){75}}

\put(240,255){\line(0,-1){215}}

\put(270,240){\line(0,-1){210}}

\put(55,195){\line(0,-1){155}} \put(55,40){\line(1,0){185}}

\put(245,190){$A_1$} \put(275,235){$A_2$} \put(245,260){$A_3$}

\put(126,228){$B_2$} \put(88,277){$B_4$} \put(62,228){$B_6$}
\put(55,155){$B_8$}

\put(245,122){$C_1$} \put(275,95){$C_2$} \put(226,65){$C_3$}

\put(111,63){$D_3$} \put(55,16){$D_6$} \put(25,58){$D_9$}

\put(3,160){$x$} \put(137,295){$\psi$} \put(290,160){$y$}
\put(137,10){$g$}
\end{picture}
\end{center}\caption{}
\label{fig:07}
\end{figure}

\begin{remark}\label{rem:2.8}
In terms of notations of Lemma~\ref{lema:2.6}, the points from
Notation~\ref{not:2.7} are numbered:

(i) $_{\llcorner}(0,0) = A_0, A_1,\ldots, A_{s+1} =
_{\llcorner}(g^{-1}(1), 1), \ldots, A_{2s+1}, A_{2s+2} =
_{\llcorner}(1,0)$

(ii) $(0,0)_{\lrcorner} = B_0, B_1,\ldots, B_{2ns+2n-1},
B_{2ns+2n} = (1, \psi(1))_{\lrcorner}$;

(iii) $^{\ulcorner}(0, 0) =C_0, C_1,\ldots, C_{ns+n-1}, C_{ns+n} =
^{\ulcorner}(1, \psi(1))$;

(iv) $(0,0)^{\urcorner} = D_0, D_1,\ldots, D_{2ns+2n-1},
D_{2ns+2n} = (1, 0)^{\urcorner}$.
\end{remark}

Notice, that maps $g$ and $\psi$ of the forms~\eqref{eq:2.2}
and~\eqref{eq:2.3} respectively correspond to $n=3$ and $s=1$ in
notations of Lemma~\ref{lema:2.6}. Thus, the quantities of points
in Remark~\ref{rem:2.8} corresponds to ones from
Figure~\ref{fig:07}.

\begin{notation}\label{not:2.9}
Due to Remark~\ref{rem:2.8} denote $a_i, b_i, c_i, d_i$ the first
coordinates of $A_i$, $B_i$, $C_i$ and $D_i$ for all admissible
indices.
\end{notation}

We will write explicitly the following observation.

\begin{remark}\label{rem:2.10}
Suppose that $\mathcal{S}$ is a determinating lattice of maps $g$
and $\psi$. Let points $A_0, \ldots, D_{ns+2n}$ be from
Remark~\ref{rem:2.8}. Then:

(i) For every $i\in \{1,\ldots, 2s+1\}$ either $g$ is linear at
$A_i$, or there is $j\in \{1,\ldots, 2ns+2n-1\}$ such that $A_i$
and $D_j$ have the same coordinates;

(i) For every $i\in \{1,\ldots, 2ns+2n-1\}$ either $g$ is linear
at $D_i$, or there is $j\in \{1,\ldots, 2s+1\}$ such that $D_i$
and $A_j$ have the same coordinates;

(iii) For every $i\in \{1,\ldots, 2ns+2n-1\}$ either $\psi$ is
linear at $B_i$, or there is $j\in \{1,\ldots, ns+n-1\}$ such that
$B_i$ and $C_j$ have the same coordinates;

(iv) For every $i\in \{1,\ldots, ns+n-1\}$ either $\psi$ is linear
at $C_i$, or there is $j\in \{1,\ldots, 2ns+2n-1\}$ such that
$C_i$ and $B_j$ have the same coordinates.
\end{remark}

Suppose that maps $g$ and $\psi$ have $p$ and $q$ kinks
respectively, where end-points of the interval $[0, 1]$ are
considered as kinks too. Thus, there exist non subsets
$$
\mathcal{P}\subset \{0, \ldots, 2s+2\} \times \{0, \ldots,
2ns+2n\} $$ and $$ \mathcal{Q}\subset \{0, \ldots, 2ns+2n\}\times
\{0, \ldots, ns+n\} $$ of numbers of kinks of $g$ and $\psi$, i.e.
the inclusion $(i, j)\in\mathcal{P}$ means that $A_i$ is the kink
of $g$, and $A_i$ has the same coordinates as $D_j$. Analogously,
$(i,j)\in \mathcal{Q}$ means that $B_i$ is the kink of $\psi$ and
$B_i$ has the same coordinates as $C_j$.

For example, for the maps $g$ and $\psi$ from Fig.~\ref{fig:07} we
have that $$ \mathcal{P} = \{(0,0);\, (1,3);\, (2,6);\, (3,9);\,
(4,12)\}
$$ and $$
\mathcal{Q} = \{(0, 0);\, (4,2);\, (6, 3);\ (8, 4);\, (10, 5);\,
(12, 6)\}.
$$

Thus, we cave naturally come to the classification of the
commutative piecewise linear pairs $g$ and $\psi$, where $g$ is
unimodal surjective such that $g(0) =g(1)=0$.

\begin{definition}\label{def:2.11}
Let $U, V\subset (0, 1)$ be finite sets such that $\frac{\#U
+1}{\#V+1}\in \mathbb{N}$.
 Denote $0< x_1 <\ldots
<x_u <1$ and $0 <y_1 <\ldots <y_v <1$ the ordered elements of $U$
and $V$. Denote $x_0 =y_0 = 0$ and $x_{u+1} =y_{v+1} =1$. Define
$\mu_{UV}:\, [0, 1]\rightarrow [0, 1]$ as follows:

1. $\mu_{UV}:\, \{U\cup \{0; 1\}\}\rightarrow \{V\cup \{0; 1\} \}$

2. $\mu_{UV}(0) =0$;

3. If $\mu_{UV}(x_i) =0$ for $i\leq u$, then $\mu_{UV}(x_{i+1})
=y_1$;

4. If $\mu_{UV}(x_i) =1$ for $i\leq u$, then $\mu_{UV}(x_{i+1})
=y_u$;

5. If $\mu_{UV}(x_i) =y_j\in (0, 1)$ for $i\leq u$, then
$\mu_{UV}(x_{i+1})\in \{y_{j-1}; y_{j+1}\}$.

6. $\mu_{UV}$ is piecewise linear and all its kinks belong to $U$.

7. The image of every maximal interval of monotonicity of
$\mu_{UV}$ is $[0, 1]$.
\end{definition}

Remark that in Definition~\ref{def:2.11} the condition $\frac{\#U
+1}{\#V+1}\in \mathbb{Z}$ is equivalent to $\mu_{UV}(1) \in \{0,
1\}$. Moreover, $\frac{\#U +1}{\#V+1}$ is the number of maximal
intervals of monotonicity of $\mu_{UV}$.

Let natural numbers $n,s$ and sets $X, \Psi, G, Y\subset (0, 1)$
with $\# X
 =2ns+2n-1$, $\#G =ns+n-1$, $\# \Psi = 2s+1$, and $\# Y =s$ be
given. If $\mu_{\Psi Y} =\mu_{XG}$ and $\mu_{X\Psi} = \mu_{GY}$,
then say that set $X, \Psi, G$ and $Y$ define a commutative pair
$g =\mu_{XG}$ and $\psi =\mu_{X\Psi}$. In this case denote by:

$\mathcal{A} =\{A_i,\, i\in \{1,\ldots, 2s+1\}\}$ the consecutive
intersections of the graph of $g$ with lines $x=x_0,\, x_0\in
\Psi$;

$\mathcal{B} = \{B_i,\, i\in \{1,\ldots, 2ns+2n-1\}\}$ the
consecutive intersections of the graph of $\psi$ with lines
$x=x_0,\, x_0\in X$;

$\mathcal{C} = \{C_i,\, i\in \{1,\ldots, ns+n-1\}\}$ the
consecutive intersections of the graph of $\psi$ withe lines $x =
x_0,\, x_0\in G$;

$\mathcal{D} = \{D_i,\, i\in \{1,\ldots, 2ns+2n-1\}\}$ the
consecutive intersections of the graph of $g$ with lines $x=x_0,\,
x_0\in X$.

We will say that sets $X, \Psi, G, Y$ are concordant with pairs
$$\mathcal{P}\subset \{0, \ldots, 2s+2\} \times \{0, \ldots,
2ns+2n\} $$ and $$ \mathcal{Q}\subset \{0, \ldots, 2ns+2n\}\times
\{0, \ldots, ns+n\},
$$ if
they define a commutative pair $(g, \psi)$ and, moreover, the
following holds:

1. If a point $A_i$ is a kink of $\mu_{\Psi Y}$, then there exists
$j$ such that $(i, j)\in\mathcal{P}$;

2. If a point $B_i$ is a kink of $\mu_{X\Psi}$, then there exists
$j$ such that $(i, j)\in\mathcal{Q}$;

3. If a point $C_j$ is a kink of $\mu_{GY}$, then there exists $i$
such that $(i, j)\in\mathcal{Q}$;

4. If a point $D_j$ is a kink of $\mu_{XG}$, then there exists $i$
such that $(i, j)\in\mathcal{P}$.

5. If $(i,j)\in \mathcal{P}$ then $A_i$ and $D_j$ have the same
coordinates.

6. If $(i,j)\in \mathcal{Q}$ then $B_i$ and $C_j$ have the same
coordinates.

\vskip 1cm

The construction of sets $X, \Psi, G, Y$, which are concordant
with a given set of pais $\mathcal{P}$ and $\mathcal{Q}$ is almost
converse to the construction of the sets $\mathcal{P}$ and
$\mathcal{Q}$ by given commutative maps $g$ and $\psi$. The
difference is that in the case of concordant sets we do not demand
that sets $\mathcal{P}$ and $\mathcal{Q}$ contain pairs of kinks.

We will treat the next problem. For $n,s\in \mathbb{N}$ and sets
$\mathcal{P}$ and $\mathcal{Q}$ suppose that there exist $X, \Psi,
G, Y$, which are concordant with $\mathcal{P}$ and $\mathcal{Q}$.
Describe all the functions $g$, which are constructed by $X, \Psi,
G, Y$ in the case. Notice, that sets $\mathcal{P}$ and
$\mathcal{Q}$ can be given both explicitly, or by Figure, similar
to Fig.~\ref{fig:04}. The latter way of representation of the sets
$\mathcal{P}$ and $\mathcal{Q}$ is more clear geometrically. In
this case we will say that $g$ (and $\psi$) is {\it restricted by
the lattice} $(X, \Psi, G, Y)$.

\section{Examples}

\subsection{Simple examples}

\begin{example}\label{ex:3.1}
Describe the maps $g$, which are restricted by the lattice from
Figure~\ref{fig:08}.\end{example}

It is seen from Fig.~\ref{fig:08} that $s=0$, $n=3$, $$
\mathcal{P} = \{(0, 0);\, (1,3);\, (2,6)\}
$$ and $$
\mathcal{Q} =\{(0,0);\, (2,1);\, (4,2);\, (6,3)\}.
$$

\begin{figure}[ht]
\begin{center}
\begin{picture}(400,400)
\put(190,190){\vector(0,1){190}} \put(190,190){\vector(1,0){190}}
\put(190,190){\vector(0,-1){190}}
\put(190,190){\vector(-1,0){190}}

\linethickness{0.3mm} \Vidr{190}{190}{370}{280} \VidrTo{190}{370}


\Vidr{190}{190}{130}{370} \VidrTo{70}{190} \VidrTo{10}{370}

\Vidr{190}{190}{100}{10} \VidrTo{10}{190}

\Vidr{190}{190}{370}{130} \VidrTo{190}{70} \VidrTo{370}{10}

\linethickness{0.1mm}

\put(370,280){\line(-1,0){330}} \put(40,280){\line(0,-1){150}}
\put(40,130){\line(1,0){330}} \put(160,280){\line(0,-1){150}}
\put(100,280){\line(0,-1){270}} \put(130,370){\line(0,-1){300}}
\put(70,190){\line(0,-1){120}} \put(70,70){\line(1,0){120}}
\end{picture}
\end{center}
\caption{} \label{fig:08}
\end{figure}

Denote by $\sdev_1, \sdev_2, \sdev_3$ the slopes of consequent
pieces of monotonicity of $\psi$, starting from the origin, and
denote by $\gdev_1, \gdev_2$ the consequent slopes of $g$,
starting from the origin. Lemma~\ref{lema:2.3} implies the next
fact

\begin{lemma}\label{lema:3.2}
If maps $g$ and $\psi$ are restricted by the lattice from
Figure~\ref{fig:08}, then $ \sdev_1\cdot \gdev_2 =\gdev_1\cdot
\sdev_2,$ \hskip 5mm $\sdev_2\cdot \gdev_2 = \gdev_1\cdot
\sdev_3$, \hskip 5mm $\sdev_2\cdot \gdev_1 =\gdev_2\cdot \sdev_3,
$ \hskip 5mm $ \sdev_3\cdot \gdev_1 = \gdev_2\cdot \sdev_2, $
\hskip 5mm $ \sdev_3\cdot \gdev_2 = \gdev_2\cdot \sdev_1. $
\end{lemma}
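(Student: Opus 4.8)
We have maps $g$ and $\psi$ restricted by the lattice from Figure~\ref{fig:08}, with $s=0$, $n=3$. This means the determinating lattice comes from $\SAT(0)$ together with possibly other boundary SATs, and we need to establish five multiplicative relations among the slopes $\gdev_1, \gdev_2$ of $g$ and $\sdev_1, \sdev_2, \sdev_3$ of $\psi$.

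The plan is to read each of the five identities as the assertion that the composite maps $g\circ\psi$ and $\psi\circ g$ have the same slope on a suitable subinterval of $[0,1]$, and to produce that equality by a single application of Lemma~\ref{lema:2.3}. First I would isolate the consequence of Lemma~\ref{lema:2.3} that I actually use: if $g$ and $\psi$ commute at two points $a$ and $b$, and if $g$ is affine on the interval with ends $\{a;b\}$ and on the interval with ends $\{\psi(a);\psi(b)\}$, while $\psi$ is affine on the interval with ends $\{a;b\}$ and on the interval with ends $\{g(a);g(b)\}$, then $g\circ\psi$ and $\psi\circ g$ are each affine on the interval $[a,b]$ and agree at its two endpoints, hence coincide there. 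Comparing slopes via the chain rule then gives
\[
g'\big|_{[\psi(a),\psi(b)]}\cdot\psi'\big|_{[a,b]}
=
\psi'\big|_{[g(a),g(b)]}\cdot g'\big|_{[a,b]},
\]
which is precisely an identity of the shape asserted in the lemma.

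Next I would invoke the hypothesis that $g$ and $\psi$ are restricted by the lattice of Figure~\ref{fig:08}. By Lemma~\ref{lema:2.4} this already forces $g$ to commute with $\psi$, and in particular at every point where the lattice meets the two graphs, so the commutativity hypothesis of Lemma~\ref{lema:2.3} is available for any pair of lattice points. It then remains to choose, for each of the five identities, a pair $a,b$ of consecutive lattice points bounding an interval on which all four affinity conditions hold. Writing $v$ for the turning point of $g$ and $p_1<p_2$ for the two breakpoints of $\psi$ (so that $\psi$ has slope $\sdev_1$ on $[0,p_1]$, $\sdev_2$ on $[p_1,p_2]$, $\sdev_3$ on $[p_2,1]$, while $g$ has slope $\gdev_1$ on $[0,v]$ and $\gdev_2$ on $[v,1]$), each identity corresponds to one linear piece of the composite $g\circ\psi$: for that piece one reads off from Figure~\ref{fig:08} into which monotonicity segment of $\psi$ the image $g([a,b])$ falls, and into which monotonicity segment of $g$ the image $\psi([a,b])$ falls, after which the displayed slope equation becomes the stated relation. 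For instance, the first relation comes from the interval on which $g$ has slope $\gdev_1$ and $\psi$ has slope $\sdev_1$, whose $g$-image lands in $[p_1,p_2]$ (slope $\sdev_2$) and whose $\psi$-image lands in $[v,1]$ (slope $\gdev_2$), giving $\sdev_2\gdev_1=\gdev_2\sdev_1$; the remaining four follow in the same way from the four remaining pieces.

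The routine part is the bookkeeping: listing the five intervals $[a,b]$ together with the membership of their $g$- and $\psi$-images in the appropriate monotonicity segments, and then multiplying the two relevant slopes in each case. The one point that needs genuine care — and which I expect to be the main obstacle — is checking that the chosen endpoints $a,b$ really are consecutive lattice points with no kink of $g$ or $\psi$ lying strictly inside any of $[a,b]$, $[g(a),g(b)]$, or $[\psi(a),\psi(b)]$, since this is exactly what licenses the four affinity hypotheses of Lemma~\ref{lema:2.3}. Once the lattice of Figure~\ref{fig:08} is decoded so that these five trajectories are exhibited explicitly, each relation drops out immediately, with no computation beyond a single product of slopes.
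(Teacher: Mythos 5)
Your proposal is correct and takes essentially the same route as the paper: the paper's entire proof of Lemma~\ref{lema:3.2} is the remark that Lemma~\ref{lema:2.3} implies it, i.e.\ precisely your scheme of applying Lemma~\ref{lema:2.3} to consecutive lattice points so that each of the five identities is the equality of slopes of $g\circ\psi$ and $\psi\circ g$ on one linear piece. Your write-up simply makes explicit the chain-rule comparison and the interval bookkeeping that the paper leaves implicit.
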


Equalities from Lemma~\ref{lema:3.2} imply
\begin{equation}\label{eq:3.1}
\gdev_2 =-\gdev_1. \end{equation}

Denote $a = g^{-1}(1)$. Since both domain an range of $g$ have the
length $1$, then $$ \left\{\begin{array}{l}1 = \gdev_1 a,\\
1 = -(1-a)\cdot \gdev_2,
\end{array}\right.
$$ whence $$
1 = -\left(1 - \frac{1}{\gdev_1}\right)\cdot \gdev_2,
$$ and~\eqref{eq:3.1} implies $\gdev_1 =2$. The latter
and~\eqref{eq:3.1} means that $g$ is the tent map. Thus, we have
proved the next

\begin{proposition}\label{prop:01}
Let the lattice be as at the Figure~\ref{fig:08}. Then $g$ is the
tent map.
\end{proposition}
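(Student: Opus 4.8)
The plan is to package the slope computation developed in the lines preceding the statement into a self-contained argument. The starting point is Lemma~\ref{lema:3.2}, whose five multiplicative identities are exactly the constraints that Lemma~\ref{lema:2.3} imposes on the slopes once $g$ and $\psi$ are read off from the lattice of Figure~\ref{fig:08}. Thus the first step is simply to invoke Lemma~\ref{lema:3.2} and to work entirely with the slopes $\sdev_1, \sdev_2, \sdev_3$ of $\psi$ and $\gdev_1, \gdev_2$ of $g$, all of which are nonzero because $g$ and $\psi$ are genuine piecewise linear maps with strictly monotone branches.

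The second step is to extract~\eqref{eq:3.1}, namely $\gdev_2 = -\gdev_1$, from those identities. The relation $\sdev_3\gdev_2 = \gdev_2\sdev_1$ forces $\sdev_1 = \sdev_3$; substituting this into $\sdev_3\gdev_1 = \gdev_2\sdev_2$ gives $\sdev_1\gdev_1 = \gdev_2\sdev_2$, and dividing this by $\sdev_1\gdev_2 = \gdev_1\sdev_2$ cancels the slopes of $\psi$ and yields $\gdev_1^2 = \gdev_2^2$. Since $g$ is unimodal, its left branch is increasing ($\gdev_1 > 0$) and its right branch is decreasing ($\gdev_2 < 0$), so the only admissible sign choice is $\gdev_2 = -\gdev_1$.

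The third step pins down the actual slopes using only that $g$ is unimodal surjective with $g(0) = g(1) = 0$. Writing $a = g^{-1}(1)$ for the position of the peak, the left branch rises by $1$ over a base of length $a$ and the right branch falls by $1$ over a base of length $1-a$, which is precisely the system
\begin{equation*}
1 = \gdev_1 a, \qquad 1 = -(1-a)\gdev_2 .
\end{equation*}
Eliminating $a = 1/\gdev_1$ and inserting $\gdev_2 = -\gdev_1$ collapses this to $\gdev_1 - 1 = 1$, whence $\gdev_1 = 2$, $\gdev_2 = -2$ and $a = 1/2$. Consequently $g$ passes through $(0,0)$, $(1/2, 1)$ and $(1, 0)$ and has no other kinks, i.e. $g$ is the tent map.

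Every step here is an algebraic identity or a sign determination, so there is no genuine analytic obstacle. The one place that calls for care is the passage to~\eqref{eq:3.1}: one must be sure to exclude the spurious root $\gdev_2 = +\gdev_1$ of $\gdev_1^2 = \gdev_2^2$ by appealing to the opposite monotonicity of the two branches, and to check that $\gdev_2 \neq 0$ so that the cancellations producing $\sdev_1 = \sdev_3$ and $\gdev_1^2 = \gdev_2^2$ are legitimate.
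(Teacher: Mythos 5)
Your proof is correct and follows essentially the same route as the paper: derive $\gdev_2=-\gdev_1$ from the slope identities of Lemma~\ref{lema:3.2}, then combine $1=\gdev_1 a$ and $1=-(1-a)\gdev_2$ to get $\gdev_1=2$. In fact your second step supplies the algebraic detail (deriving $\sdev_1=\sdev_3$, then $\gdev_1^2=\gdev_2^2$, then fixing the sign by unimodality) that the paper asserts without proof when stating~\eqref{eq:3.1}.
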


We will use notation from Example~\ref{ex:3.1} in our further
computations.

\begin{example}\label{ex:3.3}
Describe the maps $g$, which are restricted by the lattice from
Figure~\ref{fig:09}.
\end{example}

\begin{figure}[ht]
\begin{center}
\begin{picture}(400,400)
\put(190,190){\vector(0,1){190}} \put(190,190){\vector(1,0){190}}
\put(190,190){\vector(0,-1){190}}
\put(190,190){\vector(-1,0){190}}

\linethickness{0.3mm} \Vidr{190}{190}{370}{280} \VidrTo{190}{370}

\Vidr{190}{190}{130}{370} \VidrTo{70}{190} \VidrTo{10}{370}

\Vidr{190}{190}{100}{10} \VidrTo{10}{190}

\Vidr{190}{190}{370}{130} \VidrTo{190}{70} \VidrTo{370}{10}

\linethickness{0.1mm}

\put(370,280){\line(-1,0){330}} \put(40,280){\line(0,-1){150}}
\put(40,130){\line(1,0){330}} \put(160,280){\line(0,-1){150}}
\put(100,280){\line(0,-1){270}} \put(130,370){\line(0,-1){300}}
\put(70,190){\line(0,-1){120}} \put(70,70){\line(1,0){120}}

\put(280,325){\line(-1,0){255}} \put(280,325){\line(0,-1){285}}

\put(55,235){\line(1,0){225}} \put(55,235){\line(0,-1){135}}

\put(55,100){\line(1,0){225}}

\put(25,325){\line(0,-1){165}} \put(25,160){\line(1,0){255}}

\put(85,235){\line(0,-1){195}} \put(85,40){\line(1,0){195}}

\put(115,325){\line(0,-1){285}} \put(145,325){\line(0,-1){225}}
\put(175,235){\line(0,-1){75}}

\put(160,280){\circle*{5}}  \put(280,235){\circle*{5}}
\put(280,160){\circle*{5}}  \put(145,100){\circle*{5}}
\put(130,370){\circle*{5}}  \put(370,280){\circle*{5}}
\put(370,130){\circle*{5}}  \put(100,10){\circle*{5}}
\put(100,280){\circle*{5}}  \put(280,325){\circle*{5}}
\put(280,100){\circle*{5}}  \put(55,100){\circle*{5}}
\put(70,190){\circle*{5}}       \put(190,70){\circle*{5}}
\put(40,280){\circle*{5}}       \put(280,40){\circle*{5}}

\end{picture}
\end{center}
\caption{} \label{fig:09}
\end{figure}

We will use the notations for slopes of $\psi$ and $g$, introduced
in Example~\ref{ex:3.1}. Analogously to it was done in
Example~\ref{ex:3.1}, by Lemma~\ref{lema:2.3} we have

\begin{lemma}\label{lema:3.4}
If maps $g$ and $\psi$ are restricted by the lattice from
Figure~\ref{fig:09}, then $ \sdev_1\cdot \gdev_2 = \gdev_1\cdot
\sdev_2, $ \hskip 5mm $ \sdev_2\cdot \gdev_3 = \gdev_1\cdot
\sdev_3, $ \hskip 5mm $ \sdev_2\cdot \gdev_4 = \gdev_2\cdot
\sdev_4, $ \hskip 5mm $\sdev_3\cdot \gdev_4 = \gdev_2\cdot
\sdev_5, $ \hskip 5mm $ \sdev_3\cdot \gdev_3 = \gdev_2\cdot
\sdev_6, $ \hskip 5mm $ \sdev_4\cdot \gdev_2 = \gdev_3\cdot
\sdev_6, $ \hskip 5mm $ \sdev_4\cdot \gdev_1 = \gdev_3\cdot
\sdev_5, $ \hskip 5mm $\sdev_5\cdot \gdev_1 = \gdev_3\cdot
\sdev_4, $ \hskip 5mm $ \sdev_5\cdot \gdev_2 = \gdev_4\cdot
\sdev_3, $ \hskip 5mm $\sdev_6\cdot \gdev_3 = \gdev_4\cdot \sdev_2
$ and $\sdev_6\cdot \gdev_4 = \gdev_4\cdot \sdev_1.$
\end{lemma}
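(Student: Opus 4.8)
The plan is to read each identity off the equality of the two piecewise-linear functions $g\circ\psi$ and $\psi\circ g$, exactly as in Example~\ref{ex:3.1}. A pair $(g,\psi)$ restricted by the lattice of Figure~\ref{fig:09} commutes (its agreement on the lattice is upgraded to genuine commutativity by Lemma~\ref{lema:2.4}), so $g\circ\psi$ and $\psi\circ g$ coincide as functions; in particular they share their kinks and have equal slope on every linear piece. The interior lines $x=x_0$ of the determinating lattice are precisely the abscissae of those kinks, so they partition $[0,1]$ into the subintervals on which both compositions are affine. On one such subinterval I would apply Lemma~\ref{lema:2.3} to its endpoints $a,b$: the chain rule gives the slope of $g\circ\psi$ as $\gdev_b\sdev_a$, where $\sdev_a$ is the slope of the branch of $\psi$ over $(a,b)$ and $\gdev_b$ the slope of the branch of $g$ containing $\psi((a,b))$, and the slope of $\psi\circ g$ as $\sdev_d\gdev_c$, where $\gdev_c$ is the slope of $g$ over $(a,b)$ and $\sdev_d$ the slope of the branch of $\psi$ containing $g((a,b))$; equating the two slopes yields $\sdev_a\gdev_b=\gdev_c\sdev_d$.

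First I would fix the combinatorial type read off from Figure~\ref{fig:09}: here $n=3$ and $s=1$, so $\psi$ has the six branches $\sdev_1,\ldots,\sdev_6$ (signs $+,+,-,-,+,+$, forming three maximal monotone arcs, each mapping onto $[0,1]$) and $g$ has the four branches $\gdev_1,\ldots,\gdev_4$ (signs $+,+,-,-$). By Lemma~\ref{lema:2.6} the lattice contains $2ns+2n-1=11$ interior lines $x=x_0$, so $[0,1]$ splits into twelve affine pieces of the composition.

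Next I would traverse these twelve pieces in increasing order of $x$ and record, for each, the quadruple $(a,b,c,d)$ of branch indices. Over a given piece the indices $a$ and $c$ are read directly from the branches of $\psi$ and of $g$ sitting above that $x$-interval; the index $b$ is read from which branch of $g$ the trajectory's line $\psi=\psi_0$ meets (equivalently, which $g$-kink the value $\psi(x)$ straddles), and $d$ likewise from which branch of $\psi$ the line $g=g_0$ meets. Carrying out this bookkeeping produces $\sdev_1\gdev_2=\gdev_1\sdev_2$, then $\sdev_2\gdev_3=\gdev_1\sdev_3$, and so on down to $\sdev_6\gdev_4=\gdev_4\sdev_1$; the very first piece, on which $x$ and both of its images still lie on the initial increasing branches, gives only the trivial identity $\gdev_1\sdev_1=\gdev_1\sdev_1$, and discarding it leaves precisely the eleven equations claimed.

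The main obstacle is exactly this bookkeeping: for each piece one must correctly decide which branch of $g$ contains $\psi(x)$ and which branch of $\psi$ contains $g(x)$, that is, follow each single trajectory through all four quadrants of the lattice without mis-assigning an index near a kink. The geometry of Figure~\ref{fig:09} is what makes each assignment unambiguous---the horizontal lines $\psi=\psi_0$ and $g=g_0$ of a trajectory pin down exactly which kink of the opposite map is crossed---so the real difficulty here is care and consistency rather than any genuinely hard step.
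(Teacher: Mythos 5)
Your proof is correct and is essentially the paper's own argument: the paper justifies Lemma~\ref{lema:3.4} only by the phrase ``by Lemma~\ref{lema:2.3}, analogously to Example~\ref{ex:3.1}'', which is exactly the cell-by-cell slope matching you perform, and your traversal of the twelve subintervals cut out by the eleven lattice lines $x=x_0$ reproduces the eleven stated equations in the stated order (the first subinterval giving only the trivial identity $\sdev_1\cdot\gdev_1=\gdev_1\cdot\sdev_1$). The only cosmetic difference is that you obtain the slope equality on each piece from global commutativity (Lemma~\ref{lema:2.4}) plus the chain rule, whereas the paper reads it off Lemma~\ref{lema:2.3} applied to consecutive lattice points; the bookkeeping, and hence the mathematical content, is identical.
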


\begin{lemma}\label{lema:3.5}
Equalities of Lemma~\ref{lema:3.4} imply $\gdev_3 =-\gdev_1$ and
$\gdev_4 = \frac{-(\gdev_1)^2}{\gdev_2}$.
\end{lemma}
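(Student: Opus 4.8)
The plan is to treat the eleven relations of Lemma~\ref{lema:3.4} purely as multiplicative identities among the nonzero numbers $\sdev_1,\ldots,\sdev_6,\gdev_1,\ldots,\gdev_4$ and to extract the two desired equalities by selecting a small subfamily of them. All the slopes are nonzero, since each linear piece is strictly monotone (on each maximal interval of monotonicity the map is a bijection onto $[0,1]$). Because $g$ is unimodal with exactly two increasing pieces before its maximum and two decreasing pieces after it, we have $\gdev_1,\gdev_2>0$ and $\gdev_3,\gdev_4<0$; these sign conditions are the only non-algebraic input, and they will be used to resolve the ambiguity coming from a squared relation.

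First I would prove $\gdev_3=-\gdev_1$. The two relations $\sdev_4\cdot\gdev_1=\gdev_3\cdot\sdev_5$ and $\sdev_5\cdot\gdev_1=\gdev_3\cdot\sdev_4$ form a ``crossed'' pair in $\sdev_4,\sdev_5$; multiplying them gives $\sdev_4\sdev_5\,\gdev_1^2=\sdev_4\sdev_5\,\gdev_3^2$, and cancelling the nonzero factor $\sdev_4\sdev_5$ yields $\gdev_1^2=\gdev_3^2$. Hence $\gdev_3=\pm\gdev_1$, and since $\gdev_1>0>\gdev_3$ the minus sign must hold, giving $\gdev_3=-\gdev_1$.

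For the second equality I would use the last relation $\sdev_6\cdot\gdev_4=\gdev_4\cdot\sdev_1$: cancelling $\gdev_4\neq0$ gives $\sdev_6=\sdev_1$. Next, the relation $\sdev_6\cdot\gdev_3=\gdev_4\cdot\sdev_2$ together with $\gdev_3=-\gdev_1$ and $\sdev_6=\sdev_1$ yields $\gdev_4=-\sdev_1\gdev_1/\sdev_2$. Finally, the first relation $\sdev_1\cdot\gdev_2=\gdev_1\cdot\sdev_2$ gives $\sdev_1/\sdev_2=\gdev_1/\gdev_2$, and substituting this into the previous line produces $\gdev_4=-\gdev_1^2/\gdev_2$, as claimed. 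The whole argument is essentially bookkeeping; the only genuine decision is which of the eleven identities to combine, and the only place where anything beyond pure algebra enters is the appeal to unimodality to fix the sign in $\gdev_3=-\gdev_1$.
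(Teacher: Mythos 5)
Your proof is correct. Note that the paper states Lemma~\ref{lema:3.5} with no proof at all (it is one of several routine algebraic consequences the author leaves to the reader), so there is no authorial argument to compare against; your chain of deductions --- multiplying the crossed pair $\sdev_4\cdot\gdev_1=\gdev_3\cdot\sdev_5$, $\sdev_5\cdot\gdev_1=\gdev_3\cdot\sdev_4$ to get $(\gdev_1)^2=(\gdev_3)^2$, then cancelling $\gdev_4$ in $\sdev_6\cdot\gdev_4=\gdev_4\cdot\sdev_1$ and combining $\sdev_6\cdot\gdev_3=\gdev_4\cdot\sdev_2$ with $\sdev_1\cdot\gdev_2=\gdev_1\cdot\sdev_2$ --- closes the gap completely. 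Your observation that pure algebra cannot exclude $\gdev_3=+\gdev_1$ (all slopes equal satisfies every relation of Lemma~\ref{lema:3.4}), so that the unimodality sign condition $\gdev_1>0>\gdev_3$ is genuinely needed, is a point the paper silently glosses over.
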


\begin{lemma}\label{lema:3.6}
Equalities of Lemma~\ref{lema:3.4} imply $\sdev_2 =
\frac{\sdev_1\cdot \gdev_2}{\gdev_1} $, \hskip 5mm $\sdev_3 =
 \frac{-\sdev_1\cdot \gdev_2}{\gdev_1}
$, \hskip 5mm $ \sdev_4 =
 \frac{-\sdev_1\cdot \gdev_1}{\gdev_2}
$, \hskip 5mm $ \sdev_5 =
 \frac{\sdev_1\cdot \gdev_1}{\gdev_2}
$ and $\sdev_6 = \sdev_1.$
\end{lemma}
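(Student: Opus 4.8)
The plan is to exploit the fact that the first five equalities of Lemma~\ref{lema:3.4} form a chain in which each successive slope $\sdev_{i+1}$ of $\psi$ is determined by $\sdev_1$ together with the slopes of $g$. Each of these five relations has the shape $\sdev_j\cdot(\text{a slope of }g) = (\text{a slope of }g)\cdot\sdev_{j+1}$, so once the slopes of $g$ are known one can solve for the $\sdev_i$ one at a time by back-substitution. Since $g$ and $\psi$ are piecewise linear surjections, every slope is nonzero, so all the divisions below are legitimate.

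First I would read off $\sdev_2$ directly from the first equality $\sdev_1\gdev_2 = \gdev_1\sdev_2$, giving $\sdev_2 = \sdev_1\gdev_2/\gdev_1$. Substituting this into the second equality $\sdev_2\gdev_3 = \gdev_1\sdev_3$ and invoking $\gdev_3 = -\gdev_1$ from Lemma~\ref{lema:3.5} yields $\sdev_3 = \sdev_2\gdev_3/\gdev_1 = -\sdev_1\gdev_2/\gdev_1$. Next, the third equality $\sdev_2\gdev_4 = \gdev_2\sdev_4$ together with $\gdev_4 = -\gdev_1^2/\gdev_2$ gives $\sdev_4 = \sdev_2\gdev_4/\gdev_2 = -\sdev_1\gdev_1/\gdev_2$. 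The fourth equality $\sdev_3\gdev_4 = \gdev_2\sdev_5$ then produces $\sdev_5 = \sdev_3\gdev_4/\gdev_2 = \sdev_1\gdev_1/\gdev_2$, and finally the fifth equality $\sdev_3\gdev_3 = \gdev_2\sdev_6$ gives $\sdev_6 = \sdev_3\gdev_3/\gdev_2 = \sdev_1$. These are exactly the five claimed formulas.

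Because each step only back-substitutes an already computed quantity, there is essentially no obstacle once Lemma~\ref{lema:3.5} is in hand: the genuine content of the argument is the elimination of $\gdev_3$ and $\gdev_4$ carried out there, and the present lemma is pure bookkeeping along the chain formed by the first five relations. The one point worth flagging is that the remaining equalities (six through eleven) of Lemma~\ref{lema:3.4} play no role in the derivation; the system is overdetermined, and those relations are automatically consistent with the formulas just obtained. If a sanity check is desired, one may verify a representative of them — for instance $\sdev_6\gdev_4 = \gdev_4\sdev_1$, which, after cancelling the nonzero factor $\gdev_4$, reduces precisely to $\sdev_6 = \sdev_1$ — confirming that no contradiction has been introduced.
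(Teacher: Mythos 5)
Your derivation is correct, and since the paper states this lemma without any proof (treating it as routine), your chain of back-substitutions — reading $\sdev_2$ off the first equality, then using equalities two through five together with $\gdev_3=-\gdev_1$ and $\gdev_4=-(\gdev_1)^2/\gdev_2$ from Lemma~\ref{lema:3.5} to obtain $\sdev_3,\sdev_4,\sdev_5,\sdev_6$ — is exactly the intended argument. Your appeal to Lemma~\ref{lema:3.5} is legitimate, since its conclusions are themselves consequences of the equalities of Lemma~\ref{lema:3.4}, so no hypothesis beyond those equalities (and the nonvanishing of the slopes of the piecewise linear surjections) is used.
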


\begin{lemma}
Equations of Lemmas~\ref{lema:3.5} and~\ref{lema:3.6} imply the
equations of Lemma~\ref{lema:3.4}
\end{lemma}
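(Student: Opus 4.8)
The plan is to argue by direct substitution. Lemmas~\ref{lema:3.5} and~\ref{lema:3.6} express each of the seven dependent slopes $\gdev_3,\gdev_4,\sdev_2,\sdev_3,\sdev_4,\sdev_5,\sdev_6$ explicitly in terms of the three free quantities $\gdev_1,\gdev_2,\sdev_1$, so it suffices to insert these expressions into each of the eleven equalities of Lemma~\ref{lema:3.4} and check that every one of them collapses to an identity in $\gdev_1,\gdev_2,\sdev_1$. Before starting I would note that $\gdev_1$ and $\gdev_2$, being slopes of non-degenerate linear pieces of the unimodal map $g$, are both nonzero, so that the divisions by $\gdev_1$ and $\gdev_2$ occurring in $\gdev_4=-(\gdev_1)^2/\gdev_2$ and in the formulas of Lemma~\ref{lema:3.6} are legitimate.

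The verification itself is mechanical. For example, the first equation $\sdev_1\gdev_2=\gdev_1\sdev_2$ becomes $\sdev_1\gdev_2=\gdev_1\cdot(\sdev_1\gdev_2/\gdev_1)$ upon substituting $\sdev_2$, which is an identity; the second, $\sdev_2\gdev_3=\gdev_1\sdev_3$, turns both sides into $-\sdev_1\gdev_2$; and the third, $\sdev_2\gdev_4=\gdev_2\sdev_4$, turns both sides into $-\sdev_1\gdev_1$ after cancelling the common factor $\gdev_2$. I would then run through the remaining equations in the same way: each of equations four through ten reduces, after cancelling the factors $\gdev_1$ and $\gdev_2$, to one of the monomials $\pm\sdev_1\gdev_1$, $\pm\sdev_1\gdev_2$ or $\pm\sdev_1(\gdev_1)^2/\gdev_2$ appearing identically on both sides, while the eleventh equation $\sdev_6\gdev_4=\gdev_4\sdev_1$ is immediate from $\sdev_6=\sdev_1$ and does not even require the value of $\gdev_4$.

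I do not anticipate any real obstacle, since the statement is a finite piece of rational-function algebra. The only thing to watch is bookkeeping: one must treat all eleven relations without omission and keep the signs consistent, as several of the substituted slopes (for instance $\sdev_2$ and $\sdev_3$, or $\sdev_4$ and $\sdev_5$) differ only by a sign. It is worth remarking that after all substitutions the parameters $\gdev_1,\gdev_2,\sdev_1$ remain entirely free; together with the earlier implications from Lemma~\ref{lema:3.4} to Lemmas~\ref{lema:3.5} and~\ref{lema:3.6}, this establishes that the two systems of equations are equivalent and therefore describe the same family of restricted maps $g$ and $\psi$.
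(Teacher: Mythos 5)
Your proposal is correct: substituting the expressions for $\gdev_3,\gdev_4,\sdev_2,\ldots,\sdev_6$ from Lemmas~\ref{lema:3.5} and~\ref{lema:3.6} into each of the eleven relations of Lemma~\ref{lema:3.4} does reduce every one of them to an identity in $\gdev_1,\gdev_2,\sdev_1$ (with the harmless caveat about nonzero slopes). The paper states this lemma with no proof at all, implicitly treating it as exactly this routine verification, so your argument coincides with the intended one and in fact supplies the details the paper omits.
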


\begin{lemma}\label{lema:3.8}
If $g$ be restricted by the lattice at Fig.~\ref{fig:09}. Then
$\gdev_1 = 2$, \hskip 5mm $\gdev_3 =-2$ and $\gdev_4 =
\frac{-4}{\gdev_2}.$
\end{lemma}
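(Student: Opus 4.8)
The plan is to reduce everything to the single claim $\gdev_1 = 2$ and then read off $\gdev_3$ and $\gdev_4$ from the previous lemma. Indeed, Lemma~\ref{lema:3.5} already supplies $\gdev_3 = -\gdev_1$ and $\gdev_4 = -(\gdev_1)^2/\gdev_2$; substituting $\gdev_1 = 2$ into these two identities gives at once $\gdev_3 = -2$ and $\gdev_4 = -4/\gdev_2$, which are the remaining two assertions. Thus the whole proof rests on extracting the value of $\gdev_1$, and for this the slope relations of Lemma~\ref{lema:3.4} (equivalently Lemmas~\ref{lema:3.5} and~\ref{lema:3.6}) are \emph{not} enough: they only cut the slopes down to the free parameters $\gdev_1,\gdev_2$ (together with $\sdev_1$ for $\psi$), and in particular leave $\gdev_2$ free, in agreement with the statement. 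The missing equation must therefore come, exactly as in Proposition~\ref{prop:01}, from the fact that $g$ is unimodal and surjective, i.e. that both its domain and its range fill all of $[0,1]$.

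First I would fix the geometry of $g$. Since $g$ is unimodal with $g(0)=g(1)=0$, its graph rises from $(0,0)$ to the maximum $(v,1)$ along two linear pieces of slopes $\gdev_1,\gdev_2$ and then falls back to $(1,0)$ along two linear pieces of slopes $\gdev_3,\gdev_4$. Write $w_1,w_2$ for the lengths of the $x$-projections of the two increasing pieces and $w_3,w_4$ for those of the two decreasing pieces. By Lemma~\ref{lema:2.2} the determinating lattice of Figure~\ref{fig:09} reconstructs $g$ completely, and reading the positions of its kinks off the lattice shows that on each monotone branch the kink sits at the midpoint of the branch, i.e. $w_1=w_2$ and $w_3=w_4$ (equivalently, the two kink heights equal $m=\gdev_1/(\gdev_1+\gdev_2)$). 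This is the one genuinely geometric input, and it is where the combinatorics of the lattice, rather than the slope equations alone, is used.

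Next I would write down the surjectivity of the two branches together with the normalisation of the domain:
\begin{equation*}
\gdev_1 w_1 + \gdev_2 w_2 = 1, \qquad -\gdev_3 w_3 - \gdev_4 w_4 = 1, \qquad w_1 + w_2 + w_3 + w_4 = 1 .
\end{equation*}
Substituting $w_1=w_2$, $w_3=w_4$ and the identities $\gdev_3=-\gdev_1$, $\gdev_4=-(\gdev_1)^2/\gdev_2$ of Lemma~\ref{lema:3.5}, the first equation gives $w_1 = 1/(\gdev_1+\gdev_2)$ and the second gives $w_3 = \gdev_2/\bigl(\gdev_1(\gdev_1+\gdev_2)\bigr)$. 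Then the total length collapses:
\begin{equation*}
w_1 + w_2 + w_3 + w_4 = 2w_1 + 2w_3 = \frac{2}{\gdev_1+\gdev_2} + \frac{2\gdev_2}{\gdev_1(\gdev_1+\gdev_2)} = \frac{2}{\gdev_1},
\end{equation*}
which is independent of the free parameter $\gdev_2$. Equating this to $1$ forces $\gdev_1 = 2$, and with Lemma~\ref{lema:3.5} the proof is complete.

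The hard part is the geometric input of the second paragraph: justifying from the determinating lattice of Figure~\ref{fig:09} that the kink of each monotone branch of $g$ lands at the midpoint of that branch (equivalently, pinning the kink heights to $\gdev_1/(\gdev_1+\gdev_2)$). Once that incidence is established the rest is the short length computation above, and the cancellation that makes the total width collapse to $2/\gdev_1$ is precisely what lets $\gdev_1=2$ drop out cleanly, mirroring the computation behind Proposition~\ref{prop:01}.
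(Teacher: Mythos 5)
Your reduction to the single claim $\gdev_1=2$ (with $\gdev_3$ and $\gdev_4$ then read off from Lemma~\ref{lema:3.5}) matches the paper, and your closing length computation is algebraically correct. The gap is precisely the step you call the ``one genuinely geometric input'': the assertion that the determinating lattice of Figure~\ref{fig:09} forces the kink of each monotone branch of $g$ to lie at the midpoint of that branch, i.e. $w_1=w_2$ and $w_3=w_4$. This cannot be ``read off'' the lattice: a determinating lattice records only incidence data --- which kinks of $g$ and $\psi$, viewed in the four quadrants, lie on common trajectory lines; this is what the sets $\mathcal{P}$ and $\mathcal{Q}$ encode --- whereas the midpoint statement is metric. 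Worse, given Lemma~\ref{lema:3.5}, branch surjectivity and the domain normalisation, your midpoint condition is equivalent to the conclusion $\gdev_1=2$ itself: your computation proves one implication, and Lemma~\ref{lema:3.9} (whose hypotheses are exactly the equalities of Lemma~\ref{lema:3.8}) gives the converse, since there $A_1=(a/2,a)$, $A_2=(a,1)$, $A_3=((a+1)/2,a)$. So the unproved step carries all of the content, and the argument as written begs the question; you acknowledge this yourself by deferring it as ``the hard part''.

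What the lattice actually supplies, and what the paper uses instead, is the incidence $B_2=C_1$ recorded in $\mathcal{Q}$: the same kink of $\psi$ seen in the quadrants $x\times\psi$ and $g\times y$. Translated to the graph of $g$, it says that the $x$-coordinate of the maximum $A_2$ equals the $y$-coordinate (the height $a$) of the first kink $A_1$, that is
$$
\frac{a}{\gdev_1}+\frac{1-a}{\gdev_2}=a ,
$$
which is a genuinely different constraint from $w_1=w_2$. The paper combines this with the domain-length equation
$$
\frac{a}{\gdev_1}+\frac{1-a}{\gdev_2}-\frac{1-a}{\gdev_3}-\frac{a}{\gdev_4}=1
$$
(here the lattice does legitimately give that both interior kinks of $g$ sit at the same height $a$, since they lie on one horizontal lattice line), rewrites both via Lemma~\ref{lema:3.5}, and eliminates $a$ to obtain $(\gdev_1-1)^2=1$, hence $\gdev_1=2$. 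To repair your argument you would either have to derive $w_1=w_2$ and $w_3=w_4$ from incidences of this kind --- which amounts to redoing the paper's elimination --- or simply replace the midpoint claim by the incidence $B_2=C_1$ and its consequence displayed above; with that substitution the rest of your write-up goes through.
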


Denote by $a$ is the $y$-coordinate of the first kink of $g$.
Since the length of the domain of $g$ equals $1$, then
\begin{equation}\label{eq:3.2}
\frac{a}{\gdev_1} + \frac{1-a}{\gdev_2} -\frac{1-a}{\gdev_3}
-\frac{a}{\gdev_4} =1.
\end{equation}
Using Lemma~\ref{lema:3.5} rewrite~\eqref{eq:3.2} as
$$ \frac{a}{\gdev_1} + \frac{1-a}{\gdev_2}
+\frac{1-a}{\gdev_1} +\frac{a\cdot \gdev_2}{(\gdev_1)^2} =1,
$$ which is the same as
\begin{equation}\label{eq:3.3}
a\cdot \left( \frac{\gdev_2}{(\gdev_1)^2}
-\frac{1}{\gdev_2}\right) = 1 - \frac{1}{\gdev_1}
-\frac{1}{\gdev_2} \end{equation}

Since $B_2$ and $C_1$ are the same points, then $x$-coordinate of
$A_2$ is the same as $y$-coordinate of $A_1$. In other words,
$$
\frac{a}{\gdev_1} + \frac{1-a}{\gdev_2} = a,
$$
which can be rewritten as
\begin{equation}\label{eq:3.4}
a\cdot\left( 1 -\frac{1}{\gdev_1}  +\frac{1}{\gdev_2}\right) =
\frac{1}{\gdev_2}
\end{equation}
Express the value of $a$ from~\eqref{eq:3.3} and~\eqref{eq:3.4},
whence
$$
\frac{\displaystyle{1 - \frac{1}{\gdev_1}
-\frac{1}{\gdev_2}}}{\displaystyle{\frac{\gdev_2}{(\gdev_1)^2}
-\frac{1}{\gdev_2}}}\, =
\frac{\displaystyle{\frac{1}{\gdev_2}}}{\displaystyle{1
-\frac{1}{\gdev_1} +\frac{1}{\gdev_2}}}
$$
This expression can be simplified to
$$
(\gdev_1 -1)^2 =1,
$$ whence $\gdev_1 = 2$. Now the lemma follows from
Lemma~\ref{lema:3.5} and equality~\eqref{eq:3.4}.

\begin{lemma}\label{lema:3.9}
Let $a$ be the $y$-coordinate of the first kink of a piecewise
linear unimodal map $g$, and let $\gdev_1,\ldots, \gdev_4$ be the
slopes of $g$. If equalities of Lemma~\ref{lema:3.8} hold, then
$$ g:\, (0, 0)\rightarrow A_1\left(\frac{a}{2}, a\right) \rightarrow
A_2\left(a, 1\right) \rightarrow A_3\left(\frac{a+1}{2}, a\right)
\rightarrow (1, 0).
$$
\end{lemma}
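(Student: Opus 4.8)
The plan is to locate the three interior kinks of $g$ one at a time, using the slope values supplied by Lemma~\ref{lema:3.8} together with the boundary data $g(0)=g(1)=0$ and the fact that the maximum value is $1$. I would write the kinks as $A_1,A_2,A_3$ in the $(x,g(x))$-plane, so that $g$ is linear with slope $\gdev_i$ on the $i$-th of the four resulting segments, and then read off each kink's coordinates from the adjacent slopes.

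First I would pin down $A_1$. Since $g(0)=0$ and $g$ has slope $\gdev_1=2$ on its first segment, there $g(x)=2x$; as the first kink has $y$-coordinate $a$ by the definition of $a$, its $x$-coordinate is $a/\gdev_1=a/2$, giving $A_1=(a/2,\,a)$. Next, the second kink $A_2$ is the maximum, so its $y$-coordinate is $1$ and its $x$-coordinate is $g^{-1}(1)$. The coincidence of $B_2$ and $C_1$ underlying~\eqref{eq:3.4} forces $g^{-1}(1)=a$, so $A_2=(a,1)$; reading off the slope of the second segment then gives $\gdev_2=(1-a)/(a-a/2)=2(1-a)/a$, and hence $\gdev_4=-4/\gdev_2=-2a/(1-a)$ by Lemma~\ref{lema:3.8}.

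The remaining, and only nonroutine, step is to determine $A_3=(x_3,y_3)$. It lies on the third segment, which leaves $(a,1)$ with slope $\gdev_3=-2$, so $y_3=1-2(x_3-a)$; it also lies on the fourth segment, which joins $(x_3,y_3)$ to $(1,0)$ with slope $\gdev_4=-2a/(1-a)$, so $-y_3=\gdev_4(1-x_3)$. Eliminating $y_3$ and clearing denominators, the quadratic terms cancel and one is left with $(1-2a)(1+a)=2(1-2a)\,x_3$. Dividing by $1-2a$ gives $x_3=(a+1)/2$, and back-substitution gives $y_3=a$, so $A_3=((a+1)/2,\,a)$, as claimed.

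The hard part is precisely this final elimination: one must recognize the factorization $1-a-2a^2=(1-2a)(1+a)$ so that the cancellation producing $x_3=(a+1)/2$ becomes visible, and one must treat the degenerate value $a=1/2$ separately. At $a=1/2$ one has $\gdev_2=\gdev_4=2$, all four slopes equal $\pm 2$, and $g$ is simply the tent map; the stated formula still records its graph correctly, with $A_1$ and $A_3$ ceasing to be genuine kinks.
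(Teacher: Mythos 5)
Your proposal is correct, and for $A_1$ and $A_2$ it runs parallel to the paper's own proof: $A_1=\left(\frac{a}{2},a\right)$ from $\gdev_1=2$, and $A_2=(a,1)$ from the coincidence $B_2=C_1$ behind~\eqref{eq:3.4} (the paper encodes the same fact as ``$\gdev_2=\frac{2}{a}-2$'' and deduces that the $x$-coordinate of $A_2$ is $a$; you go in the opposite direction, from $g^{-1}(1)=a$ to $\gdev_2=\frac{2(1-a)}{a}$). The genuine difference is at $A_3$. The paper imports one more fact from the lattice --- that $A_3$ has the same $y$-coordinate as $A_1$ --- and then needs only $\gdev_3=-2$ to get $x_3=a+\frac{1-a}{2}=\frac{a+1}{2}$; it never uses the equality $\gdev_4=-4/\gdev_2$ at all (and its final display ``$A_3\left(\frac{3a-1}{2},a\right)$'' is a typo for $\frac{a+1}{2}$). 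You instead intersect the slope-$\gdev_3$ line through $(a,1)$ with the slope-$\gdev_4$ line through $(1,0)$, so the height $y_3=a$ is \emph{derived} rather than assumed; the cost is the factorization $1-a-2a^2=(1-2a)(1+a)$ and the degenerate case $a=\frac{1}{2}$, which you correctly isolate (and which in fact cannot occur when $A_1$ is a genuine kink, since then $\gdev_2=\frac{2(1-a)}{a}\neq 2=\gdev_1$ forces $a\neq\frac{1}{2}$). Your variant thus uses the full strength of Lemma~\ref{lema:3.8} and one fewer reading from Figure~\ref{fig:09}, which is a real gain. Finally, your explicit appeal to $B_2=C_1$ is not an extra assumption relative to the paper: the three slope equalities of Lemma~\ref{lema:3.8} alone do not determine $g$ --- for instance $(0,0)\rightarrow\left(\frac{1}{5},\frac{2}{5}\right)\rightarrow\left(\frac{7}{20},1\right)\rightarrow\left(\frac{7}{10},\frac{3}{10}\right)\rightarrow(1,0)$ has slopes $2,4,-2,-1$, hence satisfies all three, yet is not of the claimed form --- so some relation like~\eqref{eq:3.4} must be invoked; the paper does this tacitly through the unexplained step $\gdev_2=\frac{2}{a}-2$, whereas you do it openly, which is preferable.
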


\begin{proof}
Since $a$ is denoted the $y$-coordinate of the first king of $g$,
we can write the coordinates of $A_1$, $A_2$ and $A_3$ as follows.
The coordinates of $A_1$ are $$A_1\left(\frac{a}{2}, a\right),$$
since $g'(0)=2$. Since $g$ has tangent $\gamma_2$ on the interval
$A_1A_2$ with the length of $y$-projection $1-a$, then the
$x$-projection of $A_1A_2$ is $\frac{1-a}{\gamma_2}$. Thus, $$
A_2\left(\frac{a}{2} +\frac{1-a}{\gamma_2}, 1\right).
$$

Since $\gamma_2 = \frac{2}{a} -2 = \frac{2(1-a)}{a}$, then $A_2$
has coordinates
$$ A_2\left(a, 1\right).
$$

We can compute the $x$-coordinate of $A_3$ as $$a -
\frac{1-a}{\gamma_3} =a +\frac{1-a}{2} =\frac{a+1}{2}.
$$ Since $A_1$ and $A_3$ has the same $y$-coordinate, then $$
A_3\left(\frac{3a-1}{2}, a\right).
$$
\end{proof}

\begin{lemma}\label{lema:3.10}
Let $a$ be the $y$-coordinate of first kink of a piecewise linear
unimodal map $g$, and $\gdev_1,\ldots, \gdev_4$ be the slopes of
$g$. Then $g$ is restricted by the lattice at Figure~\ref{fig:09}
if and only if equalities of Lemma~\ref{lema:3.8} hold.
\end{lemma}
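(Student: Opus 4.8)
The statement is an equivalence, so I would prove the two implications separately. The forward implication --- that a map $g$ restricted by the lattice of Figure~\ref{fig:09} must satisfy $\gdev_1 =2$, $\gdev_3 =-2$ and $\gdev_4 = -4/\gdev_2$ --- is precisely the content of Lemma~\ref{lema:3.8}, obtained there from the slope relations of Lemma~\ref{lema:3.4} together with Lemma~\ref{lema:3.5}. Hence nothing new is required in that direction, and the work lies entirely in the converse.

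For the converse I would start from a unimodal $g$ whose four slopes satisfy the equalities of Lemma~\ref{lema:3.8}. By Lemma~\ref{lema:3.9} such a $g$ has the explicit form $g:\,(0,0)\to(a/2,a)\to(a,1)\to((a+1)/2,a)\to(1,0)$ with $\gdev_2 = 2(1-a)/a$ for some $a\in(0,1)$, so $g$ is described by the single parameter $a$. The task is then to exhibit sets $X,\Psi,G,Y$ concordant with the pairs $\mathcal{P}$ and $\mathcal{Q}$ read off from Figure~\ref{fig:09}, defining a commutative pair whose first member is this $g$. I would construct the partner $\psi$ by prescribing its six slopes $\sdev_1,\ldots,\sdev_6$: choose $\sdev_1>0$ and then set $\sdev_2,\ldots,\sdev_6$ by the formulas of Lemma~\ref{lema:3.6}. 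Since the equalities of Lemma~\ref{lema:3.8} are a special case of those of Lemma~\ref{lema:3.5}, the hypotheses of the (unlabelled) lemma asserting that the relations of Lemmas~\ref{lema:3.5} and~\ref{lema:3.6} imply the relations of Lemma~\ref{lema:3.4} are met; thus all eleven slope identities of Lemma~\ref{lema:3.4} hold. Each of those identities is exactly the proportionality forced by Lemma~\ref{lema:2.3} at a pair of points of the lattice, so by Lemma~\ref{lema:2.4} the constructed $\psi$ commutes with $g$, and by the construction of the \SAT s the determinating lattice of $(g,\psi)$ is the one of Figure~\ref{fig:09} with kink data $\mathcal{P},\mathcal{Q}$.

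The step I expect to be the main obstacle is checking that the six prescribed slopes actually assemble into a legitimate map $\psi$ of the combinatorial type demanded by Figure~\ref{fig:09}: one must fix the free scale $\sdev_1$ so that $\psi(0)=0$ and each maximal monotonicity segment of $\psi$ is mapped onto all of $[0,1]$, as required by Lemma~\ref{lema:2.1} and condition~7 of Definition~\ref{def:2.11}, and one must verify that the resulting break abscissae lie in $(0,1)$ in the prescribed order with no spurious kinks. This is a ``closing-up'' computation: the surjectivity of each arm and the normalization $\psi(1)\in\{0,1\}$ impose linear constraints on the break points, and one has to confirm that the slope ratios forced by Lemma~\ref{lema:3.6} are compatible with them for a suitable $\sdev_1$ depending on $a$. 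Once this compatibility is established the sets $X,\Psi,G,Y$ are concordant with $\mathcal{P}$ and $\mathcal{Q}$ by construction, so $g$ is restricted by the lattice, completing the converse and hence the equivalence.
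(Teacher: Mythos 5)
Your skeleton (forward direction delegated to Lemma~\ref{lema:3.8}; converse by building $\psi$ from the slope formulas of Lemma~\ref{lema:3.6}) matches the paper's, but the converse as you argue it has a genuine gap: you claim that once all eleven identities of Lemma~\ref{lema:3.4} hold, commutativity follows from Lemma~\ref{lema:2.4} and concordance with $\mathcal{P}$, $\mathcal{Q}$ holds ``by construction.'' That step would fail. The identities of Lemma~\ref{lema:3.4} are slope proportionalities, i.e.\ \emph{necessary} consequences of Lemma~\ref{lema:2.3}; they say nothing about the \emph{positions} of the kinks. What Lemma~\ref{lema:2.4} actually requires, and what concordance means, is that the pairs of points singled out by $\mathcal{P}$ and $\mathcal{Q}$ have equal coordinates ($B_i=C_j$, $A_i=D_j$), and this is exactly the content of the paper's proof: it computes the coordinates explicitly and finds that $B_4=C_2$ holds identically, whereas $B_6=C_3$ is \emph{not} automatic --- it forces $\sdev_1=3$ --- after which the equalities $B_8=C_4$, $B_{10}=C_5$ propagate, the total length of the domain of $\psi$ is checked to equal $1$, and $D_3$, $D_9$ are matched with $A_1$, $A_3$. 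Only then does Lemma~\ref{lema:2.4} apply.

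Relatedly, you treat $\sdev_1$ as a free scale to be tuned ``depending on $a$'' in a closing-up computation about surjectivity of the arms of $\psi$. In fact $\sdev_1$ is rigid: the coordinate-matching condition $B_6=C_3$ pins it to the value $3$ independently of $a$, and the normalization (domain of $\psi$ of length $1$) then holds identically rather than imposing a constraint. So the ``main obstacle'' you flag is real, but it is mislocated (it is about kink coincidence, not about assembling a legitimate $\psi$) and it is left unexecuted; since this computation is the entire substance of the equivalence, the proposal as written does not establish the lemma.
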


\begin{proof}
Since formulas of Lemma~\ref{lema:3.8} are obtained during its
proof constructively from the description of Figure~\ref{fig:09},
then the unique fact, which is necessary to prove is that pairs of
points, determined by the sets $\mathcal{P}$ and $\mathcal{Q}$,
consist of the pairs points with equal coordinates.

Thus, we have to compute the explicit coordinates of points $B_2,
B_4, B_6, B_8$ and $B_{10}$, and compare them with $C_1, C_2, C_3,
C_4,$ and $C_5$. Also we have to compare the coordinates of points
$A_1, A_2$ and $A_3$ with $D_3, D_6$ and $D_9$ respectively.

$B_4\left(\frac{\frac{a}{\gdev_1} + \frac{1-a}{\gdev_2}}{s_1} +
\frac{1 -\frac{a}{\gdev_1} -\frac{1-a}{\gdev_2}}{s_2}, 1\right)$

$$
C_2\left(\frac{a}{s_1} +\frac{1-a}{s_2}, 1\right).
$$

Since $B_4$ and $C_2$ is the same point on the graph of $\psi$,
then $$ \frac{\frac{a}{\gdev_1} + \frac{1-a}{\gdev_2}}{s_1} +
\frac{1 -\frac{a}{\gdev_1} -\frac{1-a}{\gdev_2}}{s_2} =
\frac{a}{s_1} +\frac{1-a}{s_2}
$$
Using Lemmas~\ref{lema:3.6} and~\ref{lema:3.8} rewrite the latter
equality:
$$ \frac{\frac{\frac{2}{\gdev_2 +2}}{2}
+\frac{1-\frac{2}{\gdev_2 +2}}{\gdev_2}}{s_1} + \frac{1
-\frac{\frac{2}{\gdev_2 +2}}{2} -\frac{1-\frac{2}{\gdev_2
+2}}{\gdev_2}}{\frac{\sdev_1\cdot \gdev_2}{2}} = \frac{2}{\gdev_2
+2}\cdot \frac{1}{ \sdev_1} +\frac{1-\frac{2}{\gdev_2
+2}}{\frac{\sdev_1\cdot \gdev_2}{2}}.
$$
This can be simplified to
$$ \frac{1}{\gdev_2 +2} +\frac{1}{\gdev_2 +2}
+\left(1 -\frac{1}{\gdev_2 +2} -\frac{1}{\gdev_2 +2}\right)\cdot
\frac{2}{\gdev_2}
 = \frac{2}{\gdev_2 +2}
+\left(1-\frac{2}{\gdev_2 +2}\right)\cdot \frac{2}{\gdev_2},
$$ which is the correct equality.

Calculate $$B_7\left(\frac{a}{\gdev_1} +\frac{1-a}{\gdev_2},
\frac{a}{\gdev_1} +\frac{1-a}{\gdev_2}\right)$$ and
$$C_3\left(
\frac{a}{\sdev_1} + \frac{1-a}{\sdev_2} -\frac{1-a}{\sdev_3},
\frac{a}{\gdev_1} +\frac{1-a}{\gdev_2}\right).$$ Since $B_6$ and
$C_3$ are the same point, then $$ \frac{a}{\gdev_1}
+\frac{1-a}{\gdev_2} = \frac{a}{\sdev_1} + \frac{1-a}{\sdev_2}
-\frac{1-a}{\sdev_3}.
$$
Using Lemmas~\ref{lema:3.6} and~\ref{lema:3.8},  obtain
$$ \frac{2}{\gdev_2 +2}\cdot \frac{1}{2} +\frac{1-\frac{2}{\gdev_2
+2}}{\gdev_2} = \frac{2}{\gdev_2 +2}\cdot\frac{1}{\sdev_1} +
\frac{1-\frac{2}{\gdev_2 +2}}{\frac{\sdev_1\cdot \gdev_2}{2}}
-\frac{1-\frac{2}{\gdev_2 +2}}{\frac{-\sdev_1\cdot \gdev_2}{2}}.
$$$$ \frac{1}{\gdev_2 +2}
+\frac{1}{\gdev_2 +2} = \frac{2}{\gdev_2 +2}\cdot\frac{1}{\sdev_1}
+ \frac{2}{\sdev_1\cdot (\gdev_2 +2)} + \frac{2}{\sdev_1\cdot
(\gdev_2 +2)}
$$ $$ 2 = \frac{6}{\sdev_1}
+ \frac{4}{\sdev_1}
$$ $$
\sdev_1 =3.
$$

Now, $\overrightarrow{OB_6} =\overrightarrow{OC_3}$ implies
$\overrightarrow{OB_8} = \overrightarrow{OC_4}.$

Analogously, $\overrightarrow{OB_8} =\overrightarrow{OC_4}$
implies $\overrightarrow{OB_{10}} =\overrightarrow{OC_4}$

We will need for this purpose the slopes $\sdev_1,\ldots, \sdev_6$
of the linear parts of the map $\psi$.

Since the length of the domain of $\psi$ equals to $1$, then
$$
\left(\frac{a}{\gdev_1} +
\frac{1-a}{\gdev_2}\right)\cdot\left(\frac{1}{\sdev_1}
-\frac{1}{\sdev_4} +\frac{1}{\sdev_5}\right)
-\left(\frac{1-a}{\gdev_3} +\frac{a}{\gdev_4}\right)\cdot
\left(\frac{1}{\sdev_2} -\frac{1}{\sdev_3}
+\frac{1}{\sdev_6}\right) =1
$$
Using Lemma~\ref{lema:3.6} and~\ref{lema:3.8}, continue:
$$ \left(\frac{2}{\gdev_2 +2}\cdot
\frac{1}{2} +
\frac{1-\frac{2}{\gdev_2+2}}{\gdev_2}%
\right)\cdot\left(\frac{1}{3}
+\frac{\gdev_2}{6} +%
\frac{\gdev_2}{6}%
\right) -$$$$-\left(\frac{1-\frac{2}{\gdev_2 +2}}{-2}
+\frac{\frac{2}{\gdev_2 +2}\cdot \gdev_2}{-4}\right)\cdot
\left(\frac{2}{3\cdot \gdev_2} -\frac{2}{-3\cdot \gdev_2}
+\frac{1}{3}\right) =
$$$$ =\left(\frac{1}{\gdev_2 +2}+%
\frac{1}{\gdev_2+2}%
\right)\cdot\left(\frac{1}{3}
+\frac{\gdev_2}{3}%
\right) +\left(\frac{\gdev_2}{2\cdot (\gdev_2+2)}
+\frac{\gdev_2}{2\cdot (\gdev_2+2)}\right)\cdot
\left(\frac{4}{3\cdot \gdev_2} +\frac{1}{3}\right) =
$$$$
=\frac{2}{\gdev_2 +2}\cdot \frac{\gdev_2+1}{3}
+\frac{\gdev_2}{\gdev_2+2}\cdot \frac{4 +\gdev_2}{3\cdot \gdev_2}
=\frac{2\cdot (\gdev_2+1) +(4+\gdev_2)}{3\cdot (\gdev_2 +2)}=1
$$

Points $D_3$ and $D_9$ have the same coordinates as $A_1$ and
$A_3$, because the $y$-coordinates of $D_3$ and $D_9$ is the
$x$-coordinate of $C_3$, coordinates of $C_3$ are equal, and
$y$-coordinates of $C_3$ are the same as $y$-coordinates of $A_1$
and $A_3$.

Now the proposition follows from Lemma~\ref{lema:2.4}.
\end{proof}

\begin{example}\label{ex:3.11}
Describe all the maps $g$, which are restricted by the lattice
from Figure~\ref{fig:10}.
\end{example}

\begin{figure}[ht]
\begin{center}
\begin{picture}(400,400)
\put(190,190){\vector(0,1){190}} \put(190,190){\vector(1,0){190}}
\put(190,190){\vector(0,-1){190}}
\put(190,190){\vector(-1,0){190}}

\linethickness{0.3mm} \Vidr{190}{190}{370}{280} \VidrTo{190}{370}

\Vidr{190}{190}{130}{370} \VidrTo{70}{190} \VidrTo{10}{370}

\Vidr{190}{190}{100}{10} \VidrTo{10}{190}

\Vidr{190}{190}{370}{130} \VidrTo{190}{70} \VidrTo{370}{10}

\linethickness{0.1mm}

\put(370,280){\line(-1,0){330}} \put(40,280){\line(0,-1){150}}
\put(40,130){\line(1,0){330}} \put(160,280){\line(0,-1){150}}
\put(100,280){\line(0,-1){270}} \put(130,370){\line(0,-1){300}}
\put(70,190){\line(0,-1){120}} \put(70,70){\line(1,0){120}}

\put(280,325){\line(-1,0){255}} \put(280,325){\line(0,-1){285}}

\put(55,235){\line(1,0){225}} \put(55,235){\line(0,-1){135}}

\put(55,100){\line(1,0){225}}

\put(25,325){\line(0,-1){165}} \put(25,160){\line(1,0){255}}

\put(85,235){\line(0,-1){195}} \put(85,40){\line(1,0){195}}

\put(115,325){\line(0,-1){285}} \put(145,325){\line(0,-1){225}}
\put(175,235){\line(0,-1){75}}

\put(160,280){\circle*{5}}  \put(135,368){$B_4$}
\put(280,235){\circle*{5}} \put(284,228){$A_1$}
\put(280,160){\circle*{5}} \put(284,160){$C_1$}

\put(160,130){\circle*{5}} \put(160,120){$D_2$}
\put(40,130){\circle*{5}}

\put(130,370){\circle*{5}}  \put(370,280){\circle*{5}}
\put(370,130){\circle*{5}}  \put(100,10){\circle*{5}}
\put(100,280){\circle*{5}}  \put(280,325){\circle*{5}}
\put(280,100){\circle*{5}}  
\put(70,190){\circle*{5}}       \put(190,70){\circle*{5}}
\put(40,280){\circle*{5}} \put(280,40){\circle*{5}}

\end{picture}
\end{center}
\caption{} \label{fig:10}
\end{figure}

\begin{lemma}\label{lema:3.12}
If the lattice is as on the Figure~\ref{fig:10}, then $
\sdev_1\cdot \gdev_2 = \gdev_1\cdot \sdev_2$, \hskip 5mm $
\sdev_2\cdot \gdev_3 = \gdev_2 \cdot \sdev_3$, \hskip 5mm $
\sdev_2\cdot \gdev_4 = \gdev_2\cdot \sdev_4$, \hskip 5mm $
\sdev_3\cdot \gdev_4 = \gdev_2\cdot \sdev_5$, \hskip 5mm $
\sdev_3\cdot \gdev_3 = \gdev_2\cdot \sdev_6$, \hskip 5mm $
\sdev_4\cdot \gdev_2 = \gdev_3\cdot \sdev_6$, \hskip 5mm $
\sdev_4\cdot \gdev_1 = \gdev_3\cdot \sdev_5$, \hskip 5mm $
\sdev_5\cdot \gdev_1 = \gdev_3\cdot \sdev_4$, \hskip 5mm $
\sdev_5\cdot \gdev_2 = \gdev_3 \cdot \sdev_3$, \hskip 5mm $
\sdev_6\cdot \gdev_3 = \gdev_4\cdot \sdev_2$ and $ \sdev_6\cdot
\gdev_4 = \gdev_4\cdot \sdev_1$.
\end{lemma}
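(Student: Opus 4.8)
The plan is to proceed exactly as in Example~\ref{ex:3.1}: every single trajectory of the determining lattice couples one linear piece from each of the four graphs drawn in the quadrants $x\times\psi$, $\psi\times y$, $x\times g$ and $g\times y$, and forces a multiplicative relation between the corresponding slopes. Concretely, I would fix a single trajectory of the lattice and a neighbouring one obtained by moving the vertical line $x=x_0$ slightly, staying inside the same four linear pieces. If on the relevant cell $\psi$ has slope $\sdev_i$ in the quadrant $x\times\psi$, $g$ has slope $\gdev_j$ in the quadrant $\psi\times y$, $g$ has slope $\gdev_k$ in the quadrant $x\times g$, and $\psi$ has slope $\sdev_l$ in the quadrant $g\times y$, then computing the increment of $y=g(\psi(x))$ along the upper path, namely $\gdev_j\sdev_i(x_1-x_0)$, and the increment of $y=\psi(g(x))$ along the lower path, namely $\sdev_l\gdev_k(x_1-x_0)$, and equating them gives $\sdev_i\,\gdev_j=\gdev_k\,\sdev_l$. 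This slope identity is the quantitative content of Lemma~\ref{lema:2.3}: since the pair is restricted by the lattice the two compositions $g\circ\psi$ and $\psi\circ g$ are linear on the interval between the two trajectories and agree at its endpoints, hence have equal slope.

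Granting this, I would read off the eleven relations by walking along the determining lattice of Figure~\ref{fig:10} from the origin, trajectory by trajectory. For each trajectory I would determine four indices: which of $\sdev_1,\dots,\sdev_6$ is the slope of $\psi$ met in the $x\times\psi$ quadrant, which of $\gdev_1,\dots,\gdev_4$ is the slope of $g$ met in the $\psi\times y$ quadrant, which $\gdev$-index occurs in the $x\times g$ quadrant, and which $\sdev$-index occurs in the $g\times y$ quadrant; the trajectory then contributes the equation $\sdev_i\gdev_j=\gdev_k\sdev_l$. The first trajectory gives $\sdev_1\gdev_2=\gdev_1\sdev_2$, and continuing through the remaining non-boundary cells produces the equations in the order stated.

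The main obstacle is purely combinatorial bookkeeping, and it concerns orientation. On the descending branches of $g$ and of $\psi$ the order of the intersection points is reversed relative to the $x$-axis, so the piece of $g$ on which a given $\psi$-value lands, and the piece of $\psi$ on which a given $g$-value lands, must be identified with care. This is exactly where the present lemma differs from Lemma~\ref{lema:3.4}: although the lattices of Figures~\ref{fig:09} and~\ref{fig:10} look almost identical, the coupling of pieces across the diagonals differs on a few cells, which is why, for instance, the second relation here reads $\sdev_2\gdev_3=\gdev_2\sdev_3$ rather than $\sdev_2\gdev_3=\gdev_1\sdev_3$. For this reason I would not transcribe the equations from Lemma~\ref{lema:3.4} but re-extract each one directly from Figure~\ref{fig:10}, checking every index against the drawn trajectory.
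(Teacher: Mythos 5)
Your proposal is correct and takes essentially the same route as the paper: the paper gives no written proof for Lemma~\ref{lema:3.12}, deriving it (as with Lemmas~\ref{lema:3.2} and~\ref{lema:3.4}) by applying Lemma~\ref{lema:2.3} cell by cell to the lattice of Figure~\ref{fig:10}, which is precisely your trajectory-by-trajectory comparison yielding $\sdev_i\cdot\gdev_j=\gdev_k\cdot\sdev_l$ on each cell. Your insistence on re-extracting every index pair from Figure~\ref{fig:10} instead of transcribing Lemma~\ref{lema:3.4} is exactly the point where the two lattices differ (e.g.\ $\sdev_2\cdot\gdev_3=\gdev_2\cdot\sdev_3$ here versus $\gdev_1\cdot\sdev_3$ there), so the proposal is faithful to the paper's intent.
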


Equalities of Lemma~\ref{lema:3.12} imply $ \gdev_1 = \gdev_2$ and
$ \gdev_3 =\gdev_4 = -\gdev_1$, whence $g$ is the tent map. Since
$B_2$ has the same coordinate as $C_1$, then $A_1$ is a pre-image
of the maximum point of $g$, whence $A_1 = (1/4, 1/2)$. Now, $B_2
=(1/6,1/2)$, whence $D_2 =(1/3, 1/4)$. Since $D_2\neq A_1$, then
we have the next proposition.

\begin{proposition}\label{prop:03}
There is no commutative piecewise linear functions, which are
restricted by the lattice from Figure~\ref{fig:10}.
\end{proposition}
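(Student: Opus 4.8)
The plan is to show that the lattice of Figure~\ref{fig:10} over-determines $g$: its slope relations force $g$ to be the tent map, and then two lattice points that the figure requires to coincide turn out to be distinct, which is the contradiction. First I would start from the eleven slope identities of Lemma~\ref{lema:3.12} and solve for the slopes of $g$, exactly in the spirit of the passage from Lemma~\ref{lema:3.4} to Lemma~\ref{lema:3.5}. Multiplying and dividing matched pairs of these identities so that the factors $\sdev_i$ cancel, I expect to obtain $\gdev_1 = \gdev_2$ and $\gdev_3 = \gdev_4 = -\gdev_1$. The key structural consequence is that the two increasing pieces share a slope and the two decreasing pieces share a slope, so $g$ has no genuine kink inside either monotone branch and its graph is a symmetric tent.

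Next I would pin down $g$ numerically. Since $g$ is unimodal surjective with $g(0)=g(1)=0$ and range of length $1$, writing $a=g^{-1}(1)$ gives $\gdev_1\,a = 1$ and $-\gdev_1(1-a)=1$ (using $\gdev_3=-\gdev_1$), whence $a=\tfrac12$ and $\gdev_1=2$. Thus $g$ is precisely the tent map $x\mapsto 1-|1-2x|$, with peak at $\tfrac12$. This is the analogue of the deduction $\gdev_1=2$ carried out in Examples~\ref{ex:3.1} and~\ref{ex:3.3}, and I expect it to be routine once the slope ratios above are in hand.

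Then I would extract from Figure~\ref{fig:10} the forced coincidences recorded by the sets $\mathcal{P}$ and $\mathcal{Q}$, using the quadrant conventions of Notation~\ref{not:2.7}. The requirement that $B_2$ and $C_1$ be the same point on the graph of $\psi$ says, read in the $\psi\times y$ quadrant, that $A_1$ is the point whose $g$-image is the location $\tfrac12$ of the maximum; with $g$ the tent map this gives $A_1=(\tfrac14,\tfrac12)$. Propagating this datum through the lattice — computing the first coordinate of $B_2$ in the $x\times\psi$ quadrant and then the coordinates of $D_2$ in the $x\times g$ quadrant under $g(x)=1-|1-2x|$ — yields $B_2=(\tfrac16,\tfrac12)$ and $D_2=(\tfrac13,\tfrac14)$. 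But the pairing encoded in $\mathcal{P}$ (the marked coincidence $A_1\leftrightarrow D_2$ in the figure) demands $D_2=A_1=(\tfrac14,\tfrac12)$, and $(\tfrac13,\tfrac14)\neq(\tfrac14,\tfrac12)$. By Lemma~\ref{lema:2.4} the existence of a commutative pair restricted by the lattice would force all such prescribed coincidences, so this discrepancy rules out any such pair and proves the proposition.

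The main obstacle is not the algebra, which is short, but the bookkeeping: one must read the correct combinatorial pairing off Figure~\ref{fig:10} and keep the four quadrant coordinate conventions straight, since $A_i,B_i,C_i,D_i$ live in different quadrants and a single mis-assignment of which coordinate is $x$, $\psi$, $g$, or $y$ would corrupt the chain $A_1\to B_2\to D_2$. Once the tent-map slopes are fixed, verifying $D_2\neq A_1$ is immediate.
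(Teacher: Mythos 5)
Your proposal is correct and follows essentially the same route as the paper: the slope identities of Lemma~\ref{lema:3.12} force $\gdev_1=\gdev_2$ and $\gdev_3=\gdev_4=-\gdev_1$, so $g$ is the tent map; the coincidence $B_2=C_1$ then gives $A_1=\left(\frac14,\frac12\right)$, while the lattice yields $B_2=\left(\frac16,\frac12\right)$ and $D_2=\left(\frac13,\frac14\right)\neq A_1$, contradicting the pairing required by $\mathcal{P}$. The only difference is that you make explicit the intermediate computation $a=\frac12$, $\gdev_1=2$, which the paper leaves implicit in the phrase ``whence $g$ is the tent map.''
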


\begin{example}\label{ex:3.13}
Describe all the maps $g$, which are restricted by the lattice
from Figure~\ref{fig:11}.
\end{example}

\begin{figure}[ht]
\begin{center}
\begin{picture}(400,400)
\put(190,190){\vector(0,1){190}} \put(190,190){\vector(1,0){190}}
\put(190,190){\vector(0,-1){190}}
\put(190,190){\vector(-1,0){190}}

\linethickness{0.3mm} \Vidr{190}{190}{370}{280} \VidrTo{190}{370}

\Vidr{190}{190}{130}{370} \VidrTo{70}{190} \VidrTo{10}{370}

\Vidr{190}{190}{100}{10} \VidrTo{10}{190}

\Vidr{190}{190}{370}{130} \VidrTo{190}{70} \VidrTo{370}{10}

\Vidr{190}{190}{370}{370} 

\linethickness{0.1mm}

\put(334,334){\circle{3}} \put(334,334){\circle{5}}
\put(334,334){\circle{7}} \put(262,262){\circle{3}}
\put(262,262){\circle{5}} \put(262,262){\circle{7}}

\put(22,334){\circle*{5}}
\put(46,262){\circle*{5}} \put(46,118){\circle*{5}}
\put(82,46){\circle*{5}} \put(94,262){\circle*{5}}
\put(94,262){\circle*{5}}
\put(118,334){\circle*{5}} \put(118,46){\circle*{5}}
\put(142,334){\circle*{5}}
\put(154,118){\circle*{5}}
\put(166,262){\circle*{5}}
\put(190,70){\circle*{5}} \put(262,226){\circle*{5}}
\put(262,94){\circle*{5}} \put(262,334){\circle*{5}}
\put(262,166){\circle*{5}} \put(262,46){\circle*{5}}
\put(334,298){\circle*{5}} \put(334,142){\circle*{5}}
\put(334,262){\circle*{5}} \put(334,118){\circle*{5}}
\put(334,142){\circle*{5}} \put(334,22){\circle*{5}}

\put(22,334){\line(1,0){240}} \put(22,334){\line(0,-1){168}}

\put(34,298){\line(1,0){300}} \put(34,298){\line(0,-1){156}}

\put(40,280){\line(1,0){330}} \put(40,280){\line(0,-1){150}}

\put(46,262){\line(1,0){288}} \put(46,262){\line(0,-1){144}}

\put(70,190){\line(0,-1){120}}

\put(58,226){\line(1,0){204}} \put(58,226){\line(0,-1){132}}

\put(22,166){\line(1,0){240}}

\put(34,142){\line(1,0){300}}

\put(40,130){\line(1,0){330}}

\put(46,118){\line(1,0){288}}

\put(58,94){\line(1,0){204}}

\put(70,70){\line(1,0){120}}

\put(82,46){\line(1,0){180}} \put(82,46){\line(0,1){180}}

\put(94,22){\line(1,0){240}} \put(94,22){\line(0,1){240}}

\put(100,10){\line(0,1){270}}

\put(106,22){\line(0,1){276}}

\put(118,46){\line(0,1){288}}

\put(130,70){\line(0,1){300}}

\put(142,94){\line(0,1){240}}

\put(154,118){\line(0,1){180}}

\put(160,130){\line(0,1){150}} \put(166,142){\line(0,1){120}}

\put(178,166){\line(0,1){60}} \put(262,46){\line(0,1){288}}
\put(334,22){\line(0,1){276}}

\end{picture}
\end{center}\caption{} \label{fig:11}
\end{figure}

\begin{lemma}\label{lema:3.14}
If maps $g$ and $\psi$ are restricted by the lattice from
Figure~\ref{fig:11}, then $ \sdev_1\cdot \gdev_2 = \gdev_1\cdot
\sdev_2$, \hskip 5mm $\sdev_2\cdot \gdev_3 = \gdev_1\cdot
\sdev_3$, \hskip 5mm $ \sdev_2\cdot \gdev_4 = \gdev_1\cdot
\sdev_4$, \hskip 5mm $ \sdev_2\cdot \gdev_5 = \gdev_2\cdot
\sdev_5$, \hskip 5mm $ \sdev_3\cdot \gdev_6 = \gdev_2\cdot
\sdev_6$, \hskip 5mm $ \sdev_4\cdot \gdev_6 = \gdev_2\cdot
\sdev_7$, \hskip 5mm $ \sdev_5\cdot \gdev_5 = \gdev_3\cdot
\sdev_8$, \hskip 5mm $ \sdev_5\cdot \gdev_4 = \gdev_3\cdot
\sdev_9$, \hskip 5mm $ \sdev_5\cdot \gdev_3 = \gdev_4\cdot
\sdev_9$, \hskip 5mm $ \sdev_6\cdot \gdev_2 = \gdev_4\cdot
\sdev_8$, \hskip 5mm $ \sdev_6\cdot \gdev_1 = \gdev_5\cdot
\sdev_7$, \hskip 5mm $ \sdev_7\cdot \gdev_1 = \gdev_5\cdot
\sdev_6$, \hskip 5mm $ \sdev_7\cdot \gdev_2 = \gdev_5\cdot
\sdev_5$, \hskip 5mm $ \sdev_8\cdot \gdev_3 = \gdev_6\cdot
\sdev_4$, \hskip 5mm $ \sdev_8\cdot \gdev_4 = \gdev_6\cdot
\sdev_3$, \hskip 5mm $ \sdev_8\cdot \gdev_5 = \gdev_6\cdot
\sdev_2$, \hskip 5mm $ \sdev_9\cdot \gdev_6 = \gdev_6\cdot
\sdev_1$
\end{lemma}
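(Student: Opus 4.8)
The plan is to read each of the seventeen equalities off the determinating lattice of Figure~\ref{fig:11} one cell at a time, exactly as was done for Figures~\ref{fig:08} and~\ref{fig:09}; the only new feature here is the larger number of cells. Recall the four--quadrant picture: the $x$-- and $\psi$--axes bound the graph of $\psi$ (with slopes $\sdev_1,\ldots,\sdev_9$), the $x$-- and $g$--axes bound the graph of $g$ (with slopes $\gdev_1,\ldots,\gdev_6$), while the graph of $g$ reappears in the $\psi\times y$ quadrant and the graph of $\psi$ in the $g\times y$ quadrant. A single trajectory through a point $x_0$ fixes the four lines $x=x_0$, $\psi=\psi(x_0)$, $g=g(x_0)$ and $y=y_0$, and by definition it does not contradict the commutativity of $g$ and $\psi$ precisely when the value $y_0$ reached along the top path $x\mapsto\psi\mapsto y$ agrees with the value reached along the bottom path $x\mapsto g\mapsto y$, i.e.\ when $g(\psi(x_0))=\psi(g(x_0))$.

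The determinating lattice cuts the $x$--axis into finitely many cells, and on each cell all four affine branches entering the two paths are linear. On such a cell let $\sdev_i$ be the slope of $\psi$ over its $x$--interval (top--left quadrant), $\gdev_j$ the slope of $g$ over the image $\psi$--interval (top--right quadrant), $\gdev_k$ the slope of $g$ over the same $x$--interval (bottom--left quadrant), and $\sdev_l$ the slope of $\psi$ over the image $g$--interval (bottom--right quadrant). Since each branch is affine, the slope of the composite along each path is the product of the two slopes traversed, so matching the top and bottom paths --- equivalently, matching the two diagonals of the rectangle of Figure~\ref{fig:06} --- shows that the trajectory closes up if and only if
$$\sdev_i\,\gdev_j=\gdev_k\,\sdev_l.$$
This is exactly the content of Lemma~\ref{lema:2.3}, and it is the single relation contributed by the cell. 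Running over all cells in the order in which their $x$--intervals appear from the origin to $1$ yields one such product equality per cell, and collecting them produces precisely the list in the statement.

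What has to be done with care, and what constitutes the only real work, is the bookkeeping of the four indices $(i,j,k,l)$ for each cell. The indices $i$ and $k$ are read off directly from which monotonicity piece of $\psi$ and of $g$ the $x$--interval of the cell lies in. The indices $j$ and $l$ instead require locating the \emph{images} of that $x$--interval: $j$ is determined by which linear branch of $g$ contains $\psi$ evaluated on the cell, and $l$ by which branch of $\psi$ contains $g$ evaluated on the cell. Tracking these images against the horizontal and vertical lines of Figure~\ref{fig:11} --- where short initial branches force the image of an $x$--interval into a piece of higher index than one might naively expect, as already in the first relation $\sdev_1\gdev_2=\gdev_1\sdev_2$ --- is where the configuration of the figure is genuinely used and is the main obstacle. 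Once the assignment $(i,j,k,l)$ has been tabulated for every cell, the seventeen equalities follow by simply writing $\sdev_i\gdev_j=\gdev_k\sdev_l$ for each, with no further computation; that these cell--by--cell relations indeed capture the commutativity of $g$ and $\psi$ is guaranteed by Lemma~\ref{lema:2.4}.
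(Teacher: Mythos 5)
Your proposal is correct and follows essentially the same route as the paper: the paper gives no explicit proof of this lemma, but (exactly as for Lemmas~\ref{lema:3.2} and~\ref{lema:3.4}) it is obtained by applying Lemma~\ref{lema:2.3} cell by cell to the lattice, each cell where all four relevant branches are linear forcing equality of the composite slopes $\sdev_i\gdev_j=\gdev_k\sdev_l$, which is precisely your argument. Your index-bookkeeping description (including the observation that images of a cell may land in a higher-index branch, and implicitly that the trivial cell at the origin contributes no relation) matches how the seventeen listed equalities are read off Figure~\ref{fig:11}.
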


\begin{lemma}\label{lema:3.15}
Equalities from Lemma~\ref{lema:3.14} imply $ \gdev_4 = -\gdev_3$,
\hskip 5mm $\gdev_5 =-\gdev_1$ and $\gdev_6 =
\frac{-(\gdev_1)^3}{\gdev_2\cdot \gdev_3}$.
\end{lemma}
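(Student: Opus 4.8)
The plan is to eliminate the nine slopes $\sdev_1,\dots,\sdev_9$ of $\psi$ from the seventeen relations of Lemma~\ref{lema:3.14} and read off relations among the six slopes $\gdev_1,\dots,\gdev_6$ of $g$ alone. Throughout I use that every slope is nonzero (each maximal monotone piece of $\psi$ and of $g$ is mapped onto $[0,1]$, by Lemma~\ref{lema:2.1}), and that the graph of $g$ in the lattice of Figure~\ref{fig:11} is unimodal with three increasing pieces followed by three decreasing pieces, so that $\gdev_1,\gdev_2,\gdev_3>0$ and $\gdev_4,\gdev_5,\gdev_6<0$. This sign information is what will pin down the otherwise ambiguous square roots.

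First I would extract the two sign relations from paired equations of Lemma~\ref{lema:3.14}. Dividing $\sdev_5\gdev_4=\gdev_3\sdev_9$ by $\sdev_5\gdev_3=\gdev_4\sdev_9$ gives $\gdev_4/\gdev_3=\gdev_3/\gdev_4$, hence $\gdev_4^2=\gdev_3^2$; since $\gdev_3>0>\gdev_4$, this forces $\gdev_4=-\gdev_3$. In exactly the same way, dividing $\sdev_6\gdev_1=\gdev_5\sdev_7$ by $\sdev_7\gdev_1=\gdev_5\sdev_6$ gives $\gdev_5^2=\gdev_1^2$, and the signs yield $\gdev_5=-\gdev_1$.

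For the last relation I would express the $\sdev_i$ successively in terms of $\sdev_1$ and the $\gdev_j$, reading the first eight relations of Lemma~\ref{lema:3.14} as recursions: $\sdev_2=\sdev_1\gdev_2/\gdev_1$, and then $\sdev_3,\sdev_4,\sdev_5,\sdev_6,\sdev_7,\sdev_8$ in turn, obtaining in particular $\sdev_6=\sdev_1\gdev_3\gdev_6/\gdev_1^2$ and $\sdev_8=\sdev_1\gdev_5^2/(\gdev_1\gdev_3)$. Substituting these two into the remaining relation $\sdev_6\gdev_2=\gdev_4\sdev_8$ and cancelling the nonzero factor $\sdev_1$ gives
$$
\gdev_6=\frac{\gdev_1\gdev_4\gdev_5^2}{\gdev_2\gdev_3^2},
$$
and inserting the already-proved $\gdev_4=-\gdev_3$ and $\gdev_5^2=\gdev_1^2$ collapses this to $\gdev_6=-(\gdev_1)^3/(\gdev_2\cdot\gdev_3)$, as claimed.

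The only genuinely delicate point is the selection of signs in $\gdev_4^2=\gdev_3^2$ and $\gdev_5^2=\gdev_1^2$: algebraically each admits a spurious positive solution, and it is precisely the unimodality of $g$, that is, the fixed alternation of signs of its slopes read from Figure~\ref{fig:11}, that discards it. The several relations of Lemma~\ref{lema:3.14} not used above are then automatically satisfied by the resulting slopes, so no additional constraint on $\gdev_1,\gdev_2,\gdev_3$ is produced at this stage; I would record this consistency but not grind through the routine substitutions.
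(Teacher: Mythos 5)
Your proposal is correct: the paper states Lemma~\ref{lema:3.15} without proof, and your elimination argument is exactly the kind of computation the author intends (it reproduces, along the way, the formulas for $\sdev_2,\ldots,\sdev_8$ recorded in Lemma~\ref{lema:3.16}, and your intermediate expressions $\sdev_6=\sdev_1\gdev_3\gdev_6/(\gdev_1)^2$ and $\sdev_8=\sdev_1(\gdev_5)^2/(\gdev_1\gdev_3)$ agree with that lemma). The pairing of $\sdev_5\gdev_4=\gdev_3\sdev_9$ with $\sdev_5\gdev_3=\gdev_4\sdev_9$, and of $\sdev_6\gdev_1=\gdev_5\sdev_7$ with $\sdev_7\gdev_1=\gdev_5\sdev_6$, followed by the sign selection forced by unimodality ($\gdev_1,\gdev_2,\gdev_3>0>\gdev_4,\gdev_5,\gdev_6$), is sound, and the substitution into $\sdev_6\gdev_2=\gdev_4\sdev_8$ correctly yields $\gdev_6=-(\gdev_1)^3/(\gdev_2\gdev_3)$.
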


\begin{lemma}\label{lema:3.16}
Equalities from Lemma~\ref{lema:3.14} imply $\sdev_2 =
\frac{\sdev_1\cdot \gdev_2}{\gdev_1} $, \hskip 5mm $ \sdev_3 =
\frac{\sdev_1\cdot \gdev_2\cdot \gdev_3}{(\gdev_1)^2} $, \hskip
5mm $\sdev_4 = \frac{\sdev_1\cdot \gdev_2\cdot
\gdev_4}{(\gdev_1)^2}$, \hskip 5mm $ \sdev_5 = \frac{\sdev_1\cdot
\gdev_5}{\gdev_1} $, \hskip 5mm $\sdev_6 = \frac{\sdev_1\cdot
\gdev_3\cdot \gdev_6}{(\gdev_1)^2}$, \hskip 5mm $ \sdev_7 =
\frac{\sdev_1\cdot \gdev_4\cdot \gdev_6}{(\gdev_1)^2} $, \hskip
5mm $\sdev_8 = \frac{\sdev_1\cdot (\gdev_5)^2}{\gdev_1\cdot
\gdev_3}$ and $ \sdev_9 = \sdev_1$.
\end{lemma}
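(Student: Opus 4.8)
The plan is to solve the monomial system recorded in Lemma~\ref{lema:3.14} sequentially, propagating outward from $\sdev_1$. First I would note that every slope $\sdev_i$ and $\gdev_j$ is nonzero: each maximal piece of monotonicity of $\psi$ is a genuine increasing or decreasing segment mapped onto all of $[0,1]$ (Lemma~\ref{lema:2.1}), and the same holds for $g$, so no linear piece is horizontal. Hence every division performed below is legitimate. Although Lemma~\ref{lema:3.14} lists seventeen equalities involving the unknowns $\sdev_2,\ldots,\sdev_9$ (with $\sdev_1$ and the $\gdev_j$ regarded as given data), the idea is to single out a triangular subsystem of exactly eight of them, one per unknown, in which each newly-solved slope refers only to quantities already computed.

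Concretely, the first equality $\sdev_1\gdev_2=\gdev_1\sdev_2$ gives $\sdev_2=\sdev_1\gdev_2/\gdev_1$. Inserting this into the second and third equalities yields $\sdev_3=\sdev_1\gdev_2\gdev_3/(\gdev_1)^2$ and $\sdev_4=\sdev_1\gdev_2\gdev_4/(\gdev_1)^2$. The fourth equality $\sdev_2\gdev_5=\gdev_2\sdev_5$, after cancelling $\gdev_2$ and substituting $\sdev_2$, gives $\sdev_5=\sdev_1\gdev_5/\gdev_1$. The fifth and sixth equalities, with $\sdev_3$ and $\sdev_4$ inserted and a factor $\gdev_2$ cancelled, produce $\sdev_6=\sdev_1\gdev_3\gdev_6/(\gdev_1)^2$ and $\sdev_7=\sdev_1\gdev_4\gdev_6/(\gdev_1)^2$. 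The seventh equality $\sdev_5\gdev_5=\gdev_3\sdev_8$ then gives $\sdev_8=\sdev_1(\gdev_5)^2/(\gdev_1\gdev_3)$, and the last equality $\sdev_9\gdev_6=\gdev_6\sdev_1$ gives $\sdev_9=\sdev_1$ after cancelling $\gdev_6$. Each expression matches the claimed formula, and I would stress that none of them invokes the relations among the $\gdev_j$ of Lemma~\ref{lema:3.15}: Lemma~\ref{lema:3.16} follows from the equalities of Lemma~\ref{lema:3.14} alone.

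The only genuine subtlety is that the full system is overdetermined: the eight equalities used above pin down $\sdev_2,\ldots,\sdev_9$, while the remaining nine equalities of Lemma~\ref{lema:3.14} are constraints that the data $(\sdev_1,\gdev_1,\ldots,\gdev_6)$ must obey. I expect the main (though still routine) point to be checking that the chosen eight equalities really form a triangular chain, rather than an algebraically circular selection. For Lemma~\ref{lema:3.16} itself that is all that is required; the compatibility of the leftover nine equalities is exactly what is packaged separately as the $\gdev_j$-relations of Lemma~\ref{lema:3.15} together with the companion statement that Lemmas~\ref{lema:3.15} and~\ref{lema:3.16} jointly reproduce all of Lemma~\ref{lema:3.14}, so I would defer that verification to those results.
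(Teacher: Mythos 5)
Your derivation is correct: the eight equalities you select (the first through seventh together with the last one, $\sdev_9\gdev_6=\gdev_6\sdev_1$) do form a genuinely triangular chain, each substitution uses only previously computed slopes, and every resulting expression agrees with the statement of Lemma~\ref{lema:3.16}; your nonvanishing argument for the slopes is also the right justification for the divisions. Note that the paper itself states this lemma with no proof at all (like Lemmas~\ref{lema:3.14} and~\ref{lema:3.15}, it is left as a routine consequence of the slope relations), so your sequential elimination simply supplies the omitted verification, and it is clearly the intended one; your closing observation that the remaining nine equalities are compatibility constraints on the $\gdev_j$, deferred to Lemma~\ref{lema:3.15} and the unnumbered lemma following Lemma~\ref{lema:3.16}, matches exactly how the paper organizes that material.
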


\begin{lemma}\label{lema:3.17}
Suppose that $g$ is restricted by the lattice be as at the
Figure~\ref{fig:11}. Denote by $a$ and $b$ the $y$-lengthes of the
first two parts of linearity of $g$, and $\gdev_1,\ldots, \gdev_6$
the slopes of~$g$. Then $a = \frac{8}{ 4\cdot \gdev_2 +
\gdev_2\cdot \gdev_3 +8
 }$, \hskip 5mm $
b =\frac{4\cdot \gdev_2}{ 4\cdot \gdev_2 + \gdev_2\cdot \gdev_3 +8
}$, \hskip 5mm $\gdev_1 =2$, \hskip 5mm $\gdev_4 =-\gdev_3$,
\hskip 5mm $\gdev_5 =-2$, \hskip 5mm $\gdev_6 =
\frac{-8}{\gdev_2\cdot \gdev_3}$.
\end{lemma}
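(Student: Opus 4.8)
The plan is to reproduce, for the lattice of Figure~\ref{fig:11} (which corresponds to $n=3$ and $s=2$), the scheme already used for Figures~\ref{fig:08} and~\ref{fig:09} in Lemmas~\ref{lema:3.8} and~\ref{lema:3.10}. First I would record how much is already free. Lemma~\ref{lema:3.15} gives $\gdev_4=-\gdev_3$, $\gdev_5=-\gdev_1$ and $\gdev_6=-(\gdev_1)^3/(\gdev_2\gdev_3)$, so $\gdev_4=-\gdev_3$ is nothing new, while $\gdev_5=-2$ and $\gdev_6=-8/(\gdev_2\gdev_3)$ will be immediate once $\gdev_1=2$ is established. Lemma~\ref{lema:3.16} in turn expresses all of $\sdev_2,\ldots,\sdev_9$ through $\sdev_1,\gdev_1,\gdev_2,\gdev_3$. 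Hence the genuine unknowns are only $a$, $b$ and $\gdev_1$ (with $\gdev_2,\gdev_3$ treated as free parameters and $\sdev_1$ auxiliary), and it suffices to produce three scalar relations among them.

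Two of these relations I would extract from the coincidence data $\mathcal{P}$, $\mathcal{Q}$ encoded in Figure~\ref{fig:11}, exactly as in the proof of Lemma~\ref{lema:3.10}, where matching $B_4$ with $C_2$ and $B_6$ with $C_3$ produced such relations (and incidentally fixed $\sdev_1$): compute the $x$-coordinate of each relevant vertex $B_i$ of $\psi$ in the quadrant $x\times\psi$ as a telescoping sum $\sum_k(\Delta y_k)/\sdev_k$ over its linear pieces, equate it to the coordinate of the corresponding $C_j$ in the quadrant $g\times y$, substitute Lemmas~\ref{lema:3.15} and~\ref{lema:3.16}, and observe that $\sdev_1$ cancels. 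The two innermost such coincidences should collapse to the shape conditions $b/a=\gdev_2/\gdev_1$ and $(1-a-b)/b=\gdev_3/(\gdev_1)^2$ for the ascending branch; writing $c=1-a-b$, these read $a/\gdev_1=b/\gdev_2$ and $c/\gdev_3=b/(\gdev_1)^2$.

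The third relation is the normalisation that the domain of $g$ has length $1$, as in~\eqref{eq:3.2}. Because the descending kinks of $g$ lie on the same horizontal lattice lines $\psi=\psi_0$ (at heights $a$ and $a+b$) as the ascending ones, the descending $y$-lengths are $c$, $b$, $a$ in order, so the normalisation reads
\[
\frac{a}{\gdev_1}+\frac{b}{\gdev_2}+\frac{c}{\gdev_3}+\frac{c}{\gdev_3}+\frac{b}{\gdev_1}+\frac{a\,\gdev_2\gdev_3}{(\gdev_1)^3}=1 .
\]
Substituting the two ratio conditions together with $a+b+c=1$, the left side becomes $a\bigl(2(\gdev_1)^2+\gdev_1\gdev_2+2\gdev_2+\gdev_2\gdev_3\bigr)/(\gdev_1)^3$, while $a=(\gdev_1)^3/\bigl((\gdev_1)^3+(\gdev_1)^2\gdev_2+\gdev_2\gdev_3\bigr)$; setting the product equal to $1$ and cancelling the common $\gdev_2\gdev_3$ factors the identity into $(\gdev_1-2)\bigl[\gdev_2(\gdev_1+1)+(\gdev_1)^2\bigr]=0$, and positivity of the ascending slopes forces $\gdev_1=2$. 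Then the ratios give $a:b:c=8:4\gdev_2:\gdev_2\gdev_3$, and $a+b+c=1$ yields the stated values of $a$ and $b$; finally $\gdev_5=-2$ and $\gdev_6=-8/(\gdev_2\gdev_3)$ follow from Lemma~\ref{lema:3.15}.

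I expect the main obstacle to be the second step: correctly enumerating the coincidences of Figure~\ref{fig:11} (now with nine linear pieces of $\psi$ and six of $g$, hence far more vertices $B_i$, $C_j$ than in Lemma~\ref{lema:3.10}), keeping the telescoping sums and the substitutions from Lemmas~\ref{lema:3.15} and~\ref{lema:3.16} under control, and checking that the over-determined system is consistent, i.e. that every remaining coincidence reduces to an identity once the two ratio conditions and $\gdev_1=2$ hold. The rest is the routine algebra sketched above, and the final appeal to Lemma~\ref{lema:2.4} guarantees that the resulting $g$ is indeed the map restricted by the lattice.
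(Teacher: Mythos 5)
Your overall architecture — reduce everything to $a$, $b$, $\gdev_1$ via Lemmas~\ref{lema:3.15} and~\ref{lema:3.16}, extract two relations from the lattice coincidences, and close the system with the domain-length normalisation — is exactly the paper's, and the algebra you run on your three equations (leading to $(\gdev_1-2)\bigl[\gdev_2(\gdev_1+1)+(\gdev_1)^2\bigr]=0$ and then to the stated $a$, $b$) is correct. The gap is in the crucial step: the two relations you read off the lattice are not the ones the lattice gives. A coincidence $B_i=C_j$ equates the output of a kink of $\psi$ computed in the quadrant $x\times\psi$, where that kink lies on the horizontal lattice line through a kink of $g$ (so its output is the $\psi$-coordinate $\frac{a}{\gdev_1}+\frac{b}{\gdev_2}$ of $P_2$), with the output of the same kink computed in the quadrant $g\times y$, where it lies on the vertical line through $P_1$ (so its output is $y(P_1)=a$). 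This produces the paper's equation~\eqref{eq:3.5}, namely $\frac{a}{\gdev_1}+\frac{b}{\gdev_2}=a$, and the next such coincidence produces $x(P_5)=a+b$, i.e.~\eqref{eq:3.6}. Coincidences equate coordinates, which are telescoping \emph{sums} of projections; they never equate \emph{ratios} of adjacent projections. Your shape conditions $\frac{a}{\gdev_1}=\frac{b}{\gdev_2}$ and $\frac{1-a-b}{\gdev_3}=\frac{b}{(\gdev_1)^2}$ are genuinely different equations: $\frac{a}{\gdev_1}+\frac{b}{\gdev_2}=a$ is equivalent to $\frac{a}{\gdev_1}=\frac{b}{\gdev_2}$ only when $\gdev_1=2$, which is precisely what is to be proved.

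In fact your two conditions are exactly the formulas of Lemma~\ref{lema:3.19} ($\gdev_2=\frac{2b}{a}$, $\gdev_3=\frac{4(1-a-b)}{b}$), which in the paper are \emph{consequences} of Lemma~\ref{lema:3.17}, not inputs to it; taking them as the lattice data begs the question. That your final answer agrees with the lemma is no accident — your system and the true system \eqref{eq:3.5}, \eqref{eq:3.6}, \eqref{eq:3.7} have the same solution set — but the point at issue is deriving a valid system from the lattice in the first place. (A tell-tale symptom: the paper's elimination yields the factorisation $(\gdev_1-2)\bigl((\gdev_1)^2+1\bigr)=0$, yours a different one; different systems were solved.) Had you actually carried out your own program of equating the telescoping sums for $B_i$ and $C_j$ and cancelling $\sdev_1$, the computation would have collapsed to \eqref{eq:3.5} and \eqref{eq:3.6}, not to your proportionality conditions; replacing your conditions by those two equations and redoing the elimination repairs the argument and reproduces the paper's proof.
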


\begin{proof}
Clearly, $_{\llcorner}\overrightarrow{OP_1}
=\left(\frac{a}{\gdev_1}, a\right)$ and
$_{\llcorner}\overrightarrow{OP_2} =\left(\frac{a}{\gdev_1} +
\frac{b}{\gdev_2}, a+b\right)$. Since $(Q_1)_\lrcorner{}
_{\llcorner}P_2$ is a horizontal line, then
$(\overrightarrow{OQ_1})_{\lrcorner}\cdot (0,1) =
\frac{a}{\gdev_1} + \frac{b}{\gdev_2}$. From another hand
$_{\llcorner}P_1(^{\ulcorner}Q_1)$ is vertical line, whence
$^{\ulcorner}(\overrightarrow{OQ_1})\cdot (0, 1) =a.$ Comparing
$^{\ulcorner}(\overrightarrow{OQ_1})\cdot (0, 1)$ with
$(\overrightarrow{OQ_1})_{\lrcorner}\cdot (0,1)$ obtain
\begin{equation}\label{eq:3.5}
\frac{a}{\gdev_1} + \frac{b}{\gdev_2} = a.
\end{equation}
Since $(Q_2)_{\lrcorner}{}(_\llcorner P_5)$ is a horizontal line
and $_{\llcorner}P_2(^{\ulcorner}Q_2)$ is a vertical line, then
$\overrightarrow{OP_5}\cdot (1,0) = \overrightarrow{OP_2}\cdot
(0,1)$. Thus, $$ a+b = \frac{a}{\gdev_1} +\frac{b}{\gdev_2}
+\frac{1-a-b}{\gdev_3} -\frac{1-a-b}{\gdev_4} -\frac{b}{\gdev_5}.
$$ By~\eqref{eq:3.5} and Lemma~\ref{lema:3.15}, rewrite the latter
equality as $$ \frac{2(1-a-b)}{\gdev_3} -\frac{b}{\gdev_5} = b,
$$
which, by Lemma~\ref{lema:3.15}, can be simplified as
\begin{equation}\label{eq:3.6}
\frac{2(1-a-b)}{\gdev_3} +\frac{b}{\gdev_1} = b.\end{equation}

Since the length of the domain of $g$, regarded in the quadrant
$\psi\times y$ equals $1$, then
\begin{equation}\label{eq:3.7}
a + b- \frac{a}{\gdev_6} =1.
\end{equation}

Lemma~\ref{lema:3.15} and~\eqref{eq:3.7} imply
\begin{equation}\label{eq:3.8}
b =1 -\frac{a\cdot \gdev_2\cdot \gdev_3}{(\gdev_1)^3} -a,
\end{equation}
By~\eqref{eq:3.8}, rewrite~\eqref{eq:3.5} as \[ \frac{a}{\gdev_1}
+ \frac{1}{\gdev_2} -\frac{a\cdot \gdev_3}{(\gdev_1)^3}
-\frac{a}{\gdev_2},
 = a\]
whence express $a$ as \begin{equation}\label{eq:3.9} a =
\frac{(\gdev_1)^3}{ (\gdev_1)^3\cdot \gdev_2 -(\gdev_1)^2\cdot
\gdev_2 + \gdev_3\cdot \gdev_2 +(\gdev_1)^3 }
\end{equation}

Plug~\eqref{eq:3.9} into~\eqref{eq:3.8} and, after simplification,
get
\begin{equation}\label{eq:3.10}
b =\frac{(\gdev_1)^3\cdot \gdev_2 -(\gdev_1)^2\cdot \gdev_2}{
(\gdev_1)^3\cdot \gdev_2 -(\gdev_1)^2\cdot \gdev_2 + \gdev_3\cdot
\gdev_2 +(\gdev_1)^3 }
\end{equation}

Substitute~\eqref{eq:3.9} and~\eqref{eq:3.10} into~\eqref{eq:3.6}:
\[\frac{2\cdot \gdev_2\cdot \gdev_3}{\gdev_3}
+((\gdev_1)^2\cdot \gdev_2 -\gdev_1\cdot \gdev_2) =
(\gdev_1)^3\cdot \gdev_2 -(\gdev_1)^2\cdot \gdev_2.\] This
expression can be simplified to
\[(\gdev_1 -2)((\gdev_1)^2 +1) =0,
\] whence $\gdev_1 = 2$
and the proposition follows from Lemma~\ref{lema:3.15} and
equalities~\eqref{eq:3.9} and~\eqref{eq:3.10}.
\end{proof}

\begin{lemma}
Suppose that $g$ is restricted by the lattice be as at the
Figure~\ref{fig:11}. Denote by $a$ and $b$ the $y$-lengthes of the
first two parts of linearity of $g$, and $\gdev_1,\ldots, \gdev_6$
the slopes of~$g$. Then $ \sdev_2 = \frac{3\cdot \gdev_2}{2}$,
\hskip 5mm $\sdev_3 = \frac{3\cdot \gdev_2\cdot \gdev_3}{4}$,
\hskip 5mm $\sdev_4 = \frac{-3\cdot \gdev_2\cdot \gdev_3}{4} $,
\hskip 5mm $\sdev_5 = -3$, \hskip 5mm $\sdev_6 =
\frac{-6}{\gdev_2} $, \hskip 5mm $\sdev_7 = \frac{6}{\gdev_2} $,
\hskip 5mm $\sdev_8 = \frac{6}{\gdev_3} $, \hskip 5mm $\sdev_9 =
3$.
\end{lemma}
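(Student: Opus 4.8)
The plan is to deduce every value from the two preceding lemmas, leaving only the overall scale $\sdev_1$ to be determined. Lemma~\ref{lema:3.16} already writes each of $\sdev_2,\ldots,\sdev_9$ as $\sdev_1$ times an explicit rational function of $\gdev_1,\ldots,\gdev_6$, and Lemma~\ref{lema:3.17} fixes the slopes of $g$: $\gdev_1=2$, $\gdev_4=-\gdev_3$, $\gdev_5=-2$ and $\gdev_6=\frac{-8}{\gdev_2\gdev_3}$. First I would substitute these four identities into the formulas of Lemma~\ref{lema:3.16}; after cancelling the powers of $\gdev_1$ this makes each $\sdev_i$ equal to $\sdev_1$ times a function of $\gdev_2,\gdev_3$ alone, e.g. $\sdev_2=\frac{\sdev_1\gdev_2}{2}$, $\sdev_6=\frac{-2\sdev_1}{\gdev_2}$, $\sdev_8=\frac{2\sdev_1}{\gdev_3}$ and $\sdev_9=\sdev_1$. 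Hence the whole lemma is equivalent to the single identity $\sdev_1=3$.

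None of the slope-product relations of Lemma~\ref{lema:3.14} can supply this value: each is linear in the $\sdev$'s on both sides, so the whole system is invariant under the scaling $(\sdev_1,\ldots,\sdev_9)\mapsto(\lambda\sdev_1,\ldots,\lambda\sdev_9)$ and cannot fix the common scale; they were in any case already used up in deriving Lemmas~\ref{lema:3.15} and~\ref{lema:3.16}. The missing equation must therefore be a metric one, and to produce it I would imitate the computation that gave $\sdev_1=3$ in the proof of Lemma~\ref{lema:3.10} for Figure~\ref{fig:09}. One uses a kink of $\psi$ that is recorded twice in $\mathcal{Q}$ --- once as a point $B_i$ in the quadrant $x\times\psi$ and once as the point $C_j$ in the quadrant $g\times y$, with $(i,j)\in\mathcal{Q}$ the analogue of the pair $(6,3)$ of Figure~\ref{fig:09}, namely the kink sitting over the maximum of $g$. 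Its first coordinate, read as an $x$-value, is $g^{-1}(1)=\frac{a}{\gdev_1}+\frac{b}{\gdev_2}+\frac{1-a-b}{\gdev_3}$, the sum of the $x$-lengths of the three increasing pieces of $g$; read instead as a $g$-value by tracing $\psi$ across the quadrant $g\times y$, it is a signed sum of the lengths of the pieces of $\psi$ crossed before that level, each length being the relevant $y$-increment (fixed by the lattice lines $\psi=\psi_0$) divided by the corresponding slope $\sdev_k$. Equating the two expressions yields one equation containing $\sdev_1$.

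The hard part will be the bookkeeping in this second computation: one must identify exactly which pieces of $\psi$ are traversed before the coincidence level is reached and respect the sign changes caused by the alternating monotonicity of $\psi$ (the slopes $\sdev_4,\sdev_5,\sdev_6$ being negative). After inserting the ratios $\sdev_k/\sdev_1$ from Lemma~\ref{lema:3.16} together with the values of $a$, $b$ and of the $\gdev$'s from Lemma~\ref{lema:3.17}, I expect all dependence on $\gdev_2,\gdev_3$ to cancel and the equation to collapse to a linear relation in $\frac{1}{\sdev_1}$ whose unique solution is $\sdev_1=3$, exactly as in Lemma~\ref{lema:3.10}. Feeding $\sdev_1=3$ back into the formulas of the first paragraph then produces $\sdev_2=\frac{3\gdev_2}{2}$, $\sdev_3=\frac{3\gdev_2\gdev_3}{4}$, $\sdev_4=\frac{-3\gdev_2\gdev_3}{4}$, $\sdev_5=-3$, $\sdev_6=\frac{-6}{\gdev_2}$, $\sdev_7=\frac{6}{\gdev_2}$, $\sdev_8=\frac{6}{\gdev_3}$ and $\sdev_9=3$, as required; by Lemma~\ref{lema:2.4} these data indeed determine a commuting pair.
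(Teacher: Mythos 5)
Your opening reduction is correct and is exactly how the paper closes its own proof: substituting $\gdev_1=2$, $\gdev_4=-\gdev_3$, $\gdev_5=-2$, $\gdev_6=\frac{-8}{\gdev_2\cdot\gdev_3}$ from Lemma~\ref{lema:3.17} into Lemma~\ref{lema:3.16} turns the statement into the single claim $\sdev_1=3$, and your remark that the homogeneous relations of Lemma~\ref{lema:3.14} are invariant under rescaling all the $\sdev_i$ and so cannot fix this value is also right. The gap is in the equation you choose to produce $\sdev_1=3$: the lattice of Figure~\ref{fig:11} contains no pair of $\mathcal{Q}$ ``sitting over the maximum of $g$''. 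This can be read off the paper's own Section~4 analysis of this very lattice (where, in terms of the conjugacy $h$ from the tent map, the kinks of $g$ are at $h\left(\frac{1}{5}\right), h\left(\frac{2}{5}\right), h\left(\frac{1}{2}\right), h\left(\frac{3}{5}\right), h\left(\frac{4}{5}\right)$): the pairs of $\mathcal{Q}$ are $B_2=C_1$, $B_5=C_2$, $B_7=C_4$, $B_{10}=C_5$, $B_{14}=C_7$, $B_{17}=C_8$, and the corresponding kinks of $\psi$ lie over the abscissas $h\left(\frac{2}{15}\right), h\left(\frac{4}{15}\right), h\left(\frac{2}{5}\right), h\left(\frac{8}{15}\right), h\left(\frac{4}{5}\right), h\left(\frac{14}{15}\right)$ --- never over $g^{-1}(1)=h\left(\frac{1}{2}\right)$; indeed no $g$-line of this lattice sits at that abscissa, so no point $C_j$ can have it as first coordinate. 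A kink of $\psi$ above the maximum of $g$ is a special feature of Figure~\ref{fig:09}, where the conjugacy has its kink at $\frac{1}{2}$ and $\xi_3\left(\frac{1}{2}\right)=\frac{1}{2}$; it does not survive in Figure~\ref{fig:11}. Since the rest of your argument is an announced expectation of cancellation, the decisive step of the proof is missing.

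The repair is small, and either of two routes completes your outline. Keeping your strategy, replace the nonexistent pair by the genuine pair $B_7=C_4$, the kink of $\psi$ above the \emph{second} kink of $g$: read through $g$, its abscissa is $\frac{a}{\gdev_1}+\frac{b}{\gdev_2}$; read through $\psi$ in the quadrant $g\times y$ (whose kink levels are $a$, $a+b$, $1$, $a+b$), it is $\frac{a}{\sdev_1}+\frac{b}{\sdev_2}+\frac{1-a-b}{\sdev_3}-\frac{1-a-b}{\sdev_4}$. Substituting Lemma~\ref{lema:3.16} and Lemma~\ref{lema:3.17} (which also give $1-a-b=\frac{\gdev_2\cdot\gdev_3}{D}$ with $D=4\cdot\gdev_2+\gdev_2\cdot\gdev_3+8$), the equation collapses to $\frac{8}{D}=\frac{24}{\sdev_1\cdot D}$, i.e.\ $\sdev_1=3$. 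The paper instead uses a metric equation that requires no choice of pair: the $x$-length of the domain of $\psi$ in the quadrant $x\times\psi$ equals $1$, that is
$$
a\left(\frac{1}{\sdev_1}-\frac{1}{\sdev_6}+\frac{1}{\sdev_7}\right)
+b\left(\frac{1}{\sdev_2}-\frac{1}{\sdev_5}+\frac{1}{\sdev_8}\right)
+(1-a-b)\left(\frac{1}{\sdev_3}-\frac{1}{\sdev_4}+\frac{1}{\sdev_9}\right)=1,
$$
which by Lemmas~\ref{lema:3.15} and~\ref{lema:3.16} becomes $1+a\cdot\frac{2\gdev_2}{\gdev_1}+b\cdot\left(\frac{\gdev_1}{\gdev_2}+\frac{\gdev_3}{\gdev_1}\right)+(1-a-b)\cdot\frac{2(\gdev_1)^2}{\gdev_2\cdot\gdev_3}=\sdev_1$, and then Lemma~\ref{lema:3.17} forces $\sdev_1=3$; the conclusion follows from Lemma~\ref{lema:3.16}, exactly as in your first paragraph.
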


\begin{proof}
Since the length of the domain of $\psi$ of the quadrant $x\times
\psi$ equals to $1$, then
$$
a\left( \frac{1}{\sdev_1} -\frac{1}{\sdev_6}
+\frac{1}{\sdev_7}\right) +b\left(\frac{1}{\sdev_2}
-\frac{1}{\sdev_5} +\frac{1}{\sdev_8}\right)
+(1-a-b)\left(\frac{1}{\sdev_3} -\frac{1}{\sdev_4}
+\frac{1}{\sdev_9}\right) =1
$$
By Lemmas~\ref{lema:3.15} and~\ref{lema:3.16} this expression can
be simplified to
$$
a\cdot \frac{2 \cdot \gdev_2}{\gdev_1} +b\cdot
\left(\frac{\gdev_1}{\gdev_2}
 +\frac{\gdev_3}{\gdev_1}
 \right)
+(1-a-b)\cdot \frac{2(\gdev_1)^2}{\gdev_2\cdot \gdev_3} + 1
 =\sdev_1.
$$ By Lemma~\ref{lema:3.17} rewrite
$$
\frac{8}{ 4\cdot \gdev_2 + \gdev_2\cdot \gdev_3 +8}\cdot \gdev_2
+\frac{4\cdot \gdev_2}{ 4\cdot \gdev_2 + \gdev_2\cdot \gdev_3 +8
}\cdot \left(\frac{2}{\gdev_2}
 +\frac{\gdev_3}{2}
 \right)+$$ $$
+\left(1-\frac{8}{ 4\cdot \gdev_2 + \gdev_2\cdot \gdev_3
+8}-\frac{4\cdot \gdev_2}{ 4\cdot \gdev_2 + \gdev_2\cdot \gdev_3
+8 }\right)\cdot \frac{8}{\gdev_2\cdot \gdev_3} + 1
 =\sdev_1,
$$ which can be simplified to
$\sdev_1 = 3.$ Now the lemma follows from Lemma~\ref{lema:3.16}.
\end{proof}

\begin{lemma}\label{lema:3.19}
Formulas from Lemma~\ref{lema:3.17} imply $\gdev_2 = \frac{2\cdot
b}{a},$ and $\gdev_3 = \frac{4\cdot (1-a-b)}{b}.$
\end{lemma}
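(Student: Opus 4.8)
The plan is to treat the two claimed identities as the inversion of the explicit expressions for $a$ and $b$ furnished by Lemma~\ref{lema:3.17}: there $a$ and $b$ are written as rational functions of $\gdev_2$ and $\gdev_3$, and here the goal is to recover $\gdev_2$ and $\gdev_3$ from $a$ and $b$. First I would record the two formulas $a = \frac{8}{D}$ and $b = \frac{4\gdev_2}{D}$, where $D = 4\gdev_2 + \gdev_2\gdev_3 + 8$ is their common denominator. Keeping $D$ as a single symbol throughout is the organizing idea, since none of the remaining conclusions of Lemma~\ref{lema:3.17} (the values of $\gdev_1,\gdev_4,\gdev_5,\gdev_6$) are needed for the present statement.

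The first identity is then immediate: dividing the two formulas cancels $D$, so that $\frac{b}{a} = \frac{4\gdev_2}{8} = \frac{\gdev_2}{2}$, and hence $\gdev_2 = \frac{2b}{a}$.

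For the second identity I would compute $1 - a - b$. Adding the two formulas gives $a + b = \frac{8 + 4\gdev_2}{D}$, so $1 - a - b = \frac{D - 8 - 4\gdev_2}{D}$. The key simplification is that the numerator collapses, namely $D - 8 - 4\gdev_2 = \gdev_2\gdev_3$, leaving $1 - a - b = \frac{\gdev_2\gdev_3}{D}$. Dividing by $b = \frac{4\gdev_2}{D}$ cancels both $D$ and the factor $\gdev_2$, which yields $\frac{1-a-b}{b} = \frac{\gdev_3}{4}$, that is, $\gdev_3 = \frac{4(1-a-b)}{b}$.

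The computation is entirely routine; there is no genuine obstacle, only the bookkeeping of treating $D$ as one symbol so that both ratios stay transparent. The single place to be attentive is the cancellation $D - 8 - 4\gdev_2 = \gdev_2\gdev_3$, which is exactly what makes the final ratio come out as clean as claimed.
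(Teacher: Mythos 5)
Your proof is correct and takes essentially the same approach as the paper: both simply invert the two rational formulas for $a$ and $b$ given in Lemma~\ref{lema:3.17}. The only difference is organizational --- you read off $\gdev_2$ at once from the ratio $b/a$ and then $\gdev_3$ from $(1-a-b)/b$, whereas the paper first eliminates $\gdev_2$ to solve for $\gdev_3$ and then back-substitutes; your bookkeeping with the common denominator is, if anything, slightly cleaner.
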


\begin{proof}
By the hypothesis, express $\gdev_2$ as
\begin{equation}\label{eq:3.11}
\gdev_2 = \frac{8\cdot (1-a)}{a\cdot (4+ \gdev_3)}
\end{equation}

Plug~\eqref{eq:3.11} into the expression for $b$ from
Lemma~\ref{lema:3.17}, whence express $\gdev_3$ as
$$
\gdev_3 = \frac{4\cdot (1-a)}{b} -4 =\frac{4\cdot (1-a-b)}{b}
$$

Now, by~\eqref{eq:3.11},
$$\gdev_2 =\frac{2\cdot b}{a}.$$
\end{proof}

\begin{lemma}\label{lema:3.20}
The map $g$, described in Lemma~\ref{lema:3.17}, can be
represented as
$$ g:\, (0, 0)\rightarrow A_1\left(\frac{a}{2}, a\right)
\rightarrow A_2(a, a+b) \rightarrow A_3\left(\frac{4a +b}{4},
1\right) \rightarrow$$ $$ \rightarrow A_4\left(\frac{2a +b}{2},
a+b\right) \rightarrow A_5\left(a+b, a\right) \rightarrow (1,0),
$$ where $a, b,\, 0<a<b<1$ are arbitrary parameters.
\end{lemma}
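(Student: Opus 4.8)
The plan is to read off each kink of $g$ by an elementary bookkeeping computation, invoking only the slope data already in hand. From Lemma~\ref{lema:3.17} I take $\gdev_1 = 2$, $\gdev_4 = -\gdev_3$, $\gdev_5 = -2$ and $\gdev_6 = -8/(\gdev_2\gdev_3)$, and from Lemma~\ref{lema:3.19} the identities $\gdev_2 = 2b/a$ and $\gdev_3 = 4(1-a-b)/b$. For admissible parameters (in particular $a+b<1$, so that $\gdev_3>0$) the first three slopes are positive and the last three negative; hence $g$ climbs on its first three pieces to the value $1$ and then falls on its last three, so $A_3$ is the unique maximum and $A_1,\dots,A_5$ are met in order of increasing abscissa.

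First I would fix the ordinates of the kinks. By definition $a$ and $b$ are the $y$-lengths of the first two pieces, so $A_1$ and $A_2$ sit at heights $a$ and $a+b$, while $A_3$ attains the maximum height $1$. Since, by Lemma~\ref{lema:2.2}, every kink of $g$ lies on a line of the determinating lattice, the heights $a$ and $a+b$ are interior horizontal lattice lines; by Lemma~\ref{lema:2.6} there are exactly $s=2$ such lines, so these are all of them. The descending branch is strictly monotone and carries exactly two interior kinks (three pieces with slopes $\gdev_4,\gdev_5,\gdev_6$), so it meets each of the two available heights once, forcing $A_4$ and $A_5$ to lie at heights $a+b$ and $a$ respectively. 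Identifying the descending kink heights with the ascending ones in this way is the one step I expect to need genuine care, since it rests on the lattice rather than on an a priori symmetry of $g$.

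With the ordinates fixed, the abscissae follow by computing, piece by piece, the horizontal displacement as the $y$-length divided by the modulus of the slope. Going up I would get $a/\gdev_1 = a/2$, then $b/\gdev_2 = a/2$, then $(1-a-b)/\gdev_3 = b/4$, which gives $A_1=(a/2,\,a)$, $A_2=(a,\,a+b)$ and $A_3=\bigl((4a+b)/4,\,1\bigr)$. Coming down, the fall of height $1-a-b$ with slope $\gdev_4=-\gdev_3$ again advances $b/4$, giving $A_4=\bigl((2a+b)/2,\,a+b\bigr)$, and the fall of height $b$ with slope $\gdev_5=-2$ advances $b/2$, giving $A_5=(a+b,\,a)$.

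Finally I would perform a consistency check on the last piece: the fall of height $a$ with slope $\gdev_6$ advances $a/|\gdev_6| = a\gdev_2\gdev_3/8$ to the right, and substituting $\gdev_2\gdev_3 = 8(1-a-b)/a$ from Lemma~\ref{lema:3.19} makes this exactly $1-a-b$, so the graph closes at $(1,0)$. This confirms that the five listed points are the kinks of $g$ for every admissible pair $a,b$, which is the assertion of the lemma.
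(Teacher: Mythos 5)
Your proposal is correct and proceeds essentially the same way as the paper's proof: the ordinates of the kinks are fixed at $a$, $a+b$, $1$, $a+b$, $a$, and the abscissae are accumulated piece by piece as $y$-length over slope, using $\gdev_1=2$, $\gdev_4=-\gdev_3$, $\gdev_5=-2$ from Lemma~\ref{lema:3.17} and $\gdev_2=2b/a$, $\gdev_3=4(1-a-b)/b$ from Lemma~\ref{lema:3.19}, yielding the same telescoping computation $a/2,\ a,\ (4a+b)/4,\ (2a+b)/2,\ a+b$. The only (harmless) additions on your side are that you justify the descending-branch ordinates by the counting argument of Lemmas~\ref{lema:2.2} and~\ref{lema:2.6} (with $s=2$ interior horizontal lattice lines) where the paper simply reads them off the $\psi\times y$ part of Figure~\ref{fig:11}, and that your final closure check at $(1,0)$ re-verifies what is already encoded in equation~\eqref{eq:3.7} inside the proof of Lemma~\ref{lema:3.17}.
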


\begin{proof}
Using $\psi\times y$-part of Figure~\ref{fig:11}, write the
coordinates of $A_i$th as follows: $A_1\left(\frac{a}{2},
a\right)$, $A_2\left(\frac{a}{2} +\frac{b}{\gdev_2}, a+b\right)$,
$A_3\left(\frac{a}{2} +\frac{b}{\gdev_2} +\frac{1-(a+b)}{\gdev_3},
1\right)$, $A_4\left(\frac{a}{2} +\frac{b}{\gdev_2}
+\frac{1-(a+b)}{\gdev_3} -\frac{1-(a+b)}{\gdev_4}, a+b\right)$ and
$$A_5\left(\frac{a}{2} +\frac{b}{\gdev_2} +\frac{1-(a+b)}{\gdev_3}
-\frac{1-(a+b)}{\gdev_4} -\frac{b}{\gdev_5}, a\right).$$

We can rewrite these formulas, using Lemmas~\ref{lema:3.17}
and~\ref{lema:3.19}.

The $x$-coordinate of $A_2$ is \begin{equation}\label{eq:3.12}
\frac{a}{2} +\frac{b}{\gdev_2} = \frac{a}{2} +\frac{1\cdot
b}{2\cdot b} =a,
\end{equation} whence $$
A_2(a, a+b).
$$
By Lemma~\ref{lema:3.19} and using~\eqref{eq:3.12} rewrite the
$x$-coordinate of $A_3$ as $$ \frac{a}{2} +\frac{b}{\gdev_2}
+\frac{1-(a+b)}{\gdev_3} \overset{\text{by~\eqref{eq:3.12}}}{=} a
+\frac{1-(a+b)}{\gdev_3} \overset{\text{by
Lem.~\ref{lema:3.19}}}{=}$$
$$=
a +\frac{b\cdot (1-(a+b))}{4\cdot (1-a-b)} = \frac{4a +b}{4}.
$$ Thus,
\begin{equation}\label{eq:3.13}
\frac{a}{2} +\frac{b}{\gdev_2} +\frac{1-(a+b)}{\gdev_3}=\frac{4a
+b}{4}
\end{equation}
and $$ A_3\left(\frac{4a +b}{4}, 1\right).
$$
By Lemma~\ref{lema:3.19} and using~\eqref{eq:3.13} rewrite the
$x$-coordinate of $A_4$ as $$ \frac{a}{2} +\frac{b}{\gdev_2}
+\frac{1-(a+b)}{\gdev_3} -\frac{1-(a+b)}{\gdev_4}
\overset{\text{by~\eqref{eq:3.13}}}{=} \frac{4a +b}{4}
-\frac{1-(a+b)}{\gdev_4} \overset{\text{by Lem.~\ref{lema:3.17},
\ref{lema:3.19}}}{=}
$$$$
=\frac{4a +b}{4} +\frac{b\cdot (1-(a+b))}{4\cdot (1-a-b)}
=\frac{2a +b}{2},
$$ whence

\begin{equation}\label{eq:3.14}
\frac{a}{2} +\frac{b}{\gdev_2} +\frac{1-(a+b)}{\gdev_3}
-\frac{1-(a+b)}{\gdev_4} = \frac{2a +b}{2}
\end{equation} and $$
A_4\left(\frac{2a +b}{2}, a+b\right).
$$
By Lemmas~\ref{lema:3.17} and~\ref{lema:3.19} and
using~\eqref{eq:3.14} write the $x$-coordinate of $A_5$ as $$
\frac{a}{2} +\frac{b-a}{\gdev_2} +\frac{1-(a+b)}{\gdev_3}
-\frac{1-(a+b)}{\gdev_4} -\frac{b}{\gdev_5}
\overset{\text{by~\eqref{eq:3.14}}}{=}
$$$$
=\frac{2a +b}{2} -\frac{b}{\gdev_5} \overset{\text{by
Lem.~\ref{lema:3.19}}}{=} \frac{2a +b}{2} +\frac{b}{2} =a+b,
$$ whence $$
A_5\left(a+b, a\right).
$$
\end{proof}

\subsection{Whether the maps $\psi$ from the examples are
topologically conjugated to the tent map?}\label{sec:3.2}

We will determine in this section, wether the maps $g$ from
examples Examples~\ref{ex:3.3} and \ref{ex:3.13} are topologically
conjugated to the tent map by a piecewise linear map. This
equation is motivated by the following facts.

\begin{lemma}\label{lema:3.21}
If a piecewise linear map $g$ is topologically conjugated to the
tent map via piecewise linear conjugacy, then $g'(0)=2$.
\end{lemma}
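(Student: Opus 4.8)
The plan is to exploit the fact that the multiplier at a fixed point is invariant under piecewise-linear conjugacy. Write the conjugacy as $h\circ f = g\circ h$, where $f$ is the tent map and $h\colon[0,1]\to[0,1]$ is a piecewise-linear homeomorphism; equivalently $g=h\circ f\circ h^{-1}$. Recall that $f(x)=2x$ on the whole first branch $[0,1/2]$, so in particular $f(0)=0$ and $f$ is linear with slope $2$ near the origin. The idea is to show that $h$ fixes $0$, that $h$ is linear in a one-sided neighbourhood of $0$, and that these two facts force the first slope of $g$ to coincide with that of $f$.

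First I would establish that $h(0)=0$. Being a homeomorphism of $[0,1]$, $h$ carries the boundary $\{0,1\}$ to itself, so either $h(0)=0$ (and $h$ is increasing) or $h(0)=1$, $h(1)=0$ (and $h$ is decreasing). In the latter case the conjugacy would give $g(0)=g(h(1))=h(f(1))=h(0)=1$, contradicting the standing assumption $g(0)=0$, which holds for all maps $g$ considered here. Hence $h(0)=0$ and $h$ is orientation preserving.

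Next, since $h$ is piecewise linear with finitely many kinks and $h(0)=0$, on its first linear piece $[0,\varepsilon]$ we have $h(x)=\lambda x$ with $\lambda>0$, so $h^{-1}(y)=y/\lambda$ for $y$ small. For $y$ small enough that $h^{-1}(y)$ lies in the first branch of $f$ and $f(h^{-1}(y))$ lies in the first linear piece of $h$, I compute $g(y)=h\bigl(f(h^{-1}(y))\bigr)=h(2y/\lambda)=2y$. Thus $g$ is linear with slope $2$ in a one-sided neighbourhood of $0$, i.e. $g'(0)=2$.

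The only genuine subtlety, and hence the step I would treat most carefully, is the orientation argument of the second paragraph: one must rule out the decreasing conjugacy, which is exactly where the hypothesis $g(0)=0$ is used. Everything else is the cancellation $\lambda\cdot 2=g'(0)\cdot\lambda$, which makes the value of $\lambda$ irrelevant and pins down the multiplier.
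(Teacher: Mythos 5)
Your proof is correct. There is nothing in the paper to compare it with: Lemma~\ref{lema:3.21} is stated without proof (it is treated as a known fact, in the spirit of the results quoted from the literature in the same subsection), so your argument supplies a proof that the paper omits. The route you take --- $0$ is a common fixed point, a piecewise-linear conjugacy with $h(0)=0$ is linear on a one-sided neighbourhood of $0$, and the slope $\lambda$ cancels in $g = h\circ f\circ h^{-1}$ --- is the standard multiplier-invariance argument, and it matches the local picture the paper itself relies on when it applies the lemma: in the proof of Lemma~\ref{lema:3.26} the paper works with $h\colon x\mapsto kx$ and $g\colon x\mapsto 2x$ near the origin (cf.\ Fig.~\ref{fig:15}), which is precisely your computation read in the other direction. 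Your attention to the orientation of $h$ is also the right instinct, not a pedantic detail: as literally stated (``a piecewise linear map $g$''), the lemma needs the paper's standing convention $g(0)=0$, because conjugating the tent map by a \emph{decreasing} piecewise-linear homeomorphism produces a map with $g(0)=1$ whose one-sided slope at $0$ equals $-2\nu/\mu$, where $\nu$ and $\mu$ are the slopes of $h$ at $0$ and at $1$ respectively; this can be any negative number, so the conclusion $g'(0)=2$ genuinely requires ruling out the decreasing case, which your second paragraph does.
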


\begin{theorem}\cite[Theorem~2]{UMZh-2016}\label{th:1}
For every $v \in (0,\, 1)$ and arbitrary increasing piecewise
linear $g_l:\, [0,\, v]\to [0,\, 1]$, which does not have positive
fixed points, such that $g_l(v)=1$, $g_l'(0)=2$ there exists a
piecewise linear unimodal map $g$, which is conjugated with the
tent map via piecewise linear homeomorphism, and coincides with
$g_l$ on $[0, v]$. Moreover, such $g$ is uniquely defined by
$g_l$.
\end{theorem}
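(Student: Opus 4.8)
The plan is to reduce the whole statement to the construction of the conjugating homeomorphism and then to read $g$ off from it. Suppose $g=h\circ f\circ h^{-1}$ for an increasing homeomorphism $h$ of $[0,1]$ with $h(0)=0$ and $h(1)=1$, where $f$ is the tent map. Since a conjugacy must carry the unique turning point of $f$ to the unique turning point of $g$, necessarily $h(1/2)=v$; in particular $h$ maps $[0,1/2]$ onto $[0,v]$ and $[1/2,1]$ onto $[v,1]$. For $t\in[0,1/2]$ one has $f(t)=2t$, so on the left branch the conjugacy relation reads
\begin{equation}\label{eq:plan1}
h(2t)=g_l(h(t)),\qquad t\in[0,1/2].
\end{equation}
Once a homeomorphism $h$ solving~\eqref{eq:plan1} with $h(1/2)=v$ is produced, I would simply \emph{define} $g$ on $[v,1]$ by $g_r:=h\circ f\circ h^{-1}$; then $g$ agrees with $g_l$ on $[0,v]$ by~\eqref{eq:plan1}, is conjugate to $f$ by construction, and is unimodal because $g_r$ is a composition of an increasing, a decreasing and an increasing map.

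To build $h$ I would use both hypotheses on $g_l$. Because $g_l'(0)=2=f'(0)$ and $g_l$ is piecewise linear, there is $\delta>0$ with $g_l(x)=2x$ on $[0,\delta]$; accordingly I set $h(t)=\lambda t$ near $0$, where $\lambda>0$ is a scaling parameter, and observe that such a germ solves~\eqref{eq:plan1} identically in the linear regime. I would then propagate $h$ outward by the rule $h(s)=g_l(h(s/2))$, which expresses $h$ on $[0,2r]$ in terms of its values on $[0,r]$. The hypothesis that $g_l$ has no positive fixed point gives $g_l(x)>x$ on $(0,v]$, so the backward orbit $g_l^{-n}(v)$ decreases to $0$; since the linear germ already covers some interval $[0,\delta']$ and doubling grows geometrically, the propagation reaches $t=1/2$ in finitely many steps, introducing only the finitely many kinks coming from the kinks of $g_l$. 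Thus $h$ is piecewise linear with finitely many pieces and strictly increasing, as a composition of strictly increasing maps. Finally I would fix $\lambda$ by the single normalization $h(1/2)=v$: the value $h(1/2)$ depends monotonically and continuously on $\lambda$, so exactly one $\lambda$ works, after which $h(1)=g_l(h(1/2))=g_l(v)=1$ and $h$ is a homeomorphism of $[0,1]$.

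It then remains to verify that $g$ is the asserted unimodal map and to prove uniqueness. Unimodality and the boundary values are routine: $g_r(v)=h(f(1/2))=h(1)=1$ matches $g_l(v)=1$ at the kink, $g(1)=h(f(1))=h(0)=0$, and $g_r$ is decreasing. For uniqueness, suppose $g$ is \emph{any} piecewise linear unimodal map coinciding with $g_l$ on $[0,v]$ and piecewise linearly conjugate to $f$ by some $h$. Conjugacy forces $h(1/2)=v$ and equation~\eqref{eq:plan1} on the left branch; moreover, by Lemma~\ref{lema:3.21} the condition $g_l'(0)=2$ is precisely what allows $h$ to be linear near $0$, so $h(t)=\lambda t$ near $0$ for some $\lambda$. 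But then $h$ is completely determined by~\eqref{eq:plan1} together with this germ, and $h(1/2)=v$ fixes $\lambda$ uniquely; hence $h$ is unique, and so is $g_r=h\circ f\circ h^{-1}$. Thus $g$ is uniquely determined by $g_l$.

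I expect the main obstacle to be the existence half, namely checking rigorously that the propagation of~\eqref{eq:plan1} out of the linear germ at $0$ yields a genuine piecewise linear increasing homeomorphism of $[0,1]$, with finitely many kinks and with the normalization $h(1/2)=v$ attainable. This is exactly where both hypotheses are indispensable. The equality $g_l'(0)=2$ matches the multiplier of $f$ at its repelling fixed point, which is what permits a piecewise linear $h$ to exist at all. The absence of positive fixed points is necessary because the doubling map $t\mapsto 2t$ has no fixed point in $(0,1/2)$ while conjugacy preserves fixed points; it is also what forces the backward orbit of $v$ to accumulate only at $0$, so that~\eqref{eq:plan1} determines $h$ on all of $(0,1/2]$ and the normalization $h(1/2)=v$ can be met. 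Once these convergence and normalization points are secured, the remaining identities are direct computations.
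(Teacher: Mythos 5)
Before assessing your argument: the paper never proves this statement at all --- it is imported verbatim from \cite[Theorem~2]{UMZh-2016} and used as a black box --- so there is no internal proof to compare you against; what follows judges your proposal on its own merits.

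Your strategy is the natural one and its skeleton is sound: solve the left-branch conjugacy equation $h(2t)=g_l(h(t))$, $t\in[0,1/2]$, starting from a linear germ $h(t)=\lambda t$ at the origin, normalize $h(1/2)=v$, then \emph{define} the decreasing branch by $g_r:=h\circ f\circ h^{-1}$ on $[v,1]$; uniqueness follows because any piecewise linear conjugacy is forced to have this form. You also identify correctly what each hypothesis is for. The one genuine gap is the normalization step, which you yourself flag as the main obstacle but then dispatch with ``$h(1/2)$ depends monotonically and continuously on $\lambda$, so exactly one $\lambda$ works.'' As written this is not an argument: the propagation $h_\lambda(s)=g_l\bigl(h_\lambda(s/2)\bigr)$ is legitimate only while the inner values stay in $[0,v]$, the domain of $g_l$, so for large $\lambda$ the quantity $h_\lambda(1/2)$ is simply undefined; monotonicity and continuity of a partially defined function do not by themselves put $v$ in its range. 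The fix is already contained in your backward-orbit remark and makes the intermediate-value argument unnecessary: since $g_l(x)>x$ on $(0,v]$, the preimages $g_l^{-n}(v)$ decrease to $0$, so choose $n$ with $g_l^{-n}(v)\le\delta$ and set $\lambda:=2^{n+1}g_l^{-n}(v)$; then $h_\lambda\bigl(2^{-(n+1)}\bigr)=g_l^{-n}(v)$ lies in the linear regime, the values $h_\lambda\bigl(2^{k-(n+1)}\bigr)=g_l^{k-n}(v)$ stay in $[0,v]$ for all $k\le n$, and $h_\lambda(1/2)=v$ exactly; your strict-monotonicity observation then gives uniqueness of this $\lambda$, hence of $h$, hence of $g$. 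A second, harmless, slip: in the uniqueness part you invoke Lemma~\ref{lema:3.21} as ``what allows $h$ to be linear near $0$.'' In fact any piecewise linear homeomorphism is automatically linear on some $[0,\epsilon]$ (it has finitely many kinks); the true role of $g_l'(0)=2$ is that it is exactly the condition making the germ equation $2\lambda t=g_l(\lambda t)$ consistent near $0$, i.e.\ it is forced by the existence of a piecewise linear conjugacy, which is what Lemma~\ref{lema:3.21} records. With the normalization step repaired as above, your proof is complete.
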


\begin{theorem}\cite[Theorem~3]{UMZh-2016}\label{th:2}
Let $v \in (0,\, 1)$ and $g_r:\, [v,\, 1]\to [0,\, 1]$ be a
decreasing piecewise linear surjective map such that
$(g_r^2)'(x_0) = 4$, where $x_0$ is a fixed point of $g_r$. Then
there exists a piecewise linear unimodal map $g$, which is
conjugated with the tent map via piecewise linear homeomorphism,
and coincides with $g_r$ on $[v, 1]$. Moreover, such $g$ is
uniquely defined by $g_r$.
\end{theorem}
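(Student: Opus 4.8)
The plan is to construct the conjugating homeomorphism $h$ directly and to read $g$ off from it, mirroring the role the left branch plays in Theorem~\ref{th:1}. Since $g_r$ is decreasing, piecewise linear and surjective, it satisfies $g_r(v)=1$, $g_r(1)=0$, and has a unique fixed point $x_0\in(v,1)$; the hypothesis $(g_r^2)'(x_0)=4$ says exactly that the linear piece of $g_r$ through $x_0$ has slope $-2$, which is the decreasing-branch analogue of the normalisation $g'(0)=2$ of Lemma~\ref{lema:3.21}. I look for an increasing piecewise linear homeomorphism $h\colon[0,1]\to[0,1]$ with $h(0)=0$, $h(1)=1$, and set $g:=h\circ f\circ h^{-1}$, so that $g$ is automatically unimodal and conjugate to the tent map by construction. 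The remaining demand, that $g$ coincide with $g_r$ on $[v,1]$, translates through $g\circ h=h\circ f$ on the decreasing branch $x\in[1/2,1]$ into the functional equation
\[
h(u)=g_r\bigl(h(1-\tfrac{u}{2})\bigr),\qquad u\in[0,1].
\]
Evaluating at $u=1$ forces $g_r(h(1/2))=1$, hence $h(1/2)=v$ (the correct turning point), while at $u=2/3$ it forces $h(2/3)$ to be a fixed point of $g_r$, hence $h(2/3)=x_0$. Thus the fixed-point data and the relation $g_r(v)=1$ are all built in.

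The crux is the behaviour of $h$ at $2/3$. The contraction $T(u)=1-\tfrac{u}{2}$ has slope $-\tfrac12$ and fixed point $2/3$, and near $2/3$ the value $h(1-u/2)$ stays inside the single linear piece of $g_r$ containing $x_0$, where the equation reads $h(u)=-2\,h(1-u/2)+\text{const}$. The hypothesis enters precisely here: the slope balance $|g_r'(x_0)|\cdot|T'|=2\cdot\tfrac12=1$ makes this local equation \emph{resonant}, so its continuous solutions form a large family (including non-affine self-similar ones with kinks accumulating at $2/3$). The key observation is that a piecewise linear solution cannot have kinks accumulating at $2/3$, so it must be \emph{affine} near $2/3$; and among affine germs the value $h(2/3)=x_0$ is pinned while the slope remains a single free parameter $\alpha$.

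Next I would fix $\alpha$ and build $h$ globally by propagating outward: since $1-u/2$ always lies strictly closer to $2/3$ than $u$, the relation $h(u)=g_r(h(1-u/2))$ determines $h$ at points farther from $2/3$ from its values nearer $2/3$, and finitely many such steps exhaust $[1/2,1]$ and then, via arguments $1-u/2\in[3/4,1]$, all of $[0,1/2]$ (with $h(0)=g_r(h(1))=g_r(1)=0$). Because $g_r$ has only finitely many kinks and the germ is kink-free, $h$ acquires only finitely many kinks, so it is genuinely piecewise linear; at each step it is increasing, being a decreasing $g_r$ composed with the decreasing $T$. The free slope $\alpha$ is then fixed as the unique value for which this propagation closes up into a surjection $h(1)=1$, i.e. a homeomorphism of $[0,1]$. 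Setting $g:=h\circ f\circ h^{-1}$ yields a piecewise linear unimodal map, conjugate to the tent map, equal to $g_r$ on $[v,1]$ by the very equation defining $h$. Uniqueness runs the argument in reverse: any admissible $g$ forces $h$ to solve the displayed equation with an affine germ, and both $\alpha$ and the outward propagation are uniquely determined, so $h$, and hence $g$, is determined by $g_r$.

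I expect the main obstacle to be exactly the resonance. Unlike the situation at the left fixed point of Theorem~\ref{th:1}, here the germ slope is not forced by the local equation, so I must (i) prove that the piecewise linear requirement genuinely selects the affine germ rather than an accumulating one, (ii) show that outward propagation from an affine germ produces only finitely many kinks, and (iii) establish that a unique admissible slope $\alpha$ closes the boundary condition $h(1)=1$ while keeping $h$ monotone onto $[v,1]$. I would make (i)--(ii) rigorous by taking the maximal interval about $2/3$ on which $h(1-u/2)$ remains in the central linear piece of $g_r$, showing the solution is forced affine there, and bounding the number of outer pieces by the number of linear pieces of $g_r$; and (iii) by checking that the map $\alpha\mapsto h_\alpha(1)$ is strictly monotone, so that $h_\alpha(1)=1$ has a unique root (consistent with the tent map itself, where $g_r=f_r$ gives $h=\mathrm{id}$ and $\alpha=1$).
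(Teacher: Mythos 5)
The paper never proves this statement: Theorem~\ref{th:2} is quoted from \cite[Theorem~3]{UMZh-2016}, so there is no internal proof to compare yours against, and I can only judge your proposal on its own terms. Your overall strategy is the natural one and is largely sound: encoding $g|_{[v,1]}=g_r$ as the functional equation $h(u)=g_r\bigl(h(1-u/2)\bigr)$, noting that $u\mapsto 1-u/2$ contracts to $2/3$ so that the equation forces $h(1/2)=v$, $h(2/3)=x_0$, rigidifies the germ of $h$ at $2/3$, and then determines $h$ outward in finitely many steps, with one free germ parameter $\alpha$ fixed by $h(1)=1$; the steps I checked (the forced values, monotonicity of each propagation step, strict monotonicity of $\alpha\mapsto h_\alpha(1)$, and the uniqueness argument) are correct.

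There is, however, a genuine gap at the very first step: you read the hypothesis $(g_r^2)'(x_0)=4$ as saying that the linear piece of $g_r$ through $x_0$ has slope $-2$. That is true only if $x_0$ is not a kink of $g_r$. If $x_0$ is a kink with one-sided slopes $s_-,s_+$, then $g_r^2$ is nevertheless differentiable at $x_0$ (both one-sided derivatives equal $s_-s_+$), so the hypothesis is exactly $s_-s_+=4$, which admits, for example, $s_-=-4$, $s_+=-1$. Such maps are not exotic and must be covered: they are precisely the decreasing branches of $h\circ f\circ h^{-1}$ when the conjugacy $h$ has a kink at $2/3$ (e.g.\ $h:\,(0,0)\rightarrow\left(\frac{2}{3},\frac{1}{2}\right)\rightarrow(1,1)$ yields one-sided slopes $-4$ and $-1$ at the fixed point $\frac12$), and the paper explicitly treats Theorems~\ref{th:1} and~\ref{th:2} as necessary conditions, i.e.\ as covering every piecewise linearly conjugate $g$. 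In the kink case your resonance analysis fails verbatim: consistency of $h(u)=g_r\bigl(h(1-u/2)\bigr)$ at $2/3$ then forces $h$ to have a kink at $2/3$, with one-sided slopes $p,q$ tied by $q=\frac{|s_-|}{2}p$ and $p=\frac{|s_+|}{2}q$ (compatibility of these two relations is exactly $s_-s_+=4$), so the germ is a two-sided affine germ with one free scale, not a kink-free affine germ. The scheme survives this modification, but as written your proof covers only the sub-case where $x_0$ lies in the interior of a linear piece. A secondary, smaller issue: your step (iii) needs more care than an intermediate value argument on a fixed interval, since for $\alpha$ too large the propagation exits the domain $[v,1]$ of $g_r$ and $h_\alpha$ is undefined; one should observe that, by monotonicity, the propagation is admissible exactly while $h_\alpha(1/2)\geq v$, equivalently $h_\alpha(1)\leq 1$, so the desired root is attained at the boundary of the admissible set of $\alpha$'s.
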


In fact, Theorems~\ref{th:1} and~\ref{th:2} are contain the
necessary conditions for a map to be topologically conjugated with
the tent map vis piecewise linear conjugacy. Notice that these
conditions are satisfied from the maps $g$ from
examples~\ref{ex:3.3} and~\ref{ex:3.13}. Remind that these maps
are described in Lemmas~\ref{lema:3.10} and~\ref{lema:3.17}
respectively.

\begin{lemma}
Each map $g$ from Examples~\ref{ex:3.3} and~\ref{ex:3.13} satisfy
the condition of Theorems~\ref{th:1} and~\ref{th:2}.
\end{lemma}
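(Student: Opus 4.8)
The plan is to treat the two maps side by side, splitting each $g$ into its increasing branch $g_l$ on $[0,v]$ and its decreasing branch $g_r$ on $[v,1]$ (where $v$ is the $x$-coordinate of the maximum, i.e. of $A_2$ in Example~\ref{ex:3.3} and of $A_3$ in Example~\ref{ex:3.13}), and then to check the hypotheses of Theorems~\ref{th:1} and~\ref{th:2} one at a time. For Theorem~\ref{th:1} observe first that $g_l$ is increasing and $g_l(v)=1$ by the very definition of a unimodal surjective map, and that $g_l'(0)=\gdev_1=2$ has already been established in Lemma~\ref{lema:3.8} for Example~\ref{ex:3.3} and in Lemma~\ref{lema:3.17} for Example~\ref{ex:3.13}. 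Hence the only new point for Theorem~\ref{th:1} is that $g_l$ has no positive fixed point.

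To settle this I would use that $g_l(x)-x$ is linear on each maximal interval of monotonicity of $g_l$, so its sign is determined by its values at the kinks. Using the explicit coordinates from Lemma~\ref{lema:3.9}, the values of $g_l(x)-x$ at the successive kinks $A_1,A_2$ are $a/2$ and $1-a$, both positive for $0<a<1$; using the coordinates from Lemma~\ref{lema:3.20}, the values at $A_1,A_2,A_3$ are $a/2$, $b$ and $(4-4a-b)/4$, which are positive because $a,b>0$ together with $a+b<1$ give $4a+b<3a+1<4$. Since $g_l(0)-0=0$ and $g_l(x)-x$ is strictly positive at every interior kink and linear in between, it is strictly positive on $(0,v]$, so $g_l$ admits no positive fixed point.

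For Theorem~\ref{th:2} the branch $g_r$ is decreasing and surjective by construction, so the whole content is the equality $(g_r^2)'(x_0)=4$ at the fixed point $x_0$ of $g_r$. As $g_r$ is continuous and strictly decreasing, it meets the diagonal in exactly one point, so it is enough to exhibit that point and read off the relevant slope. I would solve $g_r(x)=x$ on the linear piece of $g_r$ whose slope equals $-2$: in Example~\ref{ex:3.3} this is the piece $A_2A_3$ (slope $\gdev_3=-2$), which yields $x_0=(1+2a)/3$, and in Example~\ref{ex:3.13} it is the piece $A_4A_5$ (slope $\gdev_5=-2$), which yields $x_0=a+2b/3$. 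A direct comparison with the $x$-coordinates of the neighbouring kinks shows, under the standing restrictions, that $x_0$ lies strictly in the interior of that piece; since moreover $g_r(x_0)=x_0$ lies in the same piece, $g_r$ is differentiable at both $x_0$ and $g_r(x_0)$ with slope $-2$, whence $(g_r^2)'(x_0)=\bigl(g_r'(x_0)\bigr)^2=(-2)^2=4$.

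The calculations are all routine; the one place that needs a little care is confirming that the fixed point lands in the slope-$(-2)$ segment rather than in a neighbouring one. This is why I would phrase that step as an explicit interval-membership check in terms of $a$ (and $b$), invoking $0<a<1$ and $a+b<1$ to rule out the degenerate boundary cases $x_0=(a+1)/2$ in Example~\ref{ex:3.3} and $x_0=a+b/2$ or $x_0=a+b$ in Example~\ref{ex:3.13}, each of which forces an excluded value of the parameters.
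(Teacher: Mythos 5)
Your proposal is correct and takes essentially the same route as the paper: both read off $g_l'(0)=2$ from the explicit descriptions in Lemmas~\ref{lema:3.9} and~\ref{lema:3.20} (with slopes from Lemmas~\ref{lema:3.8} and~\ref{lema:3.17}), and both verify the hypothesis of Theorem~\ref{th:2} by locating the unique fixed point of the decreasing branch on the slope $-2$ piece, so that $(g_r^2)'(x_0)=(-2)^2=4$. The only difference is one of thoroughness: the paper leaves the ``no positive fixed points'' hypothesis of Theorem~\ref{th:1} implicit and only gestures at the interval-membership check for $x_0$, both of which you carry out explicitly.
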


\begin{proof}
Suppose that a map $g$ is restricted by the lattice from
Example~\ref{ex:3.3}. Then, $g$ is described in
Lemma~\ref{lema:3.9}, whence the derivative of $g$ at origin is
$2$. Since $a<1$, then $\frac{a+1}{2}>a$, whence the positive
fixed point of $g$ belongs to $A_2A_3$. This proves that the
derivative of $g$ at its positive fixed point equals $\gdev_3 =
-2$.

The case of Example~\ref{ex:3.13} follows analogously from
Lemma~\ref{lema:3.20}.
\end{proof}


Lemmas~\ref{lema:2.2}, \ref{lema:2.3} and~\ref{lema:2.4} provide a
techniques
 to prove that maps $g$ and $\psi$ commute. Indeed,
these lemmas provide a technique to verify the equality $$
g_1\circ \psi_1 = \psi_2\circ g_2
$$ for arbitrary maps $g_1, g_2, \psi_1, \psi_2$, not necessarily
with the conditions $g_1 =g_2$ and $\psi_1 = \psi_2$. In
particular, for given map $g$ and a conjugacy $h$ we can verify
the conjugation of the tent map $f$ and the map $g$ by the
conjugacy $h$. On another hand, there is no given map $h$ to
verify the conjugacy from $f$ to $g$ in Examples~\ref{ex:3.3}
and~\ref{ex:3.13}.

We will introduced some methods to find the conjugacy from the
tent map to a given unimodal map $g$.
 For any continuous unimodal map $g$, which
is topologically conjugated to the tent map, we have defined
in~\cite{1810.05953} a $g$-decomposition $(\beta_i)_{i\geq 0}$ of
any point $x\in [0, 1]$. This $g$-decomposition coincides with
$v^{-1}$-coordinates expansion of a point for the skew tent map
$f_v$ in our~\cite{Chaos}. The next fact follows
from~\cite[Th.~2]{Yong-Guo-Wang}.

\begin{lemma}\label{lema:3.23}
Let $g_1$ be a piecewise linear unimodal map, which is
topologically conjugated to the tent map. Let $h$ be an increasing
conjugacy from $g_1$ to a map $g_2$. Then for every point $x\in
[0, 1]$ the $g_1$-coordinates of $x$ coincide with
$g_2$-coordinates of $h(x)$.
\end{lemma}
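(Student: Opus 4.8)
The plan is to reduce the statement to the invariance of the itinerary (symbolic) data under an increasing conjugacy, which is exactly the content of the cited \cite[Th.~2]{Yong-Guo-Wang}. First I would unfold the definition of the $g$-decomposition from \cite{1810.05953}: the sequence $(\beta_i)_{i\geq 0}$ attached to a point $x$ is built from the itinerary of $x$, that is, from the record, at each step $i$, of the position of the iterate $g^{\,i}(x)$ relative to the turning point $v$ of $g$ (whether $g^{\,i}(x)$ lies on the increasing branch $g_l$ or on the decreasing branch $g_r$). The essential observation is that this sequence depends only on the combinatorial data of the orbit and on no metric quantity; hence it ought to be a topological invariant of any orientation-preserving conjugacy. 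The identification, noted just before the statement, of the $g$-decomposition with the $v^{-1}$-coordinate expansion of the skew tent map is what guarantees that the coordinates are a genuine invariant of the symbolic dynamics.

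Next I would establish the two facts about $h$ that force the itineraries to match. Since $h\circ g_1 = g_2\circ h$, iteration yields $h\circ g_1^{\,i} = g_2^{\,i}\circ h$ for every $i\geq 0$. Denote by $v_1$ and $v_2$ the turning points of $g_1$ and $g_2$. Because each $g_j$ is unimodal surjective, it attains its maximal value $1$ only at $v_j$ (recall $g_j(v_j)=1$ from the definition~\eqref{eq:1.1}). Since $h$ is an increasing self-homeomorphism of $[0,1]$ we have $h(1)=1$, and therefore $g_2(h(v_1)) = h(g_1(v_1)) = h(1) = 1$; as $h(v_1)$ is thus a point at which $g_2$ equals $1$, it must be the unique maximizer, so $h(v_1)=v_2$.

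Then, because $h$ is increasing, for any $y\in[0,1]$ one has $y<v_1 \iff h(y)<v_2$ and $y>v_1 \iff h(y)>v_2$. Applying this with $y=g_1^{\,i}(x)$ and using $h(g_1^{\,i}(x)) = g_2^{\,i}(h(x))$, I conclude that $g_1^{\,i}(x)$ lies on the increasing (resp.\ decreasing) branch of $g_1$ exactly when $g_2^{\,i}(h(x))$ lies on the increasing (resp.\ decreasing) branch of $g_2$. Hence the two itineraries agree for all $i$, and therefore the $g_1$-coordinates of $x$ coincide with the $g_2$-coordinates of $h(x)$.

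The main obstacle I anticipate is not this dynamical computation but the bookkeeping of definitions: one must verify that the $g$-decomposition of \cite{1810.05953} is literally the itinerary-based object to which \cite[Th.~2]{Yong-Guo-Wang} applies, so that the cited theorem may be invoked verbatim rather than re-derived. Once that matching of definitions is secured, the conjugacy-invariance of the coordinates is precisely \cite[Th.~2]{Yong-Guo-Wang}, and the lemma follows immediately from the orbit computation above.
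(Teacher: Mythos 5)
Your proposal is correct, but it supplies substantially more argument than the paper does: the paper's entire justification of Lemma~\ref{lema:3.23} is the sentence preceding its statement, namely that the fact ``follows from~\cite[Th.~2]{Yong-Guo-Wang}''; no proof at all is given in the text. What you do differently is reconstruct the argument the citation stands in for: you iterate the conjugation equation to get $h\circ g_1^{\,i}=g_2^{\,i}\circ h$, show that $h$ carries the turning point $v_1$ of $g_1$ to the turning point $v_2$ of $g_2$ (using that a unimodal surjective map of the form~\eqref{eq:1.1} attains the value $1$ only at its turning point, together with $h(1)=1$), and conclude that the itineraries of $x$ under $g_1$ and of $h(x)$ under $g_2$ agree symbol by symbol; granting that the $g$-decomposition of~\cite{1810.05953} is determined by the itinerary, the lemma follows. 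This buys self-containedness: apart from the definitional identification of the $g$-decomposition with itinerary data (which you correctly flag as the one point still resting on \cite{1810.05953} and \cite{Yong-Guo-Wang}), your orbit computation makes the invariance provable inside the paper, whereas the paper leaves the reader entirely dependent on the external reference. One small point worth making explicit in your write-up: $g_2=h\circ g_1\circ h^{-1}$ is itself unimodal precisely because $h$ is an increasing homeomorphism, and this is what licenses speaking of ``the'' turning point $v_2$ and of the two branches of $g_2$ in your branch-matching step.
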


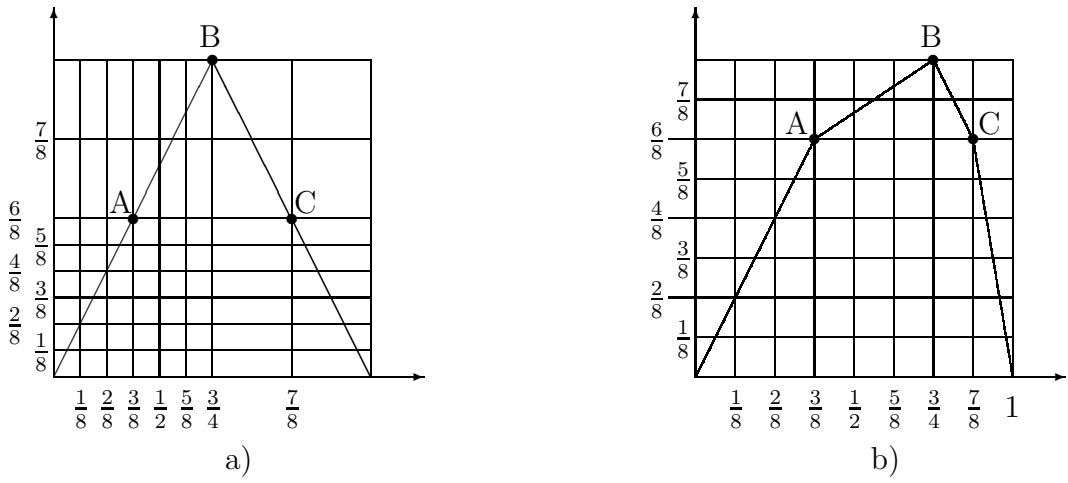
\begin{figure}[htbp]
\begin{minipage}[h]{0.49\linewidth}
\begin{center}
\begin{picture}(140,140)
\put(120,0){\line(0,1){120}} \put(0,120){\line(1,0){120}}
\put(0,0){\vector(1,0){140}} \put(0,0){\vector(0,1){140}}
\put(0,0){\line(1,2){60}} \put(60,120){\line(1,-2){60}}
\put(60,120){\line(1,-2){60}}

\put(10,0){\line(0,1){120}} \put(20,0){\line(0,1){120}}
\put(30,0){\line(0,1){120}} \put(40,0){\line(0,1){120}}
\put(50,0){\line(0,1){120}} \put(60,0){\line(0,1){120}}
\put(90,0){\line(0,1){120}}

\put(7,-15){$\frac{1}{8}$} \put(17,-15){$\frac{2}{8}$}
\put(27,-15){$\frac{3}{8}$} \put(37,-15){$\frac{1}{2}$}
\put(47,-15){$\frac{5}{8}$} \put(57,-15){$\frac{3}{4}$}
\put(87,-15){$\frac{7}{8}$}

\put(-8,6.5){$\frac{1}{8}$} \put(-18,16.5){$\frac{2}{8}$}
\put(-8,26.5){$\frac{3}{8}$} \put(-18,36.5){$\frac{4}{8}$}
\put(-8,46.5){$\frac{5}{8}$} \put(-18,56.5){$\frac{6}{8}$}
\put(-8,86.5){$\frac{7}{8}$}

\put(0,10){\line(1,0){120}} \put(0,20){\line(1,0){120}}
\put(0,30){\line(1,0){120}} \put(0,40){\line(1,0){120}}
\put(0,50){\line(1,0){120}} \put(0,60){\line(1,0){120}}
\put(0,90){\line(1,0){120}}

\put(30,60){\circle*{4}} \put(60,120){\circle*{4}} \put(21,62){A}
\put(55,125){B}

\put(90,60){\circle*{4}} \put(91,62){C}
\end{picture}
\end{center}
\centerline{a)}
\end{minipage}
\hfill
\begin{minipage}[h]{0.49\linewidth}
\begin{center}
\begin{picture}(140,140)
\put(120,0){\line(0,1){120}} \put(0,120){\line(1,0){120}}
\put(0,0){\vector(1,0){140}} \put(0,0){\vector(0,1){140}}

\put(15,0){\line(0,1){120}} \put(30,0){\line(0,1){120}}
\put(45,0){\line(0,1){120}} \put(60,0){\line(0,1){120}}
\put(75,0){\line(0,1){120}} \put(90,0){\line(0,1){120}}
\put(105,0){\line(0,1){120}}

\put(12,-15){$\frac{1}{8}$} \put(27,-15){$\frac{2}{8}$}
\put(42,-15){$\frac{3}{8}$} \put(57,-15){$\frac{1}{2}$}
\put(72,-15){$\frac{5}{8}$} \put(87,-15){$\frac{3}{4}$}
\put(102,-15){$\frac{7}{8}$} \put(117,-15){$1$}

\put(-8,11.5){$\frac{1}{8}$} \put(-18,26.5){$\frac{2}{8}$}
\put(-8,41.5){$\frac{3}{8}$} \put(-18,56.5){$\frac{4}{8}$}
\put(-8,71.5){$\frac{5}{8}$} \put(-18,86.5){$\frac{6}{8}$}
\put(-8,101.5){$\frac{7}{8}$}

\put(0,15){\line(1,0){120}} \put(-10,30){\line(1,0){130}}
\put(0,45){\line(1,0){120}} \put(-10,60){\line(1,0){130}}
\put(0,75){\line(1,0){120}} \put(-10,90){\line(1,0){130}}
\put(0,105){\line(1,0){120}}

\put(45,90){\circle*{4}} \put(34,92){A} %
\put(90,120){\circle*{4}} \put(85,125){B} %
\put(105,90){\circle*{4}} \put(107,92){C}

\qbezier(0,0)(22.5,45)(45,90) \qbezier(45,90)(67.5,105)(90,120)
\qbezier(90,120)(97.5,105)(105,90)
\qbezier(105,90)(112.5,45)(120,0)

\end{picture}
\end{center}
\centerline{ b)}
\end{minipage}
\hfill \caption{Construction of the conjugated map}\label{fig:12}
\end{figure}

Thus, Lemma~\ref{lema:3.23} lets to define a correspondence
between points of the graph of $g_1$ and the graph of $g_2$ as $$
\Gamma_{g_1}:\, (x, g_1(x))\mapsto (h(x), (h\circ g_1)(x))\in
\Gamma_{g_2}.
$$
Indeed, since $g_1 = h^{-1}\circ g_2\circ h$, then $h\circ g_1 =
g_2\circ h$, whence $(h(x), (h\circ g_1)(x))$ belongs to
$\Gamma_{g_2}$ for every $x\in [0, 1]$. We can see these
reasonings geometrically. Let, as above, $f$ be the tent map and
$h$, $$ h:\, (0, 0)\rightarrow \left(\frac{1}{2},
\frac{3}{4}\right) \rightarrow (1,1)$$ be the conjugacy.

The conjugation by $h$ can be understood as the change of scale
$x\mapsto h(x)$. Thus, we can look at the Fig.~\ref{fig:12}a as
the graph of the map $g = h\circ f\circ h^{-1}$ in non-uniform
scale. We can first to plot the graph of $f$ on the
Fig.~\ref{fig:12}a, and then the add the non-uniform scale for the
interval $[0, 1]$. More precisely, in this case we denote each
point $(x,y)$ on the plot of $f$ by $(h(x), h(y))$, obtaining the
graph of new function. If one comes back to the ``common'' uniform
scale, Figure~\ref{fig:12}a transforms to Fig.~\ref{fig:12}b,
where $g$ is given in a common way.

Our deal is to see the tent map $x\mapsto 1 -|1-2x|$ on
Fig.~\ref{fig:12}b. Suppose that the graph $g$ of the
form~\eqref{eq:2.2} is given at Figure~\ref{fig:13}a, where it is
seen that $g(A_1) =g(A_3) =A_2$. Thus, $g$-coordinates of $A_1$,
$A_2$ and $A_3$ are $\left(\frac{1}{4}, \frac{1}{2}\right)$,
$\left(\frac{1}{2}, 1\right)$ and $\left(\frac{3}{4},
\frac{1}{2}\right)$ respectively.

Since $f$-coordinate of $x$ is $x$ for any $x\in [0, 1]$ then it
follows from Lemma~\ref{lema:3.23} that $h\left(\frac{1}{4}\right)
=\frac{3}{8}$, $h\left(\frac{1}{2}\right) =\frac{3}{4}$, and
$h\left(\frac{3}{4}\right) =\frac{7}{8}$.

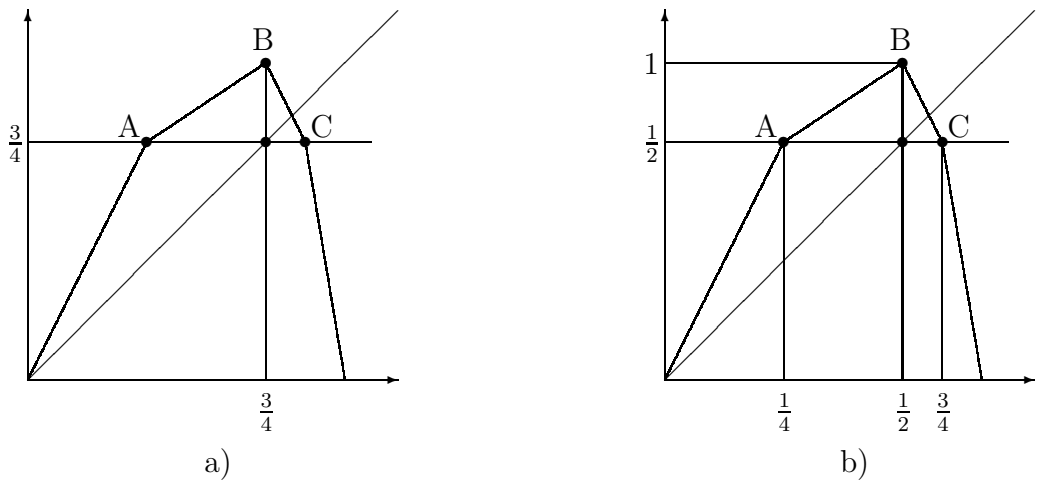
\begin{figure}[htbp]
\begin{minipage}[h]{0.49\linewidth}
\begin{center}
\begin{picture}(140,140)
\put(0,0){\vector(1,0){140}} \put(0,0){\vector(0,1){140}}

\put(0,0){\line(1,1){140}}

\put(90,0){\line(0,1){120}}

\put(87,-15){$\frac{3}{4}$}

\put(-8,86.5){$\frac{3}{4}$} 

\put(0,90){\line(1,0){130}}

\put(45,90){\circle*{4}} \put(34,92){A} %
\put(90,120){\circle*{4}} \put(85,125){B} %
\put(105,90){\circle*{4}} \put(107,92){C}

\qbezier(0,0)(22.5,45)(45,90) \qbezier(45,90)(67.5,105)(90,120)
\qbezier(90,120)(97.5,105)(105,90)
\qbezier(105,90)(112.5,45)(120,0)

\put(90,90){\circle*{4}}

\end{picture}
\end{center}
\centerline{ a)}
\end{minipage}
\begin{minipage}[h]{0.49\linewidth}
\begin{center}
\begin{picture}(140,140)
\put(0,0){\vector(1,0){140}} \put(0,0){\vector(0,1){140}}

\put(0,0){\line(1,1){140}}

\put(90,0){\line(0,1){120}}

\put(87,-15){$\frac{1}{2}$}

\put(-8,86.5){$\frac{1}{2}$}

\put(0,90){\line(1,0){130}}

\put(45,90){\circle*{4}} \put(34,92){A} %
\put(90,120){\circle*{4}} \put(85,125){B} %
\put(105,90){\circle*{4}} \put(107,92){C}

\qbezier(0,0)(22.5,45)(45,90) \qbezier(45,90)(67.5,105)(90,120)
\qbezier(90,120)(97.5,105)(105,90)
\qbezier(105,90)(112.5,45)(120,0)

\put(90,90){\circle*{4}}

\qbezier(45,90)(45,45)(45,0) \qbezier(105,90)(105,45)(105,0)
\qbezier(90,120)(45,120)(0,120)

\put(42,-15){$\frac{1}{4}$} \put(102,-15){$\frac{3}{4}$}

\put(-8,116){$1$}

\end{picture}
\end{center}
\centerline{ b)}
\end{minipage}
\caption{The graph of the conjugated map}\label{fig:13}
\end{figure}

Thus, the conjugacy $h$ should pass throw points
$\left(\frac{1}{4}, \frac{3}{8}\right)$, $\left(\frac{1}{2},
\frac{3}{4}\right)$, and $\left(\frac{3}{4}, \frac{7}{8}\right)$.
Thus, $$ h:\, (0, 0)\rightarrow \left(\frac{1}{4},
\frac{3}{8}\right)\rightarrow \left(\frac{1}{2},
\frac{3}{4}\right)\rightarrow \left(\frac{3}{4},
\frac{7}{8}\right)\rightarrow (1, 1)
$$ seems to be a conjugacy from $f$ to $g$.
From another hand, the latter map coincides with
\begin{equation}\label{eq:3.15}
h:\, (0, 0)\rightarrow \left(\frac{1}{2},
\frac{3}{4}\right)\rightarrow (1, 1), \end{equation} since
$\left(\frac{1}{4}, \frac{3}{8}\right)$ and $\left(\frac{3}{4},
\frac{7}{8}\right)$ are not kinks. The uniqueness of the kink of
$h$ corresponds to Figure~\ref{fig:13}.

\begin{figure}
\begin{center}

\begin{picture}(300,300)

\put(150,150){\vector(1,0){150}} \put(150,150){\vector(-1,0){150}}
\put(150,150){\vector(0,1){150}} \put(150,150){\vector(0,-1){150}}

\linethickness{0.4mm}


\Vidr{150}{150}{240}{195}
\put(240,195){\circle*{6}}\VidrTo{270}{240}
\put(270,240){\circle*{6}}\Vidr{270}{240}{240}{255}
\put(240,255){\circle*{6}}\VidrTo{150}{270}


\linethickness{0.1mm}

\Vidr{150}{150}{90}{30} \VidrTo{30}{150}

\Vidr{120}{195}{240}{195} \Vidr{90}{240}{270}{240}
\Vidr{60}{255}{240}{255}

\Vidr{120}{195}{120}{90}

\Vidr{90}{240}{90}{30} \Vidr{240}{255}{240}{30}

\put(90,240){\circle*{6}} \put(240,90){\circle*{6}}
\put(240,240){\circle*{6}}

\Vidr{150}{150}{90}{240} \VidrTo{30}{270}

\Vidr{150}{150}{240}{90} \VidrTo{270}{30}

\qbezier(60,255)(60,172.5)(60,90) \qbezier(60,90)(150,90)(240,90)


\put(245,190){$A_1$} \put(275,235){$A_2$} \put(245,260){$A_3$}




\put(3,160){$x$} \put(137,295){$\psi$} \put(290,160){$y$}
\put(137,10){$f$}
\end{picture}
\end{center}\caption{}
\label{fig:14}
\end{figure}

When the assumption that the conjugacy $h$ is given
by~\eqref{eq:3.15} is obtained, the verification is necessary. We
can do this verification by too ways:

(i) check that the map on Figure~\ref{fig:13}b
$$
(0,0)\rightarrow \left(\frac{1}{4},\frac{1}{2}\right) \rightarrow
\left(\frac{1}{2},1\right)\rightarrow
\left(\frac{3}{4},\frac{1}{2}\right)\rightarrow \left(1,0\right)
$$ is indeed the tent map;

(ii) To check by Fig.~\ref{fig:14} that the trajectories, which
pass throw $A_1$ and $A_3$ do not contradict to the equality
$g\circ h = h\circ f$.

\begin{lemma}\label{lema:3.24}
Suppose that piecewise linear unimodal $g$ is such that
$g^{-\infty}(0)$ is dense in $[0, 1]$. Suppose that $a_i,\, 1\leq
i\leq k$ are the kinks of increasing (decreasing) part of $g$ and
$\alpha_{i},\, 1\leq i\leq k$ are the $g$-coordinates of $a_i$s.
If the conjugacy from $f$ to $g$ is piecewise linear, the the
complete set of its kinks is $\{(\alpha_i, a_i)\}\cup
\{(f(\alpha_i), g(a_i))\},\, 1\leq i\leq k$.
\end{lemma}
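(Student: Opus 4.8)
The plan is to exploit the conjugacy identity $h\circ f=g\circ h$ (so $g=h\circ f\circ h^{-1}$ with $h$ an increasing homeomorphism fixing $0$ and $1$) together with Lemma~\ref{lema:3.23}. Since the $f$-coordinate of a point is the point itself, Lemma~\ref{lema:3.23} identifies the $g$-coordinate function with $h^{-1}$; hence $\alpha_i=h^{-1}(a_i)$, i.e.\ $h(\alpha_i)=a_i$, so each $(\alpha_i,a_i)$ lies on the graph $\Gamma_h$. Applying $h\circ f=g\circ h$ at $\alpha_i$ gives $h(f(\alpha_i))=g(h(\alpha_i))=g(a_i)$, so each $(f(\alpha_i),g(a_i))$ lies on $\Gamma_h$ as well. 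This settles the easy half: every point of the proposed list is genuinely a point of $h$.

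The substantive half is that $h$ has no other kinks. First I would differentiate the conjugacy relation on each linear branch of $f$: since $|f'|\equiv 2$ and $h'>0$, one gets $h'(x)\,|g'(h(x))|=2\,h'(f(x))$ wherever all three derivatives exist. Reading off where $h'$ can jump, a kink of $h$ at $p$ must come either from a jump of $h'$ at $f(p)$ or from a jump of $|g'|$ at $h(p)$; that is,
\[
K_h\ \subseteq\ f^{-1}(K_h)\ \cup\ h^{-1}(K_g),
\]
where $K_h,K_g$ denote the kink sets. As $h$ is assumed piecewise linear, $K_h$ is finite, so for each $p\in K_h$ the forward orbit $p,f(p),f^2(p),\dots$ stays in $K_h$ until it first lands in $h^{-1}(K_g)$. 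Unfolding the displayed relation along this orbit expresses the one-sided slope ratio $R(p):=h'(p^+)/h'(p^-)$ as a product of the local slope ratios $\rho(y):=|g'(y^-)|/|g'(y^+)|$ taken over the $g$-orbit of $y=h(p)$, with signs dictated by the branches of $f$ visited; this product is finite because density of $g^{-\infty}(0)$ forces the relevant orbits to reach the fixed point $0$, beyond which $\rho\equiv1$. Thus $p$ is a genuine kink ($R(p)\neq1$) only if the $g$-orbit of $h(p)$ meets $K_g=\{a_i\}\cup\{\widetilde a_i\}\cup\{c_g\}$ (increasing kinks, their mirror images, turning point) nontrivially.

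It remains to localize these meetings to the two stated families. The increasing kinks are met at time $0$ precisely when $h(p)=a_i$, i.e.\ $p=\alpha_i$, giving the first family. For the turning point and the mirror kinks I would use $f(\alpha_i)=2\alpha_i=f(1-\alpha_i)$: a meeting produced one step later collapses the contributions of $c_g$ and of $\widetilde a_i=h(1-\alpha_i)$ onto the common one-step $f$-preimage $f(\alpha_i)$, which is why the mirror abscissa $1-\alpha_i$ itself need not be a kink and only $f(\alpha_i)$ survives --- exactly the second family. This matches the worked example, where the decreasing kink $7/8$ forces the kink of $h$ at $f(1/4)=1/2$ rather than at $3/4$. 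Equivalently, in the single-trajectory picture (cf.\ Fig.~\ref{fig:14}), between two consecutive candidate abscissae every trajectory crosses only linear pieces of both $f$ and $g$, so the conjugacy relation forces $h$ to be affine there; hence no kink can occur strictly between candidates.

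The hard part is the non-cancellation analysis underlying the two previous paragraphs: a priori the inclusion $K_h\subseteq f^{-1}(K_h)\cup h^{-1}(K_g)$ permits kinks at arbitrarily high $f$-preimages of $h^{-1}(K_g)$, and one must show that all contributions beyond the first return cancel in the slope product, collapsing the candidate set to $\{\alpha_i\}\cup\{f(\alpha_i)\}$. This is precisely where density of $g^{-\infty}(0)$ is indispensable: it makes the slope product finite and the conjugacy rigid, ruling out spurious higher-order kinks and periodic cycles of kinks (on which the sign bookkeeping forces $R\equiv1$). Finally I would note, as the example already shows, that some listed candidates may be degenerate --- lying on a single line through their neighbours --- so that the list is the complete set of candidate kinks, the genuine kinks being those at which the two adjacent affine pieces of $h$ have different slopes.
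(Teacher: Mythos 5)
Your opening paragraph is correct, and it already contains everything the paper itself makes precise: by Lemma~\ref{lema:3.23} (plus the observation that $f$-coordinates are the identity) one gets $h(\alpha_i)=a_i$, and applying $h\circ f=g\circ h$ gives $h(f(\alpha_i))=g(a_i)$, so every listed point lies on the graph of $h$. Keep in mind that the paper's entire proof of this lemma is the single sentence that it ``follows from the geometrical interpretation of the topological conjugacy as a change of the scale'', so your cocycle setup (the relation $h'(x)\,|g'(h(x))|=2\,h'(f(x))$ and the inclusion $K_h\subseteq f^{-1}(K_h)\cup h^{-1}(K_g)$) goes beyond anything the paper writes down, and it is sound. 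The genuine gap is that the substantive half --- no kinks outside $\{\alpha_i\}\cup\{f(\alpha_i)\}$ --- is never actually proved: you yourself flag ``the non-cancellation analysis'' as the hard part and then only describe what ``one must show''. Worse, both mechanisms you invoke to close it are false. Density of $g^{-\infty}(0)$ is a statement about backward orbits and does not force the forward $g$-orbit of any kink to reach $0$; in the paper's own Example~\ref{ex:3.13} (the map of Lemma~\ref{lema:3.20}, with $a,b$ free parameters) the forward orbits of the kinks generically never hit $0$, so your ``the slope product is finite because the relevant orbits reach the fixed point $0$'' step fails precisely in the cases the lemma is meant to cover. And your parenthetical claim that on a periodic cycle of kinks ``the sign bookkeeping forces $R\equiv1$'' holds only when the cycle visits the decreasing branch of $f$ an odd number of times: around a cycle avoiding $h^{-1}(K_g)$ the unfolded relation reads $R(p)=R(p)^{(-1)^m}$, where $m$ is the number of visits to $(1/2,1]$, and this is vacuous for even $m$, so such cycles are not excluded.

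The structural difficulty you never resolve is a mismatch of directions: the inclusion $K_h\subseteq f^{-1}(K_h)\cup h^{-1}(K_g)$ propagates kinks \emph{forward}, hence permits kinks at arbitrarily deep $f$-\emph{preimages} of $h^{-1}(K_g)$, whereas the asserted list consists of $h^{-1}(K_g)$ together with its first forward image only. Your fallback justification (``between two consecutive candidate abscissae every trajectory crosses only linear pieces of both $f$ and $g$, so the conjugacy relation forces $h$ to be affine there'') is circular: that $h$ is affine between candidates is exactly what has to be proved. What could actually close the argument is the backward counterpart of your cocycle relation, which you never exploit: if $q\in K_h\cap(0,1)$, then each of its two $f$-preimages lies in $K_h\cup h^{-1}(K_g)$; since a point of $(0,1)$ has $2^n$ distinct preimages of order $n$ while $K_h\cup h^{-1}(K_g)$ is finite, the forced preimage tree of every kink must be absorbed by $h^{-1}(K_g)$ within finitely many generations. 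Combining that finiteness argument with forward propagation of linearity from a neighbourhood of $0$ (the argument the paper does carry out in proving Lemma~\ref{lema:3.26}) is the kind of reasoning that can pin the kinks down to the stated set. As written, your text is a correct setup plus a plan; the lemma's actual content remains unproved.
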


\begin{proof}
Lemma follows from the geometrical interpretation of the
topological conjugacy as a change of the scale.
\end{proof}

\begin{lemma}\label{lema:3.25}
Suppose that $g$ is a piecewise linear map of the
form~\eqref{eq:1.1}, and $h$ is a conjugacy. Denote $\mathcal{A}
=\{a_i,\, 1\leq i\leq k\}$ the set with the following properties:

(i) all kinks of $g$ belong to $\mathcal{A}$;

(ii) all kinks of $h$ belong to $\mathcal{A}$;

(iii) all kinks of $h$ belong to $g_l(\mathcal{A}\cap [0, v])\cap
g_r(\mathcal{A}\cap [v, 1])$;

(iv) for every $i$ the interval, bounded by $h(\alpha_i)$ and
$h(\alpha_{i+1})$ does not contain any kink of $h$;

(v) for every $i$ the point $(h(\alpha_i), (h\circ g)(\alpha_i))$
belongs to the graph of the tent map $f$.

\noindent Then $h$ is the conjugacy from $g$ to $f$.
\end{lemma}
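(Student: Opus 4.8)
The plan is to prove that the two continuous piecewise linear maps $h\circ g$ and $f\circ h$ coincide on all of $[0,1]$; since $h$ is a homeomorphism, this is exactly the assertion that $h$ conjugates $g$ to $f$. As in the proof of Lemma~\ref{lema:2.4}, I would reduce this global equality to a finite check: if two continuous piecewise linear maps agree at every point of a finite set $T$ and neither of them has a kink strictly between two consecutive points of $T$, then they agree everywhere, by the linear--interpolation observation of Lemma~\ref{lema:2.3}. Here $T$ is the set of abscissae $\{\alpha_i\}$ supplied by the hypotheses, and condition~(v), which reads $(h\circ g)(\alpha_i)=f(h(\alpha_i))$, is precisely the statement that the desired equality already holds at each point of $T$.

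The heart of the argument is the bookkeeping of the kinks of the two compositions. The map $f\circ h$ can break only where $h$ has a kink or where $h(x)=\tfrac12$, the apex of $f$; the map $h\circ g$ can break only where $g$ has a kink or where $g(x)$ is a kink of $h$. I would dispose of these four families in turn: (i) places the kinks of $g$ inside $\mathcal{A}$; (ii) places the kinks of $h$ inside $\mathcal{A}$; and (iii), which puts each kink of $h$ into $g_l(\mathcal{A}\cap[0,v])\cap g_r(\mathcal{A}\cap[v,1])$, guarantees that both branch--preimages under $g$ of every kink of $h$ again lie in $\mathcal{A}$, so that the breakpoints of $h\circ g$ of the form $g(x)\in\{\text{kinks of }h\}$ are already accounted for. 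The remaining, and most delicate, breakpoint is the apex: using that $g$ is unimodal and surjective with $g(v)=1$, so that each monotone branch maps onto $[0,1]$, together with condition~(v) at the relevant test point, I would deduce $h(v)=\tfrac12$; thus the point where $h$ meets the apex of $f$ coincides with the apex $v$ of $g$ and is covered. Condition~(iv) is then used to exclude any kink of $h$ hidden strictly inside a monotone piece, so that each linear segment of either composite really is affine across consecutive points of $T$.

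With the kink bookkeeping in hand the conclusion is immediate. At the endpoints the boundary conditions $g(0)=g(1)=0$ together with $h(0)=0$, $h(1)=1$ force $h\circ g$ and $f\circ h$ to agree at $0$ and $1$; at the interior points of $T$ the agreement is exactly condition~(v). Since, by the previous paragraph, no kink of either $h\circ g$ or $f\circ h$ lies strictly between consecutive points of $T$, the interpolation principle of Lemma~\ref{lema:2.3}, applied along the whole test set as in Lemma~\ref{lema:2.4}, yields $h\circ g=f\circ h$ on each subinterval, hence on all of $[0,1]$. Combined with the description of the kinks of the conjugacy in Lemma~\ref{lema:3.24}, this also shows that $h$ is forced to be the unique such map.

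The step I expect to be the main obstacle is precisely this exhaustive enumeration of breakpoints and the verification that conditions~(iii)--(iv) cover the ``hidden'' ones. In particular, one must check that no linear piece of $f\circ h$ straddles the point where $h$ meets $\tfrac12$ and, symmetrically, that no linear piece of $h\circ g$ straddles a branch--preimage of a kink of $h$; pinning down $h(v)=\tfrac12$ from the surjectivity of the two monotone branches of $g$ is the one place where the unimodal structure, and not merely piecewise linearity, is genuinely used.
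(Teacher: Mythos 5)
Your argument is correct, but it takes a genuinely different route from the paper: the paper's entire proof of this lemma is the single sentence that it ``follows from the geometrical interpretation of the topological conjugacy as a change of the scale,'' whereas you give an actual finite-verification argument. Concretely, you reduce the identity $h\circ g=f\circ h$ to agreement on the finite set $\mathcal{A}\cup\{0,1\}$ plus the absence of breakpoints of either composite strictly between consecutive test points, and then you account for every possible breakpoint: kinks of $g$ via (i), kinks of $h$ via (ii), the two branch preimages under $g$ of each kink of $h$ via (iii), and the apex point $h^{-1}\left(\tfrac{1}{2}\right)$, which you identify with $v$ by applying (v) at the kink $v\in\mathcal{A}$ and using $g(v)=1$, $h(1)=1$ and $f(y)=1\Leftrightarrow y=\tfrac{1}{2}$. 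This deduction $h(v)=\tfrac{1}{2}$ is precisely the content that the paper's ``change of scale'' appeal leaves implicit, and it is the only point where unimodality (rather than mere piecewise linearity) is used; your proof also makes visible that the lemma needs $h$ increasing, so that $h(0)=0$, $h(1)=1$, for the endpoint checks. Two minor inaccuracies, neither of which affects validity: condition (iv) as printed concerns the image intervals bounded by $h(\alpha_i)$ and $h(\alpha_{i+1})$, and under your deduction of $h(v)=\tfrac{1}{2}$ it becomes redundant, rather than being what excludes kinks of $h$ inside monotone pieces (that exclusion is already condition (ii)); and the ``interpolation principle'' you attribute to Lemma~\ref{lema:2.3} is really just the elementary fact that two affine functions agreeing at two points coincide --- Lemma~\ref{lema:2.3} as stated in the paper has hypotheses but no conclusion, so nothing precise can be quoted from it, and your argument is better served by invoking the affine-agreement fact directly.
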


\begin{proof}
Lemma follows from the geometrical interpretation of the
topological conjugacy as a change of the scale.
\end{proof}

\begin{lemma}\cite[Lema~10]{UMZh-2016}\label{lema:3.26}
Let $g$ be piecewise linear unimodal map, and $h$ is the conjugacy
from the tent map $f$ to $g$. Denote by $\varepsilon$ the first
kink of $g$, and $k$ the slope of $h$ at zero. Then
$\frac{2\varepsilon}{k}$ is the first kink of $h$~\footnote{There
is a typing error in~\cite{UMZh-2016}. The first kink of $h$ is
$\frac{2\varepsilon}{k}$, but not $\frac{\varepsilon}{k}$ as it is
written in~\cite{UMZh-2016}.}.
\end{lemma}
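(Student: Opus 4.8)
The plan is to use the defining relation $h\circ f=g\circ h$ together with the observation that both sides are one and the same continuous piecewise linear function; consequently, scanning from $x=0$ to the right, the first kink of $h\circ f$ and the first kink of $g\circ h$ must occur at the same argument. Let $\delta$ denote the first kink of $h$ that I want to identify. Near the origin all three maps are linear: the increasing conjugacy fixes $0$, so $h(x)=kx$ on $[0,\delta)$; the tent map gives $f(x)=2x$ on $[0,1/2]$; and $g(x)=g'(0)\,x$ on $[0,\varepsilon)$. Differentiating $h\circ f=g\circ h$ at $0$ yields $2k=g'(0)\,k$, so $g'(0)=2$ (this is exactly Lemma~\ref{lema:3.21}); hence $g(x)=2x$ on $[0,\varepsilon)$.

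Next I would locate, independently, the first kink of each composition. For $h\circ f$ the inner map $f(x)=2x$ is linear and nonconstant throughout $[0,1/2]$, so the composite is linear exactly while $f(x)=2x<\delta$, that is on $[0,\delta/2)$, and it develops its first slope change at $x=\delta/2$; this is legitimate because $\delta<1$ forces $\delta/2<1/2$, so the kink $1/2$ of $f$ does not interfere. For $g\circ h$ the composite stays linear as long as $h$ itself is linear ($x<\delta$) \emph{and} its value $h(x)=kx$ has not yet reached the first kink $\varepsilon$ of $g$ ($x<\varepsilon/k$); thus the first kink of $g\circ h$ sits at $\min(\delta,\varepsilon/k)$.

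Equating the two first kinks gives $\delta/2=\min(\delta,\varepsilon/k)$. Since $\delta/2<\delta$, the minimum cannot be attained at $\delta$, so necessarily $\delta/2=\varepsilon/k$, whence $\delta=\frac{2\varepsilon}{k}$, which is the assertion (and accounts for the factor $2$ corrected in the footnote).

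I expect the only delicate point to be the verification that $x=\delta/2$ is a genuine kink of $h\circ f$ rather than an accidental point of smoothness: since $f$ has the nonzero slope $2$ there and $h$ has a true break at $\delta$, the left and right slopes of $h\circ f$ at $\delta/2$ differ, so the kink is real; the analogous check for $g\circ h$ at $\min(\delta,\varepsilon/k)$ is identical. A final remark handles the degenerate case $g=f$: then $h=\mathrm{id}$, $k=1$, $\varepsilon=1/2$, and the formula returns $2\varepsilon/k=1$, consistent with $h$ having no interior kink, so one either excludes this case or reads the endpoint as the ``first kink''.
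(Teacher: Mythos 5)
Your proof is correct, and it reaches the conclusion by a different mechanism than the paper's, although both are local analyses of $h\circ f=g\circ h$ at the origin built on the same two ingredients ($g'(0)=2$ and $h(x)=kx$ near $0$). The paper runs a \emph{propagation} argument: writing $h=g\circ h\circ\{x\mapsto x/2\}$ on the increasing branches, it shows that linearity of $h$ on $[0,\gamma]$ together with linearity of $g$ on the image interval forces linearity of $h$ on the doubled interval $[0,2\gamma]$, and identifies $\gamma=\varepsilon/k$ as the critical value beyond which the slope change of $g$ breaks the propagation, so the first kink of $h$ lands at $2\varepsilon/k$. You instead compare the two composites \emph{statically}: the first kink of $h\circ f$ sits at $\delta/2$ (composition with $f$ halves the location of $h$'s kink), the first kink of $g\circ h$ sits at $\min(\delta,\varepsilon/k)$, and equality of the two functions together with $\delta/2<\delta$ forces $\delta/2=\varepsilon/k$. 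Your route buys a shorter, bidirectional argument (it gives simultaneously that $h$ is linear up to $2\varepsilon/k$ and that a genuine break occurs there) plus an explicit treatment of the degenerate case $g=f$, which the paper ignores; the paper's route buys a constructive statement — the functional equation \emph{determines} $h$ from its germ at $0$ — which is what Remark~\ref{rem:3.27} and the later examples actually exploit. One pedantic refinement to your argument: in the coincidence case $\delta=\varepsilon/k$ the two potential kinks of $g\circ h$ overlap and could in principle cancel, so a priori the first kink of $g\circ h$ is only known to lie at or beyond $\min(\delta,\varepsilon/k)$ rather than exactly there; but that case already contradicts the equality with the first kink of $h\circ f$ at $\delta/2<\delta$, so the conclusion $\delta=2\varepsilon/k$ is unaffected.
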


\begin{proof}
By Lemma~\ref{lema:3.21} the map $g$ is given by $g:\, x\mapsto
2x$ for sufficiently small $x$. Thus, there exists $\delta>0$ such
that $h:\, x\mapsto kx$ for all $x\leq 2\delta$ and $g:\, x\mapsto
2x$ for all $x\leq \frac{\delta}{k}$ (see Fig.~\ref{fig:15}a).

Suppose that $\gamma<\frac{1}{2}$ is such that $h:\, x\mapsto kx$
for $x\leq \gamma$ and $g:\, x\mapsto 2x$ for all $x\leq
\frac{\gamma}{k}$, i.e. $h$ has no kinks on $[0, \gamma]$ and $g$
has no kinks on $\left[ 0, \frac{\gamma}{k}\right]$. Then
functional equation \begin{equation}\label{eq:3.16} h\circ f =
g\circ h
\end{equation} determines $h$ on $[0, 2\gamma]$, as $$
h = g\circ h\circ \left\{x\mapsto \frac{x}{2}\right\}.
$$ see Fig.~\ref{fig:15}b. Moreover, in this case $$
h = \left\{x\mapsto 2x\right\}\circ \left\{x\mapsto
kx\right\}\circ \left\{x\mapsto \frac{x}{2}\right\}
=\left\{x\mapsto kx\right\},
$$ which is defined in $f\circ [0,\gamma] = [0,2\gamma]$.

\begin{figure}[htbp]
\begin{minipage}[h]{0.3\linewidth}
\begin{center}
$ \xymatrix{ [0, \delta] \ar^{x\mapsto 2x}[rr] \ar_{x\mapsto
kx}[d] && [0, 2\delta] \ar_{x\mapsto
kx}[d]\\
[0, k\delta]  \ar^{x\mapsto 2x}[rr] && [0, 2k\delta]} $
\centerline{a.}\end{center}
\end{minipage}
\hfill
\begin{minipage}[h]{0.33\linewidth}
\begin{center}
$ \xymatrix{ [0, \gamma] \ar^{x\mapsto 2x}[rr] \ar_{x\mapsto
kx}[d] &&
[0, 2\gamma] \ar@{:>}^{h}[d]\\
[0, k\gamma]  \ar^{x\mapsto 2x}[rr] && [0, 2k\gamma]} $
\centerline{b.}\end{center}
\end{minipage}
\hfill
\begin{minipage}[h]{0.3\linewidth}
\begin{center}
$ \xymatrix{ \left[0, \frac{\varepsilon}{k}\right] \ar^{x\mapsto
2x}[rr] \ar_{x\mapsto kx}[d] && \left[0,
\frac{2\varepsilon}{k}\right] \ar_{x\mapsto
kx}[d]\\
[0, \varepsilon]  \ar^{x\mapsto 2x}[rr] && [0, 2\varepsilon]}
$\\
\centerline{c.}\end{center}
\end{minipage}
\hfill \caption{} \label{fig:15}
\end{figure}

The value $\gamma =\frac{\varepsilon}{k}$ is a critical value,
when~\eqref{eq:3.16} can be expressed as a commutative diagram
from Fig.~\ref{fig:15}b, because for $x>\varepsilon$ the map $g$
in the bottom part of the diagram gas another slope. Thus critical
situation is illustrated on Fig.~\ref{fig:15}c.
\end{proof}

\begin{remark}\label{rem:3.27}
Using the notations of the current work, we can rewrite
Lemma~\ref{lema:3.26} as follows. Let a piecewise linear map $g$,
be given as
$$
g:\, (0, 0)\rightarrow (\varepsilon, 2\varepsilon)\rightarrow
\ldots .
$$ If $h$ is a piecewise linear conjugacy from the tent map to
$g$, then
$$
h:\, (0, 0)\rightarrow (h^{-1}(2\varepsilon),
2\varepsilon)\rightarrow \ldots .
$$ Moreover, denote $\alpha = h^{-1}(\varepsilon)$, whence
$$
g:\, (0, 0)\rightarrow (h(\alpha), h(2\alpha))\rightarrow \ldots
$$ and $$
h:\, (0, 0)\rightarrow (2\alpha, h(2\alpha))\rightarrow \ldots .
$$
\end{remark}

\begin{lemma}
For any $a\in (0, 1)$ the map $g$, defined in
Lemma~\ref{lema:3.9}, is topologically conjugated with the tent
map by piecewise linear conjugacy.
\end{lemma}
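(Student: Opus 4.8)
The plan is to exhibit an explicit piecewise linear conjugacy $h$ from the tent map $f$ to $g$ and then verify the identity $h\circ f = g\circ h$ directly, rather than appealing to Theorems~\ref{th:1}--\ref{th:2} (whose uniqueness clauses would still require matching the other half of $g$ by hand). Recall from Lemma~\ref{lema:3.9} that
$$g:\ (0,0)\rightarrow \left(\tfrac a2, a\right)\rightarrow (a, 1)\rightarrow \left(\tfrac{a+1}2, a\right)\rightarrow (1,0),$$
so $g'(0)=2$, consistently with Lemma~\ref{lema:3.21}. To produce the correct candidate for $h$ I would read off the $g$-coordinates of the kinks using Lemma~\ref{lema:3.23} and Remark~\ref{rem:3.27}: the peak $A_2=(a,1)$ is the image of the critical point, so its $g$-coordinate is $\tfrac12$, while its two preimages $A_1$ and $A_3$ have $g$-coordinates $\tfrac14$ and $\tfrac34$. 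Since the $f$-coordinate of a point is the point itself, an increasing conjugacy must satisfy $h(\tfrac14)=\tfrac a2$, $h(\tfrac12)=a$ and $h(\tfrac34)=\tfrac{a+1}2$.

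The first thing to record is that these constraints, together with $h(0)=0$ and $h(1)=1$, are compatible with a single kink: the points $(0,0)$, $(\tfrac14,\tfrac a2)$, $(\tfrac12,a)$ all lie on the line of slope $2a$, and $(\tfrac12,a)$, $(\tfrac34,\tfrac{a+1}2)$, $(1,1)$ all lie on the line of slope $2(1-a)$. Accordingly I would define
$$h:\ (0,0)\rightarrow \left(\tfrac12, a\right)\rightarrow (1,1),$$
that is, $h(x)=2ax$ on $[0,\tfrac12]$ and $h(x)=2(1-a)x+(2a-1)$ on $[\tfrac12,1]$. As $0<a<1$, both slopes are positive, so $h$ is an increasing piecewise linear homeomorphism of $[0,1]$.

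It then remains to verify \eqref{eq:1.2} with this $h$, i.e. $h\circ f = g\circ h$. The plan is to subdivide $[0,1]$ at $\tfrac14,\tfrac12,\tfrac34$, which are exactly the $f$-preimages of the breakpoint $\tfrac12$ of $h$ and of the breakpoint $a$ of $g$; on each of the four resulting subintervals both $h\circ f$ and $g\circ h$ are affine, so it suffices to check that they agree at the five nodes $0,\tfrac14,\tfrac12,\tfrac34,1$. For instance on $[\tfrac34,1]$ both compositions equal $4a(1-x)$, and the other three pieces are handled identically. Alternatively one may invoke the finite verification criterion of Lemma~\ref{lema:3.25}, taking $\mathcal A=\{0,\tfrac a2,a,\tfrac{a+1}2,1\}$ enlarged by the kink $\tfrac12$ of $h$ and checking conditions (i)--(v). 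Either route establishes that $h$ conjugates $f$ to $g$, so $g$ is topologically conjugate to the tent map by a piecewise linear conjugacy.

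Since the construction is explicit, there is no deep obstacle; the step requiring care is producing the correct candidate $h$ (the $g$-coordinate reasoning of Lemma~\ref{lema:3.23} does this, but note it is only heuristic until the identity is confirmed, because that lemma presupposes conjugacy) and then checking that the three forced values of $h$ genuinely lie on just two lines, so that $h$ has a single kink rather than three. Once $h$ is pinned down, the conjugation identity reduces to the routine four-piece computation described above.
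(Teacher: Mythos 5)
Your proposal is correct and takes essentially the same route as the paper: the paper exhibits the same single-kink piecewise linear conjugacy, only written in the inverse direction ($h:\,(0,0)\rightarrow\left(a,\tfrac{1}{2}\right)\rightarrow(1,1)$ from $g$ to $f$), and concludes by showing the conjugated map $h\circ g\circ h^{-1}$ is the tent map. The only cosmetic difference is the verification step: the paper transports the kinks $A_1$, $A_2$, $A_3$ through $h$ using the midpoint property of linear maps to see that they land at $\left(\tfrac{1}{4},\tfrac{1}{2}\right)$, $\left(\tfrac{1}{2},1\right)$, $\left(\tfrac{3}{4},\tfrac{1}{2}\right)$, whereas you check $h\circ f=g\circ h$ directly on the four affine pieces; both verifications are routine and complete.
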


\begin{proof}
Denote by $a$ the maximum point of $g$. By Lemma~\ref{lema:3.24}
if the conjugacy $h$ from $g$ to $f$ is piecewise linear, then
$$h:\, (0, 0)\rightarrow \left(a,
\frac{1}{2}\right)\rightarrow (1,1).$$ Denote $\widetilde{g} =
h\circ g\circ h^{-1}$.

\begin{figure}[htbp]
\begin{minipage}[h]{\linewidth}
\begin{center}
\begin{picture}(140,140)
\put(0,0){\vector(1,0){140}} \put(0,0){\vector(0,1){140}}

\put(0,0){\line(1,1){140}}

\put(90,0){\line(0,1){120}}

\put(80,3){$a$}

\put(-8,86.5){$a$}

\put(0,90){\line(1,0){130}}

\put(45,90){\circle*{6}} \put(34,96){$A_1$} %
\put(90,120){\circle*{6}} \put(85,125){$A_2$} %
\put(105,90){\circle*{6}} \put(105,96){$A_3$} %
\put(78,94){$D$} \put(3,94){$K$} \put(123,80){$T$}

\linethickness{0.4mm} \qbezier(0,0)(22.5,45)(45,90)
\qbezier(45,90)(67.5,105)(90,120)
\qbezier(90,120)(97.5,105)(105,90)
\qbezier(105,90)(112.5,45)(120,0) \linethickness{0.1mm}

\put(90,90){\circle*{4}} \put(0,90){\circle*{4}}
\put(120,90){\circle*{4}}

\qbezier(45,90)(45,45)(45,0) \qbezier(105,90)(105,45)(105,0)
\qbezier(90,120)(45,120)(0,120)

\qbezier(120,90)(120,45)(120,0)

\put(42,-15){$\frac{a}{2}$} \put(98,-15){$\frac{a+1}{2}$}

\put(-8,116){$1$}

\end{picture}
\end{center}
\end{minipage}
\caption{The graph of the conjugated map}\label{fig:16}
\end{figure}

In new coordinates point $D$ will be
$\widetilde{D}\left(\frac{1}{2}, \frac{1}{2}\right)$. Since $h$ is
linear on $(0, a)$ and $A_1$ is a middle point of $KD$, then
$\widetilde{A}_1\left(\frac{1}{4}, \frac{1}{2}\right)$.
Analogously, new coordinates of $A_3$ are
$\widetilde{A}_3\left(\frac{3}{4}, \frac{1}{2}\right)$, because
$A_3$ is the middle point of $DT$. Since $$ \widetilde{g}:\, (0,
0)\rightarrow \widetilde{A}_1\rightarrow
\widetilde{A}_2\rightarrow \widetilde{A}_3\rightarrow (1,0)
$$ is the tent map, we are done.
\end{proof}

\begin{lemma}
For any $a, b\in (0, 1)$ such that $a<b$ and $4a+b<4$ the map $g$,
defined in Lemma~\ref{lema:3.20}, is topologically conjugated with
the tent map by piecewise linear conjugacy.
\end{lemma}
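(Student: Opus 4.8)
The plan is to exhibit the piecewise linear conjugacy explicitly and then verify it by the change-of-scale argument used for the previous lemma (compare Figures~\ref{fig:13} and~\ref{fig:16}). First I would read off from Lemma~\ref{lema:3.20} the relations $g(a)=a+b$ (the point $A_2$) and $g(a+b)=a$ (the point $A_5$), so that $\{a,\,a+b\}$ is a period-two orbit of $g$. The tent map $f$ has the unique period-two orbit $\{\tfrac25,\,\tfrac45\}$, and an increasing conjugacy must carry one onto the other preserving order. This forces the conjugacy $h$ from $g$ to $f$ to satisfy $h(a)=\tfrac25$ and $h(a+b)=\tfrac45$; together with $h(0)=0$, $h(1)=1$ the natural candidate is the piecewise linear map
\[
h:\,(0,0)\rightarrow\Big(a,\tfrac25\Big)\rightarrow\Big(a+b,\tfrac45\Big)\rightarrow(1,1),
\]
with kinks only at $a$ and $a+b$, which is the conjugacy one is led to by Lemma~\ref{lema:3.24} and Remark~\ref{rem:3.27}.

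Next I would transport each kink of $g$ through the scale change $(x,y)\mapsto(h(x),h(y))$, which sends the graph of $g$ to the graph of $\widetilde g=h\circ g\circ h^{-1}$. Since $h$ is linear on $[0,a]$, on $[a,a+b]$ and on $[a+b,1]$, a direct computation gives $h(\tfrac a2)=\tfrac15$, $h\big(\tfrac{4a+b}{4}\big)=\tfrac12$ and $h\big(\tfrac{2a+b}{2}\big)=\tfrac35$, so the kinks $A_1,\dots,A_5$ are carried to
\[
\Big(\tfrac15,\tfrac25\Big),\quad\Big(\tfrac25,\tfrac45\Big),\quad\Big(\tfrac12,1\Big),\quad\Big(\tfrac35,\tfrac45\Big),\quad\Big(\tfrac45,\tfrac25\Big),
\]
while the endpoints remain $(0,0)$ and $(1,0)$. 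All seven points lie on the graph of the tent map, so $\widetilde g$ coincides with $f$ at these abscissae; being piecewise linear between them, $\widetilde g=f$, and $g$ is conjugate to the tent map via the homeomorphism $h$.

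The points genuinely requiring care are that $h$ is an increasing homeomorphism and that $\widetilde g$ acquires no extra kinks. Monotonicity of $h$ amounts to $0<a<a+b<1$, which holds because $g$, as defined in Lemma~\ref{lema:3.20}, is an honest unimodal map with maximum value $1$, forcing $a+b<1$. For the kinks, the decisive coincidence is that the two break values of $h$, namely $a$ and $a+b$, are precisely the values of $g$ at its own kinks, $g(\tfrac a2)=g(a+b)=a$ and $g(a)=g(\tfrac{2a+b}{2})=a+b$; hence every abscissa at which the inner copy of $h$ in $\widetilde g=h\circ g\circ h^{-1}$ might introduce a new break already lies over a kink of $g$, and no spurious vertex appears. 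I expect this slope bookkeeping — confirming that the values $h(\tfrac a2)=\tfrac15$, $h\big(\tfrac{4a+b}{4}\big)=\tfrac12$ and $h\big(\tfrac{2a+b}{2}\big)=\tfrac35$ are attained in the interiors of the linear pieces of $h$, so that $h$ really has kinks only at $a$ and $a+b$ — to be the only delicate step; alternatively one may simply verify conditions (i)--(v) of Lemma~\ref{lema:3.25} for the set of kinks of $g$.
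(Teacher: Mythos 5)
Your proposal is correct and takes essentially the same approach as the paper's proof: both extract the period-two orbit $\{a,\,a+b\}$ of $g$ from Lemma~\ref{lema:3.20}, match it to the tent map's unique period-two orbit $\left\{\frac{2}{5},\frac{4}{5}\right\}$ to force $h(a)=\frac{2}{5}$ and $h(a+b)=\frac{4}{5}$, and then confirm that the piecewise linear map with kinks only at $a$ and $a+b$ conjugates $g$ to $f$ (the paper does this by computing $h$ at all five kinks of $g$, obtaining the same values $\frac{1}{5},\frac{1}{2},\frac{3}{5}$ you compute, and checking by collinearity that the intermediate points are not genuine kinks). Your closing verification that $h\circ g\circ h^{-1}$ acquires no spurious kinks is, if anything, spelled out more explicitly than the paper's concluding appeal to Fig.~\ref{fig:17}b.
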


$$ g:\, (0, 0)\rightarrow A_1\left(\frac{a}{2},
a\right) \rightarrow A_2(a, a+b) \rightarrow A_3\left(\frac{4a
+b}{4}, 1\right) \rightarrow$$ $$ \rightarrow A_4\left(\frac{2a
+b}{2}, a+b\right) \rightarrow A_5\left(a+b, a\right) \rightarrow
(1,0),
$$

\begin{proof}
It is seen from the form of $g$ that $g(A_1) =A_2$, $g(A_2) =A_5$,
$g(A_4) =A_2$ and $g(A_5) =A_2$. Thus, $\{A_2, A_5\}$ is a
periodical trajectory of period $2$ under the action of $g$.

Since the tent map $f$ has the unique periodical trajectory
$\left\{ \frac{2}{5}, \frac{4}{5}\right\}$ of period $2$, then for
the conjugacy from $g$ to $f$ the equalities $h\left(a\right)
=\frac{2}{5}$ and $h(a+b) =\frac{4}{5}$. Since $g(A_1)=A_2$, then
$h\left(\frac{a}{2}\right)=\frac{1}{5}$.

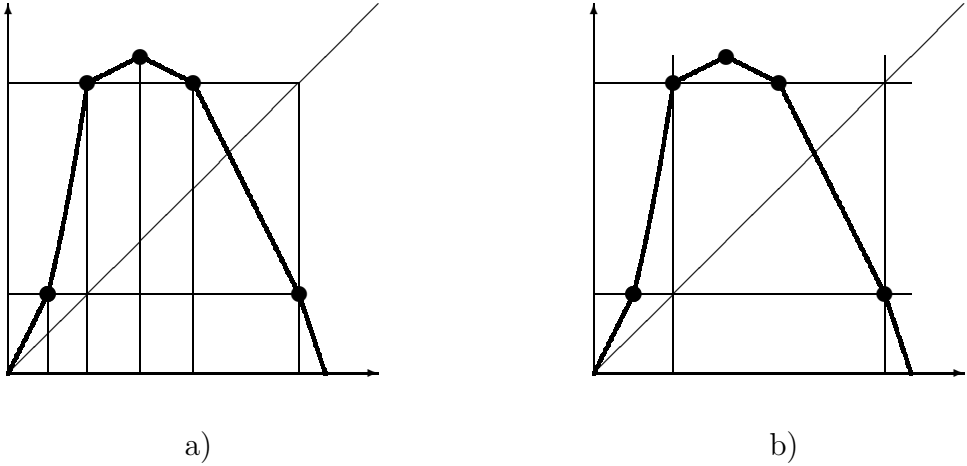
\begin{figure}[htbp]
\begin{minipage}[h]{0,45\linewidth}
\begin{center}
\begin{picture}(140,140)
\put(0,0){\vector(1,0){140}} \put(0,0){\vector(0,1){140}}

\put(0,0){\line(1,1){140}}


\linethickness{0.4mm} \qbezier(0,0)(7.5,15)(15,30)
\qbezier(15,30)(22.5,60)(30,110) \qbezier(30,110)(40,115)(50,120)
\qbezier(50,120)(60,115)(70,110) \qbezier(70,110)(90,70)(110,30)
\qbezier(110,30)(115,15)(120,0) \linethickness{0.1mm}

\put(15,30){\circle*{6}} \put(30,110){\circle*{6}}
\put(50,120){\circle*{6}} \put(70,110){\circle*{6}}
\put(110,30){\circle*{6}} \put(15,30){\circle*{6}}
\put(15,30){\circle*{6}}

\qbezier(0,30)(55,30)(110,30) \qbezier(0,110)(55,110)(110,110)
\qbezier(110,110)(110,55)(110,0) \qbezier(15,30)(15,15)(15,0)

\qbezier(30,110)(30,55)(30,0) \qbezier(70,110)(70,55)(70,0)
\qbezier(50,120)(50,60)(50,0)

\end{picture}
\end{center}
\centerline{ a)}
\end{minipage}
\begin{minipage}[h]{0,45\linewidth}
\begin{center}
\begin{picture}(140,140)
\put(0,0){\vector(1,0){140}} \put(0,0){\vector(0,1){140}}

\put(0,0){\line(1,1){140}}


\linethickness{0.4mm} \qbezier(0,0)(7.5,15)(15,30)
\qbezier(15,30)(22.5,60)(30,110) \qbezier(30,110)(40,115)(50,120)
\qbezier(50,120)(60,115)(70,110) \qbezier(70,110)(90,70)(110,30)
\qbezier(110,30)(115,15)(120,0) \linethickness{0.1mm}

\put(15,30){\circle*{6}} \put(30,110){\circle*{6}}
\put(50,120){\circle*{6}} \put(70,110){\circle*{6}}
\put(110,30){\circle*{6}} \put(15,30){\circle*{6}}
\put(15,30){\circle*{6}}


\qbezier(0,30)(60,30)(120,30) \qbezier(0,110)(60,110)(120,110)
\qbezier(110,120)(110,60)(110,0) 

\qbezier(30,120)(30,60)(30,0) 

\end{picture}
\end{center}
\centerline{ b)}
\end{minipage}
\caption{The graph of the conjugated map}\label{fig:17}
\end{figure}

Clearly, new coordinates of $A_3$ are $\left(\frac{1}{2},
1\right)$, whence $h\left(\frac{4a+b}{4}\right) =\frac{1}{2}$.
Finally, $g(A_4)=A_2$ implies $h\left(\frac{2a+b}{2}\right) =\max
f^{-1}\left(\frac{4}{5}\right) =\frac{2-\frac{4}{5}}{2}
=\frac{3}{5}$. Thus, $$ h:\, (0,0)\rightarrow
\widetilde{A}_1\left(\frac{a}{2}, \frac{1}{5}\right)\rightarrow
\widetilde{A}_2\left(a, \frac{2}{5}\right)\rightarrow
\widetilde{A}_3\left(\frac{4a+b}{4}, \frac{1}{2}\right)\rightarrow
$$$$ \rightarrow
\widetilde{A}_4\left(\frac{2a+b}{2}, \frac{3}{5}\right)\rightarrow
\widetilde{A}_5\left(a+b, \frac{4}{5}\right)\rightarrow (1,1).
$$
Clearly, $\widetilde{A}_1$ belongs to $O\widetilde{A}_2$.
Moreover, $$
\frac{\frac{2a+b}{2}-\frac{4a+b}{4}}{\frac{3}{5}-\frac{1}{2}}
=\frac{(a+b)-\frac{2a+b}{2}}{\frac{4}{5}-\frac{3}{5}}
$$
implies that $\widetilde{A}_4$ belongs to the interval
$(\widetilde{A}_3\widetilde{A}_5)$. Thus, $$ h:\, (0,0)\rightarrow
\widetilde{A}_2\left(a, \frac{2}{5}\right)\rightarrow
\widetilde{A}_3\left(\frac{4a+b}{4}, \frac{1}{2}\right)\rightarrow
\widetilde{A}_5\left(a+b, \frac{4}{5}\right)\rightarrow (1,1).
$$ Moreover, $$
\frac{\frac{4a+b}{4}-a}{\frac{1}{2}-\frac{2}{5}} =
\frac{(a+b)-\frac{4a+b}{4}}{\frac{4}{5} -\frac{1}{2}},
$$ which is equivalent to $$
\frac{b/4}{1/10} = \frac{3b/4}{3/10},
$$
shows that $\widetilde{A}_3$ belongs to $(\widetilde{A}_2,
\widetilde{A}_5)$, whence $h$ can be written as $$ h:\,
(0,0)\rightarrow \widetilde{A}_2\left(a,
\frac{2}{5}\right)\rightarrow \widetilde{A}_5\left(a+b,
\frac{4}{5}\right)\rightarrow (1,1).
$$ The last finishes the proof, due to diagram at
Fig.~\ref{fig:17}b.
\end{proof}

\subsection{One more complicated example}

We have shows in Section~\ref{sec:3.2} that maps $g$ in
Examples~\ref{ex:3.3} and~\ref{ex:3.13} are topologically
conjugated to the tent map $f$ by a piecewise linear conjugacy. We
have easily constructed this conjugacy, because it happened easy
to find $g$-coordinates of kinks of $g$ in the assumption that $g$
is topologically conjugated to the tent map.

We will construct below an example, where kinks of $g$ will be
$g$-irrational in the assumption that $g$ is topologically
conjugated to the tent map. Let the conjugacy $h$ be given as $$
h:\, (0, 0)\rightarrow \left(\frac{\sqrt{2}}{2},
\frac{\sqrt{3}}{2}\right)\rightarrow (1, 1).
$$
Our direct computations show that maps $g = h\circ f\circ h^{-1}$
and $\psi =h\circ \xi_3\circ h^{-1}$ are restricted by the lattice
from Figure~\ref{fig:18}.

\begin{example}\label{ex:4.9}
Describe all the maps $g$, which are restricted by the lattice
from Figure~\ref{fig:18}.\end{example}

\begin{figure}
\begin{center}
\begin{picture}(460,460)

\put(230,230){\vector(1,0){230}} \put(230,230){\vector(-1,0){230}}
\put(230,230){\vector(0,1){230}} \put(230,230){\vector(0,-1){230}}

\linethickness{0.4mm}

\put(403,403){\circle{3}}
\put(403,403){\circle{5}}\put(403,403){\circle{7}}

\put(230,230){\circle*{6}}\Vidr{230}{230}{172}{403}
\put(172,403){\circle*{6}}\Vidr{172}{403}{148}{430}
\put(148,430){\circle*{6}}\Vidr{148}{430}{124}{403}
\put(124,403){\circle*{6}}\Vidr{124}{403}{67}{230}
\put(67,230){\circle*{6}}\Vidr{67}{230}{57}{260}
\put(57,260){\circle*{6}}\Vidr{57}{260}{39}{403}
\put(39,403){\circle*{6}}\Vidr{39}{403}{30}{430}

\put(230,230){\circle*{6}}\Vidr{230}{230}{403}{317}
\put(403,317){\circle*{6}}\Vidr{403}{317}{430}{352}
\put(430,352){\circle*{6}}\Vidr{430}{352}{403}{388}
\put(403,388){\circle*{6}}\Vidr{403}{388}{373}{403}
\put(373,403){\circle*{6}}\Vidr{373}{403}{230}{430}

\put(230,230){\circle*{6}}\Vidr{230}{230}{403}{172}
\put(403,172){\circle*{6}}\Vidr{403}{172}{430}{148}
\put(430,148){\circle*{6}}\Vidr{430}{148}{403}{124}
\put(403,124){\circle*{6}}\Vidr{403}{124}{230}{67}
\put(230,67){\circle*{6}}\Vidr{230}{67}{260}{57}
\put(260,57){\circle*{6}}\Vidr{260}{57}{403}{39}
\put(403,39){\circle*{6}}\Vidr{403}{39}{430}{30}

\put(230,230){\circle*{6}}\Vidr{230}{230}{143}{57}
\put(143,57){\circle*{6}}\Vidr{143}{57}{108}{30}
\put(108,30){\circle*{6}}\Vidr{108}{30}{72}{57}
\put(72,57){\circle*{6}}\Vidr{72}{57}{57}{87}
\put(57,87){\circle*{6}}\Vidr{57}{87}{30}{230}

\linethickness{0.1mm}

\put(30,352){\line(1,0){400}} \put(30,148){\line(1,0){400}}
\put(30,317){\line(1,0){400}} \put(30,388){\line(1,0){400}}
\put(30,172){\line(1,0){400}} \put(30,39){\line(1,0){400}}
\put(30,124){\line(1,0){400}} \put(30,403){\line(1,0){400}}
\put(30,302){\line(1,0){400}} \put(30,115){\line(1,0){400}}
\put(30,43){\line(1,0){400}} \put(30,182){\line(1,0){400}}
\put(30,260){\line(1,0){400}} \put(30,419){\line(1,0){400}}
\put(30,87){\line(1,0){400}} \put(30,53){\line(1,0){400}}
\put(30,210){\line(1,0){400}} \put(30,245){\line(1,0){400}}
\put(30,57){\line(1,0){400}} \put(30,424){\line(1,0){400}}
\put(30,77){\line(1,0){400}} \put(30,57){\line(1,0){400}}
\put(30,220){\line(1,0){400}} \put(30,67){\line(1,0){400}}

\put(189,30){\line(0,1){400}} \put(108,30){\line(0,1){400}}
\put(45,30){\line(0,1){400}} \put(201,30){\line(0,1){400}}
\put(96,30){\line(0,1){400}} \put(50,30){\line(0,1){400}}
\put(177,30){\line(0,1){400}} \put(119,30){\line(0,1){400}}
\put(41,30){\line(0,1){400}} \put(403,30){\line(0,1){400}}
\put(373,30){\line(0,1){400}} \put(172,30){\line(0,1){400}}
\put(124,30){\line(0,1){400}} \put(39,30){\line(0,1){400}}
\put(206,30){\line(0,1){400}} \put(91,30){\line(0,1){400}}
\put(52,30){\line(0,1){400}} \put(289,30){\line(0,1){400}}
\put(220,30){\line(0,1){400}} \put(77,30){\line(0,1){400}}
\put(57,30){\line(0,1){400}} \put(158,30){\line(0,1){400}}
\put(138,30){\line(0,1){400}} \put(34,30){\line(0,1){400}}
\put(72,30){\line(0,1){400}} \put(260,30){\line(0,1){400}}
\put(225,30){\line(0,1){400}} \put(72,30){\line(0,1){400}}
\put(62,30){\line(0,1){400}} \put(153,30){\line(0,1){400}}
\put(143,30){\line(0,1){400}} \put(32,30){\line(0,1){400}}
\put(67,30){\line(0,1){400}} \put(148,30){\line(0,1){400}}
\end{picture}
\end{center}\caption{}
\label{fig:18}
\end{figure}

\begin{lemma}\label{lema:3.31}
If maps $g$ and $\psi$ are restricted by the lattice from
Figure~\ref{fig:18}, then %
$\sdev_1\cdot \gdev_2 = \gdev_1\cdot \sdev_2 $, \hskip 5mm
$\sdev_1\cdot \gdev_3 = \gdev_1\cdot \sdev_3, $ \hskip 5mm
$\sdev_1\cdot \gdev_4 = \gdev_1\cdot \sdev_4 $, \hskip 5mm
$\sdev_2\cdot \gdev_5 = \gdev_1\cdot \sdev_4 $, \hskip 5mm
$\sdev_3\cdot \gdev_5 = \gdev_1\cdot \sdev_5 $, \hskip 5mm
$\sdev_3\cdot \gdev_5 = \gdev_2\cdot \sdev_6$, \hskip 5mm
$\sdev_4\cdot \gdev_4 = \gdev_2\cdot \sdev_6$, \hskip 5mm
$\sdev_4\cdot \gdev_3 = \gdev_2\cdot \sdev_7$, \hskip 5mm
$\sdev_4\cdot \gdev_2 = \gdev_3\cdot \sdev_7$, \hskip 5mm
$\sdev_4\cdot \gdev_1 = \gdev_3\cdot \sdev_6$, \hskip 5mm
$\sdev_4\cdot \gdev_1 = \gdev_4\cdot \sdev_5$, \hskip 5mm
$\sdev_5\cdot \gdev_1 = \gdev_4\cdot \sdev_4$, \hskip 5mm
$\sdev_6\cdot \gdev_1 = \gdev_5\cdot \sdev_4$, \hskip 5mm
$\sdev_6\cdot \gdev_2 = \gdev_5\cdot \sdev_3$, \hskip 5mm
$\sdev_6\cdot \gdev_3 = \gdev_5\cdot \sdev_2$, \hskip 5mm
$\sdev_6\cdot \gdev_4 = \gdev_5\cdot \sdev_1$, \hskip 5mm
$\sdev_7\cdot \gdev_5 = \gdev_5\cdot \sdev_1$.
\end{lemma}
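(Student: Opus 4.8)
The plan is to proceed exactly as for the companion results Lemma~\ref{lema:3.4} and Lemma~\ref{lema:3.14}: every displayed equality is a single application of Lemma~\ref{lema:2.3}, read off from one single trajectory of the determinating lattice drawn in Figure~\ref{fig:18}. Morally each relation is the chain rule for the commutation identity $g\circ\psi=\psi\circ g$, which on a piece where all four maps are linear reads
\[
g'(\psi(x))\cdot\psi'(x)=\psi'(g(x))\cdot g'(x),
\]
and Lemma~\ref{lema:2.3} is the rigorous, coordinate-free version of this: if $a$ and $b$ are two consecutive intersection points of the lattice lying on one common piece of linearity, then commutativity at $a$ and $b$ together with the linearity hypotheses forces the two diagonals of Figure~\ref{fig:06} to be parallel, i.e.\ a relation of the form $\sdev_i\cdot\gdev_j=\gdev_k\cdot\sdev_m$ among the four slopes involved.

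First I would set up the index bookkeeping. For a base point $x$ of the lattice let $i$ be the monotonicity piece of $\psi$ containing $x$, let $j$ be the piece of $g$ containing $\psi(x)$, let $k$ be the piece of $g$ containing $x$, and let $m$ be the piece of $\psi$ containing $g(x)$. Then the upper path $x\mapsto\psi(x)\mapsto g(\psi(x))$ carries slopes $\sdev_i$ then $\gdev_j$, while the lower path $x\mapsto g(x)\mapsto\psi(g(x))$ carries $\gdev_k$ then $\sdev_m$. Since $g$ and $\psi$ are restricted by the lattice they commute at every lattice point, so Lemma~\ref{lema:2.3} applies on each piece and delivers $\sdev_i\gdev_j=\gdev_k\sdev_m$; geometrically this says that the rectangle cut out by the trajectory on the graph of $g$ in the $\psi\times y$ quadrant is congruent to the one it cuts on the graph of $\psi$ in the $x\times g$ quadrant.

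Next I would carry out the enumeration, running through the single trajectories of Figure~\ref{fig:18} from the origin outward and recording the quadruple $(i,j,k,m)$ at each step. Here $\psi$ has $n=7$ monotonicity pieces with slopes $\sdev_1,\dots,\sdev_7$ and $g$ has five with slopes $\gdev_1,\dots,\gdev_5$, and the successive trajectories reproduce the displayed equalities in order. For instance a trajectory with $i=k=1$ but $j=m=2$ (its base point still in the first piece of both maps, while $\psi(x)$ and $g(x)$ have already entered the second pieces of $g$ and $\psi$) gives $\sdev_1\gdev_2=\gdev_1\sdev_2$, whereas the outermost trajectory, where $g(x)$ has returned to the first piece of $\psi$ and $x$ occupies the last piece of $g$, gives $\sdev_7\gdev_5=\gdev_5\sdev_1$; a trajectory passing exactly through a kink is seen from both adjacent pieces and therefore contributes the repeated products (such as $\sdev_3\gdev_5$ or $\sdev_4\gdev_1$) that appear in two consecutive equalities. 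Collecting all of them produces exactly the seventeen relations of the statement.

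There is no analytic content whatsoever; the sole obstacle is the bookkeeping, since Figure~\ref{fig:18} is dense and one must track, trajectory by trajectory and across all four quadrants, which linear piece each of $x$, $\psi(x)$ and $g(x)$ occupies. Orientation demands particular care: the decreasing branches of $g$ and $\psi$ carry negative slopes, so the signs must be respected when matching a trajectory against the unsigned products written in the statement. Once the indexing is performed consistently, each single trajectory of the lattice yields one of the displayed equalities by Lemma~\ref{lema:2.3}, and the seventeen together are complete.
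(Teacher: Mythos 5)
Your proposal is correct and is essentially the paper's own argument: the paper states this lemma with no further proof, exactly as it does for Lemmas~\ref{lema:3.2}, \ref{lema:3.4} and~\ref{lema:3.14}, the justification in every case being the trajectory-by-trajectory application of Lemma~\ref{lema:2.3} that you describe, i.e.\ the relation $\sdev_i\cdot\gdev_j=\gdev_k\cdot\sdev_m$ read off from each single trajectory of the lattice (and indeed all seventeen relations so obtained are consistent with the consequences recorded in Lemmas~\ref{lema:3.32} and~\ref{lema:3.33}). One terminological slip that does not affect your argument: the indices $1,\ldots,7$ for $\psi$ and $1,\ldots,5$ for $g$ enumerate pieces of \emph{linearity}, not of monotonicity --- here $\psi$, being conjugate to $\xi_3$, has only three maximal intervals of monotonicity and the unimodal $g$ only two, the extra kinks being introduced by the conjugacy $h$.
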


\begin{lemma}\label{lema:3.32}
The equalities from Lemma~\ref{lema:3.31} imply $\gdev_3 =
-\gdev_2, $\hskip 5mm $\gdev_4 = -\gdev_1$, \hskip 5mm $\gdev_5 =
\frac{-(\gdev_1)^2}{\gdev_2}$.
\end{lemma}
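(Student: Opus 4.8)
The relations of Lemma~\ref{lema:3.31} are each bilinear, with exactly one factor $\sdev_i$ and one factor $\gdev_j$ on either side; fixing the $\gdev$'s as coefficients, they form a homogeneous linear system in $\sdev_1,\ldots,\sdev_7$. The plan is therefore to solve for all the $\sdev_i$ in terms of $\sdev_1$ and the $\gdev$'s, and then to read the over-determinacy of the system as a set of forced relations among the $\gdev_i$. Every $\sdev_i$ and every $\gdev_i$ is the slope of a strictly monotone linear piece (by Lemma~\ref{lema:2.1} each branch of $\psi$ is onto $[0,1]$, and $g$ is unimodal), hence nonzero, so all the cancellations below are legitimate.

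First I would use the three leading relations $\sdev_1\gdev_2=\gdev_1\sdev_2$, $\sdev_1\gdev_3=\gdev_1\sdev_3$ and $\sdev_1\gdev_4=\gdev_1\sdev_4$ to write $\sdev_2=\sdev_1\gdev_2/\gdev_1$, $\sdev_3=\sdev_1\gdev_3/\gdev_1$ and $\sdev_4=\sdev_1\gdev_4/\gdev_1$. Feeding $\sdev_3$ into $\sdev_3\gdev_5=\gdev_1\sdev_5$ gives $\sdev_5=\sdev_1\gdev_3\gdev_5/(\gdev_1)^2$, and feeding $\sdev_4$ into $\sdev_4\gdev_3=\gdev_2\sdev_7$ gives $\sdev_7=\sdev_4\gdev_3/\gdev_2$. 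This expresses every slope needed below as an explicit multiple of $\sdev_1$.

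Now the surviving equations become pure relations in the $\gdev_i$. Substituting $\sdev_7=\sdev_4\gdev_3/\gdev_2$ into $\sdev_4\gdev_2=\gdev_3\sdev_7$ and cancelling $\sdev_4$ gives $(\gdev_2)^2=(\gdev_3)^2$. Substituting the expressions for $\sdev_4$ and $\sdev_5$ into $\sdev_4\gdev_1=\gdev_4\sdev_5$ and cancelling $\sdev_1\gdev_4$ gives $(\gdev_1)^2=\gdev_3\gdev_5$. Substituting the expressions for $\sdev_2$ and $\sdev_4$ into $\sdev_2\gdev_5=\gdev_1\sdev_4$ and cancelling $\sdev_1$ gives $\gdev_4=\gdev_2\gdev_5/\gdev_1$.

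It remains to assemble these and settle the signs. From $(\gdev_2)^2=(\gdev_3)^2$ we get $\gdev_3=\pm\gdev_2$; since $g$ is unimodal, $\gdev_2$ is a slope of its increasing branch and $\gdev_3$ a slope of its decreasing branch, so they have opposite signs and $\gdev_3=-\gdev_2$ (first claim). Inserting this into $(\gdev_1)^2=\gdev_3\gdev_5=-\gdev_2\gdev_5$ yields $\gdev_5=-(\gdev_1)^2/\gdev_2$ (third claim), and substituting that into $\gdev_4=\gdev_2\gdev_5/\gdev_1$ collapses it to $\gdev_4=-\gdev_1$ (second claim). The one genuinely non-mechanical point is the sign resolution $\gdev_3=-\gdev_2$: this is the only place where the unimodality of $g$ (positive slopes before the critical point, negative after) must be invoked, and it is the step I expect to be the main obstacle to a purely formal derivation, since the bilinear relations alone are invariant under sign changes and can only pin the $\gdev_i$ down up to such ambiguities.
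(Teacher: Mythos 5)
Your proposal is correct: every elimination step checks out against the seventeen relations of Lemma~\ref{lema:3.31} (you use the relations $\sdev_1\gdev_2=\gdev_1\sdev_2$, $\sdev_1\gdev_3=\gdev_1\sdev_3$, $\sdev_1\gdev_4=\gdev_1\sdev_4$, $\sdev_3\gdev_5=\gdev_1\sdev_5$, $\sdev_4\gdev_3=\gdev_2\sdev_7$, $\sdev_4\gdev_2=\gdev_3\sdev_7$, $\sdev_4\gdev_1=\gdev_4\sdev_5$ and $\sdev_2\gdev_5=\gdev_1\sdev_4$, all of which are in the list), and the three conclusions follow exactly as you write them. Note, however, that the paper states Lemma~\ref{lema:3.32} with no proof at all (as it does for its analogues, Lemmas~\ref{lema:3.5} and~\ref{lema:3.15}), so there is no argument in the text to compare yours against; your derivation supplies what the author leaves implicit. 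Your closing remark is also a genuine mathematical point rather than a pedantic one: the bilinear system is satisfied, for instance, by taking all $\gdev_i$ and all $\sdev_j$ equal to the same positive constant, so the relations alone can only pin down $(\gdev_2)^2=(\gdev_3)^2$, $(\gdev_1)^2=\gdev_3\gdev_5$ and $\gdev_4=\gdev_2\gdev_5/\gdev_1$; the minus signs in the statement are forced only by the unimodality of $g$ (slopes $\gdev_1,\gdev_2$ on the increasing branch, $\gdev_3,\gdev_4,\gdev_5$ on the decreasing branch, as the lattice of Figure~\ref{fig:18} dictates), and the paper nowhere acknowledges that this geometric input is needed. In that one respect your write-up is more careful than the source.
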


\begin{lemma}\label{lema:3.33}
The equalities from Lemma~\ref{lema:3.31} imply $\sdev_2
=\frac{\sdev_1\cdot \gdev_2}{\gdev_1}$, \hskip 5mm $\sdev_3 =
\frac{-\sdev_1\cdot \gdev_2}{\gdev_1}$, \hskip 5mm $\sdev_4 =
-\sdev_1$, \hskip 5mm $\sdev_5 = \sdev_1$, \hskip 5mm $\sdev_6 =
\frac{\sdev_1\cdot \gdev_1}{\gdev_2} $ and $\sdev_7 = \sdev_1$.
\end{lemma}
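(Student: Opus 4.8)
The plan is to read off each of the six formulas directly from a single equation of Lemma~\ref{lema:3.31}, substituting the relations $\gdev_3 = -\gdev_2$, $\gdev_4 = -\gdev_1$ and $\gdev_5 = -(\gdev_1)^2/\gdev_2$ supplied by Lemma~\ref{lema:3.32}. First I would use the first three equalities of Lemma~\ref{lema:3.31}, namely $\sdev_1\gdev_2 = \gdev_1\sdev_2$, $\sdev_1\gdev_3 = \gdev_1\sdev_3$ and $\sdev_1\gdev_4 = \gdev_1\sdev_4$; dividing by $\gdev_1$ and inserting the $\gdev$-relations yields $\sdev_2 = \sdev_1\gdev_2/\gdev_1$, $\sdev_3 = -\sdev_1\gdev_2/\gdev_1$ and $\sdev_4 = -\sdev_1$ at once.

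Next I would obtain the remaining three slopes. The equality $\sdev_7\gdev_5 = \gdev_5\sdev_1$ immediately gives $\sdev_7 = \sdev_1$ after cancelling the nonzero factor $\gdev_5$. For $\sdev_5$ I would use $\sdev_5\gdev_1 = \gdev_4\sdev_4$: substituting $\gdev_4 = -\gdev_1$ and the already-derived $\sdev_4 = -\sdev_1$ turns the right-hand side into $\gdev_1\sdev_1$, so $\sdev_5 = \sdev_1$. Finally, for $\sdev_6$ I would use $\sdev_6\gdev_1 = \gdev_5\sdev_4$: with $\gdev_5 = -(\gdev_1)^2/\gdev_2$ and $\sdev_4 = -\sdev_1$ the right-hand side becomes $(\gdev_1)^2\sdev_1/\gdev_2$, whence $\sdev_6 = \sdev_1\gdev_1/\gdev_2$.

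Every division above is legitimate because the slopes $\gdev_1,\gdev_2,\gdev_5$ of the surjective piecewise linear maps are nonzero. The only genuine bookkeeping issue is that the system of Lemma~\ref{lema:3.31} is overdetermined: seventeen equations constrain the six unknowns $\sdev_2,\ldots,\sdev_7$. I expect the main (and only) work to be checking that the formulas just derived are consistent with the equalities I did not use --- for instance $\sdev_3\gdev_5 = \gdev_2\sdev_6$, $\sdev_4\gdev_4 = \gdev_2\sdev_6$ and $\sdev_4\gdev_1 = \gdev_3\sdev_6$ all reduce, after the same substitutions, to the identity $\sdev_1\gdev_1 = \sdev_1\gdev_1$. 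Verifying that each of the unused equations collapses to such a tautology completes the proof; no further relation on $\sdev_1,\gdev_1,\gdev_2$ is forced.
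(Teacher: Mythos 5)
Your proof is correct. Each of the six formulas follows, as you show, by solving a single equation of Lemma~\ref{lema:3.31} after substituting $\gdev_3=-\gdev_2$, $\gdev_4=-\gdev_1$ and $\gdev_5=\frac{-(\gdev_1)^2}{\gdev_2}$ from Lemma~\ref{lema:3.32}, and every division is by a nonzero slope of a surjective piecewise linear map. The paper states Lemma~\ref{lema:3.33} with no proof at all, treating it as immediate from Lemma~\ref{lema:3.31}, so your computation supplies exactly the routine argument the author leaves implicit; in that sense it is the same approach, not a different one. One clarification: the consistency check you describe as ``the main (and only) work'' is not actually needed for this lemma --- to establish an implication one only has to derive the stated formulas from the hypotheses, and whether the unused equations of the overdetermined system collapse to identities is irrelevant to that. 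In the paper, that verification is precisely the content of the separate converse statement, Lemma~\ref{lema:3.34}; in your write-up it is a harmless (and correct) bonus which in effect proves that lemma as well.
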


\begin{lemma}\label{lema:3.34}
Equations of Lemmas~\ref{lema:3.32} and~\ref{lema:3.33} imply
equations of Lemma~\ref{lema:3.31}.
\end{lemma}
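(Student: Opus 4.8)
The plan is to prove Lemma~\ref{lema:3.34} by direct substitution, exactly as in the analogous verification carried out just after Lemma~\ref{lema:3.6}. Lemmas~\ref{lema:3.32} and~\ref{lema:3.33} together express every dependent slope, namely $\gdev_3, \gdev_4, \gdev_5$ and $\sdev_2, \sdev_3, \sdev_4, \sdev_5, \sdev_6, \sdev_7$, as explicit functions of the three free parameters $\gdev_1$, $\gdev_2$ and $\sdev_1$ (with $\gdev_1, \gdev_2 \neq 0$, since these are slopes of monotone linear pieces and hence the denominators are legitimate). I would plug these expressions into each of the seventeen equalities listed in Lemma~\ref{lema:3.31} and check that every one collapses to a true identity in $\gdev_1, \gdev_2, \sdev_1$, so that no further constraint on the parameters is produced.

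First I would dispose of the equalities that are literally the defining relations of Lemma~\ref{lema:3.33}: for instance $\sdev_1\cdot \gdev_2 = \gdev_1\cdot \sdev_2$ is just the definition $\sdev_2 = \sdev_1 \gdev_2 / \gdev_1$, while $\sdev_7\cdot \gdev_5 = \gdev_5\cdot \sdev_1$ holds because $\sdev_7 = \sdev_1$. The remaining equalities split into two routine families. Those not involving $\gdev_5$, such as $\sdev_4\cdot \gdev_4 = \gdev_2\cdot \sdev_6$ or $\sdev_4\cdot \gdev_3 = \gdev_2\cdot \sdev_7$, reduce after inserting $\gdev_3 = -\gdev_2$, $\gdev_4 = -\gdev_1$, $\sdev_4 = -\sdev_1$, $\sdev_6 = \sdev_1 \gdev_1 / \gdev_2$ to the same expression $\pm\,\sdev_1 \gdev_1$, or $\pm\,\sdev_1 \gdev_2$, on both sides. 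Those involving $\gdev_5 = -(\gdev_1)^2/\gdev_2$, such as $\sdev_2\cdot \gdev_5 = \gdev_1\cdot \sdev_4$ and $\sdev_6\cdot \gdev_2 = \gdev_5\cdot \sdev_3$, require one extra cancellation of the factor $\gdev_2$ against the numerator of $\gdev_5$; after that cancellation each again reduces to $\pm\,\sdev_1 \gdev_1$ on both sides. I would organize the check as a single table running through the seventeen relations in order, which keeps the sign bookkeeping transparent.

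I do not expect any genuine obstacle here: the statement is the converse bookkeeping to the derivation of Lemmas~\ref{lema:3.32} and~\ref{lema:3.33} from Lemma~\ref{lema:3.31}, and the whole argument is a finite mechanical computation rather than one requiring an idea. The only point demanding care is the tracking of signs together with the standing hypotheses $\gdev_1, \gdev_2 \neq 0$ that make the rational expressions meaningful. Since every relation becomes an identity, the verification confirms that Lemmas~\ref{lema:3.32} and~\ref{lema:3.33} are not merely consequences of Lemma~\ref{lema:3.31} but in fact equivalent to it, which is precisely the content of Lemma~\ref{lema:3.34}.
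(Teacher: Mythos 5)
Your proposal is correct and is exactly what the paper intends: Lemma~\ref{lema:3.34} is stated there without proof (just like the analogous lemma following Lemma~\ref{lema:3.6}), being regarded as the mechanical substitution check you describe, and indeed all seventeen relations of Lemma~\ref{lema:3.31} collapse to identities of the form $\pm\sdev_1\gdev_1$, $\pm\sdev_1\gdev_2$ or $\pm\sdev_1(\gdev_1)^2/\gdev_2$ once $\gdev_3=-\gdev_2$, $\gdev_4=-\gdev_1$, $\gdev_5=-(\gdev_1)^2/\gdev_2$ and the expressions for $\sdev_2,\ldots,\sdev_7$ are inserted. One cosmetic slip only: in relations such as $\sdev_2\cdot\gdev_5=\gdev_1\cdot\sdev_4$ the factor $\gdev_2$ cancels against the \emph{denominator} of $\gdev_5=-(\gdev_1)^2/\gdev_2$, not its numerator.
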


\begin{lemma}\label{lema:3.35}
Let $g$ be restricted by the lattice from Figure~\ref{fig:18}.
Then $_\llcorner\overrightarrow{OP_1} =\left(\frac{a+b}{\gdev_1},
a+b\right)$, $_\llcorner\overrightarrow{P_1P_2}
=\left(\frac{1-a-b}{\gdev_2}, 1-a-b\right)$,
$_\llcorner\overrightarrow{P_2P_3} =\left(-\frac{1-a-b}{\gdev_3},
-1+a+b\right)$, $_\llcorner\overrightarrow{P_3P_4}
=\left(-\frac{b}{\gdev_4}, -b\right)$
 and
$_\llcorner\overrightarrow{P_4P_5} =\left(-\frac{a}{\gdev_5},
-a\right)$.
\end{lemma}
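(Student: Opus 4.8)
The plan is to read the polygonal graph of $g$ in the ${}_\llcorner$ quadrant (that is, the $\psi\times y$ quadrant) directly off Figure~\ref{fig:18}, exactly as the coordinates of the $A_i$ were read off Figure~\ref{fig:11} in the proof of Lemma~\ref{lema:3.20}, and then to convert each linear piece into its step vector. By the unimodality of $g$ together with Lemmas~\ref{lema:3.31} and~\ref{lema:3.32}, the map $g$ has five linear pieces, with slopes $\gdev_1,\gdev_2>0$ on the increasing branch and $\gdev_3,\gdev_4,\gdev_5<0$ on the decreasing branch. Correspondingly its graph in the $\psi\times y$ quadrant is the path $O\to P_1\to P_2\to P_3\to P_4\to P_5$, where $P_2$ is the vertex at which $g$ attains its maximal value $1$ and $P_5={}_\llcorner(1,0)$, since $g(0)=g(1)=0$.

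First I would pin down the five signed $y$-heights of the pieces. The determinating lattice of Figure~\ref{fig:18} places the two kinks $P_1$ and $P_3$ on one common constant-$y$ line; denote by $a+b$ their common $y$-coordinate and by $a$ the $y$-coordinate of $P_4$. Since $g$ rises from $0$ to $1$ on $O\,P_1\,P_2$ through the intermediate height $a+b$, the two increasing pieces have $y$-heights $a+b$ and $1-a-b$. On the way down, $P_2P_3$ returns from $1$ to the level $a+b$, hence has height $1-a-b$; the piece $P_3P_4$ drops from $a+b$ to $a$, hence has height $b$; and $P_4P_5$ drops from $a$ to $0$, hence has height $a$. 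Thus the signed $y$-displacements along $\overrightarrow{OP_1},\overrightarrow{P_1P_2},\overrightarrow{P_2P_3},\overrightarrow{P_3P_4},\overrightarrow{P_4P_5}$ are $a+b,\ 1-a-b,\ -(1-a-b),\ -b,\ -a$.

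It then remains only to recover the $\psi$-displacements. On a piece of slope $\gdev$ one has $\Delta\psi=\Delta y/\gdev$, so each step vector equals $\bigl(\Delta y/\gdev,\ \Delta y\bigr)$ for the slope $\gdev$ of that piece. Substituting the five heights just found together with $\gdev_1,\dots,\gdev_5$ — for instance ${}_\llcorner\overrightarrow{P_2P_3}=\bigl(-(1-a-b)/\gdev_3,\,-1+a+b\bigr)$ on the piece of slope $\gdev_3$ — reproduces exactly the five identities of the statement, the positivity of every $\psi$-displacement being automatic because $\gdev_3,\gdev_4,\gdev_5<0$.

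The only genuinely non-formal step is the claim that $P_1$ and $P_3$ share the level $a+b$; this is precisely the horizontal incidence recorded in the determinating lattice of Figure~\ref{fig:18} (among its constant-$y$ lines there is one through both kinks of $g$ at that height), read off in the same spirit as the coordinate identifications used in the proof of Lemma~\ref{lema:3.20}. Everything after it is the routine ``displacement equals height, and height over slope'' bookkeeping, with signs dictated by the decreasing branch, so I expect no further obstacle.
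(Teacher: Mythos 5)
Your proof is correct and is essentially the paper's own argument: the paper's proof of Lemma~\ref{lema:3.35} is the one-line remark that ``these values appear, if we consider the kinks of $g$ from the quadrant $\psi\times y$,'' and you carry out exactly that reading of Figure~\ref{fig:18} (the lattice forcing $P_1$ and $P_3$ onto a common constant-$y$ line, the maximum $1$ at $P_2$, and the conversion $\Delta\psi=\Delta y/\gdev$ on each piece). No gap; your write-up just makes the figure-reading explicit.
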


\begin{proof}
These values appear, if we consider the kinks of $g$ from the
quadrant $\psi\times y$.
\end{proof}

\begin{lemma}\label{lema:3.36}
Let the lattice be as at the Figure~\ref{fig:18}. Denote $a$ and
$b$ the $y$-lengthes of least and pre-last parts of linearity of
$g$ respectively, and $\gdev_1,\ldots, \gdev_5$ the slopes of~$g$.
Then $b = 1- \frac{a\cdot \gdev_2}{4} -a$, \hskip 5mm $ \gdev_1 =
2$, \hskip 5mm $ \gdev_3 = -\gdev_2$, \hskip 5mm $ \gdev_4 =-2$,
and $ \gdev_5 = \frac{-4}{\gdev_2}.$
\end{lemma}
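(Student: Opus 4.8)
The plan is to repeat, in this more elaborate setting, the argument used to prove Lemma~\ref{lema:3.17}. All the slope identities $\gdev_3=-\gdev_2$, $\gdev_4=-\gdev_1$, $\gdev_5=-(\gdev_1)^2/\gdev_2$ are already furnished by Lemma~\ref{lema:3.32}, so the whole statement reduces to two things: proving $\gdev_1=2$ and establishing the single relation $b=1-\tfrac{a\gdev_2}{4}-a$. Once $\gdev_1=2$ is known, the remaining slopes $\gdev_4=-\gdev_1=-2$ and $\gdev_5=-(\gdev_1)^2/\gdev_2=-4/\gdev_2$ are immediate from Lemma~\ref{lema:3.32}.

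First I would write down the horizontal–extent (surjectivity) equation for $g$ in the quadrant $\psi\times y$. Summing the first coordinates of the five edge–vectors supplied by Lemma~\ref{lema:3.35} and setting the total equal to $1$ gives
\begin{equation*}
\frac{a+b}{\gdev_1}+\frac{1-a-b}{\gdev_2}-\frac{1-a-b}{\gdev_3}-\frac{b}{\gdev_4}-\frac{a}{\gdev_5}=1,
\end{equation*}
and substituting the identities of Lemma~\ref{lema:3.32} collapses this to
\begin{equation*}
\frac{a+b}{\gdev_1}+\frac{2(1-a-b)}{\gdev_2}+\frac{b}{\gdev_1}+\frac{a\gdev_2}{(\gdev_1)^2}=1 .
\end{equation*}
This is the exact analogue of~\eqref{eq:3.7}. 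A short computation shows that, once $\gdev_1=2$ is inserted, this single equation is equivalent to $b=1-\tfrac{a\gdev_2}{4}-a$; hence the formula for $b$ comes essentially for free, and the real work is to pin down $\gdev_1$.

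To force $\gdev_1=2$ I would read off Figure~\ref{fig:18} the kink–coincidence relations recorded by the sets $\mathcal P$ and $\mathcal Q$, exactly as the relations~\eqref{eq:3.5} and~\eqref{eq:3.6} were read off Figure~\ref{fig:11} in the proof of Lemma~\ref{lema:3.17}. Each such coincidence states that a break–point $P_i$ of $g$ (whose position is computed from the edge–vectors of Lemma~\ref{lema:3.35}) lies on a prescribed horizontal or vertical line of the lattice, that is, that a certain $\psi$–coordinate equals a certain height; written out in terms of $\gdev_1,\gdev_2$ and the $y$–lengths this yields an algebraic relation among $a,b,\gdev_1,\gdev_2$. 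Taking the relation whose two sides involve only the first two edges (the analogue of~\eqref{eq:3.5}) together with the contracted extent equation, one eliminates $a$ and $b$.

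The decisive and most delicate step is then the algebraic elimination: substituting the resulting expressions for $a$ and $b$ into the remaining kink relation produces a single polynomial equation in $\gdev_1$, with $\gdev_2$ a free parameter. As in the proof of Lemma~\ref{lema:3.17}, where the analogous elimination gave $(\gdev_1-2)((\gdev_1)^2+1)=0$, I expect the eliminant here to factor with $(\gdev_1-2)$ as one factor and the complementary factor admitting no positive real root; since $g$ ascends from the origin we have $\gdev_1>0$, so $\gdev_1=2$ is forced, and the formula for $b$ follows from the contracted extent equation. I anticipate that the bookkeeping of which points of Figure~\ref{fig:18} coincide, and the clean factorisation of the eliminant, will be the principal obstacles; everything else is routine substitution of Lemma~\ref{lema:3.32} together with a final appeal to Lemma~\ref{lema:2.4} to confirm that the reconstructed $g$ and $\psi$ genuinely commute.
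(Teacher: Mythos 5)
Your setup coincides with the paper's as far as it goes: the reduction via Lemma~\ref{lema:3.32} to the two claims $\gdev_1=2$ and $b=1-\frac{a\cdot\gdev_2}{4}-a$, and the extent equation you write, are exactly the paper's equations~\eqref{eq:3.17}--\eqref{eq:3.18}. The gap is in the decisive step, which you only anticipate, and whose outcome you mispredict. The paper uses precisely one kink coincidence from Figure~\ref{fig:18}: chasing through $Q_5$, the abscissa of $P_4$ equals the ordinate of $P_1$, i.e. $1+\frac{a}{\gdev_5}=a+b$, which by Lemma~\ref{lema:3.32} is $b=1-\frac{a\cdot\gdev_2}{(\gdev_1)^2}-a$ (equations~\eqref{eq:3.19}--\eqref{eq:3.20}); note this relation involves the first and the \emph{last} edges, not the first two, and it is also where the formula for $b$ actually comes from. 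Substituting it into~\eqref{eq:3.18} does \emph{not} yield a polynomial equation in $\gdev_1$ alone, as you expect by analogy with Lemma~\ref{lema:3.17}; it yields
$$
a\cdot\frac{(\gdev_2-\gdev_1)\cdot(\gdev_1-2)}{(\gdev_1)^3}=\frac{\gdev_1-2}{\gdev_1},
$$
an equation still containing $a$. Its factorization gives two branches: $\gdev_1=2$, or $a=\frac{(\gdev_1)^2}{\gdev_2-\gdev_1}$. The second branch is a genuine solution of the two equations available, so it cannot be dismissed as a factor ``admitting no positive real root''; it is excluded only by a sign argument: $a>0$ forces $\gdev_2>\gdev_1$, and then $b=\frac{-\gdev_1-(\gdev_1)^2}{\gdev_2-\gdev_1}<0$, contradicting that $b$ is a length. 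This case analysis and positivity contradiction is the crux of the paper's proof and is absent from your proposal.

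Two further points. First, your claim that once $\gdev_1=2$ is inserted the extent equation is equivalent to $b=1-\frac{a\cdot\gdev_2}{4}-a$ fails when $\gdev_2=2$: there the coefficient of $b$ vanishes and the extent equation holds identically, determining nothing, whereas the kink relation~\eqref{eq:3.20} gives the formula unconditionally. Second, your elimination scheme needs a second kink relation ``involving only the first two edges'' (an analogue of~\eqref{eq:3.5}); Figure~\ref{fig:18} is not Figure~\ref{fig:11}, and the paper exhibits no such relation --- the remaining coincidences of that lattice involve the slopes of $\psi$ and are spent later (Lemma~\ref{lema:3.38}) on proving $\sdev_1=3$. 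So the elimination you outline cannot be carried through as stated with the data you have collected; what replaces it is the branch analysis above.
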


\begin{proof}
Since $_\llcorner\overrightarrow{OP_1} +\ldots
+_\llcorner\overrightarrow{OP_5} = (1,0)$, then
Lemma~\ref{lema:3.35} implies

\begin{equation}\label{eq:3.17} \frac{a+b}{\gdev_1} +
\frac{1-a-b}{\gdev_2} - \frac{1-a-b}{\gdev_3} -\frac{b}{\gdev_4} -
\frac{a}{\gdev_5} = 1.
\end{equation} By Lemmas~\ref{lema:3.32} and~\ref{lema:3.33},
rewrite~\eqref{eq:3.17} as
\begin{equation}\label{eq:3.18} \frac{a+2b}{\gdev_1} +
\frac{2(1-a-b)}{\gdev_2} + \frac{a\cdot \gdev_2}{(\gdev_1)^2} = 1.
\end{equation}

Since
$$
(1,0)\cdot \overrightarrow{OQ_5}_{\lrcorner} =(1,0)\cdot
\overrightarrow{OP_4}^{\urcorner}
$$ and $$
(1,0)\cdot ^\ulcorner\overrightarrow{OQ_5} = (0,1)\cdot
\overrightarrow{OP_1}^{\urcorner},$$ then $$ (1,0)\cdot
\overrightarrow{OP_4} = (0,1)\cdot \overrightarrow{OP_1},
$$ whence
\begin{equation}\label{eq:3.19} 1+ \frac{a}{\gdev_5} = a+b.
\end{equation} By Lemma~\ref{lema:3.32}   rewrite~\eqref{eq:3.19} as
$\displaystyle{1- \frac{a\cdot \gdev_2}{(\gdev_1)^2} = a+b},$
whence
\begin{equation}\label{eq:3.20}
b = 1- \frac{a\cdot \gdev_2}{(\gdev_1)^2} -a.
\end{equation} Plug~\eqref{eq:3.20} into~\eqref{eq:3.18} and
$$ \frac{a+2\left(1- \frac{a\cdot \gdev_2}{(\gdev_1)^2}
-a\right)}{\gdev_1} + \frac{2\left(1-a-\left(1- \frac{a\cdot
\gdev_2}{(\gdev_1)^2} -a\right)\right)}{\gdev_2} + \frac{a\cdot
\gdev_2}{(\gdev_1)^2} = 1.
$$
This equality can be simplified to
$$
a\cdot \frac{(\gdev_2-\gdev_1)\cdot (\gdev_1 -2)}{(\gdev_1)^3 } =
\frac{\gdev_1 -2}{\gdev_1}.
$$ Thus, either \begin{equation}\label{eq:3.21} \gdev_1 =2,
\end{equation}
 or \begin{equation}\label{eq:3.22}
a = \frac{(\gdev_1)^2}{\gdev_2-\gdev_1}
\end{equation}
Plug~\eqref{eq:3.22} into~\eqref{eq:3.20} and obtain
\begin{equation}\label{eq:3.23}
b = 1- \frac{\gdev_2}{\gdev_2-\gdev_1}
-\frac{(\gdev_1)^2}{\gdev_2-\gdev_1} = \frac{-\gdev_1
-(\gdev_1)^2}{\gdev_2-\gdev_1}.
\end{equation}
Since~\eqref{eq:3.22} implies $\gdev_2-\gdev_1>0$, then $b<0$
in~\eqref{eq:3.23}. This contradiction proves~\eqref{eq:3.21}. Now
the proposition follows from Lemma~\ref{lema:3.31} and
equality~\eqref{eq:3.20}.
\end{proof}

\begin{lemma}\label{lema:3.37}
A map $g$ is defined by Lemma~\ref{lema:3.36}, if and only if
there is $a,\, b\in (0, 1)$ such that
$$
g:\, (0,0)\rightarrow P_1\left( a,
2a\right)\rightarrow%
P_2\left(
b, 1\right)\rightarrow%
P_3\left(2b -a,
2a\right)\rightarrow%
P_4\left( 2a, 4\cdot(b -a)\right) \rightarrow (1,0).
$$
\end{lemma}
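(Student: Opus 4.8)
The plan is to convert the slope data of Lemma~\ref{lema:3.36} together with the kink--position data of Lemma~\ref{lema:3.35} into explicit coordinates, using the two most convenient parameters: let $a$ denote the abscissa (the $\psi$-coordinate in the quadrant $\psi\times y$) of the first kink $P_1$ of $g$, and let $b=g^{-1}(1)$ be the abscissa of the maximum. I emphasize at the outset that these are \emph{not} the quantities called $a,b$ in Lemma~\ref{lema:3.36}, where they were the $y$-lengths of the last two linear pieces; setting up the correct dictionary between the two parametrizations is the one genuinely non-mechanical point, and I expect it to be the main obstacle. The statement is an ``if and only if'', so I would argue both directions as coordinate computations.

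For the forward direction I would first read off the two lowest-index points. Since Lemma~\ref{lema:3.36} gives $\gdev_1=2$, the initial segment is $x\mapsto 2x$, so $P_1=(a,2a)$; since $P_2$ is the peak, $P_2=(b,1)$, which forces $\gdev_2=(1-2a)/(b-a)$. To place $P_3$ I would invoke Lemma~\ref{lema:3.35}, from which the $y$-components of $\overrightarrow{P_1P_2}$ and $\overrightarrow{P_2P_3}$ are opposite, so the second and third segments have equal $y$-length; together with $\gdev_3=-\gdev_2$ this makes them a symmetric tent with apex $P_2$, whence $P_3$ is the reflection of $P_1$ in the vertical line $x=b$, i.e. $P_3=(2b-a,2a)$.

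The last step is to pin down $P_4$. Using $\gdev_4=-2$ and $\gdev_5=-4/\gdev_2$ from Lemma~\ref{lema:3.36}, I would describe $P_4$ as the intersection of the line through $P_3$ of slope $-2$ with the line through the endpoint $(1,0)$ of slope $-4/\gdev_2$; solving this $2\times2$ linear system and substituting $\gdev_2=(1-2a)/(b-a)$ collapses the abscissa to exactly $2a$ and the ordinate to $4(b-a)$, giving $P_4=\bigl(2a,4(b-a)\bigr)$. This is the place where the defining relation $b=1-\tfrac{a\,\gdev_2}{4}-a$ of Lemma~\ref{lema:3.36} is really used, since that relation is precisely the condition that the graph terminate at $(1,0)$; verifying that the abscissa simplifies cleanly to $2a$ is the computational crux.

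For the converse I would start from the displayed broken line and compute its five slopes directly from the listed vertices, checking that they equal $2,\ \gdev_2,\ -\gdev_2,\ -2,\ -4/\gdev_2$, and that the $y$-lengths of its last two pieces, namely $4(b-a)$ and $6a-4b$, satisfy the relation of Lemma~\ref{lema:3.36}; this reduces to a one-line identity. The only extra care is to record the inequalities under which the displayed map is genuinely unimodal, surjective and correctly ordered as $a<b<2b-a<2a<1$: beyond $a,b\in(0,1)$ one needs $a<b<\tfrac{3a}{2}$ and $a<\tfrac12$. I expect no difficulty beyond this bookkeeping, the essential work being to keep the two meanings of ``$a,b$'' apart and to confirm the stated vertex formulas by the substitutions above.
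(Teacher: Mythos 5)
Your proposal is correct, and its computational core is the same as the paper's: both combine the displacement vectors of Lemma~\ref{lema:3.35} with the formulas of Lemma~\ref{lema:3.36} and parametrize the map by the abscissas of $P_1$ and $P_2$ (your $a,b$ are exactly what the paper calls $\widetilde{a},\widetilde{b}$). The difference is direction and completeness: the paper only argues forward, summing the displacement vectors in the parameters of Lemma~\ref{lema:3.36} and renaming at the end, so the ``if'' half of the equivalence rests on the unstated invertibility of that change of parameters; you start from the target parameters, get $P_3$ from the reflection symmetry forced by $\gamma_3=-\gamma_2$, get $P_4$ as the intersection of the slope $-2$ line through $P_3$ with the slope $-4/\gamma_2$ line through $(1,0)$ (both computations are right: the intersection is $(2a,4(b-a))$), and then prove the converse explicitly by recomputing the five slopes and the $y$-length relation from the displayed vertices. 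Your version adds two things the paper's proof lacks: the explicit converse, and the inequalities $a<b<\frac{3a}{2}$, $a<\frac{1}{2}$, without which the displayed points need not even be in increasing order --- the bare ``$a,b\in(0,1)$'' of the statement admits, e.g., $a=\frac{1}{10}$, $b=\frac{9}{10}$, for which $2b-a>1$. The one caveat is the degenerate case $\gamma_2=2$ (the tent map, maximum at abscissa $\frac{1}{2}$): there your two lines through $P_3$ and $(1,0)$ coincide, so the intersection argument does not single out $P_4$, though the stated coordinates still lie on the graph and the conclusion is unaffected.
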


\begin{proof}
Using Lemma~\ref{lema:3.35}, due to formulas from
Lemma~\ref{lema:3.36}, write $\overrightarrow{OP_1} =\left(
\frac{4-a\cdot \gdev_2}{8}, \frac{4-a\cdot \gdev_2}{4}\right)$,
$\overrightarrow{P_1P_2} =\left(\frac{a}{4}, \frac{a\cdot
\gdev_2}{4}\right)$, $\overrightarrow{P_2P_3} =\left(\frac{a}{4},
-\frac{a\cdot \gdev_2}{4}\right)$, $\overrightarrow{P_3P_4}
=\left(\frac{b}{2}, -b\right)$ and $\overrightarrow{P_4P_5}
=\left(-\frac{a\cdot \gdev_2}{4}, -a\right)$.

Thus, $\overrightarrow{OP_2} =\left( \frac{4-a\cdot
\gdev_2+2a}{8}, 1\right)$, $\overrightarrow{OP_3} =\left(
\frac{4-a\cdot \gdev_2+4a}{8}, 1-\frac{a\cdot \gdev_2}{4}\right)$,
and  $$\overrightarrow{OP_4} = \left( \frac{4-a\cdot \gdev_2 +4a
+4b}{8}, 1-\frac{a\cdot \gdev_2}{4} -b\right) =\left(
\frac{4-a\cdot \gdev_2}{4}, a\right).$$ Denote $\widetilde{a}
=\frac{4-a\cdot \gdev_2}{8}$ and $\widetilde{b} =\frac{4-a\cdot
\gdev_2}{8} +\frac{a}{4}$. In this case $a = 4\cdot(\widetilde{b}
-\widetilde{a})$ and

Thus, $\overrightarrow{OP_1} =\left( \widetilde{a},
2\widetilde{a}\right)$, $\overrightarrow{OP_2} =\left(
\widetilde{b}, 1\right)$, $\overrightarrow{OP_3}
=\left(2\widetilde{b} -\widetilde{a}, 2\widetilde{a}\right)$, and
$\overrightarrow{OP_4} =\left( 2\widetilde{a},
4\cdot(\widetilde{b} -\widetilde{a})\right).$
\end{proof}

\begin{lemma}\label{lema:3.38}
Let $\psi$ be restricted by the lattice from Figure~\ref{fig:18}.
Then $\sdev_1 =3$.
\end{lemma}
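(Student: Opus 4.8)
The plan is to pin down $\sdev_1$ by the one scalar constraint still unused after Lemmas~\ref{lema:3.33} and~\ref{lema:3.36}, namely that the graph of $\psi$, read in the $x\times\psi$ quadrant of Figure~\ref{fig:18}, has domain of length $1$. This is exactly the route that produced $\sdev_1=3$ in Example~\ref{ex:3.13} (the computation following Lemma~\ref{lema:3.17}) and, in a single-kink-matching form, in the proof of Lemma~\ref{lema:3.10}; since Lemmas~\ref{lema:3.33} and~\ref{lema:3.36} already express every $\sdev_i$ (for $i\geq2$) and every $\gdev_i$ in terms of $\sdev_1$, $\gdev_2$ and the kink parameter, the length constraint is the only thing left to reduce.

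First I would record the seven signed $\psi$-increments of the linear pieces of $\psi$ as they cross the horizontal lattice levels of Figure~\ref{fig:18}; by Lemma~\ref{lema:3.35} these levels are the kink heights $a$, $a+b$, $1$ of $g$, and the three maximal monotone runs of $\psi$ (ascending, descending, ascending) traverse them. The $x$-projection of the $i$-th piece is its signed $\psi$-increment $\Delta_i$ divided by $\sdev_i$, so the domain-length condition reads
\[
\sum_{i=1}^{7}\frac{\Delta_i}{\sdev_i}=1 .
\]
Next I would substitute $\sdev_2,\ldots,\sdev_7$ from Lemma~\ref{lema:3.33} together with $\gdev_1=2$, $\gdev_3=-\gdev_2$, $\gdev_4=-2$, $\gdev_5=-4/\gdev_2$ and $b=1-\tfrac{a\gdev_2}{4}-a$ from Lemma~\ref{lema:3.36}. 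By Lemma~\ref{lema:3.33} each $1/\sdev_i$ is a $\gdev_2$-rational multiple of $1/\sdev_1$ (indeed $\sdev_4=-\sdev_1$, $\sdev_5=\sdev_7=\sdev_1$, while $\sdev_2,\sdev_3,\sdev_6$ introduce only factors of $\gdev_2$), so the left-hand side is homogeneous of degree $-1$ in $\sdev_1$; the equation therefore solves for $\sdev_1$ explicitly, and the content of the lemma is that, after eliminating $b$ and simplifying, the dependence on the free parameters $a$ and $\gdev_2$ cancels and the resulting value is the constant $3$.

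The hard part will be the first step: reading the seven increments $\Delta_i$ correctly off Figure~\ref{fig:18} and tracking the signs on the single descending run, where $\sdev_3,\sdev_4<0$ pair with negative increments. A sign slip there would destroy the cancellation of $a$ and $\gdev_2$, so this bookkeeping is the only genuine obstacle; once the increments are fixed, the remaining manipulation is the same routine rational simplification as in the lemma following Lemma~\ref{lema:3.17}, collapsing to the numerical identity $\sdev_1=3$.
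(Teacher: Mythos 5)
Your plan breaks down exactly at the step you dismiss as ``bookkeeping'', and not because of signs: the seven increments $\Delta_i$ \emph{cannot} all be read off from horizontal lattice levels, because the kinks of $\psi$ do not all sit at kink heights of $g$. In the notation of Lemmas~\ref{lema:3.37} and~\ref{lema:3.38}, the kinks $Q_1,Q_2,Q_3,Q_4,Q_6$ do lie at the levels $2a$, $1$, $2a$, $0$, $2a$, so $\Delta_1,\ldots,\Delta_4$ and $\Delta_7$ are known; but the third ascending run of $\psi$ has \emph{two} interior kinks, and the first of them, $Q_5$, sits at a level $\psi_5$ that is not a kink height of $g$ (for the generating example $h:\,(0,0)\rightarrow(\tfrac{\sqrt2}{2},\tfrac{\sqrt3}{2})\rightarrow(1,1)$ one computes $\psi_5=h\left(\tfrac{3\sqrt2}{2}-2\right)=\tfrac{3\sqrt3}{2}-\sqrt6\approx 0.149$, whereas the $g$-kink heights are $\approx 0.866$, $1$, $0.718$). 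Only the sum $\Delta_5+\Delta_6=2a$ is visible from the levels. Hence your length equation, after substituting Lemma~\ref{lema:3.33} and $\gdev_1=2$, becomes
\begin{equation*}
\sdev_1 \;=\; 2a+1+\frac{4(1-2a)}{\gdev_2}+\psi_5+\frac{(2a-\psi_5)\,\gdev_2}{2},
\end{equation*}
a single equation in the \emph{two} unknowns $\sdev_1$ and $\psi_5$: it does not ``solve for $\sdev_1$ explicitly'', and the dependence on $a,\gdev_2$ does not cancel. Guessing the split wrongly (e.g.\ placing $Q_5$ at the level of $P_4$) produces a value of $\sdev_1$ different from $3$, so the gap is real, not cosmetic.

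What is missing is a different kind of lattice datum, and it is precisely the central step of the paper's proof: the lattice joins $Q_5$ to $P_4$ by a \emph{vertical} line, so $Q_5$ has $x$-coordinate $2a$, which gives $\psi_5=2a\sdev_1-4a-\frac{4(1-2a)}{\gdev_2}$ (note $\sdev_1$ enters here a second time). If you adjoin this to your length equation, the computation does collapse as you hoped:
\begin{equation*}
\sdev_1\left(1-2a+a\gdev_2\right)\;=\;3\left(1-2a+a\gdev_2\right),
\end{equation*}
and since $\gdev_2>0$ the bracket is positive, whence $\sdev_1=3$. But at that point your argument is a repackaging of the paper's rather than an alternative to it: the paper encodes the ``total length $1$'' fact by locating $Q_6$ at $x=1-\frac{1-2a}{\sdev_7}$ from the right end, and closes the system by requiring the segment $Q_5Q_6$ to have slope $\sdev_6=\frac{\sdev_1\gdev_1}{\gdev_2}$ --- the same two constraints, used in a different order, together with the same positivity of $\gdev_2$ to discard the spurious factor.
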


\begin{proof}
\begin{equation}\label{eq:3.24}
\left(\overrightarrow{OQ_1}\right)_{\lrcorner} =
\left(\frac{2a}{\sdev_1},2a\right).\end{equation}

$\left(\overrightarrow{Q_1Q_2}\right)_{\lrcorner} =
\left(\frac{1-2a}{\sdev_2},1-2a\right)$. Using~\eqref{eq:3.24}
obtain $\left(\overrightarrow{OQ_2}\right)_{\lrcorner} =
\left(\frac{2a}{\sdev_1} +\frac{1-2a}{\sdev_2},1\right)$, and by
Lemma~\ref{lema:3.33} simplify
\begin{equation}\label{eq:3.25}
\left(\overrightarrow{OQ_2}\right)_{\lrcorner} =
\left(\frac{2a}{\sdev_1} +2\cdot \frac{1-2a}{\sdev_1\cdot
\gdev_2},1\right).
\end{equation}
Since $\left(\overrightarrow{Q_2Q_3}\right)_{\lrcorner} =
\left(-\frac{1-2a}{\sdev_3}, -(1-2a)\right)
\overset{\text{Lema.~\ref{lema:3.33}}}{=} \left(2\cdot
\frac{1-2a}{\sdev_1\cdot \gdev_2}, -(1-2a)\right)$,
 then,
by~\eqref{eq:3.25}, \begin{equation}\label{eq:3.26}
\left(\overrightarrow{OQ_3}\right)_{\lrcorner}
=\left(\frac{2a}{\sdev_1} +4\cdot \frac{1-2a}{\sdev_1\cdot
\gdev_2}, 2a\right).
\end{equation}

Since $\left(\overrightarrow{Q_3Q_4}\right)_{\lrcorner} =
\left(-\frac{2a}{\sdev_4},-2a \right)
\overset{\text{Lema.~\ref{lema:3.33}}}{=}
\left(\frac{2a}{\sdev_1},-2a\right)$, then it follows
from~\eqref{eq:3.26} that $$
\left(\overrightarrow{OQ_4}\right)_{\lrcorner}
=\left(\frac{4a}{\sdev_1} +4\cdot \frac{1-2a}{\sdev_1\cdot
\gdev_2}, 0\right).
$$

Since $Q_5(P_4)^{\urcorner}$ is a vertical line, then $$ (1,
0)\cdot \left(\overrightarrow{OQ_5}\right)_{\lrcorner} =(1,
0)\cdot \left(\overrightarrow{OP_4}\right) =2a.
$$ Thus, $$
\left(\overrightarrow{OQ_5}\right)_{\lrcorner} =\left(2a,
\sdev_5\cdot \left(2a-\frac{4a}{\sdev_1} -4\cdot
\frac{1-2a}{\sdev_1\cdot \gdev_2}\right)\right)
\overset{\text{Lema.~\ref{lema:3.33}}}{=}$$ $$ =\left(2a,
\sdev_1\cdot \left(2a-\frac{4a}{\sdev_1} -4\cdot
\frac{1-2a}{\sdev_1\cdot \gdev_2}\right)\right)=
$$$$
=\left(2a, 2a\cdot \sdev_1-4a -4\cdot
\frac{1-2a}{\gdev_2}\right)$$

Clearly, $$ \left(\overrightarrow{OQ_6}\right)_{\lrcorner} =
\left(1-\frac{1-2a}{\sdev_7}, 2a\right)
\overset{\text{Lema.~\ref{lema:3.33}}}{=}
\left(1-\frac{1-2a}{\sdev_1}, 2a\right).$$ Since the slope of
$Q_5Q_6$ is $\sdev_6$, then $$ \sdev_6 = \frac{2a-\left(2a\cdot
\sdev_1-4a -4\cdot
\frac{1-2a}{\gdev_2}\right)}{1-\frac{1-2a}{\sdev_1}-2a}
$$

By Lema~\ref{lema:3.33}, $\sdev_6 = \frac{\sdev_1\cdot
\gdev_1}{\gdev_2} $, whence $$ \frac{\sdev_1}{\gdev_2} =
\frac{3a-a\cdot \sdev_1 +2\cdot
\frac{1-2a}{\gdev_2}}{1-\frac{1-2a}{\sdev_1}-2a}.
$$
This expression can be simplified to
$$
0  = (a\cdot \gdev_2 + 1-2a) \cdot (3-\sdev_1)
$$

Thus, either $\sdev_1 =3$, or $$ \gdev_2 = -\frac{1-2a}{a},
$$ but the latter equality is impossible, because $\gdev_2>0$.

\end{proof}

It is seen from Fig.~\ref{fig:18} that if a piecewise linear
conjugacy from $f$ to $g$ exists, then it has the unique kink
$h(t) =2a$ for some $t>1/2$. In this case $$
h\left(\frac{1}{2}\right) =h(t)\cdot \frac{1/2}{t} = \frac{a}{t}.
$$
Since $P_2$ is the maximum point of $g$, then
$h\left(\frac{1}{2}\right) =b$, whence $$ t = \frac{a}{b}.
$$
Thus, if $g$ is topologically conjugated with the tent map via
piecewise linear conjugacy, the following lemma should hold.

\begin{lemma}
Let $g$ be as in Lemma~\ref{lema:3.37}. Then the map $$ h:\,
(0,0)\rightarrow \left(\frac{a}{b}, 2a\right)\rightarrow (1,1)
$$ is a conjugacy from $f$ to $g$.
\end{lemma}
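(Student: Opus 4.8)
The plan is to verify directly that $g\circ h=h\circ f$, which is exactly the assertion that the increasing homeomorphism $h$ conjugates $f$ to $g$. Since both $g\circ h$ and $h\circ f$ are continuous piecewise linear maps, it suffices to produce a finite partition of $[0,1]$ on whose pieces both compositions are affine, and to check that the two compositions agree at the partition points together with $0$ and $1$. First I would record $h$ explicitly from the statement: it is increasing with the single kink $(a/b,2a)$, so $h(x)=2bx$ on $[0,a/b]$ and $h$ is affine from $(a/b,2a)$ to $(1,1)$ on $[a/b,1]$, with right-branch slope $\tfrac{(1-2a)b}{b-a}$. In particular $h(1/2)=b$, consistent with $P_2=(b,1)$ being the maximum of $g$; this is how the candidate $h$ was produced above via Remark~\ref{rem:3.27} and the maximum-point argument, so here I take $h$ as given and only verify it.

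Next I would locate the breaks of each side. The map $h\circ f$ can break only where $f$ breaks, i.e. at $x=1/2$, or where $f(x)$ equals the kink $a/b$ of $h$, i.e. at the two $f$-preimages $x=\tfrac{a}{2b}$ and $x=\tfrac{2b-a}{2b}$. The map $g\circ h$ can break only where $h$ breaks, i.e. at $x=a/b$, or where $h(x)$ equals one of the domain-kinks $a,b,2b-a,2a$ of $g$; using $h(x)=2bx$ on the left branch and $h(a/b)=2a$, these preimages are exactly $\tfrac{a}{2b}$ (from $h=a$), $1/2$ (from $h=b$), $\tfrac{2b-a}{2b}$ (from $h=2b-a$, which lies on the left branch because $2b-a<2a$, the kinks of $g$ being ordered $a<b<2b-a<2a$), and $a/b$ (from $h=2a$). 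Thus both partitions share the points $\tfrac{a}{2b}<\tfrac12<\tfrac{2b-a}{2b}$, and the only discrepancy is the candidate break of $g\circ h$ at $x=a/b$, which moreover lies in the interval $\bigl(\tfrac{2b-a}{2b},1\bigr)$.

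The key step, the one I expect to carry the whole argument, is to show that $x=a/b$ is a \emph{spurious} break, i.e. that $g\circ h$ is in fact affine across it. Approaching from the left, $h'=2b$ and $h(x)\to 2a^{-}$, so the one-sided slope of $g\circ h$ is $\gdev_4\cdot 2b=-4b$, using $\gdev_4=-2$ from Lemma~\ref{lema:3.36} (the segment $P_3P_4$ ending at the kink $2a$). Approaching from the right, $h'=\tfrac{(1-2a)b}{b-a}$ and $h(x)\to 2a^{+}$, so the one-sided slope is $\gdev_5\cdot\tfrac{(1-2a)b}{b-a}$; since $\gdev_5=\tfrac{-4}{\gdev_2}$ by Lemma~\ref{lema:3.36} and $\gdev_2=\tfrac{1-2a}{b-a}$ is precisely the slope of $g$ on $P_1P_2$, this also equals $-4b$. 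The two one-sided slopes coincide, so $g\circ h$ has no genuine kink at $a/b$ and is affine on all of $\bigl[\tfrac{2b-a}{2b},1\bigr]$. I would emphasise that this cancellation is not accidental but forced by the slope relations of Lemma~\ref{lema:3.36}, and this is the real content of the verification.

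Finally the shared break points are checked by evaluation, which is routine: $f\bigl(\tfrac{a}{2b}\bigr)=a/b$ gives $h\circ f\bigl(\tfrac{a}{2b}\bigr)=2a=g(a)=g\circ h\bigl(\tfrac{a}{2b}\bigr)$; $f\bigl(\tfrac12\bigr)=1$ gives $h\circ f\bigl(\tfrac12\bigr)=1=g(b)=g\circ h\bigl(\tfrac12\bigr)$; $f\bigl(\tfrac{2b-a}{2b}\bigr)=a/b$ gives $h\circ f\bigl(\tfrac{2b-a}{2b}\bigr)=2a=g(2b-a)=g\circ h\bigl(\tfrac{2b-a}{2b}\bigr)$; and both sides vanish at $x=0$ and $x=1$. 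Since $g\circ h$ and $h\circ f$ are affine on each interval cut out by $0<\tfrac{a}{2b}<\tfrac12<\tfrac{2b-a}{2b}<1$ and agree at all these points, they coincide on $[0,1]$, so $h\circ f=g\circ h$ and $h$ is a conjugacy from $f$ to $g$. Equivalently, this is the verification, in the single-trajectory language of Lemmas~\ref{lema:2.2}--\ref{lema:2.4}, that the trajectories through the kinks $P_1,P_3$ of $g$ do not contradict $g\circ h=h\circ f$.
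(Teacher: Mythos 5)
Your proof is correct, and it is worth noting that it is organized differently from the paper's. The paper works with the inverse map: it writes $h^{-1}$ explicitly, transports each kink $P_1,\dots,P_4$ of $g$ to the point $\bigl(h^{-1}(x_i),h^{-1}(g(x_i))\bigr)$, checks that all four landed points lie on $\Gamma_f$, and then closes the argument by invoking Lemma~\ref{lema:3.25} (whose own justification is only the ``change of scale'' heuristic). You instead verify the functional equation $g\circ h=h\circ f$ forward and make all of the piecewise-linear bookkeeping explicit: you list the potential break points of both compositions, observe that they share $\tfrac{a}{2b}<\tfrac12<\tfrac{2b-a}{2b}$, and then dispose of the one extra candidate break of $g\circ h$ at $x=a/b$ by the slope cancellation $\gdev_4\cdot 2b=-4b=\gdev_5\cdot\tfrac{(1-2a)b}{b-a}$, which is forced by $\gdev_4=-2$, $\gdev_5=-4/\gdev_2$ and $\gdev_2=\tfrac{1-2a}{b-a}$. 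The computational content is essentially parallel --- your three value checks correspond to the paper's $h(P_1),h(P_2),h(P_3)\in\Gamma_f$, and your slope cancellation plays the role of the paper's check $h(P_4)\in\Gamma_f$ --- but your version is self-contained: it never appeals to Lemma~\ref{lema:3.25}, and it supplies exactly the ``affine on each piece, equal at the endpoints'' argument that the paper leaves implicit inside that lemma. The price is a longer write-up; the benefit is that every step is verified from the explicit coordinates of Lemma~\ref{lema:3.37} alone.
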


\begin{proof}
The conjugacy from $g$ to $f$ is $$h^{-1}:\, (0,0)\rightarrow
\left(2a, \frac{a}{b}\right)\rightarrow (1,1),
$$ whence $$
h^{-1}(x) =\left\{ \begin{array}{ll} \frac{x}{2b},& \text{if
} x\leq 2a,\\
\frac{2a}{b} +\frac{x-2a}{1-2a}\cdot \left(1 -\frac{2a}{b}\right)&
\text{otherwise.}\end{array}\right.
$$ Thus, $$
h(P_1) =\left(\frac{a}{2b}, \frac{a}{b}\right)\in \Gamma_f,
$$$$
h(P_2) =\left(\frac{1}{2}, 1\right)\in \Gamma_f,
$$$$
h(P_3)= \left(\frac{2b-a}{2b}, \frac{a}{b}\right)=\left(1
-\frac{a}{2b}, \frac{a}{b}\right)\in \Gamma_f,
$$$$
h(P_4) =\left(\frac{a}{b}, \frac{4(b-a)}{2b}\right)
=\left(\frac{a}{b}, 2 -\frac{2a}{b}\right)\in \Gamma_f.
$$
Now the lemma follows from Lemma~\ref{lema:3.25}.
\end{proof}

\section{The case, when the unimodal map is topologically
conjugated with the tent}

We will use Theorem~\ref{th:3}, together with some preliminaries
in this section to simplify the reasonings from
Examples~\ref{ex:3.1}, \ref{ex:3.3}, \ref{ex:3.11}, \ref{ex:3.13}
and~\ref{ex:4.9}.

\subsection{Preliminaries}

We have proved in~\cite[Theorem~1]{Odesa-2016} (see
also~\cite[Theorem~1]{Plakh-Arx-Odesa}) that for every surjective
continuous map $\xi$, which commutes with the tent map, there
exists and integer positive $t$ such that
\begin{equation}\label{eq:4.1}
\xi = \xi_t:\, x \mapsto \displaystyle{\frac{1 - (-1)^{[tx]}}{2}
+(-1)^{[tx]}\{tx\}},\end{equation} where $\{\cdot \}$ denotes the
function of the fractional part of a number and $[\cdot ]$ is the
integer part. Notice that~\eqref{eq:4.1} describes a piecewise
linear function $\xi_t: [0, 1]\rightarrow [0, 1]$, whose tangents
are $\pm t$, which passes through origin, and all whose kinks
belong to lines $y =0$ and $y =1$. The graphs of $\xi_5$ and
$\xi_6$ are given at Fig.~\ref{fig:19}.

\begin{figure}[ht]
\begin{center}
\begin{picture}(140,135)
\put(0,0){\vector(1,0){140}} \put(0,0){\vector(0,1){135}}

\linethickness{0.4mm} \Vidr{0}{0}{24}{120} \VidrTo{48}{0}
\VidrTo{72}{120} \VidrTo{96}{0} \VidrTo{120}{120}
\linethickness{0.1mm}
\end{picture}\hskip 3cm
\begin{picture}(140,135)
\put(0,0){\vector(1,0){140}} \put(0,0){\vector(0,1){135}}

\linethickness{0.4mm} \Vidr{0}{0}{20}{120} \VidrTo{40}{0}
\VidrTo{60}{120} \VidrTo{80}{0} \VidrTo{100}{120} \VidrTo{120}{0}
\linethickness{0.1mm}
\end{picture}
\end{center}
\caption{Graphs of $\xi_5$ and $\xi_6$} \label{fig:19}\end{figure}
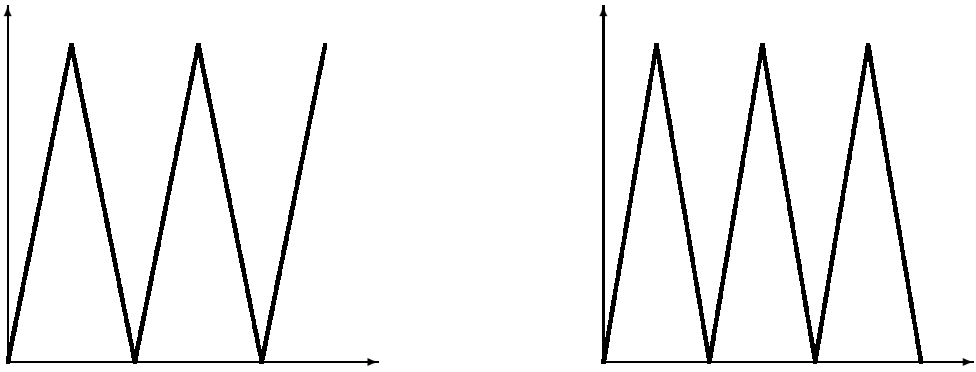

\setlength{\unitlength}{1pt}

\begin{lemma}\label{lema:4.1}
Let $h$ be the conjugacy from the tent map $f$ to a piecewise
linear unimodal map~$g$. Suppose the map $g$ commutes with a
non-constant $\psi$, which has $t,\, t\geq 1$ maximal parts of
monotonicity. Then
$$
\psi = h\circ \xi_t\circ h^{-1},$$ where $\xi_t$ is self semi
conjugacy of the tent map, determined by~\eqref{eq:4.1}. Moreover,
if $\psi$ is not a constant map, then $t$ is the number of maximal
parts of monotonicity of $\psi$.
\end{lemma}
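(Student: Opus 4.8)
The plan is to pull the whole situation back to the tent map by conjugating with $h$, reduce to the cited classification~\eqref{eq:4.1} of surjective self-semiconjugacies of $f$, and then match indices by counting laps. I begin with two elementary observations about $h$. Since $h$ is a homeomorphism of $[0,1]$ it permutes the endpoints $\{0,1\}$; evaluating $h\circ f=g\circ h$ at $x=0$ and using $f(0)=g(0)=0$ gives $h(0)=g(h(0))$, which rules out $h(0)=1$ (that would give $h(0)=g(1)=0$). Hence $h(0)=0$, so $h$ is increasing with $h(1)=1$, and in particular conjugation by $h$ preserves the number of maximal intervals of monotonicity of any map.

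Next I set $\tilde\psi=h^{-1}\circ\psi\circ h$ and check that it commutes with the tent map. From $h\circ f=g\circ h$ one gets $f\circ h^{-1}=h^{-1}\circ g$, and then
$$
f\circ\tilde\psi=f\circ h^{-1}\circ\psi\circ h=h^{-1}\circ g\circ\psi\circ h=h^{-1}\circ\psi\circ g\circ h=h^{-1}\circ\psi\circ h\circ f=\tilde\psi\circ f,
$$
where the middle equality is the hypothesis $g\circ\psi=\psi\circ g$ and the last uses $g\circ h=h\circ f$. Thus $\tilde\psi$ is a continuous map commuting with $f$, and it is non-constant because $\psi$ is and $h$ is a homeomorphism.

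The key step, and the part I expect to require the most care, is to see that $\tilde\psi$ is surjective, so that~\eqref{eq:4.1} applies. Being continuous, $\tilde\psi$ has image a nondegenerate interval $[c,d]$, and the commutation relation gives $f([c,d])=f(\tilde\psi([0,1]))=\tilde\psi(f([0,1]))=\tilde\psi([0,1])=[c,d]$, so $[c,d]$ is $f$-invariant. I would then argue that the only nondegenerate $f$-invariant subinterval of $[0,1]$ is $[0,1]$ itself. Indeed, if $1/2\notin[c,d]$ then $f$ is affine of slope $\pm2$ on $[c,d]$, so $f([c,d])$ has length $2(d-c)\neq d-c$, which is impossible; hence $1/2\in[c,d]$, and since the maximum value $1=\max f$ then lies in $f([c,d])=[c,d]$ we get $d=1$, after which $f(1)=0\in[c,1]$ forces $c=0$. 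Therefore $[c,d]=[0,1]$ and $\tilde\psi$ is onto.

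Finally I invoke~\eqref{eq:4.1}: since $\tilde\psi$ is a surjective continuous map commuting with the tent map, there is a positive integer $t'$ with $\tilde\psi=\xi_{t'}$. The map $\xi_{t'}$ has slopes $\pm t'$ with all kinks on the lines $y=0$ and $y=1$ (as in Fig.~\ref{fig:19}), hence exactly $t'$ maximal intervals of monotonicity; and since conjugation by the increasing homeomorphism $h$ preserves lap number, $\tilde\psi$ has the same number of laps as $\psi$, namely $t$. Thus $t'=t$, which simultaneously identifies the index asserted in the ``moreover'' clause and gives $\tilde\psi=\xi_t$. Conjugating back yields $\psi=h\circ\tilde\psi\circ h^{-1}=h\circ\xi_t\circ h^{-1}$, as required.
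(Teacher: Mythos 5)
Your proof is correct and follows essentially the same route as the paper: conjugate $\psi$ back to a commutator $\widetilde{\psi} = h^{-1}\circ\psi\circ h$ of the tent map, invoke the classification~\eqref{eq:4.1} to conclude $\widetilde{\psi} = \xi_{t'}$ for some positive integer $t'$, and identify $t' = t$ by counting maximal intervals of monotonicity. Your write-up is in fact more complete than the paper's own proof, since you verify that $\widetilde{\psi}$ is surjective (via the $f$-invariance of its image interval) and that $h$ fixes the endpoints --- hypotheses needed to apply the cited classification, which the paper uses without comment.
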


\begin{proof}
It is easily seen from the commutative diagram at
Fig.~\ref{fig:20} that $$ \xi = h^{-1}\circ \psi\circ h$$ will be
a commutator of $f$. Thus, by~\cite[Theorem~1]{Odesa-2016} (or
also~\cite[Theorem~1]{Plakh-Arx-Odesa}), there is $p\geq 1$ such
that $\xi_p = h^{-1}\circ \psi\circ h$, whence
$$ \psi = h\circ \xi_p\circ h^{-1}.
$$ Since $p$ is the number of maximal
parts of monotonicity of $\xi$, then $p =t$ and we are done.

\begin{figure}[ht]
$$\xymatrix{ [0, 1] \ar^{f}[rr] \ar_{h}[d] && [0, 1]
\ar^{h}[d]\\
[0, 1] \ar^{g}[rr] \ar_{\psi}[d] && [0, 1]
\ar^{\psi}[d] \\
[0, 1] \ar^{g}[rr] \ar_{h^{-1}}[d] && [0, 1] \ar^{h^{-1}}[d]\\
[0, 1] \ar^{f}[rr] && [0, 1] }$$ %
\caption{Construction of a commutator of $f$ by given commutator
of $g$}\label{fig:20}
\end{figure}
\end{proof}

We are ready now to formulate an important computation, which we
will use later.

\begin{lemma}\label{lema:4.2}
Let $h$ be the conjugacy from the tent map $f$ to a unimodal
map~$g$. Suppose that $g$ commutes with a map $\psi$, and $\xi_t$,
is determined by Lemma~\ref{lema:4.1}. Then

\noindent (i) $$ \psi(h(x)) = h( \xi_t(x))
$$ for all $x\in [0, 1]$;

\noindent (ii) $$g(h(x)) = h(f(x))$$ for all $x\in [0, 1]$.
\end{lemma}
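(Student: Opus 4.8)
The plan is to recognize both identities as immediate algebraic rearrangements of relations already in hand, so that no new geometric argument is required.

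First I would dispatch part (ii). Since $h$ is by hypothesis a conjugacy from the tent map $f$ to $g$, the defining relation~\eqref{eq:1.2} reads $h\circ f = g\circ h$. Evaluating this equality of maps at an arbitrary point $x\in[0,1]$ gives $h(f(x)) = g(h(x))$, which is exactly the asserted identity $g(h(x)) = h(f(x))$.

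For part (i), I would invoke Lemma~\ref{lema:4.1}, whose conclusion is the functional identity $\psi = h\circ \xi_t\circ h^{-1}$. Composing both sides on the right with $h$ and cancelling $h^{-1}\circ h$ yields $\psi\circ h = h\circ \xi_t$. Evaluating at $x\in[0,1]$ then produces $\psi(h(x)) = h(\xi_t(x))$, as required.

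The one point deserving attention is the applicability of Lemma~\ref{lema:4.1}: its hypotheses presuppose that $\psi$ is non-constant with $t$ maximal intervals of monotonicity, which is precisely what lets $\xi_t$ be defined in the first place. Since in the present statement $\xi_t$ is taken to be exactly the map furnished by Lemma~\ref{lema:4.1}, this hypothesis is already built into the setup and needs no separate verification. Consequently there is no genuine obstacle here; both parts reduce to rewriting a single composition of maps pointwise, (ii) coming from the conjugacy relation for $h$ and (i) from the representation of $\psi$ supplied by the previous lemma.
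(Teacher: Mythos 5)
Your proposal is correct and matches the paper's treatment: the paper states Lemma~\ref{lema:4.2} without proof precisely because it is the immediate pointwise rewriting you give, part (ii) being the conjugacy relation $h\circ f = g\circ h$ from~\eqref{eq:1.2} and part (i) being the identity $\psi = h\circ \xi_t\circ h^{-1}$ of Lemma~\ref{lema:4.1} composed with $h$ on the right. Your added remark that the hypotheses of Lemma~\ref{lema:4.1} are built into the statement is a reasonable bit of diligence, but nothing further is needed.
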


Notice that Lemma~\ref{lema:4.2} means that $g$ and $\psi$ act on
the set $\{h(x),\, x\in [0, 1]\}$ ``similarly'' to $f$ and $\xi_t$
respectively.

\subsection{Examples}

The idea of the consideration of Examples~\ref{ex:3.1},
\ref{ex:3.3}, \ref{ex:3.11}, \ref{ex:3.13} and~\ref{ex:4.9} will
be as follows.

Denote by $h$ the conjugacy from the tent map to $g$. For any kink
of $g$ denote by $a$ such that $h(a)$ is the $x$-coordinate of
this kink. Thus, we denote by $h(a_1),\ldots, h(a_m)$ the kinks of
$g$. Using Lemma~\ref{lema:4.2}, we can determined all the kinks
of the diagram (which corresponds to the example, which we will
consider) as application of $h$ to some linear combination of
$a_1,\ldots, a_m$. After this, equality of points, which is
determined by sets $\mathcal{P}$ and $\mathcal{Q}$ will give us
some expressions on $a_1,\ldots, a_m$. The latter with lead to a
description of the graph of $g$.

Since lattice from all considered examples correspond to a map
$\psi$ with three maximal parts of monotonicity, it will be
convenient for us to have explicit formulas for $\xi_3$ and
$\xi_3^{-1}$. Thus,
$$ \xi_3(x) = \left\{
\begin{array}{ll} 3x& \text{if
}x\leq 1/3,\\
-3x+2&\text{if }1/3< x\leq 2/3,\\
3x-2&\text{if }2/3< x
\end{array}\right.
$$ and $$
\xi_3^{-1}(x) = \left\{ \frac{x}{3},\, \frac{2-x}{3},\,
\frac{2+x}{3}\right\}.
$$

\begin{example}
Describe the maps $g$, which are topologically conjugated to the
tent map, and are restricted by the lattice from
Figure~\ref{fig:08}.
\end{example}

It follows from linearity of $g$ on $\left(0,
h\left(\frac{1}{2}\right)\right)$ and on $\left(
h\left(\frac{1}{2}\right),1\right)$ that
\begin{equation}\label{eq:4.2}
\gdev_1 = \frac{1}{h\left(\frac{1}{2}\right)} \end{equation} and
\begin{equation}\label{eq:4.3}
-\gdev_2 = \frac{1}{1- h\left(\frac{1}{2}\right)}
\end{equation}

By~(ii) of Lemma~\ref{lema:4.2},
\begin{equation}\label{eq:4.4}
h\left(\frac{2}{3}\right) = \gdev_1\cdot
h\left(\frac{1}{3}\right).
\end{equation}

Since $\psi$ is linear on $\left(h\left(\frac{1}{3}\right),
h\left(\frac{2}{3}\right)\right)$, then
\begin{equation}\label{eq:4.5}
\frac{1}{h\left(\frac{2}{3}\right)-h\left(\frac{1}{3}\right)}
=\frac{1-h\left(\frac{1}{2}\right)}{h\left(\frac{1}{2}\right)
-h\left(\frac{1}{3}\right)}.
\end{equation}

Plug~\eqref{eq:4.2} and~\eqref{eq:4.4} into~\eqref{eq:4.5} and
obtain
$$
\frac{1}{(\gdev_1-1)\cdot h\left(\frac{1}{3}\right)} = \frac{1
-\frac{1}{\gdev_1}}{\frac{1}{\gdev_1} -h\left(\frac{1}{3}\right)},
$$ which can be simplified to
$$
h\left(\frac{1}{3}\right)= \frac{1}{\gdev_1^2 -\gdev_1 +1}.
$$
Now,~\eqref{eq:4.4} implies $$ h\left(\frac{2}{3}\right)=
\frac{\gdev_1}{\gdev_1^2 -\gdev_1 +1}.
$$
Since, by~(ii) of Lemma~\ref{lema:4.2}, the point
 $h\left(\frac{2}{3}\right)$ is a fixed point of $g$, then
$$ -\gdev_2
=\frac{h\left(\frac{2}{3}\right)}{1-h\left(\frac{2}{3}\right)}.
$$
Express $\gdev_2$ from~\eqref{eq:4.2} and~\eqref{eq:4.3} and plug
into the latter equality, whence
$$
\frac{1}{1-\frac{1}{\gdev_1}} =\frac{\gdev_1}{\gdev_1^2 -\gdev_1
+1}\cdot \frac{1}{1-\frac{\gdev_1}{\gdev_1^2 -\gdev_1 +1}}.
$$ This can be simplified to $$
\frac{\gdev_1}{1-\gdev_1} =\frac{\gdev_1}{(\gdev_1-1)^2},
$$ which gives $$
\gdev_1 = 2.
$$ The obtained equality means that $g$ is the tent map.

\begin{example}
Describe the maps $g$, which are topologically conjugated to the
tent map, and are restricted by the lattice from
Figure~\ref{fig:09}.
\end{example}

Let $a\in (0, 1)$ be such that the first kink of $g$ is $h(a)$.
Thus, the kinks of $g$ are $A_1(h(a), h(2a))$,
$A_2\left(h\left(\frac{1}{2}\right), 1\right)$ and $A_3 =(h(1-a),
h(2a))$.

Kinks of $\psi$ in the quadrant $x\times \psi$ are %
$B_2\left(h\left(\frac{1}{6}\right),
h\left(\frac{1}{2}\right)\right)$, %
$B_6\left(h\left(\frac{1}{2}\right),h\left(\frac{1}{2}\right)\right)$
and %
$B_{10}
=\left(h\left(\frac{5}{6}\right),h\left(\frac{1}{2}\right)\right)$.
Next, $D_3\left(h\left(\frac{1-a}{3}\right),
h\left(\frac{2-2a}{3}\right)\right)$,
$D_9\left(h\left(\frac{2+a}{3}\right),
h\left(2-\frac{4+2a}{3}\right)\right)
=\left(h\left(\frac{2+a}{3}\right),
h\left(\frac{2-2a}{3}\right)\right)$,
$C_1\left(h\left(\frac{2a}{3}\right), h\left(2a\right)\right)$,
$C_3\left(h\left(\frac{2-2a}{3}\right), h\left(2a\right)\right)$
and $C_5\left(h\left(\frac{2+2a}{3}\right), h(2a)\right)$.

Since $\overrightarrow{OA_1} = \overrightarrow{OD_3}$,
$\overrightarrow{OA_3} = \overrightarrow{OD_9}$,
$\overrightarrow{OB_2} = \overrightarrow{OC_1}$,
$\overrightarrow{OB_6} = \overrightarrow{OC_3}$ and
$\overrightarrow{OB_{10}} = \overrightarrow{OC_5}$, then
$$\overrightarrow{OA_1} = \overrightarrow{OD_3}
\Longrightarrow (h(a), h(2a)) = \left(h\left(\frac{1-a}{3}\right),
h\left(\frac{2-2a}{3}\right)\right),$$
$$\overrightarrow{OA_3} =
\overrightarrow{OD_9} \Longrightarrow (h(1-a), h(2a)) =
\left(h\left(\frac{2+a}{3}\right),
h\left(\frac{2-2a}{3}\right)\right),$$
$$\overrightarrow{OB_2} =
\overrightarrow{OC_1} \Longrightarrow
\left(h\left(\frac{1}{6}\right), h\left(\frac{1}{2}\right)\right)
= \left(h\left(\frac{2a}{3}\right), h\left(2a\right)\right),$$
$$\overrightarrow{OB_6} =
\overrightarrow{OC_3} \Longrightarrow
\left(h\left(\frac{1}{2}\right),h\left(\frac{1}{2}\right)\right) =
\left(h\left(\frac{2-2a}{3}\right), h\left(2a\right)\right)$$ and
$$\overrightarrow{OB_{10}} =
\overrightarrow{OC_5} \Longrightarrow
\left(h\left(\frac{5}{6}\right),h\left(\frac{1}{2}\right)\right) =
\left(h\left(\frac{2+2a}{3}\right), h(2a)\right).$$ As $h$
increase, then these equalities are equivalent to $a=
\frac{1}{4}$. Moreover, since the all equalities $A_i=D_j$, where
$(i,j)\in \mathcal{P}$ and $B_i=C_j$, where $(i,j)\in\mathcal{Q}$
are satisfied, then $a=1/4$ is equivalent to the commutativity of
$g$ and $\psi$.

Denote $b=h\left(\frac{1}{4}\right)$, whence
$h\left(\frac{1}{2}\right) = (g\circ h)\left(\frac{1}{4}\right) =
g(b)$, and $$ g:\, (0,0)\rightarrow
\left(h\left(\frac{1}{4}\right),
h\left(\frac{1}{2}\right)\right)\rightarrow
\left(h\left(\frac{1}{2}\right), 1\right)\rightarrow
\left(h\left(\frac{3}{4}\right),
h\left(\frac{1}{4}\right)\right)\rightarrow (1,0).
$$

By Lemma~\ref{lema:3.21}, we have $g'(0)=2$, whence $$
h\left(\frac{1}{2}\right) = 2\cdot h\left(\frac{1}{4}\right).
$$
Thus, denote $a = h\left(\frac{1}{2}\right)$, whence the
increasing part of $g$ is $$ g:\, (0,0)\rightarrow
\left(\frac{a}{2}, a\right)\rightarrow \left(a, 1\right).
$$ This increasing part defines the conjugacy $$
h:\, (0,0)\rightarrow
\left(\frac{1}{4},\frac{a}{2}\right)\rightarrow
\left(\frac{1}{2},a\right)\rightarrow (1,1),
$$ which is the same as $$
h:\, (0,0)\rightarrow \left(\frac{1}{2},a\right)\rightarrow
(1,1).$$ Thus, $h\left(\frac{3}{4}\right) =\frac{1+a}{2}$, whence
$$
g:\, (0,0)\rightarrow \left(\frac{a}{2}, a\right)\rightarrow
\left(a, 1\right)\rightarrow \left(\frac{1+a}{2}, \frac{a}{2}
\right)\rightarrow (1,0)
$$ is the general form of the map, which
is topologically conjugated to the tent map, and is restricted by
the lattice from Figure~\ref{fig:09}.

\begin{proposition}
Let $g$ be a piecewise linear map. It is restricted by the lattice
from Figure~\ref{fig:09} if and only if there exists $a$ such that
$$g:\, (0,0)\rightarrow \left(\frac{a}{2}, a\right)\rightarrow
\left(a, 1\right)\rightarrow \left(\frac{1+a}{2}, \frac{a}{2}
\right)\rightarrow (1,0).$$ Moreover, in this case the map
$$
h:\, (0,0)\rightarrow \left(\frac{1}{2},a\right)\rightarrow (1,1).
$$ provides the conjugation from the tent map to $g$.
\end{proposition}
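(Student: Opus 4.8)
The plan is to prove the two implications and the conjugacy statement by leaning on the fact, already established for the lattice of Figure~\ref{fig:09}, that any such $g$ is topologically conjugate to the tent map $f$ by a piecewise linear homeomorphism $h$ (see Section~\ref{sec:3.2} and Lemma~\ref{lema:3.10}). This replaces the slope computations of Lemma~\ref{lema:3.4} by the shorter incidence calculus of Lemma~\ref{lema:4.2}. Since the lattice corresponds to a $\psi$ with three maximal monotonicity intervals, Lemma~\ref{lema:4.1} lets me write $\psi = h\circ \xi_3\circ h^{-1}$, and Lemma~\ref{lema:4.2} gives $g\circ h = h\circ f$ and $\psi\circ h = h\circ \xi_3$ pointwise; I will use the displayed formulas for $\xi_3$ and $\xi_3^{-1}$ throughout.

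For the forward implication I will first introduce a parameter $a\in(0,1)$ by putting the first kink of $g$ at $x=h(a)$, and then use Lemma~\ref{lema:4.2} to write every marked point of the lattice as the $h$-image of an explicit affine expression in $a$: on the graph of $g$, $A_1=(h(a),h(2a))$, $A_2=(h(\tfrac12),1)$, $A_3=(h(1-a),h(2a))$, and on the graph of $\psi$ the points $B_2,B_6,B_{10}$ together with $C_1,C_3,C_5$ and $D_3,D_9$, computed via the three branches of $\xi_3^{-1}$. Next I will impose the incidence relations recorded by the sets $\mathcal{P}$ and $\mathcal{Q}$, namely the coincidences $A_1=D_3$, $A_3=D_9$, $B_2=C_1$, $B_6=C_3$, $B_{10}=C_5$. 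Because $h$ is strictly increasing, each vector equality reduces to an equality of its $\xi_3$-arguments, and all of them collapse to the single condition $a=\tfrac14$.

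Having fixed $a=\tfrac14$, I will normalise the conjugacy using $g'(0)=2$ (Lemma~\ref{lema:3.21}): the increasing branch $(0,0)\to(h(\tfrac14),h(\tfrac12))\to(h(\tfrac12),1)$ forces $h(\tfrac12)=2\,h(\tfrac14)$, so the point over $x=\tfrac14$ is not a genuine kink of $h$ and, after renaming $a:=h(\tfrac12)$, the conjugacy collapses to the two-segment map $h\colon(0,0)\to(\tfrac12,a)\to(1,1)$. Reading $h(\tfrac14)$ and $h(\tfrac34)$ off this $h$ and substituting them into the coordinates of $A_1,A_2,A_3$ then yields the explicit normal form of $g$ in the statement. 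The converse is immediate: for a map of this explicit form the equalities of Lemma~\ref{lema:3.8} hold, so $g$ is restricted by the lattice by Lemma~\ref{lema:3.10}. The ``moreover'' clause is then settled by Lemma~\ref{lema:3.25}: it suffices to check that $h$ has exactly one kink and that each point $(h(\alpha_i),(h\circ g)(\alpha_i))$ lies on the graph of $f$, a one-line verification.

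I expect the main difficulty to be organisational rather than conceptual: one must carefully manage the two uses of the letter $a$ --- first the ``first-kink-at-$h(a)$'' parameter that the incidence relations pin down to $\tfrac14$, and then the rescaled parameter $a=h(\tfrac12)$ of the final normal form --- and confirm that the several coincidences demanded by $\mathcal{P}$ and $\mathcal{Q}$ are mutually consistent and really collapse to a single free parameter. Equally, to apply Lemma~\ref{lema:3.25} one must verify that the candidate $h$ has no spurious interior kink, so that its normal form is exactly the displayed two-segment map; this is where I would spend the most care.
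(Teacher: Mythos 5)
Your proposal is correct and takes essentially the same route as the paper's own proof (the derivation in Section 4 immediately preceding the proposition): write every kink as an $h$-image using Lemmas~\ref{lema:4.1} and~\ref{lema:4.2}, collapse the $\mathcal{P}$- and $\mathcal{Q}$-incidences to $a=\tfrac{1}{4}$ by monotonicity of $h$, then normalise with $g'(0)=2$ to obtain the two-segment conjugacy and the normal form of $g$; your appeals to Lemma~\ref{lema:3.10} (existence of the piecewise linear conjugacy, and the converse implication) and to Lemma~\ref{lema:3.25} (the ``moreover'' clause) merely make explicit steps that the paper leaves implicit. One caveat: carried out correctly, the computation gives the third kink as $\left(\frac{1+a}{2},\, a\right)$, in agreement with Lemma~\ref{lema:3.9}, so the ordinate $\frac{a}{2}$ printed in the proposition (and in the paper's own derivation, which at that point miswrites $h(1/4)$ where $h(1/2)$ is meant) is a typo of the paper rather than something your argument should reproduce.
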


\begin{example}
Describe the maps $g$, which are topologically conjugated to the
tent map, and are restricted by the lattice from
Figure~\ref{fig:10}.
\end{example}

Denote $a\in (0, 1/2)$ such that $h(a)$ is the first kink of $g$.
Thus, by Lemma~\ref{lema:3.21}, this kink will be $(h(a), 2h(a))$,
whence $$ g:\, (0,0)\rightarrow (h(a), 2h(a))\rightarrow
\left(h\left(\frac{1}{2}\right), 1\right)\rightarrow (h(1-a),
2h(a))\rightarrow (1, 0).
$$

Thus, the radius vectors to the consequent kinks of $\psi$ are
$\overline{OB_2} =\left(h\left(\frac{1}{6}\right),
h\left(\frac{1}{2}\right)\right)$,
$\overrightarrow{OB_4}=\left(h\left(\frac{1}{3}\right), 1\right)$,
$\overrightarrow{OB_6} =\left(h\left(\frac{1}{2}\right),
h\left(\frac{1}{2}\right)\right)$, $\overrightarrow{OB_8}
=(h(2/3),0)$ and $\overrightarrow{OB_{10}} =(h(5/6), h(1/2))$.

By geometrical construction of Diagram~\ref{fig:10} obtain that
$OD_2 = (h(1/6), h(1/3))$, $OD_{10} = (h(5/6), h(1/3))$,
$\overrightarrow{OC_1} =(h(2a/3), 2h(a))$, $\overrightarrow{OC_3}
= (h(2(1-a)/3), 2h(a)$ and $\overrightarrow{OC_5} =
(h(\frac{2-2(2-a)}{3}), 2h(a)) =(h(\frac{2 +2a}{3}), 2h(a))$.

Since $OD_2 = OA_1$ and $OB_2 = OC_1$, then \begin{equation}
\label{eq:4.6} (h(1/6), h(1/3)) = (h(a), 2h(a)),\end{equation} and
\begin{equation} \label{eq:4.7}
\left(h\left(\frac{1}{6}\right), h\left(\frac{1}{2}\right)\right)
= \left(h\left(\frac{2a}{3}\right), 2h(a)\right).
\end{equation}
Equations~\eqref{eq:4.6} and~\eqref{eq:4.7} imply that $a = 1/6$
and, in the same time, $a = 1/4$, which gives a contradiction.

\begin{example}
Describe the maps $g$, which are topologically conjugated to the
tent map, and are restricted by the lattice from
Figure~\ref{fig:11}.
\end{example}

Denote $a,\, b\in (0, 1/2)$ such that $a<b$, and $h(a)$ and $h(b)$
are the first two kinks of $g$. Thus,
$$
g:\, (0,0)\rightarrow A_1(h(a), 2h(a)) \rightarrow A_2(h(b),
h(2b))\rightarrow A_3\left(h\left(\frac{1}{2}\right),
1\right)\rightarrow $$$$ \rightarrow A_4(h(1-b), h(2b))\rightarrow
A_5(h(1-a), 2h(a))\rightarrow (1, 0).
$$
The consequent radius vectors to the consequent kinks of $\psi$
are, according to the left top part of Figure~\ref{fig:11}, are
$\overrightarrow{OB_2} =\left(h\left(\frac{b}{3}\right),
h(b)\right)$, $\overrightarrow{OB_5}
=\left(h\left(\frac{1-a}{3}\right), h(1-a)\right)$,
$\overrightarrow{OB_7} =\left(h\left(\frac{2-(1-a)}{3}\right),
h(1-a)\right) =\left(h\left(\frac{1+a}{3}\right), h(1-a)\right)$,
$\overrightarrow{OB_{10}} =\left(h\left(\frac{2-b}{3}\right),
h(b)\right)$, $\overrightarrow{OB_{14}}
=\left(h\left(\frac{b+2}{3}\right), h(b)\right)$ and
$\overrightarrow{OB_{17}} =\left(h\left(\frac{(1-a)+2}{3}\right),
h(1-a)\right) =\left(h\left( \frac{3-a}{3}\right), h(1-a)\right)$.

Using the diagram from Figure~\ref{fig:11}, we can calculate the
coordinates of radius vectors $\overrightarrow{OD_4}
=\left(h\left(\frac{1-b}{3}\right),
h\left(\frac{2-2b}{3}\right)\right)$, $\overrightarrow{OD_7}
=\left(h\left(\frac{2-(1-a)}{3}\right),
h\left(\frac{2(2-(1-a))}{3}\right)\right)
=\left(h\left(\frac{a+1}{3}\right),
h\left(\frac{2a+2}{3}\right)\right)$, $\overrightarrow{OD_{11}}
=\left( h\left(\frac{2-a}{3}\right),
h\left(\frac{2a+2}{3}\right)\right)$; $\overrightarrow{OD_{14}}
=\left(h\left(\frac{b+2}{3}\right),
h\left(2-2\cdot\frac{b+2}{3}\right)\right)$,
$\overrightarrow{OC_1} =\left( h\left(\frac{2a}{3}\right),
h(2a)\right)$, $\overrightarrow{OC_2} =\left(
h\left(\frac{2b}{3}\right), h(2b)\right)$, $\overrightarrow{OC_4}
=\left( h\left(\frac{2-2b}{3}\right), h(2b)\right)$,
$\overrightarrow{OC_5} =\left( h\left(\frac{2-2a}{3}\right),
h(2a)\right)$, $\overrightarrow{OC_7} =\left(
h\left(\frac{2+2a}{3}\right), h(2a)\right)$ and
$\overrightarrow{OC_8} =\left( h\left(\frac{2+2b}{3}\right),
h(2b)\right)$.

Comparing the kinks of two graphs of $g$ and $\psi$ at the
Figure~\ref{fig:11}, conclude that $\overrightarrow{OB_2}
=\overrightarrow{OC_1}$, %
$\overrightarrow{OB_5} =\overrightarrow{OC_2}$,
$\overrightarrow{OB_7} =\overrightarrow{OC_4}$,
$\overrightarrow{OB_{10}} =\overrightarrow{OC_5}$,
$\overrightarrow{OB_{14}} =\overrightarrow{OC_7}$,
$\overrightarrow{OB_{17}} =\overrightarrow{OC_8}$,
$\overrightarrow{OA_1} =\overrightarrow{OD_4}$,
$\overrightarrow{OA_2} =\overrightarrow{OD_7}$,
$\overrightarrow{OA_4} =\overrightarrow{OD_{11}}$ and
$\overrightarrow{OA_5} =\overrightarrow{OD_{14}}$. Thus,
\begin{equation}\label{eq:4.8}
\left( h\left(\frac{2b}{3}\right), h(2b)\right)=
\left(h\left(\frac{1-a}{3}\right), h(1-a)\right), \end{equation}$$
\left(h\left(\frac{1-a}{3}\right), h(1-a)\right) = \left(
h\left(\frac{2b}{3}\right), h(2b)\right),
$$$$
\left(h\left(\frac{1+a}{3}\right), h(1-a)\right) = \left(
h\left(\frac{2-2b}{3}\right), h(2b)\right),
$$$$
\left(h\left(\frac{2-b}{3}\right), h(b)\right) = \left(
h\left(\frac{2-2a}{3}\right), h(2a)\right),$$
$$
\left(h\left(\frac{b+2}{3}\right), h(b)\right) = \left(
h\left(\frac{2+2a}{3}\right), h(2a)\right),
$$$$
\left(h\left( \frac{3-a}{3}\right), h(1-a)\right) = \left(
h\left(\frac{2+2b}{3}\right), h(2b)\right),
$$$$
(h(a), 2h(a)) = \left(h\left(\frac{1-b}{3}\right),
h\left(\frac{2-2b}{3}\right)\right),
$$$$
(h(b), h(2b)) = \left(h\left(\frac{a+1}{3}\right),
h\left(\frac{2a+2}{3}\right)\right),
$$$$
(h(1-b), h(2b)) = \left( h\left(\frac{2-a}{3}\right),
h\left(\frac{2a+2}{3}\right)\right),
$$\begin{equation}\label{eq:4.9}
(h(1-a), 2h(a)) = \left(h\left(\frac{b+2}{3}\right),
h\left(2-2\cdot\frac{b+2}{3}\right)\right).
\end{equation}
Since $h$ increase, then equations from~\eqref{eq:4.8}
to~\eqref{eq:4.9} imply $$
\left\{\begin{array}{l} a = 1/5,\\
b = 2/5.
\end{array}\right.
$$
Rewrite the expression for $g$ as $$ g:\, (0,0)\rightarrow
A_1\left(h\left(\frac{1}{5}\right),
h\left(\frac{2}{5}\right)\right) \rightarrow
A_2\left(h\left(\frac{2}{5}\right),
h\left(\frac{4}{5}\right)\right)\rightarrow
A_3\left(h\left(\frac{1}{2}\right), 1\right)\rightarrow $$$$
\rightarrow A_4\left(h\left(\frac{3}{5}\right),
h\left(\frac{4}{5}\right)\right)\rightarrow
A_5\left(h\left(\frac{4}{5}\right),
h\left(\frac{2}{5}\right)\right)\rightarrow (1, 0).
$$

Denote $\widetilde{a} =h\left(\frac{2}{5}\right)$, $\widetilde{b}
=h\left(\frac{1}{2}\right)$, $\widetilde{c} =
h\left(\frac{3}{5}\right)$ and $\widetilde{d}
=h\left(\frac{4}{5}\right)$. Then $$ g:\, (0,0)\rightarrow
A_1\left(\frac{\widetilde{a}}{2}, \widetilde{a}\right) \rightarrow
A_2\left(\widetilde{a}, \widetilde{d}\right)\rightarrow
A_3\left(\widetilde{b}, 1\right)\rightarrow $$$$ \rightarrow
A_4\left(\widetilde{c}, \widetilde{d}\right)\rightarrow
A_5\left(\widetilde{d}, \widetilde{a}\right)\rightarrow (1, 0).
$$

By Lemma~\ref{lema:3.24} the conjugacy from $f$ to $g$ is $$ h:\,
(0,0)\rightarrow \left(\frac{2}{5},
\widetilde{a}\right)\rightarrow \left(\frac{1}{2},
\widetilde{b}\right)\rightarrow \left(\frac{4}{5},
\widetilde{d}\right)\rightarrow (1, 1).
$$ Since $g$ is linear on $(\widetilde{c}, \widetilde{d})$, then
$h$ is linear on $(h^{-1}(\widetilde{a}), h^{-1}(\widetilde{d}))$,
whence $$ h:\, (0,0)\rightarrow \left(\frac{2}{5},
\widetilde{a}\right)\rightarrow \left(\frac{4}{5},
\widetilde{d}\right)\rightarrow (1, 1).
$$
Now $$ h\left(\frac{1}{2}\right) =\widetilde{a}
+\frac{\left(\frac{1}{2}-\frac{2}{5}\right)\cdot
(\widetilde{d}-\widetilde{a})}{\frac{4}{5}-\frac{2}{5}}
=\frac{3\widetilde{a}+\widetilde{d}}{4}
$$ and $$
h\left(\frac{3}{5}\right) =\widetilde{a}
+\frac{\left(\frac{3}{5}-\frac{2}{5}\right)\cdot
(\widetilde{d}-\widetilde{a})}{\frac{4}{5}-\frac{2}{5}}
=\frac{\widetilde{a}+\widetilde{d}}{2}.
$$ Finally, $$
g:\, (0,0)\rightarrow A_1\left(\frac{\widetilde{a}}{2},
\widetilde{a}\right) \rightarrow A_2\left(\widetilde{a},
\widetilde{d}\right)\rightarrow
A_3\left(\frac{3\widetilde{a}+\widetilde{d}}{4},
1\right)\rightarrow $$$$ \rightarrow
A_4\left(\frac{\widetilde{a}+\widetilde{d}}{2},
\widetilde{d}\right)\rightarrow A_5\left(\widetilde{d},
\widetilde{a}\right)\rightarrow (1, 0).
$$ is the general form of the map, which
is topologically conjugated to the tent map, and is restricted by
the lattice from Figure~\ref{fig:11}.

\begin{proposition}
Let $g$ be a piecewise linear map. It is restricted by the lattice
from Figure~\ref{fig:11} if and only if there exist $a$ and $b$
such that
$$
g:\, (0,0)\rightarrow A_1\left(\frac{a}{2}, a\right) \rightarrow
A_2\left(a, b\right)\rightarrow A_3\left(\frac{3a+b}{4},
1\right)\rightarrow $$$$ \rightarrow A_4\left(\frac{a+b}{2},
b\right)\rightarrow A_5\left(b, a\right)\rightarrow (1, 0).
$$ Moreover, in this case the map
$$ h:\, (0,0)\rightarrow \left(\frac{2}{5},
a\right)\rightarrow \left(\frac{4}{5}, b\right)\rightarrow (1, 1)
$$ provides the conjugation from the tent map to $g$.
\end{proposition}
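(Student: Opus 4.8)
The plan is to prove the two implications of the equivalence separately, reading the ``moreover'' clause off the forward direction, and to work throughout under the standing assumption of this section that $g$ is topologically conjugated to the tent map $f$ by a piecewise linear homeomorphism $h$, so that Lemmas~\ref{lema:4.1} and~\ref{lema:4.2} apply. For the forward implication I would argue exactly as in the computation preceding the statement. Writing the first two kinks of $g$ as $h(a)$ and $h(b)$ with $0<a<b<1/2$, Lemma~\ref{lema:3.21} gives $g'(0)=2$, so the first kink is $(h(a),2h(a))$; part (ii) of Lemma~\ref{lema:4.2} then expresses every kink of $g$ as the image under $h$ of an affine combination of $a$ and $b$ with rational coefficients dictated by $f$, while part (i), together with the explicit $\xi_3$ and $\xi_3^{-1}$, does the same for every kink of $\psi=h\circ\xi_3\circ h^{-1}$. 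The incidence pattern forced by Figure~\ref{fig:11}, namely $\overrightarrow{OB_i}=\overrightarrow{OC_j}$ and $\overrightarrow{OA_i}=\overrightarrow{OD_j}$ for the pairs of $\mathcal{P}$ and $\mathcal{Q}$, becomes, since $h$ is strictly increasing, a system of equalities of arguments whose unique solution is $a=1/5$, $b=2/5$. Relabelling the two genuinely free real parameters as $a:=h(2/5)$ and $b:=h(4/5)$ and invoking Lemma~\ref{lema:3.24} to delete the breakpoint of $h$ at $1/2$ (which is not an actual kink) then yields simultaneously the stated normal form of $g$ and the stated two-kink form of $h$.

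For the reverse implication I would take $g$ in the stated normal form with arbitrary $a,b$ satisfying $0<a<b<1$; these are exactly the inequalities that make the listed $x$-coordinates increase up to $A_3$ and then decrease, so that $g$ is unimodal surjective with $g(0)=g(1)=0$. First I verify by Lemma~\ref{lema:3.25} that the displayed $h$ really is a conjugacy between $f$ and $g$: the kinks $2/5,4/5$ of $h$ form the unique period-$2$ orbit of $f$ and are carried to $A_2,A_5$, so it suffices to check that $(h(\alpha_i),(h\circ g)(\alpha_i))$ lands on the graph of $f$ at each kink, which is a short direct computation. Setting $\psi:=h\circ\xi_3\circ h^{-1}$, Lemma~\ref{lema:4.1} guarantees that $\psi$ commutes with $g$ and has three maximal intervals of monotonicity, so $(g,\psi)$ is a commuting pair governed by Lemma~\ref{lema:2.4}; it remains only to confirm that the determining lattice of $(g,\psi)$ carries the incidence sets $\mathcal{P},\mathcal{Q}$ of Figure~\ref{fig:11}.

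The step I expect to be the main obstacle is this last combinatorial verification. Because $(g,\psi)$ is the $h$-image of the fixed pair $(f,\xi_3)$ and $h$ is an increasing homeomorphism, the entire system of single trajectories, and hence the incidence sets $\mathcal{P}$ and $\mathcal{Q}$, is the $h$-transport of the corresponding system for $(f,\xi_3)$ and is therefore independent of the particular values of $a$ and $b$. The real work is to confirm that this fixed combinatorial type is exactly the one drawn in Figure~\ref{fig:11}: that the orderings of the lines $x=x_0$, $\psi=\psi_0$, $g=g_0$, $y=y_0$ and their intersections with the two graphs reproduce the figure. Once that identification is made, the equalities demanded by Lemma~\ref{lema:2.4} are precisely the argument-equalities already solved in the forward direction, and both implications close.
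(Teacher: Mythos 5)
Your forward implication is exactly the paper's own argument: the paper's ``proof'' of this proposition is the computation immediately preceding it, which, under the standing assumption that $g$ is conjugated to the tent map, writes the first kinks of $g$ as $h(a)$, $h(b)$, transports all other kinks through Lemma~\ref{lema:4.2} and the explicit $\xi_3$, reads the incidences of Figure~\ref{fig:11} as equalities of arguments of the increasing map $h$, solves to get $a=1/5$, $b=2/5$, and then relabels $\widetilde{a}=h(2/5)$, $\widetilde{d}=h(4/5)$ and deletes the spurious breakpoint of $h$ at $1/2$, just as you do. Where you go beyond the paper is the converse: the paper asserts the biconditional but never verifies that every map of the stated normal form is restricted by the lattice of Figure~\ref{fig:11}. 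Your reduction --- since the kinks of $h$ sit at the fixed period-two orbit $\{2/5,4/5\}$ of $f$, the whole system of single trajectories of $(g,\psi)$ is the $h$-transport of that of $(f,\xi_3)$, so the incidence sets $\mathcal{P}$, $\mathcal{Q}$ are independent of $a$ and $b$ and one test instance (e.g.\ $a=2/5$, $b=4/5$, $h=\mathrm{id}$, $g=f$, $\psi=\xi_3$) settles them all --- is sound, and it is legitimate because ``restricted by the lattice'' does not require the marked points to be genuine kinks, so the tent map itself is an admissible instance. You do leave that final finite coincidence check as declared-but-not-executed work, but since it is a bounded explicit computation for the single fixed pair $(f,\xi_3)$, this is an acceptable sketch, and in any case it is more than the paper itself supplies for this direction.
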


\begin{example}
Describe the maps $g$, which are topologically conjugated to the
tent map, and are restricted by the lattice from
Figure~\ref{fig:18}.
\end{example}

Denote $a$ such that $h(a)$ is the first kink of $g$, and denote
$b$ such that $h(b)$ is the last kink of $g$. Thus, the consequent
kinks of $g$ are $\overrightarrow{OA_4} =(h(a), h(2a))$,
$\overrightarrow{OA_6} =(h(1-a), h(2a))$ and
$\overrightarrow{OA_7} =(h(b), h(2-2b))$. The radius vectors of
kinks of $\psi$, which we need for our further computations are
$\overrightarrow{OB_7} =\left(h\left(\frac{b}{3}\right),
h(b)\right)$, $\overrightarrow{OB_{13}}
=\left(h\left(\frac{2-b}{3}\right), h(b)\right)$ and
$\overrightarrow{OB_{28}} =\left(h\left(\frac{2+b}{3}\right),
h(b)\right)$. We can not determine $\overrightarrow{OB_{22}}$ from
the top part of Figure~\ref{fig:18} only. Thus, denote $c\in (0,
1)$ such that $\overrightarrow{OB_{22}} = (h(c), h(3c-2))$.

Notice that $g$ has kinks at $D_{11}$, $D_{19}$ and $D_{22}$.
Since $B_{22}D_{22}$ is a horizontal line, then
$\overrightarrow{OD_{22}} = (h(c), h(2-2c))$. Since we can not
determine neither $D_{11}$ not $D_{19}$ more, denote $d\in (0, 1)$
such that $\overrightarrow{OD_{11}} = (h(d), h(2d))$. Now
$\overrightarrow{OD_{19}} = (h(1-d), h(2d))$.

Now calculate $\overrightarrow{OC_4} =
\left(h\left(\frac{2a}{3}\right), h(2a)\right)$,
$\overrightarrow{OC_6} = \left(h\left(\frac{2-2a}{3}\right),
h(2a)\right)$, $\overrightarrow{OC_{11}} = (h(2d), h(6d-2))$ and
$\overrightarrow{OC_{14}} = \left(h\left(\frac{2+2a}{3}\right),
h(2a)\right)$.

Since $\overrightarrow{OD_{11}} = \overrightarrow{OA_4}$,
$\overrightarrow{OD_{19}} = \overrightarrow{OA_6}$,
$\overrightarrow{OD_{22}} = \overrightarrow{OA_7}$,
$\overrightarrow{OB_7} = \overrightarrow{OC_4}$,
$\overrightarrow{OB_{13}} = \overrightarrow{OC_6}$,
$\overrightarrow{OB_{22}} = \overrightarrow{OC_{11}}$ and
$\overrightarrow{OB_{28}} = \overrightarrow{OC_{14}}$. Thus,
$$\overrightarrow{OD_{11}} =
\overrightarrow{OA_4}\Longrightarrow (h(d), h(2d))= (h(a),
h(2a)),$$ $$\overrightarrow{OD_{19}} = \overrightarrow{OA_6}
\Longrightarrow (h(1-d), h(2d)) = (h(1-a), h(2a)),$$
$$\overrightarrow{OD_{22}} = \overrightarrow{OA_7}
\Longrightarrow (h(c), h(2-2c)) = (h(b), h(2-2b)),$$
$$\overrightarrow{OB_7} =
\overrightarrow{OC_4} \Longrightarrow
\left(h\left(\frac{b}{3}\right), h(b)\right) =
\left(h\left(\frac{2a}{3}\right), h(2a)\right),$$
$$\overrightarrow{OB_{13}} =
\overrightarrow{OC_6} \Longrightarrow
\left(h\left(\frac{2-b}{3}\right), h(b)\right) =
\left(h\left(\frac{2-2a}{3}\right), h(2a)\right),$$
$$\overrightarrow{OB_{22}} =
\overrightarrow{OC_{11}} \Longrightarrow (h(c), h(3c-2)) =(h(2d),
h(6d-2))$$ and
$$\overrightarrow{OB_{28}} =
\overrightarrow{OC_{14}} \Longrightarrow
\left(h\left(\frac{2+b}{3}\right), h(b)\right) =
\left(h\left(\frac{2+2a}{3}\right), h(2a)\right).$$

Since $h$ increase then $$\left\{ \begin{array}{l}a =d,\\
b =2d,\\
c = 2d.
\end{array}\right.$$
Thus, $$ g:\, (0, 0)\rightarrow A_4(h(a), h(2a))\rightarrow
A_5\left(h\left(\frac{1}{2}\right), 1\right)\rightarrow
$$$$\rightarrow A_6(h(1-a), h(2a)) \rightarrow A_7(h(2a),
h(2-4a))\rightarrow (1,0).
$$

By Remark~\ref{rem:3.27}, the first kink of $h$ is $(2a, h(2a))$.
Since $2a\geq \frac{1}{2}$ then, by Lemma~\ref{lema:3.24},
$$ h:\,
(0,0)\rightarrow 
\left(2a,
h(2a)\right)\rightarrow (1,1).
$$
Denote $\widetilde{a} = h(a)$. Thus, $$ h\left(\frac{1}{2}\right)
=\frac{1}{2}\cdot \frac{\widetilde{a}}{a}
$$

Since $1-a<2a$, then $$ h\left(1-a\right)=(1-a)\cdot
\frac{\widetilde{a}}{a}
$$

Since $2-4a<2a$, then $$ h\left(2-4a\right)=(2-4a)\cdot
\frac{\widetilde{a}}{a}.
$$

Thus, $$ g:\, (0, 0)\rightarrow A_4(\widetilde{a},
2\widetilde{a})\rightarrow A_5\left(\frac{1}{2}\cdot
\frac{\widetilde{a}}{a}, 1\right)\rightarrow
$$$$\rightarrow A_6\left((1-a)\cdot
\frac{\widetilde{a}}{a}, 2\widetilde{a}\right) \rightarrow
A_7\left(2\widetilde{a}, (2-4a)\cdot
\frac{\widetilde{a}}{a}\right)\rightarrow (1,0).
$$
Denote $\widetilde{b} =\frac{1}{2}\cdot \frac{\widetilde{a}}{a}$,
whence $$ g:\, (0, 0)\rightarrow A_4(\widetilde{a},
2\widetilde{a})\rightarrow A_5\left(\widetilde{b},
1\right)\rightarrow
$$$$
\rightarrow A_6\left(2\widetilde{b} -\widetilde{a},
2\widetilde{a}\right) \rightarrow A_7\left(2\widetilde{a}, 4\cdot
(\widetilde{b}-\widetilde{a})\right)\rightarrow (1,0).
$$

\begin{proposition}
Let $g$ be a piecewise linear map. It is restricted by the lattice
from Figure~\ref{fig:18} if and only if there exist $a$ and $b$
such that
$$ g:\, (0, 0)\rightarrow A_4(a,
2a)\rightarrow A_5\left(b, 1\right)\rightarrow
$$$$
\rightarrow A_6\left(2b -a, 2a\right) \rightarrow A_7\left(2a,
4\cdot (b-a)\right)\rightarrow (1,0).
$$ Moreover, in this case the map
$$ h:\,
(0,0)\rightarrow \left(\frac{a}{b}, 2a\right)\rightarrow (1,1).
$$ provides the conjugation from the tent map to $g$.
\end{proposition}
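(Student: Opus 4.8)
The plan is to prove the equivalence in both directions and then to read off the ``moreover'' clause from the conjugacy already established for the normal form of Lemma~\ref{lema:3.37}. Throughout I work with the conjugacy $h$ from the tent map $f$ to $g$ and use the two transport identities $g\circ h = h\circ f$ and $\psi\circ h = h\circ\xi_3$ of Lemma~\ref{lema:4.2}, so that every kink in all four quadrants is expressible as a value of $h$ at an explicit combination of the parameters.

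For the forward implication, assume $g$ is restricted by the lattice of Figure~\ref{fig:18}. I let $a$ be such that $h(a)$ is the first kink and $b$ such that $h(b)$ is the last kink of $g$, and I write out the radius vectors $\overrightarrow{OA_i},\overrightarrow{OB_i},\overrightarrow{OC_i},\overrightarrow{OD_i}$ via Lemma~\ref{lema:4.2}, introducing auxiliary parameters $c$ for $\overrightarrow{OB_{22}}$ and $d$ for $\overrightarrow{OD_{11}}$ that the top part of the figure does not yet pin down. The incidences read off from Figure~\ref{fig:18}, namely $\overrightarrow{OD_{11}}=\overrightarrow{OA_4}$, $\overrightarrow{OD_{19}}=\overrightarrow{OA_6}$, $\overrightarrow{OD_{22}}=\overrightarrow{OA_7}$, $\overrightarrow{OB_7}=\overrightarrow{OC_4}$, $\overrightarrow{OB_{13}}=\overrightarrow{OC_6}$, $\overrightarrow{OB_{22}}=\overrightarrow{OC_{11}}$ and $\overrightarrow{OB_{28}}=\overrightarrow{OC_{14}}$, together with strict monotonicity of $h$, collapse to $a=d$, $b=2d$, $c=2d$. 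Remark~\ref{rem:3.27} then locates the unique kink of $h$ at $(2a,h(2a))$, which is admissible since $2a\geq\frac{1}{2}$, and Lemma~\ref{lema:3.24} fixes $h$ to $(0,0)\to(2a,h(2a))\to(1,1)$. Setting $\widetilde a=h(a)$ and $\widetilde b=\frac{1}{2}\cdot\frac{\widetilde a}{a}$ and relabeling $\widetilde a\mapsto a$, $\widetilde b\mapsto b$ puts $g$ into the displayed normal form; this is precisely the computation carried out just above, so the forward direction is pure assembly.

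For the converse, I start from a $g$ of the stated form with arbitrary admissible $a,b$ and construct its commutator. First, the map $h:(0,0)\to(a/b,2a)\to(1,1)$ is a conjugacy from $f$ to $g$: this was established for the normal form of Lemma~\ref{lema:3.37} by checking $h(P_1),\dots,h(P_4)\in\Gamma_f$ and invoking Lemma~\ref{lema:3.25}, and after the cosmetic renaming it is exactly the ``moreover'' clause (one checks $\widetilde a/\widetilde b=2a$, so the single kink sits where claimed). Setting $\psi:=h\circ\xi_3\circ h^{-1}$, Lemma~\ref{lema:4.1} guarantees that $\psi$ is a genuine non-trivial commutator of $g$ with exactly $n=3$ maximal intervals of monotonicity, so $g$ and $\psi$ commute automatically. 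It then remains to verify that the determinating lattice of the pair $(g,\psi)$ is precisely Figure~\ref{fig:18}; I do this by recomputing all the kinks $A_i,B_i,C_i,D_i$ as values of $h$ via Lemma~\ref{lema:4.2} and confirming that the coincidences $A_i=D_j$ and $B_i=C_j$ occur exactly at the pairs encoded by the figure, i.e. that the incidence sets $(\mathcal P,\mathcal Q)$ of Definition~\ref{def:2.11} agree with those of the lattice.

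The step I expect to be the genuine obstacle is this last verification in the converse: one must confirm that the incidence pattern reproduces Figure~\ref{fig:18} for \emph{every} admissible $(a,b)$, not merely for the single conjugacy that generated the figure. The concrete worry is that for some parameter values a kink could degenerate or two kinks could collide, altering $(\mathcal P,\mathcal Q)$; the explicit $h$-coordinate description of all of $A_i,B_i,C_i,D_i$ is exactly what rules this out and shows the incidence pattern is stable across the whole two-parameter family. Everything else reduces to monotonicity of $h$ and the transport identities of Lemma~\ref{lema:4.2}, which are already in place.
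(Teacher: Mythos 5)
Your forward implication and the ``moreover'' clause track the paper's own derivation step for step: the transport identities of Lemma~\ref{lema:4.2}, the auxiliary parameters $c$ and $d$ collapsing to $a=d$, $b=2d$, $c=2d$, then Remark~\ref{rem:3.27} and Lemma~\ref{lema:3.24} to pin down $h$, and the conjugacy statement imported from the unnumbered lemma following Lemma~\ref{lema:3.38} (which rests on Lemma~\ref{lema:3.25}). That half is sound, modulo the standing hypothesis of Section~4 that a conjugacy $h$ from $f$ to $g$ exists at all, which both you and the paper assume in this computation.

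The genuine gap is in the converse, exactly at the step you single out and then declare resolved. Expressing every kink as $h$ of an explicit expression in the parameters proves that the coincidences \emph{required} by Figure~\ref{fig:18} hold identically in $(a,b)$; it cannot prove that no \emph{additional} coincidences occur, and additional coincidences do occur inside the admissible region. Set $\alpha = \frac{a}{2b}$, so that admissibility ($a<b<\frac{3a}{2}$) means $\alpha\in\left(\frac13,\frac12\right)$. In tent coordinates the first kink of $g$ lies at $\alpha$, while $\psi = h\circ\xi_3\circ h^{-1}$ has a kink at $\frac{2-2\alpha}{3}$ (the middle $\xi_3$-preimage of the kink of $h$). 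These coincide precisely when $\alpha=\frac25$, i.e.\ on the curve $4b=5a$ --- for instance $a=\frac{12}{25}$, $b=\frac35$, which is admissible. There the two vertical lattice lines $x=h(\alpha)$ and $x=h\left(\frac{2-2\alpha}{3}\right)$ merge, so the cardinality of $X$ and the consecutive numbering of the points $B_i$, $D_j$ change, and the pair $(g,\psi)$ is concordant with a degenerate lattice rather than with the $(\mathcal{P},\mathcal{Q})$ encoded by Figure~\ref{fig:18}. Hence the claim that ``the incidence pattern is stable across the whole two-parameter family'' is false as written; a correct converse must exclude, or analyse separately, the finitely many resonance curves of this type. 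For comparison: the paper never proves the converse at all --- the Proposition is stated immediately after the forward computation --- so you attempted strictly more than the paper does, but the argument offered for that extra half does not close it.
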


\section{The main hypothesis}

We can generalize the descriptions of the maps $g$, restricted by
lattices from Figures~\ref{fig:09}, \ref{fig:11} and~\ref{fig:18}
as follows.

\begin{theorem}\label{th:3}
Suppose that $g$ is the general solution of some lattice, which
describes commutativity of a piecewise linear unimodal map with
piecewise linear surjective map $\psi$. Let the number of maximal
parts of monotonicity of of $\psi$ be not a power of $2$, i.e. let
$\psi$ be not an iteration of $g$. Then:

1. The increasing part of $g$ can be arbitrary such that the
derivative of $g$ at $0$ equals to $2$;

2. The decreasing part of $g$ is completely determined by the
increasing part of $g$.
\end{theorem}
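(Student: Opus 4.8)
The plan is to reduce the whole statement to the rigidity of maps conjugated to the tent map, which is captured by Theorems~\ref{th:1} and~\ref{th:2}, and to exploit the conjugacy picture developed in Lemmas~\ref{lema:4.1} and~\ref{lema:4.2}. First I would observe that the hypothesis on $\psi$ forces $\psi$ to be a non-trivial commutator: since every iteration $g^k$ of a unimodal map has exactly $2^k$ maximal intervals of monotonicity, and the number $n$ of such intervals of $\psi$ is \emph{not} a power of $2$, the map $\psi$ cannot be an iteration of $g$. Consequently the fact recalled in the Introduction applies, and $g$ is topologically conjugated with the tent map $f$ by a piecewise linear homeomorphism $h$, say $g=h\circ f\circ h^{-1}$. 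Lemma~\ref{lema:3.21} then yields $g'(0)=2$, which is the asserted necessary condition in part~1.

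Next I would prove part~2. Write $v=h(1/2)$ for the turning point of $g$ and let $g_l=g|_{[0,v]}$ be its increasing part. Because $g=h\circ f\circ h^{-1}$ and the increasing branch of $f$ is $x\mapsto 2x$, a point $y\in(0,v]$ with $g_l(y)=y$ would force $h^{-1}(y)=0$; hence $g_l$ has no positive fixed point, while $g_l(v)=1$ and $g_l'(0)=2$. Thus $g_l$ satisfies exactly the hypotheses of Theorem~\ref{th:1}, which produces a \emph{unique} unimodal map conjugated with the tent map that extends $g_l$. Since $g$ is one such map, $g$, and in particular its decreasing part $g_r=g|_{[v,1]}$, is completely determined by $g_l$. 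This is part~2. (Symmetrically, Theorem~\ref{th:2} shows that $g_l$ is recovered from $g_r$, so the two halves carry the same information.)

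For part~1 I would argue the converse direction, realizing an arbitrary admissible increasing part. Let $g_l\colon[0,v]\to[0,1]$ be any increasing piecewise linear surjection with $g_l'(0)=2$ and no positive fixed point. By Theorem~\ref{th:1} it extends to a unique $g$ conjugated with $f$ by some $h$, and I would set $\psi:=h\circ\xi_n\circ h^{-1}$. As $f$ and $\xi_n$ commute, so do $g$ and $\psi$, and $\psi$ again has $n$ (not a power of $2$) monotonicity intervals, hence is a non-trivial commutator. It remains to check that $(g,\psi)$ is governed by the \emph{same} determinating lattice as the original pair. Here I would invoke Lemma~\ref{lema:4.2}: the maps $g$ and $\psi$ act on $[0,1]=h([0,1])$ exactly as $f$ and $\xi_n$ act on $[0,1]$, so every coincidence of kinks recorded by the sets $\mathcal{P}$ and $\mathcal{Q}$ is the $h$-image of a coincidence between orbit points of the fixed pair $(f,\xi_n)$. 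Because these $f$-coordinates are preserved by the increasing conjugacy $h$ (Lemma~\ref{lema:3.23}), and the kink structure is read off them (Lemmas~\ref{lema:3.24} and~\ref{lema:3.25}), the combinatorial lattice depends only on $n$ and not on the chosen $h$. Hence every such $g_l$ occurs as the increasing part of a solution, which is the content of part~1.

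The main obstacle is precisely this last verification: showing that varying the increasing part (equivalently, varying the conjugacy $h$) does not alter the combinatorial type of the lattice, so that the whole parametrized family solves one fixed lattice rather than a moving target. Making this rigorous amounts to proving that the incidence pattern of the kinks, namely which rectangles of the lattice contain which diagonals in the language of Lemma~\ref{lema:2.2}, is a conjugacy invariant determined by $(f,\xi_n)$ alone. I expect to handle this by transporting the entire configuration of single trajectories through $h$ via Lemma~\ref{lema:4.2}, reducing each required equality $A_i=D_j$ or $B_i=C_j$ to the corresponding fixed equality for $f$ and $\xi_n$; the computations in Examples~\ref{ex:3.3}, \ref{ex:3.13} and~\ref{ex:4.9}, where the free parameters enter only through $h$ and never obstruct the coincidences, are the model for this step.
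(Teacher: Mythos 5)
The paper itself contains no proof of Theorem~\ref{th:3}: it only remarks that the theorem follows from \cite[Th.~1]{1808.03622.2} and expresses the belief (the section is even titled ``The main hypothesis'') that an elegant argument via the techniques of the paper should exist. So your attempt is necessarily a different route, and its first half is sound. Deducing that $\psi$ is a non-trivial commutator (an iterate $g^k$ of a surjective unimodal map has exactly $2^k$ maximal intervals of monotonicity, and $n$ is not a power of $2$), invoking the fact recalled in the abstract to get $g=h\circ f\circ h^{-1}$ with $h$ piecewise linear, using Lemma~\ref{lema:3.21} for $g'(0)=2$, and then obtaining part~2 from the uniqueness clause of Theorem~\ref{th:1} (your verification that $g_l$ has no positive fixed point is exactly the needed hypothesis) is correct; note, though, that this route consumes the same external input as the paper's citation, since the ``fact recalled in the Introduction'' is that cited theorem.

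The genuine gap is in your realization argument for part~1, at precisely the step you flag as the main obstacle. The claim that ``the combinatorial lattice depends only on $n$ and not on the chosen $h$'' is false, and the paper's own examples refute it: Figures~\ref{fig:08}, \ref{fig:09}, \ref{fig:10}, \ref{fig:11} and~\ref{fig:18} all correspond to $n=3$ (in each case $\psi$ is conjugate to $\xi_3$), yet they are five pairwise distinct lattices with drastically different solution sets --- Figure~\ref{fig:08} forces $g$ to be the tent map (Proposition~\ref{prop:01}), Figure~\ref{fig:10} admits no solutions at all (Proposition~\ref{prop:03}), and Figures~\ref{fig:09}, \ref{fig:11}, \ref{fig:18} carry one- and two-parameter families. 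The reason is that the kinks of $g=h\circ f\circ h^{-1}$ are produced by the kinks of $h$, so the determinating lattice records the number of kinks of $h$ and their positions relative to the dynamics of $(f,\xi_n)$, not merely $n$; and inside a fixed lattice those positions are rigid rationals (the Section~4 computations force the $f$-coordinates of the kinks of $g$ to be $1/4$ for Figure~\ref{fig:09}, and $1/5,\,2/5$ for Figure~\ref{fig:11}). Consequently the increasing part of a solution of a \emph{fixed} lattice is not arbitrary subject to $g'(0)=2$: for Figure~\ref{fig:09} it must have the special form $(0,0)\rightarrow\left(a/2,\,a\right)\rightarrow(a,1)$, i.e.\ the kink must be mapped exactly onto the turning point, which a generic two-piece $g_l$ with initial slope $2$ violates. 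So your construction, applied to an arbitrary admissible $g_l$, generally lands in a \emph{different} lattice from the one you started with: it proves that every admissible increasing part occurs for \emph{some} lattice, but not for the given one. To repair the argument you must either read part~1 existentially, in which case your construction already suffices and the false invariance claim is unnecessary, or else, for a fixed lattice, restrict ``arbitrary'' to increasing parts having the kink pattern prescribed by that lattice and then verify the required coincidences of $\mathcal{P}$ and $\mathcal{Q}$ case by case --- which is what the Section~4 examples actually do, and what cannot be deduced from $n$ alone.
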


Notice, that Theorem~\ref{th:3} follows
from~\cite[Th.~1]{1808.03622.2}. But we believe that it should be
more elegant proof of Theorem~\ref{th:3}, using the techniques,
which was introduced in this work.

 \pagestyle{empty}
\bibliography{Ds-Bib}{}
\bibliographystyle{makar}

\newpage
\tableofcontents

\end{document}